\documentclass[11pt]{amsart}
\usepackage{verbatim}
\usepackage{amsmath,amssymb,amsthm}             
\usepackage{amssymb}             
\usepackage{amsfonts}            
\usepackage{mathrsfs}            
\usepackage{amsthm}
\usepackage{algorithm}  
\usepackage{algorithmicx}  
\usepackage{algpseudocode}  
\usepackage{mathtools}
\usepackage{commath}
\usepackage{physics}
\usepackage{bm}
\usepackage{graphicx}
\usepackage{geometry}
\usepackage{float}
\usepackage{listings}
\usepackage{subfigure}
\usepackage{multirow}
\usepackage{diagbox}
\usepackage{color}
\usepackage[export]{adjustbox}
\geometry{a4paper} 

\usepackage{geometry}                
\geometry{a4paper}                   
\usepackage{graphicx}
\usepackage{amssymb}
\usepackage{epstopdf}
\usepackage{color}
\usepackage{bbm, dsfont}
\DeclareGraphicsRule{.tif}{png}{.png}{`convert #1 `dirname #1`/`basename #1 .tif`.png}
\newtheorem{theorem}{Theorem}
\newtheorem{lemma}{Lemma}[section]
\newtheorem{corollary}{Corollary}[section]
\newtheorem{proposition}{Proposition}[section]

\theoremstyle{definition}
\newtheorem{definition}{Definition}
\theoremstyle{remark}
\newtheorem{remark}{Remark}

\theoremstyle{example}

\usepackage{latexsym,amssymb,amsmath,amsfonts,amsthm}
\usepackage{amsthm,amsmath}
\usepackage{float}
\usepackage{enumerate}
\usepackage{epic}
\usepackage{fullpage}
\usepackage{bbm}
\usepackage{subfigure}
\usepackage{graphicx}
\usepackage[toc,page]{appendix}
\usepackage{amsfonts}
\usepackage[all]{xy}
\usepackage{caption}
\usepackage{geometry}              
\usepackage{graphicx}
\usepackage{enumerate}
\usepackage{amssymb}
\usepackage{epstopdf}
\usepackage{color}
\usepackage{bbm, dsfont}
\DeclareGraphicsRule{.tif}{png}{.png}{`convert #1 `dirname #1`/`basename #1 .tif`.png}
\usepackage[utf8]{inputenc}
\setcounter{tocdepth}{1}

\usepackage{amssymb,amsthm,amsmath,amssymb,wrapfig,dsfont}
\usepackage[dvipsnames]{xcolor}
\definecolor{myred}{RGB}{251,154,133}
\definecolor{myblue}{RGB}{153,206,227}
\definecolor{mylightblue}{RGB}{0, 150, 255}
\definecolor{mygreen}{RGB}{32, 210, 64}
\definecolor{mygray}{RGB}{220, 220, 220}

\usepackage{tikz}
\usetikzlibrary{decorations.pathmorphing}
\tikzset{snake it/.style={decorate, decoration=snake}}
\usetikzlibrary{shapes.geometric,positioning,decorations.pathreplacing}

\usepackage{amsmath}
\usepackage{mathrsfs}


\newcommand{\BZ}{{\mathbb{Z}}}

\numberwithin{equation}{section}

\newtheorem{claim}{Claim}

\newtheorem*{notation}{Notation}
\newcommand{\RNum}[1]{\uppercase\expandafter{\romannumeral #1\relax}}

\begin{document}

	\title{On Chemical Distance and Local Uniqueness of a Sufficiently Supercritical Finitary Random Interlacement}
	
	\author{Zhenhao Cai}
	\address[Zhenhao Cai]{Peking University}
	\email{caizhenhao@pku.edu.cn}

	\author{Xiao Han}
	\address[Xiao Han]{Peking University}
	\email{jsnjkds@pku.edu.cn}
	
	\author{Jiayan Ye}
	\address[Jiayan Ye]{Texas A\&M University}
	\urladdr{www.math.tamu.edu/~tomye}
	\email{tomye1992@gmail.com }
	
	\author{Yuan Zhang}
	\address[Yuan Zhang]{Peking University}
	\email{zhangyuan@math.pku.edu.cn}

	\maketitle
	\tableofcontents
	
\begin{abstract}
	In this paper, we study geometric properties of the unique infinite cluster $\Gamma$ in a sufficiently supercritical Finitary Random Interlacements $\mathcal{FI}^{u,T}$ in $\BZ^d, \ d\ge 3$. We prove that the chemical distance in $\Gamma$ is, with stretched exponentially high probability, of the same order as the Euclidean distance in $\BZ^d$. This also implies a shape theorem parallel to those for Bernoulli percolation and random interlacements. We also prove local uniqueness of $\mathcal{FI}^{u,T}$, which says any two large clusters in $\mathcal{FI}^{u,T}$ ``close to each other" will with stretched exponentially high probability be connected to each other within the same order of the distance between them. 
\end{abstract}

\section{Introduction}
The model of random interlacements was introduced by Sznitman \cite{Sznitman2009Vacant} in $2007$. It can be thought as a Poisson cloud of bi-infinite simple random walk trajectories on the lattice $\mathbb{Z}^d$, for $d \geq 3$. For a more thorough description of random interlacements, readers are referred to \cite{drewitz2014introduction} and the references therein.

Finitary random interlacements (FRI) was introduced by Bowen \cite{bowen2019finitary} to solve a special case of the Gaboriau-Lyons problem. It can be seen as a variant of random interlacements, where the random walk trajectories are geometrically truncated. See Section 2 for the precise definition of FRI. Denote the random graph generated by FRI process on  the lattice $\mathbb{Z}^d$, $d \geq 3$, by $\mathcal{FI}^{u,T}$, where $u>0$ is similar to the intensity parameter in random interlacements $\mathcal{I}^u$ that controls the average number of trajectories traversing a given finite subset, and $T>0$ determines the expectations of geometric ``cut points" at which random walk trajectories are truncated. Bowen \cite{bowen2019finitary} showed that the measure of FRI $\mathcal{FI}^{u,T}$ converges to the one of random interlacements $\mathcal{I}^{2du}$ weakly when $T$ goes to infinity. Procaccia, Ye, and Zhang \cite{procaccia2019percolation} proved that 
$\mathcal{FI}^{u,T}$ has a non-trivial phase transition for all fixed $u>0$. In particular, there is $T_1 (u,d) >0$ such that for all $T >T_1$, $\mathcal{FI}^{u,T}$ has a unique infinite connected component $\Gamma$.

We are interested in the chemical distance in $\Gamma$ of $\mathcal{FI}^{u,T}$. Antal and Pisztora \cite{antal1996chemical} proved that the chemical distance in the unique infinite connected component of supercritical Bernoulli percolation is of the same order as $\ell_{\infty}$ distance with high probability. Procaccia and Shellef \cite{procaccia2014range} proved a similar chemical distance result for trace of random walk on discrete torus and random interlacements, with an iterated log correction. {\v C}ern{\'y} and Popov \cite{vcerny2012internal} improved the chemical distacne result for random interlacements in \cite{procaccia2014range} by removing the log correction. Drewitz, R\'{a}th, and Sapozhnikov \cite{drewitz2014chemical} extended the results in \cite{antal1996chemical, vcerny2012internal, procaccia2014range} and showed that all correlated percolation models that satisfy a list of conditions have similar chemical distance in their infinite connected components. Many models satisfy the list of conditions in \cite{drewitz2014chemical}, including random interlacements, vacant set of random interlacements, and level sets of Gaussian free field. In \cite{procaccia2019percolation}, Procaccia, Ye, and Zhang conjectured that the unique infinite connected component $\Gamma$ of $\mathcal{FI}^{u,T}$ has similar chemical distance. 

In this paper, we prove that for all fixed $u >0$, the chemical distance in $\Gamma$ of $\mathcal{FI}^{u,T}$ is also of the same order as $\ell_{\infty}$ distance when $T$ is large enough. This gives a positive answer to the conjecture in \cite{procaccia2019percolation}. 

A key difference between FRI and the regular random interlacements is that the former is composed of simple random walk trajectories truncated at a finite length. As a result, in order to connect a point to infinity, one has to switch infinitely many different trajectories. Thus one may not always be able to obtain a sufficiently long connected path without using up the Poisson intensity of the random measure. This brings significant challenge in employing the independence of Poisson Point Processes to ``entangle” independent trajectories and create good connectivity/distance event. In order to overcome such obstacle, the following scheme is carefully implemented in this paper:

Inspired by \cite{vcerny2012internal}, an infinite connected subset $\bar \Gamma$ with desired chemical distance can be constructed using some of the trajectories in $\mathcal{FI}^{u,T}$. Note that $\Gamma$ is the unique infinite connected component of $\mathcal{FI}^{u,T}$. The “good” cluster $\bar \Gamma$ can be seen as a “highway system” embedded in $\Gamma$, and the chemical distance between any two points in $\Gamma$ can be bounded from above by using this “highway system”. Moreover, suppose we have a point in $\Gamma$, and a connected path from this point to infinity. Among all truncated simple random walk trajectories composing this path, with high probability there have to be a ``good fraction” of them with ``decent lengths” so that they can almost independently find ways to our ``highway system” $\bar \Gamma$. A decomposition-based argument invented in \cite{rodriguez2013phase} plays a key role in this step. 

With chemical distance proved, we show that FRI has local uniqueness property (see Theorem $2$ for precise definition), and obtain a shape theorem for FRI as a corollary. By the result of Procaccia, Rosenthal, and Sapozhnikov \cite{procaccia2016quenched}, random walks on the unique infinite connected component $\Gamma$ of FRI satisfy a quenched version of invariance principle. 

\begin{remark}
	In \cite{drewitz2014chemical}, local uniqueness is one of the conditions in order to prove that chemical distance in the underlying random set is of the same order as $\ell_{\infty}$ distance. Once we have proved local uniqueness for FRI, one may verify that all conditions in \cite{drewitz2014chemical} are satisfied by FRI and thus give an alternative proof for chemical distance. However, to our knowledge it is difficult to prove local uniqueness directly in our case because of the aforementioned difference between FRI and random interlacements. As can be seen in the following sections, the proof of local uniqueness for FRI is essentially equivalent to that of chemical distance.
\end{remark}

This paper is organized as follows. We introduce definitions of FRI and our main results in Section $2$. In Section $3$, there are some notations and useful results. The construction of an infinite sub-cluster with good chemical distance is in Section $4$. In Sections $5$ and $6$, we prove that $\Gamma$ has the desired chemical distance. We give a proof that FRI has local uniqueness in Section $7$. 

\section{Main results}
According to \cite{procaccia2019percolation}, there are two equivalent definitions of FRI. For $x\in \mathbb{Z}^d$ and $T>0$, let $P_x^{(T)}$ be the law of a geometrically killed simple random walk starting at $x$ with killing rate $\frac{1}{T+1}$ (one can see Section 4.2 of \cite{lawler2010random} for precise definition of geometrically killed simple random walks). Denote the set of all finite paths on $\mathbb{Z}^d$ by $W^{\left[ 0,\infty\right) }$. Since $W^{\left[ 0,\infty\right) }$ is countable, the measure $v^{(T)}=\sum_{x\in \mathbb{Z}^d}\frac{2d}{T+1}P_x^{(T)}$ is a $\sigma-$ finite measure on $W^{\left[ 0,\infty\right) }$.
\begin{definition}\label{definition1}
	For $0<u,T<\infty$, the finitary random interlacements $\mathcal{FI}^{u,T}$ is a Poisson point process with intensity measure $uv^{(T)}$. The law of $\mathcal{FI}^{u,T}$ is denoted by $P^{u,T}$. 
\end{definition}
\begin{definition}\label{definition2}
	For each site $x\in \mathbb{Z}^d$, $N_x$ is a Poisson random variable with parameter $\frac{2du}{T+1}$. Start $N_x$ independent geometrically killed simple random walks starting at $x$ with killing rate $\frac{1}{T+1}$. Let $\mathcal{FI}^{u,T}$ be the point measure on $W^{\left[ 0,\infty\right) }$ composed of all the trajectories above from all sites in $\mathbb{Z}^d$. 
\end{definition}
In \cite{procaccia2019percolation}, it has been proved that for any $u>0$ and $d\ge 3$, there is $0<T_1(u,d)<\infty$ such that for any $T>T_1$, $\mathcal{FI}^{u,T}$ has a unique infinite cluster almost surely (see Theorem 1 of \cite{procaccia2019percolation}). We denote the unique infinite cluster of $\mathcal{FI}^{u,T}$ by $\Gamma$. In this paper, we prove that the chemical distance on $\Gamma$ has the same order as the $l^\infty$ distance, which means 
\begin{theorem}\label{theorem1}
	For any $u>0$ and $d\ge 3$, there exist $0<T_2(u,d)<\infty$ and $C_1(u,d)>0$ such that $\forall T>T_2$, $\exists \delta(u,d,T)>0$, $c(u,d,T)>0$, $c'(u,d,T)>0$ satisfying: for any $ y\in \mathbb{Z}^d$,
	\begin{equation}
	P^{u,T}\left[0\in \Gamma, y\in \Gamma, \rho(0,y)>C_1 |y| \right] \le ce^{-c'|y|^\delta}, 
	\end{equation}
	where $\rho(\cdot,\cdot)$ is the chemical distance on $\mathcal{FI}^{u,T}$ and its definition can be found in Section \ref{notations}.
\end{theorem}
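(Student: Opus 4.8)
The plan is to reduce Theorem~\ref{theorem1} to three estimates about an auxiliary infinite ``highway'' subcluster $\bar\Gamma\subseteq\Gamma$, and then chain three short paths together. Following the renormalization idea of \cite{vcerny2012internal}, one constructs (this is the job of Section $4$) a random connected set $\bar\Gamma\subseteq\Gamma$, together with constants $C_0,c_0,c_0'>0$ and an exponent $\delta\in(0,1)$, valid for all sufficiently large $T$, such that for every $x\in\BZ^d$ and $r\ge 1$: \textbf{(i) (ubiquity)} $P^{u,T}[\bar\Gamma\cap B(x,r)=\emptyset]\le c_0e^{-c_0'r^\delta}$; \textbf{(ii) (internal distance)} for $x_1,x_2\in\BZ^d$, $P^{u,T}[x_1,x_2\in\bar\Gamma,\ \rho_{\bar\Gamma}(x_1,x_2)>C_0|x_1-x_2|]\le c_0e^{-c_0'|x_1-x_2|^\delta}$, where $\rho_{\bar\Gamma}$ denotes chemical distance inside $\bar\Gamma$; and \textbf{(iii) (access from $\Gamma$)} the event $\{x\in\Gamma$, and $x$ is not joined to $\bar\Gamma$ by an $\mathcal{FI}^{u,T}$-path of length at most $r$ contained in $B(x,r)\}$ has $P^{u,T}$-probability at most $c_0e^{-c_0'r^\delta}$. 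Granting these, set $r=\varepsilon|y|$ with $\varepsilon$ small. On the complement of the bad events in \textbf{(iii)} at $x=0$ and $x=y$, if $0,y\in\Gamma$ there are $z_0,z_y\in\bar\Gamma$ with $\rho(0,z_0)\le r$, $\rho(y,z_y)\le r$, $z_0\in B(0,r)$, $z_y\in B(y,r)$, hence $(1-2\varepsilon)|y|\le|z_0-z_y|\le(1+2\varepsilon)|y|$; using \textbf{(ii)} for $z_0,z_y$ (with a union bound over the $O(|y|^{2d})$ possible locations, which only costs a polynomial factor) and the triangle inequality for $\rho$ gives $\rho(0,y)\le 2r+C_0(1+2\varepsilon)|y|\le C_1|y|$. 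Thus $\{0,y\in\Gamma,\ \rho(0,y)>C_1|y|\}$ is contained in the union of these bad events, each of probability $\le ce^{-c'|y|^\delta}$. This is the whole proof modulo \textbf{(i)}--\textbf{(iii)}.

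For \textbf{(i)} and \textbf{(ii)} the mechanism is standard multiscale renormalization. Coarse-grain $\BZ^d$ into boxes of side $L$; declare a box \emph{good} when a local connectivity pattern holds --- roughly, that $\mathcal{FI}^{u,T}$ restricted to a slightly enlarged box contains a box-crossing cluster that links up with the corresponding crossing clusters of the $2d$ neighboring boxes. Since $P^{u,T}[\text{box good}]\to 1$ as $L,T\to\infty$ (using Bowen's weak convergence \cite{bowen2019finitary} together with the supercriticality input of \cite{procaccia2019percolation}), a Peierls/multiscale argument yields an infinite good-box cluster whose canonically concatenated paths form $\bar\Gamma\subseteq\Gamma$ with chemical distance comparable to the $\ell_\infty$ distance, giving \textbf{(ii)}; \textbf{(i)} then follows because good boxes are ``almost everywhere.'' The one genuine wrinkle, absent from the correlated models of \cite{drewitz2014chemical}, is that goodness is \emph{not} a finite-range event --- a single trajectory can be arbitrarily long and couple distant boxes. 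This is handled by splitting off the (exponentially rare) trajectories longer than $L^{1+\eta}$; the remainder defines a finite-range-dependent field amenable to the renormalization, while the discarded long trajectories are absorbed by a separate crude bound, and this loss is exactly what forces $\delta<1$ rather than $\delta=1$.

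The heart of the matter is \textbf{(iii)}, and this is where I expect the main obstacle. Fix $x\in\Gamma$ and an infinite self-avoiding $\mathcal{FI}^{u,T}$-path $\pi$ from $x$; its initial piece $\pi'$ up to first exiting $B(x,r)$ has length $\ge r$ and is covered by at most $r$ maximal sub-paths, the $j$-th lying on a single Poisson trajectory $w_{(j)}$. Since $\sum_j\operatorname{len}(w_{(j)})\ge r$ and long geometrically killed walks are exponentially costly, with overwhelming probability either at least $\kappa r/T$ of the $w_{(j)}$ have length $\ge\ell_0$ (a constant depending only on $d$), or some $w_{(j)}$ is anomalously long; the latter case is disposed of separately (such a trajectory is rare, and its own long sub-excursion is then used to reach $\bar\Gamma$). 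In the former case, let $J$ index these ``long used'' trajectories, $|J|\ge\kappa r/T$. Split $\mathcal{FI}^{u,T}=\mathcal{FI}^{u_1,T}+\mathcal{FI}^{u_2,T}$ into independent copies with $u_1+u_2=u$, building $\bar\Gamma$ from the first (so \textbf{(i)}--\textbf{(ii)} persist with $u_1$, for $T$ large) and realizing $\pi$ and the connections from the second. Conditioning on $\mathcal{FI}^{u_1,T}$, on the labels of the $w_{(j)}$, and on the first $\ell_0$ steps of each $w_{(j)}$, $j\in J$: by loss of memory of geometric killing and the Markov property, the continuations $(w_{(j)}(\ell_0+\cdot))_{j\in J}$ are independent fresh geometrically killed walks from points in $B(x,r+\ell_0)$, independent of $\bar\Gamma$. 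By transience in $d\ge 3$ and the $\rho_0$-ubiquity of $\bar\Gamma$ throughout $B(x,r)$ (from \textbf{(i)} and a union bound over $O(r^d)$ subboxes of side $\rho_0$), each continuation reaches and attaches to $\bar\Gamma$ within $O(\ell_0)$ further steps with probability $\ge p_0>0$, and these events are conditionally independent. Hence the conditional probability that no $w_{(j)}$, $j\in J$, connects $x$ to $\bar\Gamma$ is at most $(1-p_0)^{\kappa r/T}\le e^{-cr}$; averaging, adding the $O(r^d)$ discarded ubiquity events and the anomalous-length contribution, and finally union-bounding over the combinatorial data describing which trajectories and sub-paths compose $\pi'$ (at most exponential in $r$, absorbed since only its coarse features matter) yields \textbf{(iii)} with some $\delta\in(0,1)$. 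The delicate points I anticipate are: making the ``reveal only the continuations'' decomposition rigorous so that the connection attempts are genuinely conditionally independent of each other \emph{and} of $\bar\Gamma$ (the Poisson split combined with the Rodriguez--Sznitman-type trajectory decomposition of \cite{rodriguez2013phase} is the right tool); correctly quantifying the union bound over the not-chosen-in-advance path $\pi'$; and cleanly treating the anomalously long trajectories.
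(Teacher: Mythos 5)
Your high-level architecture matches the paper's exactly: construct an infinite ``highway'' subcluster $\bar\Gamma$ with ubiquity, an internal chemical-distance bound, and an accessibility estimate, then chain these three via a union bound over landing points in small balls around $0$ and $y$ and the triangle inequality for $\rho$. This is precisely Sections 3--4 (construction and internal distance), Lemma \ref{lemma14} (your (ii)), Proposition \ref{prop12} (your (iii)), and the chaining at the start of Section 5. So the reduction is correct, and the construction of $\bar\Gamma$ via renormalized good boxes with finite-range truncation of long trajectories is also in the spirit of the paper.

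The genuine gap is in your sketch of (iii). First, the asymmetric split does not work as stated. You build $\bar\Gamma$ from $\mathcal{FI}^{u_1,T}$ and want to ``realize $\pi$ and the connections from $\mathcal{FI}^{u_2,T}$,'' but $\pi$ is a path in $\Gamma$ of the full process and its constituent trajectories $w_{(j)}$ may well lie mostly or entirely in $\mathcal{FI}^{u_1,T}$; for those, the continuation argument gives no information independent of $\bar\Gamma$. The paper resolves this with a symmetric split and a two-sided highway $\bar\Gamma_{12}=\bar\Gamma_1\cup\bar\Gamma_2$ built from sub-processes of each half, together with the local access event $D_x$ (see (\ref{dx})) requiring that \emph{every} long path in $\mathcal{FI}_1$ hitting $B_x(\bar n)$ be attached to $\bar{\mathbb{B}}^2_{x_{\mathcal V}}$ by independent paths from a fresh sub-process of $\mathcal{FI}_2$, and symmetrically. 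Whichever half a long piece of $\pi$ falls in, an independent attachment resource is available. Second, the union bound over ``which trajectories and sub-paths compose $\pi'$'' cannot simply be ``absorbed'': the entropy of such decompositions is of order $\exp(c\,r)$, whereas your independent-attempt gain is only of order $\exp(-c'r/T)$ for a fixed $T$, so the bookkeeping does not close. The paper avoids any enumeration of trajectory identities by formulating accessibility as a \emph{single} local event $D_x$ about all long paths through a $T$-scale box (made approximately finite-range by $F^m_x$, Lemma \ref{lemma13}), supplemented by $E_x^\xi$ (Lemma \ref{lemma10}) which forces any crossing of $B_x(\bar n)$ to use at least one path of length $\ge T^\xi$. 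The Rodriguez decomposition is then used not to discretize a conditioning on trajectory continuations, but to convert a global crossing event into a union over $\le (c_0 l_0^{2(d-1)})^{2^{n_0}}$ trees of $2^{n_0}$ spatially separated, nearly independent box events, so that the leaf failure probability $(\boldsymbol{s.e.}(T))^{2^{n_0}}$ dominates the tree entropy once $T$ is large. You mention \cite{rodriguez2013phase} as ``the right tool,'' which is true, but the way it is actually deployed is a box-based spatial independence, not the trajectory-continuation conditioning your sketch describes; that shift is what makes (iii) provable.
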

Based on Theorem \ref{theorem1} and Subadditive Ergodic Theorem (see Section 7 of \cite{vcerny2012internal}), we have the following shape theorem as a corollary: 
\begin{corollary}\label{shapetheorem}
	For any $ u>0$ and $d\ge 3$, recall $T_2(u,d)$ in Theorem \ref{theorem1}. For all $T> T_2$, there exists a compact convex set $D_{u,T} \subset \mathbb{R}^d$ such that $\forall \epsilon \in (0,1)$, there exists a $P(\cdot | 0\in \Gamma)$-almost surely finite random variable $N_{\epsilon,u,T}$ satisfying $\forall n \ge N_{\epsilon,u,T}$, 
	\begin{equation}
		\left((1-\epsilon)n D_{u,T} \cap \Gamma \right) \subset \Lambda^{u,T}(n) \subset (1+\epsilon)n D_{u,T},
	\end{equation} 
	where $\Lambda^{u,T}(n)=\left\lbrace y\in \Gamma: \rho(0,y)\le n \right\rbrace $.
\end{corollary}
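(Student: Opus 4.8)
The plan is to derive Corollary~\ref{shapetheorem} from Theorem~\ref{theorem1} by the standard route pioneered for first-passage percolation and adapted to random interlacements in Section~7 of \cite{vcerny2012internal}, conditioning throughout on the event $\{0\in\Gamma\}$. The first step is to show that the chemical distance, suitably interpreted, defines a stationary subadditive process to which the Subadditive Ergodic Theorem applies. Concretely, for $x,y\in\mathbb{Z}^d$ one sets $\widetilde\rho(x,y)$ to be the chemical distance between the points of $\Gamma$ nearest to $x$ and to $y$ (a measurable, a.s.\ finite regularization of $\rho$, since $\Gamma$ is a.s.\ everywhere-dense at large scales); subadditivity $\widetilde\rho(x,z)\le\widetilde\rho(x,y)+\widetilde\rho(y,z)$ is immediate from concatenation of paths, and stationarity/ergodicity comes from the translation invariance and mixing of the Poisson point process defining $\mathcal{FI}^{u,T}$. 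Applying Kingman's theorem along each ray $ne$, $e\in\mathbb{Z}^d$, yields a deterministic limit; the integrability hypothesis $\mathbb{E}[\widetilde\rho(0,e)]<\infty$ needed to invoke the theorem follows from the exponential-type tail bound in Theorem~\ref{theorem1} (which gives $P[\widetilde\rho(0,ne)>C_1 n\mid 0\in\Gamma,\,ne\in\Gamma]\le ce^{-c'n^\delta}$, hence a finite first moment after the nearest-point regularization).

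The second step is to upgrade the directional limits to a genuine norm. One defines $g(x)=\lim_{n\to\infty}\widetilde\rho(0,nx)/n$ first for $x\in\mathbb{Z}^d$, checks homogeneity $g(kx)=kg(x)$ for positive integers $k$, extends to $\mathbb{Q}^d$, and then shows $g$ is finite, symmetric, subadditive (hence Lipschitz, hence uniformly continuous) on $\mathbb{Q}^d$, so it extends continuously to a norm-like function on $\mathbb{R}^d$; positivity $g(x)>0$ for $x\ne 0$ follows because $\widetilde\rho(0,y)\ge |y|$ trivially, giving $g(x)\ge|x|$. The unit ball $D_{u,T}=\{x\in\mathbb{R}^d: g(x)\le 1\}$ is then a compact convex symmetric set, and $\Lambda^{u,T}(n)$ is comparable to $nD_{u,T}$ \emph{in expectation} for each direction. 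Here I would follow \cite{vcerny2012internal} essentially verbatim; the only point requiring the FRI-specific input is replacing their correlation-decay estimates with the stretched-exponential bound of Theorem~\ref{theorem1}, which is in fact stronger than what the argument needs.

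The third step is the uniform (in all directions simultaneously) convergence that turns pointwise limits into the shape statement $(1-\epsilon)nD_{u,T}\cap\Gamma\subset\Lambda^{u,T}(n)\subset(1+\epsilon)nD_{u,T}$ for all large $n$. For the outer inclusion one needs: for every $y$ with $|y|\le Kn$, $\rho(0,y)\le n$ implies $y\in(1+\epsilon)nD_{u,T}$; equivalently, one must rule out a site $y$ with $g(y)>(1+\epsilon)n$ yet $\rho(0,y)\le n$, and a Borel--Cantelli argument over the polynomially many lattice points in a ball of radius $Kn$, using the stretched-exponential tail of $\{\rho(0,y)<(1-\epsilon')g(y)\}$, closes this. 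For the inner inclusion one covers $(1-\epsilon)nD_{u,T}$ by finitely many small balls, applies the directional limit at each center together with a continuity/equicontinuity bound on the fluctuations of $\widetilde\rho$ over each small ball (again via Theorem~\ref{theorem1} and a union bound), and uses that $\Gamma$ visits every such ball. The random index $N_{\epsilon,u,T}$ is the (a.s.\ finite, by Borel--Cantelli, since $\sum_n n^{d}e^{-c'n^\delta}<\infty$) first time past which all these finitely many events hold.

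The main obstacle is the one genuinely model-specific point: making precise the ``nearest point of $\Gamma$'' regularization so that $\widetilde\rho$ is finite, stationary, and has the requisite moments. Unlike supercritical Bernoulli percolation, where one can work directly on the cluster of a conditioned origin, here one must argue that $\Gamma$ is a.s.\ sufficiently ubiquitous — every box of side $C\log n$ near the origin meets $\Gamma$ with probability $1-o(n^{-d})$ — so that the nearest-point correction changes chemical distances by at most a lower-order term; this density statement itself follows from the local-uniqueness Theorem~2 together with the known fact (from \cite{procaccia2019percolation}) that large local clusters exist with high probability. Once this bookkeeping is in place, the remainder is a faithful transcription of the subadditive-ergodic shape-theorem argument of \cite[Section~7]{vcerny2012internal}, with Theorem~\ref{theorem1} supplying all the tail bounds.
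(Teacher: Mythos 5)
Your proposal matches the paper's own (unwritten) proof: the paper states the corollary with only a one-line citation to Theorem~\ref{theorem1} and Section~7 of \cite{vcerny2012internal}, and your sketch is precisely that argument — Kingman's subadditive ergodic theorem applied to the nearest-point regularization of $\rho$, followed by the standard upgrade from directional limits to the uniform shape statement via Borel--Cantelli, with Theorem~\ref{theorem1} supplying the stretched-exponential tails and Theorem~\ref{theorem2} (local uniqueness) supplying the density of $\Gamma$ needed to control the regularization error. The additional detail you give (e.g.\ the density estimate, the moment check, the positivity $g(x)\ge|x|$) is exactly the bookkeeping the cited reference carries out, so this is the same route, not a different one.
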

We also prove that FRI has local uniqueness property:
\begin{theorem}\label{theorem2}
	For any $ u>0$ and $d\ge 3$, there exists $0<T_3(u,d)<\infty$ such that $\forall T>T_3$, $\exists \delta(u,d,T)>0$, $c(u,d,T)>0$, $c'(u,d,T)>0$ satisfying: for any integer $R> 0$,
	\begin{align}\label{local uniqueness}
	P^{u,T}\bigg[ &\exists two\ clusters\ in\ \mathcal{FI}^{u,T}\cap B(R)\ having\ diameter\ at\ least\\  \nonumber
	&\frac{R}{10}\ not\ connected\ to\ each\ other\ in\ B(2R) \bigg]  \le ce^{-c'R^\delta}.
	\end{align}
\end{theorem}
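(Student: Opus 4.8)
The plan is to derive local uniqueness from the chemical distance estimate of Theorem \ref{theorem1} together with the local structure of $\mathcal{FI}^{u,T}$ near the scale $R$. First I would record the two ingredients I will combine. On one hand, Theorem \ref{theorem1} (applied at all sufficiently large $T$) says that two points lying in $\Gamma$ at $\ell_\infty$-distance $\le R$ are, except on an event of probability $ce^{-c'R^\delta}$, joined in $\Gamma$ by a path of length $\le C_1 R$, which in particular keeps them inside a box of side $O(R)$. On the other hand, I need to know that any cluster of diameter $\ge R/10$ inside $B(R)$ is in fact a piece of the unique infinite cluster $\Gamma$ restricted to a slightly larger box — i.e., a "local uniqueness of large clusters" statement saying that a mesoscopically large cluster cannot be an isolated finite component. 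This should follow from the same machinery used to prove Theorem \ref{theorem1}, since the construction of the embedded ``highway system'' $\bar\Gamma$ in Section 4 already shows that a dense net of boxes at a mesoscopic scale are crossed by trajectories that are all connected together with stretched-exponentially high probability.

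The main steps are then: (1) Tile $B(2R)$ by boxes of an intermediate scale $L = R^{\alpha}$ for a suitable $\alpha\in(0,1)$, and define a box to be \emph{good} if it is crossed by $\bar\Gamma$ and the chemical distance within $\Gamma$ between any two points of $\Gamma$ inside the concentric box of triple the size is at most $C_1 L$; by Theorem \ref{theorem1} and a union bound over the $O((R/L)^d)$ boxes, all boxes in $B(2R)$ are good except on an event of probability $\le c(R/L)^d e^{-c'L^\delta}\le c e^{-c'' R^{\alpha\delta}}$, which is of the required form after renaming $\delta$. (2) On this good event, take any cluster $\mathcal{C}\subset \mathcal{FI}^{u,T}\cap B(R)$ with $\mathrm{diam}(\mathcal{C})\ge R/10$. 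Since its diameter exceeds $L$, $\mathcal{C}$ must meet at least two boxes of the tiling, hence it meets a good box; inside that good box $\mathcal{C}$ is connected to $\bar\Gamma\subset\Gamma$, staying within $B(2R)$. Therefore every large cluster in $B(R)$ is connected inside $B(2R)$ to the single set $\bar\Gamma$, hence any two such clusters are connected to each other in $B(2R)$. (3) Conclude by taking the complement: the probability in \eqref{local uniqueness} is bounded by the probability that some box in $B(2R)$ fails to be good, which is $\le c e^{-c'R^\delta}$.

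The step I expect to be the genuine obstacle is (1), specifically the part asserting that a cluster of diameter $\ge L$ inside a box of side $3L$ is automatically attached to $\bar\Gamma$ within that box. This is \emph{not} immediate from Theorem \ref{theorem1}, which only controls distances between points already known to lie in $\Gamma$; here I must rule out the existence of a mesoscopically large but globally finite cluster. To handle this I would revisit the decomposition-based argument of Section 5--6: a cluster of diameter $\ge L$ is built from truncated SRW trajectories, and as in the proof of Theorem \ref{theorem1} a positive fraction of these trajectories have length of order comparable to $T$ and can, almost independently, reach the highway system $\bar\Gamma$; the probability that \emph{all} of them fail is stretched-exponentially small in $L$. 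The bookkeeping of conditional independence here (conditioning on the trajectories realizing $\mathcal{C}$ while preserving enough Poisson intensity to connect to $\bar\Gamma$) is exactly the delicate point flagged in the introduction, and is where most of the work goes; once it is in place, the tiling-and-union-bound wrapper above is routine. Finally I would note that the required threshold is $T_3(u,d) = \max\{T_1(u,d), T_2(u,d)\}$ together with any further lower bound on $T$ needed for the highway-reaching estimate, and that the exponent $\delta$ in the statement is $\alpha$ times the exponent from Theorem \ref{theorem1}.
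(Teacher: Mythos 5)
Your diagnosis of the structure of the argument is essentially correct, and you correctly identify the central obstacle: Theorem~\ref{theorem1} alone only controls distances between points already known to be in $\Gamma$, so the real work is to rule out mesoscopically large but globally finite clusters by showing that anything crossing a box of scale comparable to $T^{1/2d}$ must attach itself to the highway $\bar\Gamma$ with stretched-exponentially high probability. This is exactly what Proposition~\ref{prop12} delivers (the estimate $P[0\leftrightarrow\partial B(N^{1/2d}),\,\rho(0,\bar\Gamma_{12})>N]\le\boldsymbol{s.e.}(N)$), and your plan to ``revisit the decomposition-based argument of Section 5--6'' is the right move; the paper in fact reuses Proposition~\ref{prop12} directly rather than re-running the decomposition. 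Your tiling-and-union-bound wrapper at scale $L=R^\alpha$ is a reasonable alternative to the paper's union bound over pairs $x_1,x_2\in B(R)$, and it has the small advantage of making the ``two close clusters'' case disappear (the paper handles that case separately via the crossing estimate Theorem~\ref{theorem3}).

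However, there is a genuine gap that you do not address: the assertion that the connection stays inside $B(2R)$. What Proposition~\ref{prop12} plus Lemma~\ref{lemma14} actually produce, once you bound the length of the path from each cluster to $\bar\Gamma_{12}$ and the traversal along $\bar\Gamma_{12}$, is a connection inside $B(cR)$ where $c$ is a constant depending on the chemical-distance constant $c'$ in Proposition~\ref{bargamma}; there is no reason for that $c$ to be $\le 2$. Your remark that ``inside that good box $\mathcal C$ is connected to $\bar\Gamma$, staying within $B(2R)$'' only controls the attachment step, not the traversal along $\bar\Gamma$ that is needed to actually join the two clusters, and a geodesic in $\bar\Gamma$ between two points at $\ell_\infty$-distance of order $R$ can wander out to distance $c'R\gg 2R$. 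The paper resolves this with a separate, non-trivial step: it first establishes the estimate with $B(cR)$ in place of $B(2R)$ (display~(\ref{7.2})), and then improves $c$ down to $2$ using the overlapping-balls induction from R\'ath--Sapozhnikov, via the events $A_x^1=\{B_x(R')\cap\Gamma\neq\emptyset\}$ and $A_x^2=\{\forall y_1,y_2\in B_x(2R')\cap\Gamma,\ y_1\xleftrightarrow[B_x(2cR')]{\Gamma}y_2\}$ and an induction on $|x_1-x_2|$. This refinement is not ``routine bookkeeping'' and is the one step your proposal would need to add to become a complete argument.
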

Moreover, for any $l^\infty $ box $B(N)$, it can be shown that there exists a left-right crossing path in FRI with high probability.
\begin{theorem}\label{theorem3}
	For any $u>0$ and $d\ge 3$, there exists $0<T_4(u,d)<\infty$ such that $\forall T>T_4$, $\exists c(u,d,T)>0$ such that $\forall N>0$,
	\begin{equation}
	P^{u,T}[LR(N)]\ge 1-e^{-cN^{d-1}}, 
	\end{equation} 
	where $LR(N)$ is the event $\left\lbrace left\ of\ B(N) \xleftrightarrow[B(N)]{\mathcal{FI}^{u,T}}right\ of\ B(N)\right\rbrace $ (precise definition of the notation $\xleftrightarrow{}$ will be introduced in Section \ref{notations}) and $left\ of\ B(N):=\left\lbrace x\in B(N):x^{(1)}=-N \right\rbrace $, $right\ of\ B(N):=\left\lbrace x\in B(N):x^{(1)}=N \right\rbrace $ ($x^{(1)}$ is the first coordinate of the vertex $x$).
\end{theorem}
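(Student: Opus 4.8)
The plan is to obtain Theorem~\ref{theorem3} as a relatively soft consequence of Theorems~\ref{theorem1} and~\ref{theorem2}, through a static renormalization on a large but fixed scale $L=L(u,d,T)$ followed by a Peierls-type estimate on the dual separating surface; the one ingredient specific to $\mathcal{FI}^{u,T}$ is a length-truncation turning the renormalized field into a finite-range one. Since $LR(N)$ is an increasing event, deleting from $\mathcal{FI}^{u,T}$ all trajectories of length $>\ell$ can only decrease its probability; write $\mathcal{FI}^{u,T}_{\le\ell}$ for the truncated process, which has range of dependence at most $2\ell$, a surviving trajectory having diameter $\le\ell$. I would take $\ell=\ell(L)$ of order $T\bigl(L^{\delta}+\log L\bigr)$, where $\delta\le 1/2$ is the exponent of Theorem~\ref{theorem1}: this is small enough that $\ell(L)=o(L)$, so truncation leaves the finite-range structure at scale $L$ intact, and large enough that the chance some deleted trajectory meets a prescribed box of side $O(L)$ is $\le ce^{-c'L^{\delta}}$. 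It then suffices to bound $P^{u,T}[LR(N)^{c}]$ for $\mathcal{FI}^{u,T}_{\le\ell}$.

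\emph{Good boxes.} Cover $B(N)$ by boxes $Q_z=z+B(L)$, $z\in L\BZ^d$, so that neighbouring boxes overlap in a slab of width $\asymp L$, and let $\widehat Q_z=z+B(2L)$. Call $Q_z$ \emph{good} if (a) $\mathcal{FI}^{u,T}_{\le\ell}\cap Q_z$ has a connected component meeting all $2d$ faces of $Q_z$, and (b) any two components of $\mathcal{FI}^{u,T}_{\le\ell}\cap Q_z$ of diameter $\ge L/10$ are connected inside $\widehat Q_z$. Off an event of probability $\le ce^{-c'L^{\delta}}$ the truncated and the full interlacements agree on $\widehat Q_z$; on its complement (a) and (b) may be read for $\mathcal{FI}^{u,T}$ itself, so (b) fails with probability $\le ce^{-c'L^{\delta}}$ by Theorem~\ref{theorem2} (this is exactly local uniqueness at scale $L$), while (a) holds with probability $\ge1-ce^{-c'L^{\delta}}$ because $B(L)$ carries a macroscopic crossing cluster with that probability — a scale-$L$ fact produced along the way to Theorems~\ref{theorem1}--\ref{theorem2}, notably in the construction of $\bar\Gamma$. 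Hence $P^{u,T}[Q_z\ \text{not good}]\le ce^{-c'L^{\delta}}$, and, since $\{Q_z\ \text{good}\}$ is a function of $\mathcal{FI}^{u,T}_{\le\ell}$ restricted to $\widehat Q_z$ while $\ell(L)=o(L)$, the events $\{Q_z\ \text{good}\}$ and $\{Q_{z'}\ \text{good}\}$ are independent once $\|z-z'\|_{\infty}$ exceeds a fixed multiple of $L$.

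\emph{Crossings and the Peierls bound.} A deterministic argument shows that a $\ast$-connected chain of good boxes from one flush with $\{x^{(1)}=-N\}$ to one flush with $\{x^{(1)}=N\}$ forces $LR(N)$. Indeed, for two neighbouring good boxes $Q_z,Q_{z'}$, the crossing component of $Q_z$ restricted to the common slab is a connected piece $P$ of diameter $\asymp L\ge L/10$ contained in both $Q_z$ and $Q_{z'}$; by property (b) of $Q_z$ the component of $\mathcal{FI}^{u,T}_{\le\ell}\cap Q_z$ containing $P$ is joined, inside $\widehat Q_z$, to the crossing component of $Q_z$, and likewise on the $Q_{z'}$ side; chaining these links along the good chain produces one connected subset of $\mathcal{FI}^{u,T}_{\le\ell}\subseteq\mathcal{FI}^{u,T}$, contained in $B(N)$ (boundary boxes handled by clipping their $\widehat Q$'s), meeting both faces. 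Consequently $LR(N)^{c}$ forces a $\ast$-connected set of not-good boxes inside $B(N)$ separating the two faces, and any such set has at least $(2N/L)^{d-1}$ members. From such a set I extract a subcollection pairwise $\|\cdot\|_{\infty}$-separated by a fixed multiple of $L$, of size $\ge(2N/L)^{d-1}/C_{d}$ with mutually independent non-goodness, and sum over the at most $(2N)^{d}C_{d}^{\,k}$ candidate separating sets of cardinality $k\ge(2N/L)^{d-1}$: the resulting geometric series has ratio $C_{d}\bigl(ce^{-c'L^{\delta}}\bigr)^{1/C_{d}}$, which I force to be $\le e^{-1}$ by choosing $L$ large, whence $P^{u,T}[LR(N)^{c}]\le C(N/L)^{d}e^{-(2N/L)^{d-1}}\le e^{-cN^{d-1}}$ for all large $N$ (small $N$ being trivial after shrinking $c$).

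\emph{Expected main obstacle.} The only step with real content beyond bookkeeping is the scale-$L$ input behind (a): that $\mathcal{FI}^{u,T}\cap B(L)$ contains a connected component crossing $B(L)$ in every direction, with probability $\ge1-ce^{-c'L^{\delta}}$. This does not follow from Theorem~\ref{theorem1} alone — $\{x\in\Gamma\}$ has only constant probability — and must be combined with a stretched-exponential lower bound on the density of $\Gamma$ inside a box, itself obtained in the course of proving Theorems~\ref{theorem1} and~\ref{theorem2} (cf.\ \cite{procaccia2019percolation}). With that in hand, the truncation has reduced the problem to a finite-range, strongly supercritical setting in which the renormalization and the Peierls sum are standard; the truncation step is also what lets us bypass a full decoupling inequality for $\mathcal{FI}^{u,T}$ of the type used in \cite{rodriguez2013phase}.
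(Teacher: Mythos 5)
There is a genuine circularity that sinks this route as stated. Your proposal invokes Theorem~\ref{theorem2} (local uniqueness at scale $L$) as the input for property~(b) of a good box. But in the paper's architecture, Theorem~\ref{theorem2} is proved \emph{after} and \emph{from} Theorem~\ref{theorem3}: equation~(\ref{74}) in Section~7 explicitly uses Theorem~\ref{theorem3} (together with Proposition~\ref{prop12}) to control $P((\widetilde{LR}(M))^c)$, and that bound is needed in the remaining case $|x_1-x_2|\le R/30$ of the local-uniqueness proof. So a proof of Theorem~\ref{theorem3} cannot legitimately assume Theorem~\ref{theorem2} unless you also exhibit an independent proof of the latter, which you do not.

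You correctly flag that ingredient~(a) --- a macroscopic crossing cluster in $B(L)$ with probability $\ge 1 - ce^{-c'L^{\delta}}$ --- is ``the only step with real content,'' but this is precisely the content that the paper has already packaged into Proposition~\ref{Y}: the site percolation $\{Y(y)\}_{y\in\mathcal V}$ of good boxes on scale $n\asymp T^{1/2}$, which is $k$-independent, has $P(Y(y)=1)\ge 1-\boldsymbol{s.e.}(T)$, and (by Theorem~1.3 of Liggett--Schonmann--Stacey) stochastically dominates a supercritical Bernoulli bond percolation on $\mathcal H$. Once you accept that renormalization as given, you no longer need Theorem~\ref{theorem2} at all, nor the truncation $\mathcal{FI}^{u,T}_{\le\ell}$: the crossing estimate $P(LR_{\mathcal V}(\widetilde N))\ge 1-e^{-cN^{d-1}}$ is immediate from (8.98) of \cite{grimmett2013percolation} applied to the dominated Bernoulli field, and all that remains is the deterministic step --- showing a good-box crossing on $\mathcal V$ forces an FRI crossing inside $B(N)$ via the overlapping segments $\bar{\mathcal I}_{x,i}$ and the good-site connections --- which the paper carries out in Section~4.3. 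Your Peierls computation on the not-good dual surface is essentially a re-derivation of Grimmett's (8.98) by hand, while the truncation trick, though a sound idea (and indeed the paper uses a cut-off $\pi_\xi$ of the same flavour in Section~5 for a different purpose), is superfluous here because the renormalized field $\{Y(y)\}$ already has finite range of dependence by Proposition~\ref{Y}(2)--(3). In short: the route you sketch is circular and also does more work than necessary given the ingredients it would need to assume.
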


For $\omega \in \{0,1\}^{\mathbb{Z}^d}$ and $x \in \mathbb{Z}^d$, define $\mathcal{S} := \{ x \in \mathbb{Z}^d: \omega (x) = 1 \}$, and let $$deg_{\omega} (x) := \{ y \in \mathcal{S}: |y - x|_1 =1 \}$$ be the degree of $x$ in $\mathcal{S}$. We define a random walk $(X_n)_{n \geq 0}$ on $\mathcal{S}$ with initial position $X_0 = x$ and the following transition kernel:
\begin{equation}
P_{\omega,x} (X_{n+1} =z| X_n = y) =  \left\{
\begin{aligned}
&\frac{1}{2d},   & \text{if } |z-y|_1 =1, z \in \mathcal{S};\\
&1 - \frac{deg_w (y)}{2d},   & \text{if } z=y;\\
&0,   & \text{otherwise}.
\end{aligned}
\right.
\end{equation}
For $n \in \mathbb{N}$ and $t \geq 0$, define
$$
\tilde{B}_n (t) := \frac{1}{\sqrt{n}} \Big(X_{\lfloor tn \rfloor} + (tn- \lfloor tn \rfloor) \cdot (X_{\lfloor tn \rfloor +1} - X_{\lfloor tn \rfloor})\Big).
$$
For $R>0$, let $C[0,R]$ be the space of continuous function from $[0,R]$ to $\mathbb{R}^d$ equipped with supremum norm, and $\mathcal{W}_R$ be the Borel $\sigma$-algebra on $C[0,R]$. By Theorem $1.1$ in \cite{procaccia2016quenched} and Theorem \ref{theorem2}, random walks on the unique infinite cluster $\Gamma$ of a sufficiently supercritical FRI satisfy a quenched invariance principle.

\begin{corollary}
\label{quenched invariance principle}
Let $d \geq 3$ and $0 < \alpha < \beta < \infty$. There exists $0< T_5 (d, \alpha, \beta) < \infty$ such that $\forall T > T_5$, $\forall u \in (\alpha, \beta)$, and $\forall R>0$, for $P^{u,T} (\cdot | 0 \in \Gamma)$-almost every $\omega$ the law of $(\tilde{B}_n (t))_{0 \leq r \leq R}$ on $(C[0,R], \mathcal{W}_R)$ converges weakly to the law of a Brownian motion with zero drift and non-degenerate covariance matrix whose value is a function of $u$ and $T$. In particular, $T_5$ can be chosen as the critical value such that $\forall T > T_5$, (\ref{local uniqueness}) holds and $P^{u,T} (0 \in \Gamma) >0$ for all $u \in (\alpha, \beta)$.
\end{corollary}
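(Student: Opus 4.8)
The plan is to treat the general quenched invariance principle of Procaccia, Rosenthal, and Sapozhnikov \cite{procaccia2016quenched} as a black box, so that the whole task reduces to checking that the random environment induced by $\mathcal{FI}^{u,T}$ meets the hypotheses of their Theorem $1.1$. Those hypotheses amount, roughly, to three requirements: (i) the law of the configuration is invariant and ergodic under the shifts of $\BZ^d$; (ii) there is almost surely a unique infinite cluster and $0$ lies in it with positive probability; and (iii) a local-uniqueness/decoupling estimate of exactly the shape of (\ref{local uniqueness}) — the probability of two clusters of diameter at least $R/10$ in $B(R)$ not joined in $B(2R)$ decays faster than every polynomial in $R$ — together with the comparison of the chemical distance with the $\ell^\infty$ distance, which is what controls volume growth and the heat kernel in their argument. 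First I would restate these conditions precisely in the FRI notation and then verify them one by one.

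For (i): the measure $v^{(T)}=\sum_{x\in\BZ^d}\frac{2d}{T+1}P_x^{(T)}$ is manifestly invariant under $\BZ^d$-translations, hence so is $uv^{(T)}$, hence so is the law $P^{u,T}$ of the Poisson point process $\mathcal{FI}^{u,T}$; ergodicity (indeed mixing) under translations is the standard property of Poisson point processes with translation-invariant intensity. For (ii): by \cite[Theorem 1]{procaccia2019percolation}, for $T>T_1(u,d)$ there is almost surely a unique infinite cluster $\Gamma$, and since $\Gamma$ is a.s. infinite, translation invariance forces $\sum_{x}P^{u,T}(x\in\Gamma)=\BE^{u,T}|\Gamma|=\infty$, whence $P^{u,T}(0\in\Gamma)>0$. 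For (iii): Theorem \ref{theorem2} gives, for $T>T_3(u,d)$, the bound $ce^{-c'R^\delta}$ on precisely the event appearing in the local-uniqueness hypothesis, and $e^{-c'R^\delta}=o(R^{-k})$ for every $k$, so the required decay holds with room to spare; Theorem \ref{theorem1} supplies the chemical-distance comparison. Thus, for a single $u$ and $T$ above the relevant thresholds, \cite[Theorem 1.1]{procaccia2016quenched} applies verbatim.

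The only genuinely non-routine point, and the one I expect to be the main obstacle, is the uniformity in $u$ over the compact interval $(\alpha,\beta)$: the thresholds $T_1(u,d),T_2(u,d),T_3(u,d)$ and the constants $c,c',\delta$ a priori depend on $u$, while the statement demands a single $T_5(d,\alpha,\beta)$. I would set $T_5:=\sup_{u\in(\alpha,\beta)}\max\{T_1(u,d),T_2(u,d),T_3(u,d)\}$ and argue this supremum is finite by inspecting the proofs leading to Theorems \ref{theorem1} and \ref{theorem2}: each of the finitely many estimates entering those proofs has constants that depend on $u$ only through the Poisson parameters proportional to $u$ (and the FRI is stochastically monotone in $u$), so the thresholds are locally bounded on the open half-line and in particular bounded on $(\alpha,\beta)$. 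With such a $T_5$ fixed, hypotheses (i)--(iii) hold simultaneously for all $u\in(\alpha,\beta)$ and all $T>T_5$, and \cite[Theorem 1.1]{procaccia2016quenched} yields the asserted weak convergence of $(\tilde B_n(t))_{0\le t\le R}$ on $(C[0,R],\mathcal{W}_R)$ to a Brownian motion with zero drift and a non-degenerate covariance matrix that is a function of $u$ and $T$; this identifies $T_5$ with the critical value described in the statement, completing the proof.
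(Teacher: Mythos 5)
Your overall strategy—reduce the statement to verifying the hypotheses of Theorem $1.1$ in \cite{procaccia2016quenched}—matches the paper's. However, you have misidentified what those hypotheses actually are, and as a result you skip two of them that require genuine work. The hypotheses in \cite{procaccia2016quenched} are the conditions labelled $\bold{P1}$ (ergodicity), $\bold{P2}$ (monotonicity), $\bold{P3}$ (decoupling), $\bold{S1}$ (local uniqueness), and $\bold{S2}$ (continuity of $u \mapsto P^{u,T}(0 \in \Gamma)$). You handle $\bold{P1}$, $\bold{P2}$, and $\bold{S1}$ adequately, but the other two are where the real content lies.

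First, you conflate $\bold{P3}$ (decoupling) with $\bold{S1}$ (local uniqueness); they are distinct. Local uniqueness is a geometric statement, whereas decoupling is a correlation-decay statement: for events $A_1, A_2$ depending on boxes $B(x_1,10L)$ and $B(x_2,10L)$ with $|x_1-x_2|\ge 50L$, one needs $|P^{u,T}(A_1\cap A_2)-P^{u,T}(A_1)P^{u,T}(A_2)|\le e^{-cL}$. This is \emph{not} automatic for a Poisson process of random walk trajectories of unbounded length; the paper proves it by estimating the probability that any trajectory starting far away reaches a given box (an argument in the spirit of Lemma~\ref{lemma13}), giving the bound (\ref{small prob from faraway}). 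Your proposal offers nothing in this direction. Second, you completely omit $\bold{S2}$: the map $u \mapsto \eta^T(u):=P^{u,T}(0\in\Gamma)$ must be continuous on $(\alpha,\beta)$. The paper devotes an entire lemma (Lemma~\ref{continuity}) to this, following Teixeira's argument: right-continuity comes from writing $1-\eta^T(u)$ as an increasing limit of analytic functions of $u$ (using inclusion-exclusion for the event $\{0\nleftrightarrow \partial B(0,r)\}$) plus monotonicity, while left-continuity uses a Poisson coupling of $\mathcal{FI}^{v,T}$ over all $v$ and the uniqueness of the infinite cluster to show $P\bigl(C^{u,T}_\infty \setminus \bigcup_{v<u} C^{v,T}_\infty\bigr)=0$. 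None of this is routine, and a bare appeal to \cite{procaccia2016quenched} fails without it. Your handling of the uniformity of thresholds in $u$ over the compact interval is reasonable in spirit, but it is subordinate to the missing verifications of $\bold{P3}$ and $\bold{S2}$.
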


We leave the proof of Corollary \ref{quenched invariance principle} in appendix.

\subsection{Open problems}
Let $d_{u}(\cdot, \cdot)$ be the chemical distance in random interlacements $\mathcal{I}^{u}$ and $\rho_{u,T} (\cdot, \cdot)$ be the chemical distance in FRI $\mathcal{FI}^{u,T}$. Given Theorem \ref{theorem1}, it is natural to ask the following questions. Note that question $(1)$ and part of $(2)$ also appear in the open problem section of \cite{procaccia2019percolation}.
\begin{enumerate}
\item In \cite{bowen2019finitary}, it has been proved that for all $u > 0$, the measure of FRI $\mathcal{FI}^{u,T}$ converges to the one of random interlacements $\mathcal{I}^{2du}$ weakly as $T$ goes to infinity. A natural question is to prove that the chemical distance in $\mathcal{FI}^{u,T}$ converges to the one in $\mathcal{I}^{2du}$ as $T \rightarrow \infty$, i.e. for all $u>0$,
$$
\lim_{T \rightarrow \infty} \lim_{||x||_2 \rightarrow \infty} \frac{\rho_{u,T} ([0],[x])}{||x||_2} = \lim_{||x||_2 \rightarrow \infty} \frac{d_{2du} ([0],[x])}{||x||_2},
$$
where $[x]$ denotes the closest vertex in the appropriate infinite cluster to $x \in \mathbb{Z}^d$.
\item Let $0 < \alpha < \beta < \infty$. For $u \in (\alpha, \beta)$, fix a large enough $T$ (depending on $\alpha$ and $\beta$). It would be interesting to prove that the function
$$
u \rightarrow \lim_{||x||_2 \rightarrow \infty} \frac{\rho_{u,T} ([0],[x])}{||x||_2}
$$
is continuous on $(\alpha, \beta)$. For Bernoulli percolation, the continuity of chemical distance has been proved in \cite{garet2017continuity}. Similarly, fix $u>0$, one may ask whether the function
$$
T \rightarrow \lim_{||x||_2 \rightarrow \infty} \frac{\rho_{u,T} ([0],[x])}{||x||_2}
$$ 
is continuous on $(T_2(u,d), \infty)$, where $T_2(u,d)$ is the same critical parameter in Theorem \ref{theorem1}.
\item Through private communications with Eviatar Procaccia, we believe that under proper scaling, the chemical distance in random interlacements $\mathcal{I}^u$ converges to $\ell_2$ distance as $u$ goes to $0$. When $u$ is small, geodesic on $\mathcal{I}^u$ consists of very long random walk trajectories, which become Brownian motions under proper scaling. Similar for FRI, we conjecture that
$$
\lim_{u \rightarrow 0} \lim_{T \rightarrow \infty} \lim_{||x||_2 \rightarrow \infty} \frac{ \rho_{u,T} ([0],[x])}{A(||x||_2)}
$$
converges to some constants, where $A(\cdot)$ is some scaling functions.
\end{enumerate}

\section{Notations and some useful facts}\label{notations}
\textbf{Some basic notations}: In the rest of this paper, we denote the $l^\infty$ distance by $|\cdot|$ and the Euclidean distance by $|\cdot|_2$. We also denote $B_x(R)=\left\lbrace z\in \mathbb{Z}^d: |z-x|\le R \right\rbrace $ with abbreviation $B_0(R)=B(R)$. For any sets $A,B\subset \mathbb{Z}^d$, $d(A,B)=\min\left\lbrace |x-y|:x\in A, y\in B \right\rbrace $. 

Let $\mathbb{L}^d$ be the set of all nearest-neighbor edges in $\mathbb{Z}^d$ (i.e. $\mathbb{L}^d=\left\lbrace \{x,y\}:|x-y|_2=1,x,y\in \mathbb{Z}^d\right\rbrace $). For $1\le i\le d$, we denote by $e_i$ the unit vector on the i-th coordinate. For finite subset $D\subset \mathbb{Z}^d$, let $\partial D=\left\lbrace x\in D:\exists z\in \mathbb{Z}^d\setminus D\ such\ that\ \{x,z\}\in \mathbb{L}^d \right\rbrace $ be its internal boundary and $diam(D)=\max\limits\left\lbrace |x-z|:x,z\in D \right\rbrace $. Without causing confusion, for connected close set $E\subset \mathbb{R}^d$, we also denote its boundary by $\partial E$ (i.e. $\partial E=\{x\in E:\forall r>0,\exists z\in \mathbb{R}^d\setminus E\ such\ that\ |z-x|<r\}$). 

\textbf{Notations about random walks}: Let $P_x$ be the law of a simple random walk starting from $x$. For any simple random walk $\left\lbrace X_i \right\rbrace_{i=0}^{\infty}$ and $A\subset \mathbb{Z}^d$, define $H_A=\min\left\lbrace k\ge 0:X_k\in A \right\rbrace $ and $\bar{H}_A=\min\left\lbrace k\ge 1:X_k\in A \right\rbrace$. In addition, we extend the subscript of random walks to $\mathbb{R}^+$: $\forall t> 0$, define $X_t:=X_{\lfloor t\rfloor}$, where $\lfloor t\rfloor=\max\left\lbrace m\in \mathbb{Z}:m\le t \right\rbrace $. Assuming $\eta=\{\eta(j):0\le j\le l\}$ is a trajectory of the random walk (note that $l$ can be finite or infinite), then we denote by $R(\eta, t)=\left\lbrace \eta(j):0\le j\le \lfloor t\rfloor\land l \right\rbrace $ for $t>0$ the range of this tragectory up to time $t$.

\textbf{Green's funtion}: We need a famous function called the Green's function. See Section 1.5 and Section 1.6 of \cite{lawler2013intersections}, the Green's function is defined as: for any $x,y\in \mathbb{Z}^d$, \begin{equation}
	G(x,y)=\sum\limits_{j=0}^{\infty}P_x(X_j=y).
\end{equation}

\textbf{Statements about constants}: In this article, we will use $c,c_1,c_2,c'...$ as local constants (``local'' means their values may vary according to contexts) and $C,C_1,C_2,C',...$ as global constants (``global'' means constants will keep their values in the whole paper).

\textbf{Stretched exponential}: We say $f(n)$ is s.e.-small if $0\le f(n)\le c_1e^{-c_2n^{c_3}}$ for any $n\ge 1$, and write $f(n)=\boldsymbol{s.e.}(n)$ (See Definition 2.1, \cite{vcerny2012internal}). Moreover, when we use $\boldsymbol{s.e.}(T)$, the corresponding constants $c_1,c_2,c_3$ only depends on $u$ and $d$. When we use $\boldsymbol{s.e.}(\cdot)$ for $N,m,...$(some parameters not rely on $T$), the corresponding constants $c_1,c_2,c_3$ can also depend on $T$ but not only $u$ and $d$.

\textbf{Two kinds of capacities}: We will use two notations about capacity, which are defined as follows: for finite subset $K\subset \mathbb{Z}^d$, $cap(K):=\sum_{x\in K}P_x(\bar{H}_K=\infty)$; for any $T>0$, $cap^{(T)}(K):=\sum_{x\in K}P_x^{(T)}(\bar{H}_K=\infty)$. One may see by definition that $cap^{(T)}(K)\ge cap(K)$. By Proposition 6.5.2 of \cite{lawler2010random}, there exists $c_1,c_2>0$ such that for any $R>0$,   \begin{equation}\label{20}
c_1R^{d-2}\le cap(B(R)) \le c_2R^{d-2}.
\end{equation}

\textbf{Decomposition of FRI}: By the thinning property of Poisson point process, $\mathcal{FI}^{u,T}$ can be discribed as the union of several independent sub-processes. I.e. 
$$\mathcal{FI}^{u,T}\overset{d}{=}\bigcup\limits_{i=1}^{k}\mathcal{FI}^{u_i,T},$$ where $\sum_{i=1}^{k}u_i=u$ and $\left\lbrace \mathcal{FI}^{u_i,T} \right\rbrace_{i=1}^{k}$ are independent.

\textbf{Independence within FRI}: For any disjoint subsets $A_1,...,A_m \subset \mathbb{Z}^d$ and events $E_1,...,E_m$, by Definition \ref{definition2}, if the event $E_i$ only depends on the paths starting from $A_i$ for all $1\le i\le m$, then $E_1,E_2,...,E_m$ are independent.

\textbf{Stochastic domination}: We need two kinds of stochastic domination: one is between random variables and the other is between random collections of sets/bonds.

In details, if $N_1$ and $N_2$ are two non-negative random variables and satisfy that for any $x\ge 0$, $P\left(N_1\ge x \right)\ge P\left(N_2\ge x \right)$, then we say $N_1$ stochastically dominates $N_2$ and denote $N_1 \succeq   N_2 $.

Meanwhile, for two site (or bond) percolation $\left\lbrace Y_j(y) \right\rbrace_{y\in V} $ (or $\left\lbrace Y_j(y) \right\rbrace_{y\in E} $), $j\in \{1,2\}$ on the same countable graph $G=(V,E)$, we denote the state space by $\Omega=\{0,1\}^{V}$ (or $\Omega=\{0,1\}^{E}$). For any $\omega_1,\omega_2\in \Omega$, we write $\omega_1\ge \omega_2$ if $\forall y\in V$ (or $y\in E$), $\omega_1(y)\ge \omega_2(y)$. We say a function $f:\Omega \to \mathbb{R}$ is increasing if $\forall \omega_1\ge \omega_2$, $f(\omega_1)\ge f(\omega_2)$. If for any increasing function $f$ on $\Omega$, $E^{Y_1}\left[f(\omega)\right]\ge E^{Y_2}\left[f(\omega)\right]$, then we say $\left\lbrace Y_1(y) \right\rbrace_{y\in V}$ (or $\left\lbrace Y_1(y) \right\rbrace_{y\in E}$) stochastically dominates $\left\lbrace Y_2(y) \right\rbrace_{y\in V} $ (or $\left\lbrace Y_2(y) \right\rbrace_{y\in E}$). By Lemma 1.0 of \cite{liggett1997domination}, there exists $\left\lbrace \hat{Y}_j(y) \right\rbrace_{y\in V} $ (or $\left\lbrace \hat{Y}_j(y) \right\rbrace_{y\in E} $), $j\in \{1,2\}$ defined on the same probability space such that for $j\in \{1,2\}$, $\left\lbrace \hat{Y}_j(y) \right\rbrace_{y\in V}\overset{d}{=}\left\lbrace Y_j(y) \right\rbrace_{y\in V} $ (or $\left\lbrace \hat{Y}_j(y) \right\rbrace_{y\in E}\overset{d}{=}\left\lbrace Y_j(y) \right\rbrace_{y\in E} $) and $\forall y\in V$ (or $y\in E$), $\hat{Y}_1(y)\ge \hat{Y}_2(y)$ almost surely.

\textbf{Set of ``lucky" paths} (a finite path is ``lucky" if it still goes forward at least $T$ steps after hitting the given set.): Define a mapping $\pi_A$: for any finite path $s=\left(s_0,s_1,...,s_k \right)$, if $s\cap A\neq \emptyset$, let $m=min\left\lbrace t\le k: s_t\in A \right\rbrace $ and   $\pi_A(s)=\left(s_m,s_{m+1},...,s_k \right) $; otherwise $\pi_A(s)=\emptyset$. We write $length(s)=k$.
 
For any subset $A\subset B_x(1.5T^{0.5})$ and a point measure $\mathcal{K}$ composed of nearest-neighbor paths, let $S_x(\mathcal{K},A)$ be the set of paths $\eta$ in $\mathcal{K}$, which start from $B_x(8T^{0.5})\setminus B_x(6T^{0.5})$ with $length(\pi_A(\eta))\ge T $. Especially, let $S_x(\mathcal{K})$ be the set of all the paths starting from $B_x(8T^{0.5})\setminus B_x(6T^{0.5})$ in $\mathcal{K}$ (note that there is no length requirement for paths in $S_x(\mathcal{K})$). We also denote that $\bar{S}_x(\mathcal{K},A)=\left\lbrace \pi_A(\eta): \eta\in S_x(\mathcal{K},A) \right\rbrace $.
\begin{remark}
	In this article, the notation $\sum_{i=s}^{t}$ has the same meaning as $\sum_{i=\lfloor s\rfloor }^{\lfloor t\rfloor }$.
\end{remark}
	\begin{lemma}\label{lemma1}
		There exists $c(u,d)>0$ such that for any $A\subset B(1.5T^{0.5})$ and $y\in B(8T^{0.5})\setminus A$, \begin{equation}\label{4}
		P_y(H_A< T)\ge cT^{\frac{2-d}{2}}cap(A).
		\end{equation}
	\end{lemma}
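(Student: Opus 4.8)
The plan is to bound $P_y(H_A<T)$ from below by a \emph{truncated}, equilibrium‑weighted Green's function, which reduces everything to the local central limit theorem and a classical identity of potential theory. Recall $\mathrm{cap}(A)=\sum_{x\in A}e_A(x)$ with $e_A(x):=P_x(\bar H_A=\infty)$ for $x\in A$ (and $e_A\equiv 0$ off $A$), and recall the standard identity $P_z(H_A<\infty)=\sum_{x\in A}G(z,x)e_A(x)$ for every $z$ (see e.g.\ \cite{lawler2013intersections}); in particular $\sum_{x\in A}G(z,x)e_A(x)=1$ for $z\in A$, of which I use only the inequality ``$\le 1$''. One may assume $T\ge T_0(d)$; for bounded $T$ the estimate is not needed.

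First I would prove the reduction: for every integer $n\ge 0$,
\[
\sum_{x\in A}e_A(x)\sum_{k=0}^{n}P_y(X_k=x)\ \le\ P_y(H_A\le n).
\]
The left side equals $E_y\big[\sum_{k=0}^{n}e_A(X_k)\big]$, which vanishes on $\{H_A>n\}$; on $\{H_A\le n\}$ the strong Markov property at $H_A$ bounds the conditional expectation of $\sum_{k=H_A}^{n}e_A(X_k)$ by $E_{X_{H_A}}\big[\sum_{m\ge 0}e_A(X_m)\big]=\sum_{x}G(X_{H_A},x)e_A(x)=1$, since $X_{H_A}\in A$. Taking $n=\lfloor T\rfloor-1$, so that $\{H_A\le n\}\subset\{H_A<T\}$, it then remains only to bound the quantity $\sum_{x\in A}e_A(x)\,g_n(y,x)$ from below, where $g_n(y,x):=\sum_{k=0}^{n}P_y(X_k=x)$.

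Here lies the one genuine idea, and the step where I expect a first attempt to fail: the weight must be $e_A$. Weighting by $e_A$ is exactly what collapses the ``return'' factor $\sum_{x}G(X_{H_A},x)e_A(x)$ to $1$. The naive first‑entrance decomposition with uniform weights leaves instead the factor $\max_{x'\in A}\sum_{x\in A}G(x',x)$, which for a set $A$ spread over scale $\sqrt T$ is of order $T$ and would cost a full power of $T$ in the final bound (one can check that $\mathrm{cap}(A)\cdot\max_{x'}\sum_x G(x',x)\ge|A|$ can be much larger than $|A|$, so the weak bound is genuinely lossy, not just wasteful).

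For the remaining lower bound, note that since $A\subset B(1.5T^{1/2})$ and $y\in B(8T^{1/2})$ we have $|x-y|_2\le C_d\,T^{1/2}$ for all $x\in A$. By the local CLT there are $k_0(d),c_1(d)>0$ with $P_y(X_k=x)\ge c_1k^{-d/2}e^{-d|x-y|_2^2/(2k)}$ whenever $k\ge k_0$ and $k\equiv|x-y|_1\ (\mathrm{mod}\ 2)$. Restricting to the $\gtrsim T$ indices $k\in[T/2,\lfloor T\rfloor-1]$ of the correct parity (using $T\ge 2k_0$), each contributes at least $c_1 e^{-C_d'}T^{-d/2}$, so $g_{\lfloor T\rfloor-1}(y,x)\ge c_2(d)\,T^{(2-d)/2}$ for every $x\in A$. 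Summing against $e_A$ gives $\sum_{x\in A}e_A(x)\,g_{\lfloor T\rfloor-1}(y,x)\ge c_2(d)\,T^{(2-d)/2}\,\mathrm{cap}(A)$, and together with the reduction this yields $P_y(H_A<T)\ge P_y(H_A\le\lfloor T\rfloor-1)\ge c_2(d)\,T^{(2-d)/2}\,\mathrm{cap}(A)$, which is the claim. Everything outside the weight choice is the standard local CLT plus classical transient potential theory.
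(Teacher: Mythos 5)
Your proof is correct and follows essentially the same route as the paper: both arrive at the intermediate bound $P_y(H_A<T)\ge\sum_{x\in A}P_x(\bar{H}_A=\infty)\sum_{k<T}P_y(X_k=x)$ and then apply the local CLT on scale $\sqrt{T}$ to bound the inner sum below by $cT^{(2-d)/2}$. The only difference is how this bound is derived: the paper uses a last-exit decomposition at the last visit to $A$ before time $T$, which produces the equilibrium weight $P_z(\bar{H}_A=\infty)$ directly, whereas you use a first-entry decomposition together with the strong Markov property and the identity $\sum_{x}G(z,x)P_x(\bar{H}_A=\infty)=P_z(H_A<\infty)=1$ for $z\in A$; these are dual formulations of the same classical potential-theoretic fact, so the proofs are substantively identical.
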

	\begin{proof}
		For any $y\in B(8T^{0.5})\setminus A$, let $L_A$ be the last time that a simple random walk starting from $y$ hits $A$, then 
		\begin{equation}\label{5}
		\begin{split}
		P_y(H_A< T)\ge& P_y(L_A <T)\\
		\ge &\sum_{z\in A}\sum_{m=1}^{T-1}P_y\left(X_m=z, \forall n>m, X_n \notin A \right) \\
		=&\sum_{z\in A}\left( \sum_{m=1}^{T-1}P_y\left(X_m=z \right) \right) *P_z\left(\bar{H}_A=\infty \right)
		\end{split}
		\end{equation}
		Define that $p_m(x)=P_0(X_m=x)$ and $\bar{p}_m(x)=2\left( \frac{d}{2\pi m}\right)^{\frac{d}{2}}e^{-\frac{d|x|_2^2}{2m}}\cdot1_{\left\lbrace p_m(x)>0 \right\rbrace }$. Let $E(m,x)=|p_m(x)-\bar{p}_m(x)|$. By Theorem 1.2.1, \cite{lawler2013intersections},  \begin{equation}\label{2.3}
		|E(m,x)|\le cm^{-\frac{d+2}{2}}.
		\end{equation}
		Thus we have
		\begin{equation}\label{E}
			\sum_{m=\frac{T}{2}}^{T-1}|E(m,x)|\le c\sum_{m=\frac{T}{2}}^{T-1}m^{-\frac{d+2}{2}}\le cT^{-\frac{d}{2}}.
		\end{equation}
		If $\sum_{i=1}^d y^{(i)}-z^{(i)}$ is even, since $|y-z|_2\le 10\sqrt{d}T^{0.5}$, 
		\begin{equation}\label{pb}
			\begin{split}
			\sum_{m=\frac{T}{2}}^{T-1}\bar{p}_m(y-z)=&\sum_{m=\frac{T}{2}}^{T-1}2\left( \frac{d}{2\pi m}\right)^{\frac{d}{2}}e^{-\frac{d|y-z|_2^2}{2m}}1_{\left\lbrace m\ is\ even\right\rbrace }\\
			=&\sum_{m=\frac{T}{4}}^{\frac{T-1}{2}}2\left( \frac{d}{4\pi m}\right)^{\frac{d}{2}}e^{-\frac{d|y-z|_2^2}{4m}}\\
			\ge &e^{-\frac{100d^2T}{4*\lfloor\frac{T}{4}\rfloor}}*\sum_{m=\frac{T}{4}}^{\frac{T-1}{2}}2\left( \frac{d}{4\pi m}\right)^{\frac{d}{2}}\\
			\ge &e^{-\frac{100d^2T}{4*\lfloor\frac{T}{4}\rfloor}}*\int_{\frac{T}{4}}^{\frac{T-1}{2}}2\left( \frac{d}{4\pi t}\right)^{\frac{d}{2}}dt \ge cT^{\frac{2-d}{2}}.
			\end{split}
		\end{equation}
		If $\sum_{i=1}^d y^{(i)}-z^{(i)}$ is odd, without loss of generality, assume $|y-z+e_1|_2\ge |y-z|_2$ and use (\ref{pb}),
		\begin{equation}\label{pbo}
			\sum_{m=\frac{T}{2}}^{T-1}\bar{p}_m(y-z)\ge \sum_{m=\frac{T}{2}}^{T-1}\bar{p}_m(y-z+e_1)\ge cT^{\frac{2-d}{2}}.
		\end{equation}
		By (\ref{E}), (\ref{pb}) and (\ref{pbo}),  
		\begin{equation}\label{py}
			 \sum_{m=1}^{T-1}P_y\left(X_m=z \right)\ge  \sum_{m=\frac{T}{2}}^{T-1}P_y\left(X_m=z \right)\ge cT^{\frac{2-d}{2}}-O(T^{-\frac{d}{2}}).
		\end{equation}
		Combining (\ref{5}) and (\ref{py}), we conclude the proof of (\ref{4}).
	\end{proof}
In this paper, we will primarily focus on $S_x(\mathcal{FI}^{u,T},A)$. The following lemma guarantees that there are sufficiently many ``lucky paths'' traversing a given subset. This will turn out crucial in adapting the estimates in \cite{vcerny2012internal,rath2011transience} to FRI.
\begin{lemma}\label{lemma2}There exists $c=c(d,u)>0$ such that for any $A\subset B_x(1.5T^{0.5})$,  
	\begin{equation}
	|S_x(\mathcal{FI}^{u,T},A)|\succeq Pois(c*cap(A)).
	\end{equation}
\end{lemma}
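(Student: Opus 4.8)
The plan is to realize $S_x(\mathcal{FI}^{u,T},A)$ as the points of a Poisson point process obtained by thinning, and then bound the resulting intensity from below using Lemma~\ref{lemma1}. First I would use the decomposition of FRI: it suffices to produce the claimed stochastic lower bound for $S_x(\mathcal{FI}^{u',T},A)$ for some small fixed $u'=u'(u,d)\le u$, since adding back the independent process $\mathcal{FI}^{u-u',T}$ can only increase $|S_x(\cdot,A)|$. By Definition~\ref{definition1}, $\mathcal{FI}^{u',T}$ is a Poisson point process on $W^{[0,\infty)}$ with intensity $u'v^{(T)}=u'\sum_z\frac{2d}{T+1}P_z^{(T)}$. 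A path $\eta$ is counted in $S_x(\mathcal{FI}^{u',T},A)$ precisely when it starts in the annulus $\mathcal{A}:=B_x(8T^{0.5})\setminus B_x(6T^{0.5})$ and satisfies $\mathrm{length}(\pi_A(\eta))\ge T$, i.e. after first hitting $A$ it still survives at least $T$ further steps. Restricting the intensity measure to this set of paths is exactly a thinning of the Poisson process, so $|S_x(\mathcal{FI}^{u',T},A)|$ is a Poisson random variable whose parameter $\lambda$ is the total mass of $u'v^{(T)}$ on those paths.

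Next I would estimate $\lambda$ from below. Only starting points $z\in\mathcal{A}$ contribute, each with weight $u'\frac{2d}{T+1}$, so
\begin{equation}
\lambda \;=\; u'\frac{2d}{T+1}\sum_{z\in\mathcal{A}} P_z^{(T)}\big(\mathrm{length}(\pi_A(\cdot))\ge T\big).
\end{equation}
For the geometrically killed walk $P_z^{(T)}$, the event $\{\mathrm{length}(\pi_A)\ge T\}$ contains the event that the underlying simple random walk hits $A$ before time $T$ \emph{and} the walk is not killed during the first $2T$ steps (say); hitting $A$ before time $T$ guarantees at least $T$ more steps remain before time $2T$. The survival-to-time-$2T$ probability of a geometric$(\tfrac1{T+1})$ killing is $\big(\tfrac{T}{T+1}\big)^{2T}\ge c>0$ uniformly in $T$, and it is independent of the trajectory of the underlying simple random walk. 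Hence
\begin{equation}
P_z^{(T)}\big(\mathrm{length}(\pi_A(\cdot))\ge T\big)\;\ge\; c\cdot P_z\big(H_A<T\big).
\end{equation}
Applying Lemma~\ref{lemma1} (valid since $\mathcal{A}\subset B_x(8T^{0.5})$ and $A\subset B_x(1.5T^{0.5})$, after translating so $x=0$) gives $P_z(H_A<T)\ge c\,T^{(2-d)/2}\mathrm{cap}(A)$ for each of the $\asymp T^{d/2}$ sites $z\in\mathcal{A}$. Therefore
\begin{equation}
\lambda \;\ge\; u'\frac{2d}{T+1}\cdot c\,T^{d/2}\cdot c\,T^{(2-d)/2}\,\mathrm{cap}(A)\;\ge\; c(u,d)\,\mathrm{cap}(A),
\end{equation}
the powers of $T$ cancelling up to the $\frac1{T+1}\cdot T = O(1)$ factor. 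Since $|S_x(\mathcal{FI}^{u,T},A)|\succeq|S_x(\mathcal{FI}^{u',T},A)|\sim\mathrm{Pois}(\lambda)$ and $\mathrm{Pois}(\lambda)\succeq\mathrm{Pois}(c\cdot\mathrm{cap}(A))$ for $\lambda\ge c\cdot\mathrm{cap}(A)$, the lemma follows.

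The only genuinely delicate point is making the Poisson/thinning identification rigorous and uniform: one must check that the set of ``lucky'' paths starting in the annulus is a measurable subset of $W^{[0,\infty)}$ of finite $v^{(T)}$-mass (finiteness is automatic from the upper bound $\lambda\le u'\frac{2d}{T+1}\cdot|\mathcal{A}|<\infty$), so that the restricted process is a genuine Poisson process and its cardinality is $\mathrm{Pois}(\lambda)$. A minor bookkeeping issue is that in the definition of $\pi_A$ and of ``lucky'' one works with the \emph{first} hitting time of $A$ along the path and with path length measured in steps; I would spell out that ``hits $A$ before time $T$ and survives to time $2T$'' indeed forces $\mathrm{length}(\pi_A(\eta))\ge T$, which is immediate. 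Everything else is the routine heat-kernel input already packaged in Lemma~\ref{lemma1}, and the uniformity in $T$ comes solely from the bounded survival factor $(T/(T+1))^{2T}$.
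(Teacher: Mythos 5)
Your proof is correct and follows essentially the same approach as the paper: realize $|S_x(\mathcal{FI}^{u,T},A)|$ as a Poisson thinning of the paths started in the annulus $B_x(8T^{0.5})\setminus B_x(6T^{0.5})$, lower-bound the retention probability by $\{H_A<T\}\cap\{\text{survives to time }2T\}$ with the geometric survival factor $(T/(T+1))^{2T}=\Theta(1)$, and feed in Lemma~\ref{lemma1}, with the $T$-powers cancelling exactly as in the paper's chain $\frac{1}{T+1}\cdot T^{d/2}\cdot T^{(2-d)/2}=\Theta(1)$. The initial reduction to a smaller intensity $u'$ is unnecessary (the paper just works with $u$ directly) but harmless.
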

\begin{proof}
	By Definition \ref{definition2}, $|S_x(\mathcal{FI}^{u,T})|\sim Pois\left( \frac{2du}{T+1}*O(T^{\frac{d}{2}})\right) $. For any trajectory $\eta \in S_x(\mathcal{FI}^{u,T})$, if $\eta(0)$ is given, we have
	\begin{equation}\label{2.8}
		P_{\eta(0)}\left(length(\pi_A(\eta)\ge T) \right)\ge P_{\eta(0)}\left(length(\eta)\ge 2T, H_A< T \right)=\left(1-\frac{1}{T+1} \right)^{2T+1}*P_{\eta(0)}\left(H_A< T \right).
	\end{equation}
	Combine Lemma \ref{lemma1} and (\ref{2.8}),
	\begin{equation}
		P_{\eta(0)}\left(\eta \in S_x(\mathcal{FI}^{u,T},A) \right)\ge c*T^{\frac{2-d}{2}}cap(A).
	\end{equation}
	Since all the paths in $S_x(\mathcal{FI}^{u,T})$ are independent given their starting points, \begin{equation}
		|S_x(\mathcal{FI}^{u,T},A)|\succeq Pois(c*\frac{1}{T+1}*T^{\frac{d}{2}}*T^{\frac{2-d}{2}}cap(A))\succeq Pois(c*cap(A)).
	\end{equation}
\end{proof}

\textbf{Connection between two sets}: For any sets $A,B\subset \mathbb{Z}^d$, there are two connectivity relationships between them: the first is ``connected by successive vertices'' and the seccond is ``connected by successive edges''. If $D\subset \mathbb{Z}^d$ is a point set such that $\exists$ a finite sequence $(x_0,x_1,...,x_n)$ in $D$ satisfying $x_0\in A$, $x_n\in B$ and $\forall 0\le i\le n-1$, $\{x_i, x_{i+1}\}\in \mathbb{L}^d$, then we say $A$ and $B$ are connected by $D$ and denote it by $A\xleftrightarrow{D} B$.  If an edge set $E\subset \mathbb{L}^d$ satisfying that $\exists$ a finite sequence $(x_0,x_1,...,x_n)$ such that  $x_0\in A$, $x_n\in B$ and $\forall 0\le i\le n-1$, $\{x_i,x_{i+1}\}\in E$, without causing confusion in symbols, we also say $A$ and $B$ are connected by $E$ and denote it by $A\xleftrightarrow{E} B$. For a site percolation $\left\lbrace Z(x) \right\rbrace_{x\in \mathbb{Z}^d} $ and $D\subset \mathbb{Z}^d$, we denote that $A \xleftrightarrow{Z} B=A\xleftrightarrow{\left\lbrace x\in \mathbb{Z}^d:Z(x)=1 \right\rbrace }B$ and $A \xleftrightarrow[D]{Z} B=A\xleftrightarrow{\left\lbrace x\in \mathbb{Z}^d:Z(x)=1 \right\rbrace\cap D }B$. For a bond percolation $\left\lbrace S(e) \right\rbrace_{e\in \mathbb{L}^d}$, we define $A \xleftrightarrow{S} B=A\xleftrightarrow{\left\lbrace e\in \mathbb{L}^d:S(e)=1 \right\rbrace }B$ and $A\xleftrightarrow[D]{S}B=\left\lbrace \exists\ (x_0,x_1,...,x_n)\ in\ D\ satisfying\ x_0\in A, x_n\in B\ and\ \forall 0\le i\le n-1, S(\{x_i,x_{i+1}\})=1\right\rbrace$.

Especially, $\mathcal{FI}^{u,T} $ can be considered as a bond percolation on $\mathbb{L}^d$ (i.e. an edge $e\in \mathbb{L}^d$ is open if $\exists$ a trajectory $\eta$ in $\mathcal{FI}^{u,T}$ such that $e$ is a part of $\eta$). For simplicity, we denote $A\xleftrightarrow{} B=A\xleftrightarrow{\mathcal{FI}^{u,T}} B$ and $A\xleftrightarrow[D]{} B=A\xleftrightarrow[D]{\mathcal{FI}^{u,T}} B$.

\textbf{The largest connected cluster}: For $A,B\subset \mathbb{Z}^d$ and $E\subset \mathbb{L}^d$, let $\mathcal{C}_A^B=\left\lbrace z\in \mathbb{Z}^d: z\xleftrightarrow{B} A \right\rbrace $ and $\mathcal{C}_A^E=\left\lbrace z\in \mathbb{Z}^d: z\xleftrightarrow{E} A \right\rbrace $. For $D\subset \mathbb{Z}^d$, site percolation $\left\lbrace Z(x) \right\rbrace_{x\in \mathbb{Z}^d}$ and bond percolation $\left\lbrace S(e) \right\rbrace_{e\in \mathbb{L}^d}$, we denote $\mathcal{C}_A^Z=\mathcal{C}_A^{\left\lbrace x: Z(x)=1 \right\rbrace }$, $\mathcal{C}_A^{Z}(D)=\mathcal{C}_A^{\left\lbrace x: Z(x)=1 \right\rbrace\cap D }$, $\mathcal{C}_A^S=\mathcal{C}_A^{\left\lbrace e: S(e)=1 \right\rbrace }$ and $\mathcal{C}_A^{S}(D)=\left\lbrace z\in \mathbb{Z}^d: z\xleftrightarrow[D]{\left\lbrace e: S(e)=1 \right\rbrace }A \right\rbrace $. We also denote $\mathcal{C}_A=\mathcal{C}_A^{\mathcal{FI}^{u,T}}$ and $\mathcal{C}_A(D)=\mathcal{C}_A^{\mathcal{FI}^{u,T}}(D)$.

\textbf{Chemical distance}: For any sets $A, B, D\subset \mathbb{Z}^d$, if $A\xleftrightarrow{D} B$, we define that  $\rho_D(A,B)=\min\left\lbrace n\ge 0: \exists \{x_0,x_1,...,x_n\}\subset D\ such\ that\ A\xleftrightarrow{\{x_0,x_1,...,x_n\}} B \right\rbrace $; if $A \stackrel{D}\nleftrightarrow B$, set $\rho_D(A,B)=\infty$. For $E\subset \mathbb{L}^d$, if $A\xleftrightarrow{E} B$, denote $\rho_E(A,B)=\min\limits_{n\ge 0}\left\lbrace n: \exists \{e_1,...,e_n\}\subset E\ such\ that\ A\xleftrightarrow{\{e_1,...,e_n\}} B \right\rbrace $; if $A \stackrel{E}\nleftrightarrow B$, set $\rho_E(A,B)=\infty$. Similarly, for site percolation $\left\lbrace Z(x) \right\rbrace_{x\in \mathbb{Z}^d}$ and bond percolation $\left\lbrace S(e) \right\rbrace_{e\in \mathbb{L}^d}$, define $\rho_{Z}(A,B):=\rho_{\left\lbrace x\in\mathbb{Z}^d:Z(x)=1  \right\rbrace }(A,B)$ and $\rho_{S}(A,B):=\rho_{\left\lbrace e\in\mathbb{L}^d:S(E)=1\right\rbrace }(A,B)$. 
For simplicity, we also denote $\rho(A,B)=\rho_{\mathcal{FI}^{u,T}}(A,B)$.

\textbf{Large deviation bound for Poisson distribution}: By equation 2.11, \cite{drewitz2014introduction}, if $X\sim Pois(\lambda)$, then 
\begin{equation}\label{19}
	P\left(\frac{\lambda}{2} \le X \le 2\lambda\right) \ge 1-e^{-\frac{\lambda}{10}}.
\end{equation}

\textbf{Uniform bound trick}: Consider two independent FRI's $\mathcal{FI}_1^{u_1,T_1}$, $\mathcal{FI}_2^{u_2,T_2}$ and a finite set $D\subset \mathbb{Z}^d$. We define three sets: $\mathcal{W}=\left\lbrace w=\sum_{i=1}^{\infty}\delta_{\eta_i}: \eta_i \in W^{\left[ 0,\infty\right) } \right\rbrace $, $$W_{|D}=\left\lbrace w=\sum_{i=1}^{M}\delta_{\eta_i}:0\le M<\infty, \eta_i \in W^{\left[ 0,\infty\right) }\ and\ \eta_i(0)\in D \right\rbrace $$ and $$W_{\cap D}=\left\lbrace w=\sum_{i=1}^{M}\delta_{\eta_i}:0\le M<\infty, \eta_i \in W^{\left[ 0,\infty\right) }\ and\ \eta_i\cap D\neq \emptyset \right\rbrace. $$ 
Obviously, $W_{|D}$ and $W_{\cap D}$ are both countable subsets of $\mathcal{W}$. Then we also define two mappings:
$$ \pi_{|D}:\mathcal{W}\to W_{|D},\ \sum_{i=1}^{\infty}\delta_{\eta_i}\to \left\{
\begin{array}{rcl}
&\sum_{i=1}^{\infty}\delta_{\eta_i} \cdot \mathbbm{1}_{\left\lbrace \eta_i(0)\in D\right\rbrace }     \ \       & |\{\eta_i:\eta_i(0)\in D\}|<\infty;\\
&0     \ \        & |\{\eta_i:\eta_i(0)\in D\}|=\infty
\end{array} \right. $$
and 
$$ \pi_{\cap D}:\mathcal{W}\to W_{\cap D},\ \sum_{i=1}^{\infty}\delta_{\eta_i}\to \left\{
\begin{array}{rcl}
&\sum_{i=1}^{\infty}\delta_{\eta_i} \cdot \mathbbm{1}_{\left\lbrace \eta_i\cap D\neq\emptyset\right\rbrace }     \ \       & |\{\eta_i:\eta_i\cap D\neq\emptyset\}|<\infty;\\
&0     \ \        & |\{\eta_i:\eta_i\cap D\neq\emptyset\}|=\infty.
\end{array} \right. $$
According to Section 4.2 of \cite{drewitz2014introduction}, there is a $\sigma-$field $\mathcal{A}$ on $\mathcal{W}$ produced by random variables $\left\lbrace \mu(A):= \sum_{i=1}^{\infty}1_{\{\eta_i\in A\}}:A\subset W^{\left[ 0,\infty\right)} \right\rbrace$. In deed, for $\hat{w}=\sum_{i=1}^{M}\delta_{\eta_i}=\sum_{j=1}^{M'}k_j\delta_{\eta_{n_j}}\in W_{|D}$ (where $k_j\in \mathbb{N}^+$ and $\forall k\neq l$, $\eta_{n_k}\neq \eta_{n_l}$), $$\pi_{|D}^{-1}(\hat{w})=\bigcap\limits_{j=1}^{M'}\left\lbrace\mu(\{\eta_{n_j}\})=k_j \right\rbrace \cap \bigcap\limits_{\eta\in W^{\left[ 0,\infty\right)},\hat{w}(\eta)=0,\eta(0)\in D }\left\lbrace \mu(\{\eta\})=0 \right\rbrace  \in \mathcal{A}.$$
Thus $\pi_{|D}$ is measurable. Similarly, $\pi_{\cap D}$ is also measurable.  

For any events $A\subset W_{|D}$ and $B\in \mathcal{A}\times \mathcal{A}$. If there exists $c>0$ such that for any trajectory $\hat{w} \in A$, $P^{u_1,T_1}\times P^{u_2,T_2}\left( B\bigg|\left( \pi_{|D}^{-1}(\hat{w})\right) \times \mathcal{W}\right) \le c$, since $W_{|D}$ is countable,
\begin{equation}
	\begin{split}
	&P^{u_1,T_1}\times P^{u_2,T_2}\left[\left(  \pi_{|D}^{-1}(A) \times \mathcal{W}\right) \cap B\right] \\
	=&\sum\limits_{\hat{w}  \in A}P^{u_1,T_1}\times P^{u_2,T_2}\left( B\bigg|  \pi_{|D}^{-1}(\hat{w}) \times \mathcal{W}\right) P^{u_1,T_1}(\pi_{|D}^{-1}(\hat{w}))\\
	\le &c*\sum\limits_{\hat{w} \in A}P^{u_1,T_1}(\pi_{|D}^{-1}(\hat{w}))=c*P^{u_1,T_1}(\pi_{|D}^{-1}(A)).
	\end{split}
\end{equation}
Equivalently, if $P^{u_1,T_1}\times P^{u_2,T_2}\left( B\bigg|\left( \pi_{|D}^{-1}(\hat{w})\right) \times \mathcal{W}\right) \ge c$ for any $\hat{w}\in A$, then
 $$P^{u_1,T_1}\times P^{u_2,T_2}\left[\left(  \pi_{|D}^{-1}(A) \times \mathcal{W}\right) \cap B\right] \ge c*P^{u_1,T_1}(\pi_{|D}^{-1}(A)).$$
Similarly, let $A\subset W_{\cap D}$ and $B\in \mathcal{A}\times \mathcal{A}$, if $P^{u_1,T_1}\times P^{u_2,T_2}\left( B\bigg|\left( \pi_{\cap D}^{-1}(\widetilde{w})\right) \times \mathcal{W}\right) \le c (\ge c)$ for any $\widetilde{w}\in A $, then we also have: $$P^{u_1,T_1}\times P^{u_2,T_2}\left[\left(  \pi_{\cap D}^{-1}(A) \times \mathcal{W}\right) \cap B\right] \le c*P^{u_1,T_1}(\pi_{\cap D}^{-1}(A))\left(\ge c*P^{u_1,T_1}(\pi_{\cap D}^{-1}(A)) \right). $$
We call these tricks \textbf{uniform bound trick} and we will use them many times in our proof.
\section{Infinite sub-cluster $\bar{\Gamma}$ with good chemical distance}\label{section3}
	In this section, we want to construct an infinite connected subset of $\mathcal{FI}^{u,T}$ with good chemical distance i.e. the chemical distance on such sub-cluster has the same order as $l^{\infty}$ distance. The technique used here to construct $\bar{\Gamma}$ is inspired by \cite{vcerny2012internal,rath2011transience}.
	
	Let $a,b, \epsilon>0$ be small enough constants (only depend on $u,d$) and $n=\lfloor bT^{0.5}\rfloor$. Assume $\mathcal{FI}^{u,T}= \mathcal{FI}_1^{\frac{1}{3}u,T} \cup \mathcal{FI}_2^{\frac{1}{3}u,T} \cup \mathcal{FI}_3^{\frac{1}{3}u,T}$ and $\mathcal{FI}_2^{\frac{1}{3}u,T}=\bigcup_{k=1}^{d-2}\mathcal{FI}_{2,k}^{\frac{u}{3(d-2)},T}$, where $\mathcal{FI}_1^{\frac{1}{3}u,T}$, $\mathcal{FI}_{2,1}^{\frac{u}{3(d-2)},T},...,\mathcal{FI}_{2,d-2}^{\frac{u}{3(d-2)},T} , \mathcal{FI}_3^{\frac{1}{3}u,T}$ are independent. To simplify our symbols, for $1\le k\le d-2$, we define
	$S_x^{(2,k)}(\cdot)=S_x(\mathcal{FI}_{2,k}^{\frac{u}{3(d-2)},T},\cdot)$, $\bar{S}_x^{(2,k)}(\cdot)=\bar{S}_x(\mathcal{FI}_{2,k}^{\frac{u}{3(d-2)},T},\cdot)$ and  $S_x^{(2,k)}=S_x(\mathcal{FI}_{2,k}^{\frac{u}{3(d-2)},T})$. For $i\in\{1,2,3\}$, we denote   $S_x^{(i)}(\cdot)=S_x(\mathcal{FI}_i^{\frac{1}{3}u,T},\cdot)$,  $\bar{S}_x^{(i)}(\cdot)=\bar{S}_x(\mathcal{FI}_i^{\frac{1}{3}u,T},\cdot)$ and   $S_x^{(i)}=S_x(\mathcal{FI}_i^{\frac{1}{3}u,T})$. 
	\subsection{Good line segments}
	The good line segments are the building blocks of our infinite sub-cluster $\bar{\Gamma}$, and here is the strategy of constructing a good line segment.
	
We will use the technique introduced in Section 6 of \cite{vcerny2012internal}. In \cite{vcerny2012internal}, a  cluster is constructed by connecting all paths hitting a set $G_a^{(n)}=\bigcup\limits_{k=0}^nB(ke_1,n^a)$ in the random interlacements. By the definition of random interlacements, all these paths are infinite. In order to adopt their approach, we consider all paths in $\bar{S}^{(1)}(G_a^{(x,i)})$ ($G_a^{(x,i)}$ is a set similar to $G_a^{(n)}$) and extend them to be trajectories of simple random walks starting from $\partial G_a^{(x,i)}$. We will claim that the cluster produced by these extended paths satisfies some properties and leave the proof in our appendix. Finally we show that with high probability our desired line segment is unchanged after the extending.

Similar to \cite{vcerny2012internal}, let $G_a^{(x,i)}=\bigcup\limits_{j=-n}^{n}B_{x+j \boldsymbol{e}_i}(n^a)$ and for integer $k\in [-n^{1-a}-1,n^{1-a}+1]$, define $U_k^{(x,i)}=B_{x+[kn^a]e_i}(n^a)$ and  $\bar{S}_x^{(1)}(G_a^{(x,i)}, k)=\left\lbrace \eta \in \bar{S}_x^{(1)}(G_a^{(x,i)}):\eta(0)\in U_k^{(x,i)}  \right\rbrace $. In words, $\bar{S}_x^{(1)}(G_a^{(x,i)}, k)$ is a subset of $\bar{S}_x^{(1)}(G_a^{(x,i)})$, composed of paths starting from $U_k^{(x,i)}\cap \partial G_a^{(x,i)}$.
 
 Enumerate $\bar{S}_x^{(1)}(G_a^{(x,i)})$ by $X^{(k)}$, $1\le k\le |S_x^{(1)}(G_a^{(x,i)})|$. We define:
\begin{itemize}
	\item Let $L^{(k)}$ be the length of $ X^{(k)}$ for all $k$. Define  $$\hat{X}^{(k)}(i)=X^{(k)}(i \land L^{(k)})+\mathbbm{1}_{\left\lbrace i > L^{(k)}\right\rbrace }\cdot Y^{(k,x)}(i-L^{(k)}), $$
	where $\left\lbrace Y^{(k,x)}\right\rbrace_{x\in \mathbb{Z}^d,k\in \mathbb{N}^+} $ are independent simple random walks starting from $0$ and independent to $\mathcal{FI}^{u,T}$. Obviously, $\hat{X}^{(k)}$ has the same distribution as a simple random walk starting from $X^{(k)}(0)$.
	\item $j_k^{x,i}=\inf\left\lbrace j\ge 0: \hat{X}^{(k)}_{n^{2(a+\epsilon)}+(j-1)n^{2a}+i}\notin G_a^{(x,i)}, \forall i=0,1,...,n^{2a} \right\rbrace $;
	\item $\hat{t}^{x,i}_k=n^{2(a+\epsilon)}+j_k^{x,i}*n^{2a}$;
	\item $\hat{\mathcal{I}}_{x,i}=\bigcup_{k=1}^{|S_x^{(1)}(G_a^{(x,i)})|}R_k(\hat{t}_k^{x,i})$, where $R_k(t):=R(\hat{X}^{(k)},t)$ (recall $R(\cdot,\cdot)$ in Section \ref{notations});
	\item $ \bar{\mathcal{I}}_{x,i}=\bigcup_{k=1}^{|S_x^{(1)}(G_a^{(x,i)})|}R_k(\hat{t}_k^{x,i}\land L^{(k)})$.  
	\item $\hat{\rho}_{x,i}$ is the chemical distance on $\hat{\mathcal{I}}_{x,i}$ and $\bar{\rho}_{x,i}$ is the chemical distance on $\bar{\mathcal{I}}_{x,i}$.
	\item $l_{x,i}=\left\lbrace x+k*e_i: k\in [-n,n]\cap\mathbb{Z}^d \right\rbrace $; 
	\item For any $y\in l_{x,i}$, let $\hat{\zeta}_1^{x,i}(y)=\inf\left\lbrace j\ge 1: y+j*e_i\in \hat{\mathcal{I}}_{x,i} \right\rbrace $ and $\hat{\varphi}_1^{x,i}(y)=y+\hat{\zeta}_1^{x,i}(y)*e_i$.
	\item For $\bar{\mathcal{I}}_{x,i}$, we define $\bar{\zeta}_1^{x,i}(y)$ and $\bar{\varphi}_1^{x,i}(y)$ similar to $\hat{\zeta}_1^{x,i}(y)$ and $\hat{\varphi}_1^{x,i}(y)$ respectively.
\end{itemize}

We claim that $\hat{\mathcal{I}}_{x,i}$ shares properties of $\hat{\mathcal{I}}$ in Section 6 of  \cite{vcerny2012internal} and we leave the proof in the appendix, since the proofs are similar.
\begin{claim}\label{claim1}
	The following events occur with probability $1-\boldsymbol{s.e.}(T)$:
	\begin{enumerate}
		\item for any integer $k\in [-n^{1-a}-1,n^{1-a}+1]$ and two paths $X^{(p)}$ and $X^{(q)}$ in $\bar{S}_x^{(1)}(G_a^{(x,i)},k)$, $\hat{\rho}_{x,i}\left(R\left(\hat{X}^{(p)},2n^{2a} \right) ,R\left(\hat{X}^{(q)},2n^{2a} \right)\right)\le cn^{2a} $, where $c=c(d,u)>0$ is a constant;
		\item $\forall 1\le k\le |S_x^{(1)}(G_a^{(x,i)})|$, $j_k^{x,i} < n^\epsilon$;
		\item $\forall y\in l_{x,i}$, $|\hat{\varphi}_1^{x,i}(y)-y|\le T^\epsilon$;
		\item $\hat{\mathcal{I}}_{x,i}$ is connected;
		\item $\forall y_1,y_2\in l_{x,i}\cap \hat{\mathcal{I}}_{x,i}$, $\hat{\rho}_{x,i}\left(y_1,y_2\right)\le cn $, where $c=c(d,u)$ is a constant.
	\end{enumerate}
\end{claim}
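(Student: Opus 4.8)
\noindent\textbf{Sketch of proof of Claim \ref{claim1}.}
The plan is to reduce all five items to the corresponding facts for random interlacements established in Section~6 of \cite{vcerny2012internal} (and the related construction of \cite{rath2011transience}), and then to quote those. The bridge is the following observation. Every trajectory counted by $\bar S_x^{(1)}(G_a^{(x,i)})$ arises from a lucky FRI trajectory through the map $\pi_{G_a^{(x,i)}}$; by the strong Markov property at the hitting time of $G_a^{(x,i)}$, and because the geometric killing is independent of the underlying walk, this produces an honest simple random walk started at the corresponding entrance point of $\partial G_a^{(x,i)}$ and of length at least $T$, so that the prolongation by the fresh independent walk $Y^{(k,x)}$ makes $\hat X^{(k)}$ a genuine infinite simple random walk from $X^{(k)}(0)\in\partial G_a^{(x,i)}$. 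Moreover distinct $\hat X^{(k)}$ are, conditionally on the multiset of entrance points, independent, since the lucky trajectories are distinct points of a thinned Poisson process (this is ``independence within FRI''). Hence, conditionally on the entrance configuration, $\hat{\mathcal I}_{x,i}$ is exactly the union of the ranges --- stopped at the times $\hat t^{x,i}_k$ --- of a family of independent simple random walks emanating from $\partial G_a^{(x,i)}$, i.e., up to the law of the starting points, the object $\hat{\mathcal I}$ treated in \cite{vcerny2012internal}.

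The only FRI-specific input this reduction needs is a lower bound on how many such walks are available near each part of the tube $G_a^{(x,i)}$. This is furnished by Lemma \ref{lemma2} and a localised version of it, proved by the same method from Lemma \ref{lemma1} together with the fact that the annulus $B_x(8T^{0.5})\setminus B_x(6T^{0.5})$ lies at $\ell^\infty$-distance $\gtrsim T^{0.5}$ from $G_a^{(x,i)}$ (so that entrances are spread over all of $\partial G_a^{(x,i)}$): choosing $b$ small enough that $G_a^{(x,i)}\subset B_x(1.5T^{0.5})$, the number of lucky trajectories whose first entrance into $G_a^{(x,i)}$ lies in a prescribed $U_k^{(x,i)}$ stochastically dominates a Poisson variable with mean comparable --- up to a logarithmic factor when $d=3$ --- to $cap(U_k^{(x,i)})\asymp n^{a(d-2)}$, which diverges with $T$ because $n=\lfloor bT^{0.5}\rfloor$. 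Combining this with the Poisson large-deviation bound \eqref{19} and a union bound over the $O(n^{1-a})$ balls shows that, with probability $1-\boldsymbol{s.e.}(T)$, every $U_k^{(x,i)}$ is crossed by a number of the walks $\hat X^{(k)}$ comparable to what one would see in random interlacements. Since the estimates of \cite{vcerny2012internal,rath2011transience} are themselves high-probability statements over a Poisson number of walks and are robust to such logarithmic losses, this is all that is required in order to transplant them.

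Granting the reduction, the five items follow with only cosmetic changes. Item~(2) is a transience estimate in $d\ge3$: a simple random walk started on $\partial G_a^{(x,i)}$ has --- except with probability $\boldsymbol{s.e.}(n)=\boldsymbol{s.e.}(T)$ --- already performed a full block of $n^{2a}$ consecutive steps outside $G_a^{(x,i)}$ by time $n^{2(a+\epsilon)}$, so after a union bound over the polynomially many walks one gets $j^{x,i}_k<n^\epsilon$ for every $k$; this forces $\hat t^{x,i}_k\le 2n^{2(a+\epsilon)}\ll T$ whenever $a+\epsilon<1$, so that on this event the prolongations $Y^{(k,x)}$ play no role and $\hat{\mathcal I}_{x,i}=\bar{\mathcal I}_{x,i}$ --- which is precisely what makes the line segment survive the extension in the sequel. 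Item~(3) follows from a lower bound on the occupation density of $\hat{\mathcal I}_{x,i}$ in the tube: the divergent family of walks covering $G_a^{(x,i)}$ gives $\hat{\mathcal I}_{x,i}$ density bounded below there, and since every column $\{y+je_i:j\ge1\}$ with $y\in l_{x,i}$ stays inside $G_a^{(x,i)}$ for its first $\asymp n^a$ steps (because $l_{x,i}$ sits at $\ell^\infty$-distance of order $n^a$ from $\partial G_a^{(x,i)}$), it meets $\hat{\mathcal I}_{x,i}$ within $T^\epsilon$ steps except with probability $\boldsymbol{s.e.}(T)$, again after a harmless union bound over $y$. Items~(1), (4) and (5) are the local and global connectivity statements: inside each ball of radius $O(n^a)$ around a center $x+[kn^a]e_i$ the trajectory pieces form a set of density bounded below, which by the local-uniqueness estimates for random interlacements in \cite{vcerny2012internal,rath2011transience} is connected with chemical diameter $O(n^{2a})$ --- this yields~(1), it forces overlapping $U_k^{(x,i)}$'s to be connected in $\hat{\mathcal I}_{x,i}$ and hence gives~(4), and chaining these local connections along the tube (arranging the exponents, as in \cite{vcerny2012internal}, so that the total cost stays proportional to the $\ell^\infty$-length $n$ of $l_{x,i}$) gives $\hat\rho_{x,i}(y_1,y_2)\le cn$ in~(5). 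The step I expect to be the real work is the reduction itself --- checking that the conditionings implicit in $S_x^{(1)}(\cdot)$ and $\bar S_x^{(1)}(\cdot)$ genuinely leave the forward walks conditionally i.i.d.\ simple random walks given their entrance points, and proving the localised form of Lemma \ref{lemma2} (with the $d=3$ logarithmic bookkeeping) --- after which everything is a faithful rerun of \cite{vcerny2012internal,rath2011transience}, which is why the full argument is deferred to the appendix.
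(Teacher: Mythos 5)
Your sketch captures the paper's strategy faithfully: the appendix does exactly what you describe, namely (i) treat the paths $\hat X^{(k)}$ as a family of conditionally independent simple random walks from $\partial G_a^{(x,i)}$ (the extension by $Y^{(k,x)}$ is precisely the device that makes them honest SRWs), and (ii) supply the single FRI-specific ingredient, a localised Poisson lower bound on $|\bar S_x^{(1)}(G_a^{(x,i)},k)|$ proved via a last-exit decomposition for hitting $G_a^{(x,i)}$ in a prescribed slice (this is Lemma \ref{lemma3} / Lemma \ref{lemma4} in the paper) plus the truncated-capacity bound of Lemma \ref{lemma3.5}, after which items (1), (4) are obtained by quoting Proposition 4.2 of \cite{vcerny2012internal} (= Lemma \ref{prop4.2}), (2) by a strong-Markov escape argument as you indicate, and (3) by a hitting estimate for the line segment $\mathcal{L}_y^m$.

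One place where the sketch is more optimistic than accurate is item (5). Chaining local connections of chemical diameter $O(n^{2a})$ over the $O(n^{1-a})$ balls $U_k^{(x,i)}$ only yields $O(n^{1+a})$, and no arrangement of exponents fixes this; "arranging the exponents, as in \cite{vcerny2012internal}" slightly mischaracterises what that reference (and the paper) does. The actual mechanism is probabilistic, not combinatorial: one introduces the step-wise local chemical increments $\widetilde{T}_k^{x,i}$ (the distance from $x+ke_i$ to the next lattice point of $l_{x,i}$ visible in a localised version $\widetilde{\mathcal I}_{x,i,k}$ of $\hat{\mathcal I}_{x,i}$), proves they have stretched-exponential tails uniform in $k$ (so in particular bounded expectation), and then bounds $\sum_{k=-n}^{n}\widetilde{T}_k^{x,i}$ via a large-deviation inequality for sums of independent random variables (the paper uses Corollary 1.5 of \cite{nagaev1979large}), exploiting that $\widetilde{T}_k^{x,i}$ at indices $k$ separated by $\asymp n^{2(a+\epsilon)}$ are independent. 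That concentration argument is the real content of (5), and it is not a consequence of (1) alone. Otherwise the reduction and items (1)--(4) are as you describe, with the one minor geometric quibble that $l_{x,i}$ is the axis of the tube, so a column $\{y+je_i\}$ stays in $G_a^{(x,i)}$ for up to $\asymp n$ steps rather than $\asymp n^a$ (except near the ends of the segment); this is harmless since only $T^\epsilon\ll n^a$ steps are needed.
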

\begin{definition}(good line segments)\label{goodlineseg}
		We say the line segment $l_{x,i}$ is good if $\bar{\mathcal{I}}_{x,i}$ satisfies
	\begin{enumerate}
		\item $\bar{\mathcal{I}}_{x,i}$ is connected.
		\item $\forall y\in l_{x,i}$, $|\bar{\varphi}_1^{x,i}(y)-y|\le T^\epsilon$;  
		\item $\forall y_1,y_2\in\bar{\mathcal{I}}_{x,i}$, $\bar{\rho}_{x,i}\left(y_1,y_2 \right)\le  cn$, where $c=c(d,u)$ is a constant. 
	\end{enumerate}
\end{definition}
Based on Claim \ref{claim1}, we can prove that any line segment with length $2n$ is good with high probability.
\begin{lemma}\label{goodline}
For any $x\in \mathbb{Z}^d$ and $1\le i\le d$, \begin{equation}\label{3.18}
	P\left(l_{x,i}\ is\ good\right)\ge 1-\boldsymbol{s.e.}(T).
	\end{equation}
\end{lemma}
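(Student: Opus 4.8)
The plan is to deduce Lemma~\ref{goodline} directly from Claim~\ref{claim1}, by arguing that, on an event of probability $1-\boldsymbol{s.e.}(T)$, the truncated interlacement $\bar{\mathcal{I}}_{x,i}$ coincides with the extended one $\hat{\mathcal{I}}_{x,i}$; once this is known, the three requirements of Definition~\ref{goodlineseg} are literal transcriptions of parts (3)--(5) of Claim~\ref{claim1}. The key deterministic observation is that every path $X^{(k)}$ enumerating $\bar{S}_x^{(1)}(G_a^{(x,i)})$ equals $\pi_{G_a^{(x,i)}}(\eta)$ for some $\eta\in S_x^{(1)}(G_a^{(x,i)})$, and by the very definition of the set of ``lucky paths'' in Section~\ref{notations} this forces $L^{(k)}=length(X^{(k)})=length\big(\pi_{G_a^{(x,i)}}(\eta)\big)\ge T$. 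So if in addition $\hat{t}^{x,i}_k\le T$ for every $k$, then $\hat{t}^{x,i}_k\wedge L^{(k)}=\hat{t}^{x,i}_k$, hence $R_k(\hat{t}^{x,i}_k\wedge L^{(k)})=R_k(\hat{t}^{x,i}_k)$ for all $k$, and therefore $\bar{\mathcal{I}}_{x,i}=\hat{\mathcal{I}}_{x,i}$, which in turn gives $\bar{\zeta}^{x,i}_1=\hat{\zeta}^{x,i}_1$, $\bar{\varphi}^{x,i}_1=\hat{\varphi}^{x,i}_1$ and $\bar{\rho}_{x,i}=\hat{\rho}_{x,i}$.

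First I would bound $\hat{t}^{x,i}_k$. On the event of Claim~\ref{claim1}(2) we have $j^{x,i}_k<n^{\epsilon}$ for every $k$, hence $\hat{t}^{x,i}_k=n^{2(a+\epsilon)}+j^{x,i}_k n^{2a}\le n^{2(a+\epsilon)}+n^{2a+\epsilon}\le 2n^{2(a+\epsilon)}$. Since $n=\lfloor bT^{0.5}\rfloor$ and the constants $a,\epsilon$ are chosen small enough (depending only on $u,d$) that $2(a+\epsilon)<1$, this is $o(T)$, in particular $\le T$ for all large $T$; for the finitely many remaining values of $T$ one simply absorbs the statement into the constants $c,c',c_1,\dots$ of the $\boldsymbol{s.e.}$ bound. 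Thus, on the event of Claim~\ref{claim1}(2), all the hatted objects coincide with their barred counterparts as above.

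It then remains to intersect the five events of Claim~\ref{claim1}, each of probability $1-\boldsymbol{s.e.}(T)$ (so the intersection still has probability $1-\boldsymbol{s.e.}(T)$; the quantifiers over the $O(n^{1-a})$ blocks $k$ and over pairs of paths in part~(1) are already built into the statement of the claim, using e.g. the Poisson concentration (\ref{19}) for the number of relevant trajectories), and to read Definition~\ref{goodlineseg} off this event together with the identity $\bar{\mathcal{I}}_{x,i}=\hat{\mathcal{I}}_{x,i}$: requirement~(1) is Claim~\ref{claim1}(4), and requirement~(2) is Claim~\ref{claim1}(3). For requirement~(3), Claim~\ref{claim1}(5) bounds $\hat{\rho}_{x,i}(y_1,y_2)$ by $cn$ only for $y_1,y_2\in l_{x,i}\cap\hat{\mathcal{I}}_{x,i}$; to upgrade this to arbitrary $y_1,y_2\in\bar{\mathcal{I}}_{x,i}=\hat{\mathcal{I}}_{x,i}$ one notes that any $z\in\hat{\mathcal{I}}_{x,i}$ lies on a range $R_k(\hat{t}^{x,i}_k)$, a connected piece of a single trajectory of length $\hat{t}^{x,i}_k\le 2n^{2(a+\epsilon)}\ll n$, and — combining the connectedness from Claim~\ref{claim1}(4), the proximity estimate of Claim~\ref{claim1}(1) between trajectories starting in the same block, and Claim~\ref{claim1}(3) — this trajectory reaches the ``highway'' $\{\hat{\varphi}^{x,i}_1(y):y\in l_{x,i}\}\subset l_{x,i}\cap\hat{\mathcal{I}}_{x,i}$ within chemical distance $O(n)$ inside $\hat{\mathcal{I}}_{x,i}$; the triangle inequality with Claim~\ref{claim1}(5) then gives $\bar{\rho}_{x,i}(y_1,y_2)\le cn$ for a possibly enlarged constant $c=c(u,d)$.

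I expect no serious obstacle here: once Claim~\ref{claim1} is granted, the argument is bookkeeping, and all the genuine probabilistic content — the Poissonian counting of lucky paths via Lemma~\ref{lemma2}, the entanglement of the segments $R_k(\cdot)$ across the independent sub-interlacements $\mathcal{FI}_1^{\frac{1}{3}u,T},\dots$, and the soft-local-time comparison with \cite{vcerny2012internal} — has been deferred to the proof of Claim~\ref{claim1} in the appendix. The only points that need a little care in the present lemma are the diameter upgrade just described (turning the ``along $l_{x,i}$'' bound of Claim~\ref{claim1}(5) into a genuine bound on the whole cluster $\bar{\mathcal{I}}_{x,i}$) and the verification that the constants $a,b,\epsilon$ can be fixed small, depending only on $u,d$, so that simultaneously $2(a+\epsilon)<1$, $G_a^{(x,i)}\subset B_x(1.5T^{0.5})$ (so that Lemma~\ref{lemma2} applies to the sets used in the construction), and every estimate invoked from Claim~\ref{claim1} is available.
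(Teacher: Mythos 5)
Your overall strategy — show that $\bar{\mathcal{I}}_{x,i}=\hat{\mathcal{I}}_{x,i}$ on a high-probability event (using $L^{(k)}\ge T$ for lucky paths and $\hat t^{x,i}_k\le 2n^{2(a+\epsilon)}\ll T$ via Claim~\ref{claim1}(2)), and then read off conditions (1) and (2) of Definition~\ref{goodlineseg} from Claim~\ref{claim1}(4) and (3) — coincides with the paper's proof, and that part is fine.

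The gap is in your treatment of condition (3). You assert that once $\bar{\mathcal{I}}_{x,i}=\hat{\mathcal{I}}_{x,i}$ is known, everything is bookkeeping and all genuine probabilistic content lives in Claim~\ref{claim1}. That is not so. To bound $\bar\rho_{x,i}(y_1,y_2)$ for arbitrary $y_1,y_2\in\bar{\mathcal{I}}_{x,i}$ you must first bring each $y_j$ to the line $l_{x,i}$ at small chemical distance, and you propose to do this by ``combining the connectedness from Claim~\ref{claim1}(4), the proximity estimate of Claim~\ref{claim1}(1) between trajectories starting in the same block, and Claim~\ref{claim1}(3).'' This does not close the argument. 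Claim~\ref{claim1}(3) only says that $\hat\varphi_1^{x,i}(y)$ is within $T^\epsilon$ of $y$; it places \emph{some} point of $\hat{\mathcal{I}}_{x,i}$ near the block centre, but it does not put a trajectory \emph{starting from the same block} $U_{k_j}^{(x,i)}$ as $y_j$'s trajectory onto $l_{x,i}$. Indeed the trajectory realising $\hat\varphi_1^{x,i}(y)$ may well start from a block at index distance of order $n^{a+2\epsilon}$ away (its range has diameter up to $2n^{2(a+\epsilon)}\gg n^a$), so Claim~\ref{claim1}(1) — which only controls chemical distance between paths sharing a block — does not apply to it, and Claim~\ref{claim1}(4) gives connectedness without any quantitative distance bound. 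What is actually needed, and what the paper supplies, is the separate estimate (\ref{lk}): with probability $1-\boldsymbol{s.e.}(T)$ there exists a path in $\bar S_x^{(1)}(G_a^{(x,i)},k_j)$ (hence in the \emph{same} block) that hits the local slice $\mathcal{L}_{k_j}\subset l_{x,i}$ within $n^{2(a+\epsilon)}$ steps. Establishing (\ref{lk}) requires (\ref{px}) (Lemma~3.2 of \cite{vcerny2012internal}), the Poisson-count bound of Lemma~\ref{lemma4}, and the large-deviation estimate (\ref{19}); only then do Claim~\ref{claim1}(1) (to link $y_j$'s trajectory to that hitting trajectory inside the block) and Claim~\ref{claim1}(5) (to travel along $l_{x,i}$) combine via the triangle inequality to give (\ref{3.5}) and hence condition~(3). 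So the lemma does contain new probabilistic content beyond Claim~\ref{claim1}, and your sketch as written omits it.
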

\begin{proof}
	For any $X^{(k)}\in \bar{S}_x^{(1)}(G_a^{(x,i)},k)$, we have $L^{(k)}\ge T$. Since $P\left(j_k^{x,i}<n^\epsilon \right)\ge 1-\boldsymbol{s.e.}(T) $ (recall (2) of Claim \ref{claim1}), $P\left(|\bar{S}_x^{(1)}|\le cT^{\frac{d}{2}} \right)\ge 1-\boldsymbol{s.e.}(T) $ and $L^{(k)}\ge T> 2n^{2(a+\epsilon)}$, we have \begin{equation}
	\begin{split}
	P\left(\bar{\mathcal{I}}_{x,i}=\hat{\mathcal{I}}_{x,i}  \right)=&P\left(\bigcap_{k=1}^{|\bar{S}_x^{(1)}(G_a^{(x,i)},k)|}\left\lbrace \hat{t}^{x,i}_k\le L^{(k)}  \right\rbrace  \right)\\
	\ge &P\left(\bigcap_{k=1}^{|\bar{S}_x^{(1)}(G_a^{(x,i)},k)|}\left\lbrace j_k^{x,i}<n^\epsilon  \right\rbrace, |\bar{S}_x^{(1)}(G_a^{(x,i)},k)|\le cT^{\frac{d}{2}}  \right)\\
	\ge &\left( 1-\boldsymbol{s.e.}(T)\right)^{cT^{\frac{d}{2}}}*\left(1-\boldsymbol{s.e.}(T) \right) =1-\boldsymbol{s.e.}(T).
	\end{split}	
	\end{equation}
	
	Now it's sufficient to confirm  that $\hat{\mathcal{I}}_{x,i}$ satisfies these three conditions. By (3) and (4) of Claim \ref{claim1}, we know that condition (1) and (2) hold for $\hat{\mathcal{I}}_{x,i}$ with probability $1-\boldsymbol{s.e.}(T)$.
	
Let $\mathcal{L}_{k}=\left\lbrace x+le_i:k-n^a< l< k+n^a \right\rbrace\cap l_{x,i}$. For any $y_j\in \hat{\mathcal{I}}_{x,i}$, $j=1,2$, assume that $y_j\in X^{(l_j)} \in \bar{S}_x^{(1)}\left(G_a^{(x,i)},k_j\right) $. By Lemma 3.2 of \cite{vcerny2012internal}, for any $x\in \partial U_{k_j}^{(x,i)}$, 
	\begin{equation}\label{px}
	P_x\left(H_{\mathcal{L}_{k_j}}\le n^{2(a+\epsilon)} \right) \ge  \left\{
	\begin{aligned}
	&\frac{cn^a}{n^{a(d-2)}\ln(n)},    & d=3;\\
	&\frac{cn^a}{n^{a(d-2)}},    &d\ge 4.
	\end{aligned}
	\right.
	\end{equation}
	By Lemma \ref{lemma4} (which can be found in the appendix) and (\ref{px}), we know the number of paths in $\bar{S}_x^{(1)}(G_a^{(x,i)}, k_j)$ hitting $\mathcal{L}_{k_j}$ within $n^{2(a+\epsilon)}$ steps stochastically dominates a Poisson random variable with parameter $\frac{cn^a}{(\ln(n))^2}$ for $d=3$ and $cn^a$ for $d\ge 4$. Using the large deviation bound for Poisson distribution, 
	\begin{equation}\label{lk}
	P\left(there\ exists\ a\ path\ in\ \bar{S}_x^{(1)}(G_a^{(x,i)}, k_j)\ hitting\ \mathcal{L}_{k_j}\ within\ n^{2(a+\epsilon)}\ steps \right)\ge 1-\boldsymbol{s.e.}(T). 
	\end{equation}
	If $\hat{t}^{x,i}_{l_j}\le 2n^{2(a+\epsilon)}$, then $y_j\in R(X^{(l_j)},2n^{2(a+\epsilon)})$. When the event in (\ref{lk}) occurs, there exists $ X^{(m_j)}\in \bar{S}_x^{(1)}(G_a^{(x,i)}, k_j)$ such that $R\left(X^{(m_j)},n^{2(a+\epsilon)} \right)\cap  \mathcal{L}_{k_j}\neq \emptyset$.
	
	In addition, if $\hat{\rho}_{x,i}\left(R\left(\hat{X}^{(m_j)},2n^{2a} \right) ,R\left(\hat{X}^{(k_j)},2n^{2a} \right)\right)\le cn^{2a} $ also happens, then 
	 \begin{equation}\label{3.5}
	 	\hat{\rho}_{x,i}(y_j,\hat{\mathcal{I}}_{x,i}\cap \mathcal{L}_{k_j})\le cn^{2a}+3n^{2(a+\epsilon)}.
	 \end{equation}
By (\ref{lk}), (\ref{3.5}) and (1), (2) of Claim \ref{claim1}, we have: for $j=1,2$, \begin{equation}\label{3.25}
\begin{split}
P\left(\hat{\rho}_{x,i}(y_j,\hat{\mathcal{I}}_{x,i}\cap \mathcal{L}_{k_j})\le cn^{2a}+3n^{a+\epsilon}  \right)\ge 1-\boldsymbol{s.e.}(T).
\end{split}
\end{equation}	
If $\hat{\rho}_{x,i}(y_j,\hat{\mathcal{I}}_{x,i}\cap \mathcal{L}_{k_j})\le cn^{2a}+3n^{2(a+\epsilon)}$, then  $\exists z_j\in l_{x,i}$ such that $\hat{\rho}_{x,i}\left(y_j,z_j \right)\le cn^{2a}+3n^{2(a+\epsilon)}$. By $\hat{\rho}_{x,i}\left(y_1,y_2\right)\le\hat{\rho}_{x,i}\left(y_1,z_1 \right)+\hat{\rho}_{x,i}\left(z_1,z_2 \right) +\hat{\rho}_{x,i}\left(z_2,y_2 \right)$ and (5) of Claim \ref{claim1}, we know that condition (3) holds with probability $1-\boldsymbol{s.e.}(T)$.
\end{proof}	
\subsection{Good sites}\label{goodsites}
This subsection is mainly based on Section 4.2 and Section 4.3 of \cite{rath2011transience}.

If $l_{x,i}$ is good, then for any $y\in l_{x,i}$, $|\bar{\varphi}_1^{x,i}(y)-y|\le T^\epsilon$. For any $m\le n$, denote $Q^m_{x,i,y}=\mathcal{C}_{\bar{\varphi}_1^{x,i}(y)}^{\bar{\mathcal{I}}_{x,i}}\left(B_{\bar{\varphi}_1^{x,i}(y)}(m^{0.5}) \right)  $ and we have $|Q^m_{x,i,y}|\ge m^{0.5}$ since $\bar{\varphi}_1^{x,i}(y) \xleftrightarrow{\bar{\mathcal{I}}_{x,i}} \partial B_{\bar{\varphi}_1^{x,i}(y)}(m^{0.5})$. We also define that $\Psi_y(k,A,m)=\bigcup\limits_{\eta \in \bar{S}^{(2,k)}_y(A)}R\left(\eta,m \right) $. Let $U_{x,i,y}^{(1)}(m)= \Psi_y(1,Q^m_{x,i,y},m)$ and $U_{x,i,y}^{(k)}(m)=\Psi_y(k,U_{x,i,y}^{(k-1)}(m),m)$ for $2 \le k\le d-2$. Especially, we denote that $Q_{x,i,y}=Q^{2n^{2(a+\epsilon)}}_{x,i,y} $ and $U_{x,i,y}^{(k)}=U_{x,i,y}^{(k)}(2n^{2(a+\epsilon)})$, $1\le k\le d-2$.
\begin{lemma}\label{lemma5}For any $x\in \mathbb{Z}^d$, $1\le i\le d$, $m\le n$ and $y\in l_{x,i}$, 
	\begin{equation}\label{18}
	P\left(|\bar{S}_y^{(2,1)}(Q^m_{x,i,y})|\ge 1\big|l_{x,i}\ is\ good \right)\ge 1-\boldsymbol{s.e.}(m).
	\end{equation} 
\end{lemma}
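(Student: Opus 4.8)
The plan is to estimate from below the number of ``lucky paths'' in the sub-interlacement $\mathcal{FI}_{2,1}^{\frac{u}{3(d-2)},T}$ that start near $Q^m_{x,i,y}$ and traverse it, using Lemma \ref{lemma2}, and then convert this into the statement $|\bar S_y^{(2,1)}(Q^m_{x,i,y})| \ge 1$. The crucial point is that, on the event $\{l_{x,i}\text{ is good}\}$, the set $Q^m_{x,i,y} = \mathcal{C}_{\bar\varphi_1^{x,i}(y)}^{\bar{\mathcal{I}}_{x,i}}(B_{\bar\varphi_1^{x,i}(y)}(m^{0.5}))$ is connected, contains $\bar\varphi_1^{x,i}(y)$, and reaches the boundary of $B_{\bar\varphi_1^{x,i}(y)}(m^{0.5})$, hence has diameter at least $m^{0.5}$; by the capacity lower bound this forces $cap(Q^m_{x,i,y}) \ge c\, m^{(d-2)/2} \cdot \text{(const)} \ge c\, m^{1/2}$ (a connected set of diameter $r$ has capacity at least of order $r^{d-2}$ for $d\ge 3$, or at worst $r^{1/2}$ after the weakest bound — in any case bounded below by a positive power of $m$, which is all we need for an $\boldsymbol{s.e.}(m)$ statement). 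Moreover, since $|\bar\varphi_1^{x,i}(y) - y|\le T^\epsilon$ and $m\le n = \lfloor bT^{0.5}\rfloor$, we have $Q^m_{x,i,y}\subset B_y(m^{0.5}+T^\epsilon)\subset B_y(1.5 T^{0.5})$ for $T$ large, so $Q^m_{x,i,y}$ is an admissible argument for $S_y(\cdot,\cdot)$ in the sense required by Lemma \ref{lemma2}.

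The execution goes as follows. First I would condition on $\{l_{x,i}\text{ is good}\}$ and fix a realization of $\bar{\mathcal{I}}_{x,i}$ — note that $\bar{\mathcal{I}}_{x,i}$ is measurable with respect to $\mathcal{FI}_1^{\frac13 u,T}$ together with the auxiliary random walks $\{Y^{(k,x)}\}$, hence independent of $\mathcal{FI}_{2,1}^{\frac{u}{3(d-2)},T}$. This independence is exactly what the \textbf{uniform bound trick} is designed for: the set $Q^m_{x,i,y}$ depends only on the first sub-process, while the event $\{|\bar S_y^{(2,1)}(Q^m_{x,i,y})|\ge 1\}$ depends, given $Q^m_{x,i,y}$, only on paths of $\mathcal{FI}_{2,1}^{\frac{u}{3(d-2)},T}$ starting from the annulus $B_y(8T^{0.5})\setminus B_y(6T^{0.5})$. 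So it suffices to show that for \emph{every} admissible connected set $A\subset B_y(1.5T^{0.5})$ with $cap(A)\ge c\,m^{1/2}$ one has $P(|\bar S_y^{(2,1)}(A)|\ge 1)\ge 1-\boldsymbol{s.e.}(m)$, and then sum/average over the possible values of $Q^m_{x,i,y}$. But by definition $|\bar S_y^{(2,1)}(A)| = |S_y^{(2,1)}(A)|$ as a count, and Lemma \ref{lemma2} (applied to the sub-process $\mathcal{FI}_{2,1}^{\frac{u}{3(d-2)},T}$, whose intensity is a fixed fraction of $u$) gives $|S_y^{(2,1)}(A)| \succeq \mathrm{Pois}(c\cdot cap(A))$ with $c = c(d,u)>0$. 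Hence
\begin{equation}
P\big(|\bar S_y^{(2,1)}(A)| \ge 1\big) \ge 1 - e^{-c\cdot cap(A)} \ge 1 - e^{-c' m^{1/2}},
\end{equation}
which is $\boldsymbol{s.e.}(m)$-small. Feeding this uniform bound through the uniform bound trick yields (\ref{18}).

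The main obstacle is not any single estimate but the bookkeeping needed to make the conditioning rigorous: one must check that $\{l_{x,i}\text{ is good}\}$ and the random set $Q^m_{x,i,y}$ are indeed functions only of $\mathcal{FI}_1^{\frac13 u,T}$ and the auxiliary walks (so that the uniform bound trick applies with $B=\{|\bar S_y^{(2,1)}(\cdot)|\ge1\}$ read off the independent process $\mathcal{FI}_{2,1}^{\frac{u}{3(d-2)},T}$), and that the capacity lower bound $cap(Q^m_{x,i,y})\ge c\,m^{1/2}$ genuinely holds on the good event — this uses only that $Q^m_{x,i,y}$ is connected and has $\ell^\infty$-diameter at least $m^{1/2}$, together with monotonicity of capacity and (\ref{20}) applied to a sub-box of side of order $m^{1/2}$ contained in the connected set (or the elementary fact that a connected set joining the center of a ball to its boundary has capacity at least that of a line segment of the same length, which is of order $m^{1/2}$ in $d=3$ and larger in higher dimensions). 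Once these two points are in place the rest is the routine Poisson tail bound above.
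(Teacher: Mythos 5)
Your proof follows essentially the same route as the paper: fix the realization on $\{l_{x,i}\text{ is good}\}$, use the independence of $\mathcal{FI}_{2,1}^{\frac{u}{3(d-2)},T}$ from $\bar{\mathcal{I}}_{x,i}$ via the uniform bound trick, bound $cap(Q^m_{x,i,y})$ from below by a positive power of $m$, and feed this into Lemma~\ref{lemma2} and the Poisson tail. The conditioning and independence bookkeeping you describe are exactly what the paper does, and so is the observation that $Q^m_{x,i,y}\subset B_y(1.5T^{0.5})$ so Lemma~\ref{lemma2} is applicable.

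The one place where you diverge from the paper, and where your justification is partly wrong, is the capacity lower bound. You invoke ``a connected set of diameter $r$ has capacity at least of order $r^{d-2}$,'' which is false for $d\ge 4$: a line segment of length $r$ in $\mathbb{Z}^d$ is a connected set of diameter $r$ but has $cap\asymp r$ for $d\ge4$ (and $\asymp r/\log r$ for $d=3$), which is much smaller than $r^{d-2}$ once $d\ge4$. Your hedged fallback ``$r^{1/2}$ after the weakest bound'' is not backed by a cited estimate. The paper instead uses only the cardinality bound $|Q^m_{x,i,y}|\ge m^{0.5}$ together with Lemma~3.3 of \cite{vcerny2012internal} (a hitting-probability lower bound in terms of $|A|^{1-2/d}$) and Proposition~6.5.1 of \cite{lawler2010random}, obtaining $cap(Q^m_{x,i,y})\ge c\,m^{0.5-\frac1d}$; no connectedness is needed. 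That said, your higher-level observation is right: any lower bound $cap(Q^m_{x,i,y})\ge c\,m^{\gamma}$ with $\gamma>0$ suffices for the $\boldsymbol{s.e.}(m)$ conclusion, so your argument can be repaired either by quoting the paper's cardinality route, or by replacing your incorrect $r^{d-2}$ claim with the correct line-segment comparison (giving $\gtrsim m^{1/2}$ for $d\ge4$, $\gtrsim m^{1/2}/\log m$ for $d=3$) and citing a reference for it.
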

\begin{proof}
	In this proof, we assume that $l_{x,i}$ is good and fix $Q^m_{x,i,y}$.
	
	By Proposition6.5.1, \cite{lawler2010random}, for any $z\in \mathbb{Z}^d$ such that $4T^{0.5} \le |z-y|\le  10T^{0.5}$, 
	\begin{equation}\label{cap1}
	cap(Q^m_{x,i,y})= \frac{cT^{0.5(d-2)}}{1+O(\frac{m^{0.5}}{T^{0.5}})}*P_z\left(H_{Q^m_{x,i,y}}<\infty \right)\ge cT^{0.5(d-2)}*P_z\left(H_{Q^m_{x,i,y}}<\infty \right).
	\end{equation}
	By Lemma 3.3 of \cite{vcerny2012internal},
	\begin{equation}\label{cap2}
	P_z\left(H_{Q^m_{x,i,y}}<\infty \right)\ge \frac{c|Q^m_{x,i,y}|^{1-\frac{2}{d}}}{(11T^{0.5})^{d-2}} \ge cm^{0.5-\frac{1}{d}}*T^{-0.5(d-2)}.
	\end{equation}
	Combine (\ref{cap1}) and (\ref{cap2}), we have 
	\begin{equation}\label{cap3}
		cap(Q^m_{x,i,y})\ge cm^{0.5-\frac{1}{d}}.
	\end{equation}
	By Lemma \ref{lemma2} and (\ref{cap3}), $|S_y^{(2,1)}(Q^m_{x,i,y})|\succeq Pois(cm^{0.5-\frac{1}{d}})$. Then we get (\ref{18}) by the large diviation bound for Poisson distribution and the uniform bound trick.
\end{proof}
We are going to prove the following result by induction: for $1\le k\le d-2$, $$P\left( cap(U_{x,i,y}^{(k)})\ge cn^{k(a+\epsilon)(1-\epsilon_1)}\big|l_{x,i}\ is\ good \right)\ge 1-\boldsymbol{s.e.}(T).$$  First, by Lemma \ref{lemma5}, we know that $U_{x,i,y}^{(1)}$ contains at least one path with probability at least $1-\boldsymbol{s.e.}(T)$. Taking $N=1$, $T=2n^{2(a+\epsilon)}$ in Lemma 6 of \cite{rath2011transience}, we have: for $0<\epsilon_1<1$,  \begin{equation}
	P\left(cap(U_{x,i,y}^{(1)})\ge cn^{(a+\epsilon)(1-\epsilon_1)}\big|l_{x,i}\ is\ good \right)\ge 1-\boldsymbol{s.e.}(T).
\end{equation}
For any $1\le k\le d-3$, if $\left\lbrace l_{x,i}\ is\ good, cap(U_{x,i,y}^{(k)})\ge cn^{k(a+\epsilon)(1-\epsilon_1)} \right\rbrace $ happens, by Lemma \ref{lemma2} and the large deviation bound for Poisson distribution, we know that $U_{x,i,y}^{(k+1)}$ consists of at least $cn^{k(a+\epsilon)(1-\epsilon_1)}$ paths with probability at least $1-\boldsymbol{s.e.}(T)$. Use inductive hypotheses and take $N=cn^{k(a+\epsilon)(1-\epsilon_1)}$, $T=2n^{2(a+\epsilon)}$ in Lemma 6 of \cite{rath2011transience}, \begin{equation}
\begin{split}
&P\left(cap\left( U_{x,i,y}^{(k+1)}\ge cn^{(k+1)(a+\epsilon)(1-\epsilon_1)}\right)\bigg|l_{x,i}\ is\ good\right)\\
\ge &P\left(cap\left( U_{x,i,y}^{(k+1)}\ge cn^{(k+1)(a+\epsilon)(1-\epsilon_1)}\right) \bigg|l_{x,i}\ is\ good,cap\left( U_{x,i,y}^{(k)}\right) \ge cn^{(k)(a+\epsilon)(1-\epsilon_1)} \right)\\
&*P\left(cap\left( U_{x,i,y}^{(k)}\right) \ge cn^{k(a+\epsilon)(1-\epsilon_1)} \bigg|l_{x,i}\ is\ good\right)\\ 
\ge & \left( 1-\boldsymbol{s.e.}(T)\right)*\left( 1-\boldsymbol{s.e.}(T)\right)= 1-\boldsymbol{s.e.}(T).
\end{split}
\end{equation}
Now we complete the induction and get: \begin{equation}\label{3.31}
	P\left( cap(U_{x,i,y}^{(d-2)})\ge cn^{(d-2)(a+\epsilon)(1-\epsilon_1)}\big|l_{x,i}\ is\ good \right)\ge 1-\boldsymbol{s.e.}(T).
\end{equation}

We also prove the following inequalities by induction: for $1\le k\le d-2$,$$P\left(\bigcup_{j=1}^{k} U_{x,i,y}^{(j)}\subset B_y((k+2)n^{(a+\epsilon)(1+\epsilon_1)})\bigg|l_{x,i}\ is\ good\right)\ge 1-\boldsymbol{s.e.}(T).$$By Lemma 3.4 of \cite{vcerny2012internal}, for simple random walk $\left\lbrace X_m\right\rbrace_{m=0}^{\infty} $, $$P\left(diam\left\lbrace R\left(  X_{\bullet},2n^{2(a+\epsilon)}\right)  \right\rbrace\le n^{(a+\epsilon)(1+\epsilon_1)}  \right)\ge 1-\boldsymbol{s.e.}(T).$$ Since $Q_{x,i,y}\subset B_y(2n^{(a+\epsilon)(1+\epsilon_1)})$ and $P\left( |S_y^{(2,1)}|\le cn^d\right)\ge 1-\boldsymbol{s.e.}(T) $, we have \begin{equation}
P\left(U_{x,i,y}^{(1)}\subset B_y(3n^{(a+\epsilon)(1+\epsilon_1)})\bigg|l_{x,i}\ is\ good \right)\ge 1-\boldsymbol{s.e.}(T). 
\end{equation}  
Similarly, by $P\left( |S_y^{(2,k+1)}|\le cn^d\right)\ge 1-\boldsymbol{s.e.}(T) $ , Lemma 3.4 of \cite{vcerny2012internal} and inductive hypotheses,  \begin{equation}
\begin{split}
&P\left(\bigcup_{j=1}^{k+1} U_{x,i,y}^{(j)}\subset B_y((k+3)n^{(a+\epsilon)(1+\epsilon_1)})\bigg|l_{x,i}\ is\ good \right) \\
\ge& P\left(\bigcup_{j=1}^{k+1} U_{x,i,y}^{(j+1)}\subset B_y((k+3)n^{(a+\epsilon)(1+\epsilon_1)})\bigg|l_{x,i}\ is\ good ,\bigcup_{j=1}^{k} U_{x,i,y}^{(j)}\subset B_y((k+2)n^{(a+\epsilon)(1+\epsilon_1)}) \right)\\
&*P\left(\bigcup_{j=1}^{k} U_{x,i,y}^{(j)}\subset B_y((k+2)n^{(a+\epsilon)(1+\epsilon_1)})\bigg|l_{x,i}\ is\ good \right)    \\
\ge &\left(1-\boldsymbol{s.e.}(T) \right)*\left(1-\boldsymbol{s.e.}(T) \right)=1-\boldsymbol{s.e.}(T).
\end{split}
\end{equation}
Now the induction is completed. Particularly, we have: 
\begin{equation}\label{3.34}
	P\left(\bigcup_{j=1}^{d-2} U_{x,i,y}^{(j)}\subset B_y(dn^{(a+\epsilon)(1+\epsilon_1)})\bigg|l_{x,i}\ is\ good \right) \ge 1-\boldsymbol{s.e.}(T).
\end{equation}

Combine Lemma \ref{goodline}, (\ref{3.31}) and (\ref{3.34}), we have the following estimate: for any $0<\epsilon_1<\frac{1}{2}$,
\begin{align}\label{35}
P\left(\bigcup_{j=1}^{d-2} U_{x,i,y}^{(j)}\subset B_y(dn^{(a+\epsilon)(1+\epsilon_1)}),cap(U_{x,i,y}^{(d-2)})\ge cn^{(a+\epsilon)(d-2)(1-\epsilon_1)}\bigg|l_{x,i}\ is\ good\right) \ge 1-\boldsymbol{s.e.}(T), 
\end{align}
where we require $(a+\epsilon)(1+\epsilon_1)<\frac{1}{2d}$.\\
\begin{lemma}\label{connect}
	For any subsets $U$, $V\subset B_y(dn^{(a+\epsilon)(1+\epsilon_1)})$ satisfying $cap(U)\ge cn^{(a+\epsilon)(d-2)(1-\epsilon_1)}$ and $cap(V)\ge cn^{(a+\epsilon)(d-2)(1-\epsilon_1)}$, $0<\epsilon_1<\frac{1}{3}$,  \begin{equation}
	P\left(U \xleftrightarrow[B_y(n^{\frac{1}{d}})]{S_y^{(3)}(U)}V \right) \ge 1-\boldsymbol{s.e.}(T).
	\end{equation}
\end{lemma}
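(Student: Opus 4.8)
The plan is to treat the trajectories in $S_y^{(3)}(U)$ as a large family of almost independent attempts to join $U$ to $V$ inside $B_y(n^{1/d})$, each attempt being the portion of a lucky path that follows its first visit to $U$. Write $R_0=n^{1/d}$ and $r_0=dn^{(a+\epsilon)(1+\epsilon_1)}$, so that $U,V\subset B_y(r_0)$; since $(a+\epsilon)(1+\epsilon_1)<\frac{1}{2d}<\frac1d$ and $n=\lfloor bT^{0.5}\rfloor$, we have $r_0=o(R_0)$ and $r_0=o(T^{0.5})$ as $T\to\infty$, so I may assume $T$ is large, the asserted bound being trivial for bounded $T$. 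First I would bound the number of lucky paths from below: Lemma \ref{lemma2}, applied to $\mathcal{FI}_3^{\frac{1}{3}u,T}$ and the set $U$, gives $|S_y^{(3)}(U)|\succeq Pois\big(c\cdot cap(U)\big)\succeq Pois\big(c\,n^{(a+\epsilon)(d-2)(1-\epsilon_1)}\big)$, and since $n\asymp T^{1/2}$ this Poisson parameter is a positive power of $T$; hence by the large deviation bound (\ref{19}), outside an event of probability $\boldsymbol{s.e.}(T)$ one has $|S_y^{(3)}(U)|\ge M:=\frac{c}{2}\,n^{(a+\epsilon)(d-2)(1-\epsilon_1)}$.

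Next I would show that each attempt succeeds with non-negligible probability: for every $z\in U$, a simple random walk started at $z$ satisfies
$$P_z\big(H_V<H_{\partial B_y(R_0)}\le T\big)\ \ge\ \frac{p}{2},\qquad p:=c\,r_0^{2-d}\,cap(V)\ \ge\ c\,n^{-2(a+\epsilon)(d-2)\epsilon_1}.$$
For the bound without the time cutoff, I would use the last-exit decomposition for the walk killed on leaving $B_y(R_0)$, namely $P_z(H_V<H_{\partial B_y(R_0)})=\sum_{w\in V}G_{B_y(R_0)}(z,w)\,P_w(\bar{H}_V>H_{\partial B_y(R_0)})$, together with the identity $G_{B_y(R_0)}(z,w)=G(z,w)-E_z[G(X_\tau,w)]$ ($\tau$ the exit time), the estimate $G(\cdot,\cdot)\asymp|\cdot-\cdot|^{2-d}$ with $|z-w|\le 2r_0=o(R_0)$ and $|X_\tau-w|\ge R_0/2$ (which gives $G_{B_y(R_0)}(z,w)\ge c\,r_0^{2-d}$), and $\sum_{w\in V}P_w(\bar{H}_V>H_{\partial B_y(R_0)})=cap_{B_y(R_0)}(V)\ge cap(V)$; plugging in $cap(V)\ge c\,n^{(a+\epsilon)(d-2)(1-\epsilon_1)}$ and $r_0^{2-d}\asymp n^{-(a+\epsilon)(d-2)(1+\epsilon_1)}$ yields the lower bound on $p$. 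Since $R_0^2=n^{2/d}=o(T)$, a standard block estimate gives $P_z(H_{\partial B_y(R_0)}>T)\le e^{-cT/R_0^2}=e^{-cT^{1-1/d}}=\boldsymbol{s.e.}(T)$, negligible against the polynomial $p$, which gives the displayed inequality.

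Finally I would combine the two steps via independence. Each $\eta\in S_y^{(3)}(U)$ hits $U$ for the first time at some vertex $z_\eta$, with $length(\pi_U(\eta))\ge T$. Conditionally on the $\sigma$-algebra $\mathcal{G}$ generated by the pieces, up to and including the first visit to $U$, of all trajectories of $\mathcal{FI}_3^{\frac{1}{3}u,T}$ that start in $B_y(8T^{0.5})\setminus B_y(6T^{0.5})$ and hit $U$, together with the luckiness indicator of each such trajectory, the set $S_y^{(3)}(U)$ and the vertices $\{z_\eta\}$ are determined, while the first $T$ steps of the post-pieces $\{\pi_U(\eta):\eta\in S_y^{(3)}(U)\}$ are conditionally independent simple random walks started from the respective $z_\eta$. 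If one of these walks hits $V$ within $T$ steps and before leaving $B_y(R_0)$, the corresponding initial sub-path of $\eta$ lies in $B_y(R_0)$, joins $U$ to $V$, and is built of edges of a path of $S_y^{(3)}(U)$, hence realises $\mathcal{D}:=\{U\xleftrightarrow[B_y(R_0)]{S_y^{(3)}(U)}V\}$. Therefore, on $\{|S_y^{(3)}(U)|\ge M\}$,
$$P\big(\mathcal{D}^{c}\mid\mathcal{G}\big)\ \le\ \Big(1-\frac{p}{2}\Big)^{M}\ \le\ e^{-pM/2},\qquad \frac{pM}{2}\ \ge\ c\,n^{(a+\epsilon)(d-2)(1-3\epsilon_1)},$$
whose exponent is a positive power of $T$ because $\epsilon_1<\frac13$ and $d\ge3$; combined with the first step, $P(\mathcal{D})\ge 1-\boldsymbol{s.e.}(T)$.

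The main obstacle I expect is justifying the conditional independence used in the last step, namely that conditioning on the pre-hitting pieces and on luckiness leaves the post-pieces as genuine independent simple random walks from the $z_\eta$. The cleanest way to organise this is to generate each trajectory of $\mathcal{FI}_3^{\frac{1}{3}u,T}$ meeting $U$ as its piece up to $H_U$ concatenated with a fresh geometrically killed simple random walk from the hitting vertex --- legitimate by the strong Markov property and the memorylessness of the geometric killing --- then thin out the lucky ones (luckiness depends only on the length of the fresh part, so by the thinning property of Poisson point processes the lucky trajectories again form a Poisson process measurable with respect to the pre-pieces and the length marks), and only afterwards reveal the steps of the fresh parts; conditioning on geometric survival for at least $T$ steps does not alter the law of those first $T$ steps, which is precisely the simple random walk law.
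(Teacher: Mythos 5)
Your proof is correct and follows the same overall structure as the paper's: lower-bound $|S_y^{(3)}(U)|$ via Lemma~\ref{lemma2}, lower-bound the per-trajectory probability that the post-hitting piece reaches $V$ from a point of $U$, and combine via independence of the Poissonian trajectories, arriving at the same quantitative exponent $pM\gtrsim n^{(a+\epsilon)(d-2)(1-3\epsilon_1)}$. The technical differences are worth noting. For the per-trajectory estimate, the paper simply cites Lemma~\ref{lemma1} (a finite-time hitting bound proved with a last-time decomposition and the local CLT) with time scale $d^2 n^{2(a+\epsilon)(1+\epsilon_1)}$, whereas you re-derive the equivalent bound $P_z(H_V<\cdot)\gtrsim r_0^{2-d}\,cap(V)$ from a last-exit decomposition for the walk killed on $\partial B_y(R_0)$, using $G_{B_y(R_0)}(z,w)\gtrsim r_0^{2-d}$ and $cap_{B_y(R_0)}(V)\ge cap(V)$. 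Your elliptic route is self-contained and has the side benefit of handling the confinement in $B_y(n^{1/d})$ through the exit time $H_{\partial B_y(R_0)}$, where the paper instead bounds the number of steps after hitting and relies on $2(a+\epsilon)(1+\epsilon_1)<1/d$ to keep the walk inside the box. You also spell out the conditional independence that the paper only asserts with the phrase "the paths in $S_y^{(3)}(U)$ are independent given their starting points": your decomposition into pre-hitting piece plus fresh geometrically killed walk, thinning by luckiness, and appeal to memorylessness is exactly the justification that makes the per-attempt probability $p/2$ usable in the product bound. Both approaches are sound and give the same conclusion.
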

\begin{proof}
	For any $z\in B_y(dn^{(a+\epsilon)(1+\epsilon_1)})$, by Lemma \ref{lemma1},
	\begin{equation}
	P_z\left(H_V<d^2n^{2(a+\epsilon)(1+\epsilon_1)} \right)>c*\left(2dn^{(a+\epsilon)(1+\epsilon_1)} \right)^{2-d}*cap(V).
	\end{equation}
	By Lemma \ref{lemma2}, we know that $|S_y^{(3)}(U)|\succeq Pois(c*cap(U))$, thus $\exists c'(u,d)>0$ such that 
	 $$P\left(|S_y^{(3)}(U)|< c'*cap(U) \right)\le \boldsymbol{s.e.}(T). $$
	Since the paths in $S_y^{(3)}(U)$ are independent given their starting points and $2(a+\epsilon)(1+\epsilon_1)<\frac{1}{d}$, 
	\begin{equation}
		\begin{split}
			&P\left(U \xleftrightarrow[B_y(n^{\frac{1}{d}})]{S_y^{(3)}(U)}V \right)\\
		\ge &1-P\left(|S_y^{(3)}(U)|< c'*cap(U) \right)-\left(1-c*\left(dn^{(a+\epsilon)(1+\epsilon_1)} \right)^{2-d}*cap(V) \right)^{c'*cap(U)} \\
		\ge &1-\boldsymbol{s.e.}(T)-\exp{-c*\left(dn^{(a+\epsilon)(1+\epsilon_1)} \right)^{2-d}*cap(V)*cap(U)}\\
		\ge &1-\boldsymbol{s.e.}(T)-\exp{-c*n^{(a+\epsilon)(d-2)(1-3\epsilon_1)}}=1-\boldsymbol{s.e.}(T).
		\end{split}
	\end{equation}
\end{proof}
\begin{definition}\label{defofgoodsite}
		We say a site $y\in \mathbb{Z}^d$ is good if any line segment $l_{x,i}$ including $y$ is good and for any $l_{x_1,i_1}$ and $l_{x_2,i_2}$ including $y$, $\bar{\mathcal{I}}_{x_1,i_1}\xleftrightarrow[B_y(T^{\frac{1}{2d}})]{ S_y^{(2)}\cup S_y^{(3)}}\bar{\mathcal{I}}_{x_2,i_2}$.
\end{definition}
\begin{lemma}\label{goodsite}
 For any $y\in \mathbb{Z}^d$,
	\begin{equation}\label{goodsite0}
	P\left(y\ is\ good\right) \ge 1-\boldsymbol{s.e.}(T).
	\end{equation}
\end{lemma}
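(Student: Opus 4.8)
The plan is to verify the two requirements of Definition \ref{defofgoodsite} separately and then assemble them by a union bound, exploiting that only polynomially-in-$T$ many line segments meet $y$: a segment $l_{x,i}\ni y$ forces $x=y-ke_i$ with $1\le i\le d$ and $k\in[-n,n]$, so there are $d(2n+1)=O(T^{1/2})$ such segments and $(d(2n+1))^2=O(T)$ ordered pairs of them. Since a polynomial-in-$T$ prefactor is absorbed by a stretched exponential, it is enough to show that a fixed segment through $y$ fails to be good, resp.\ that a fixed pair of segments through $y$ fails the connectivity condition, only with probability $\boldsymbol{s.e.}(T)$. The first requirement is then immediate from Lemma \ref{goodline} together with this union bound.

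For a fixed pair $l_{x_1,i_1},l_{x_2,i_2}\ni y$ we work on the (probability $1-\boldsymbol{s.e.}(T)$) event that both segments are good. For $j\in\{1,2\}$ this gives $\bar{\mathcal{I}}_{x_j,i_j}\ni\bar\varphi_1^{x_j,i_j}(y)$ with $|\bar\varphi_1^{x_j,i_j}(y)-y|\le T^\epsilon$, so the connected set $Q_{x_j,i_j,y}\subset\bar{\mathcal{I}}_{x_j,i_j}$ and the sets $U^{(1)}_{x_j,i_j,y},\dots,U^{(d-2)}_{x_j,i_j,y}$ of Section \ref{goodsites} (the $k$-th built from the sub-process $\mathcal{FI}_{2,k}$) are well defined and sit near $y$. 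Choosing $0<\epsilon_1<\tfrac13$ and $(a+\epsilon)(1+\epsilon_1)<\tfrac1{2d}$, estimate (\ref{35}) gives that, with probability $1-\boldsymbol{s.e.}(T)$, all of $Q_{x_j,i_j,y},U^{(1)}_{x_j,i_j,y},\dots,U^{(d-2)}_{x_j,i_j,y}$ lie in $B_y(dn^{(a+\epsilon)(1+\epsilon_1)})\subset B_y(T^{1/2d})$ (for $T$ large, since $b$ is small) and $cap\big(U^{(d-2)}_{x_j,i_j,y}\big)\ge cn^{(a+\epsilon)(d-2)(1-\epsilon_1)}$. The combinatorial point that makes these sets usable is that, by construction, $U^{(k)}_{x_j,i_j,y}$ is a union of ranges $R(\eta,2n^{2(a+\epsilon)})$ over walk-pieces $\eta\in\bar{S}_y^{(2,k)}\big(U^{(k-1)}_{x_j,i_j,y}\big)$ (with the convention $U^{(0)}_{x_j,i_j,y}:=Q_{x_j,i_j,y}$), and every such $\eta$ \emph{begins} at a vertex of $U^{(k-1)}_{x_j,i_j,y}$; hence, starting from any vertex of $U^{(d-2)}_{x_j,i_j,y}$, one walks backwards along a $\mathcal{FI}_{2,d-2}$-piece to a vertex of $U^{(d-3)}_{x_j,i_j,y}$, then along a $\mathcal{FI}_{2,d-3}$-piece to a vertex of $U^{(d-4)}_{x_j,i_j,y}$, and so on down to a vertex of $Q_{x_j,i_j,y}\subset\bar{\mathcal{I}}_{x_j,i_j}$, using only edges traversed by paths of $S_y^{(2)}$ and never leaving $B_y(T^{1/2d})$.

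It remains to join $U^{(d-2)}_{x_1,i_1,y}$ to $U^{(d-2)}_{x_2,i_2,y}$. Conditioning on $(\mathcal{FI}_1,\mathcal{FI}_2)$ — equivalently, freezing the realizations of these two sets and of $\bar{\mathcal{I}}_{x_1,i_1},\bar{\mathcal{I}}_{x_2,i_2}$ — and using that $\mathcal{FI}_3$ is independent of that data, we apply Lemma \ref{connect} with $U=U^{(d-2)}_{x_1,i_1,y}$ and $V=U^{(d-2)}_{x_2,i_2,y}$: on the good event above its hypotheses hold uniformly, so by the uniform bound trick, with probability $1-\boldsymbol{s.e.}(T)$ there is a path inside $B_y(n^{1/d})\subset B_y(T^{1/2d})$ from $U^{(d-2)}_{x_1,i_1,y}$ to $U^{(d-2)}_{x_2,i_2,y}$ using only edges traversed by paths of $S_y^{(3)}$. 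Concatenating the backward chain in $x_1$, this $\mathcal{FI}_3$-crossing, and the backward chain in $x_2$ yields $\bar{\mathcal{I}}_{x_1,i_1}\xleftrightarrow[B_y(T^{1/2d})]{S_y^{(2)}\cup S_y^{(3)}}\bar{\mathcal{I}}_{x_2,i_2}$ off an event of probability $\boldsymbol{s.e.}(T)$. A union bound over the $O(T)$ pairs and the $O(T^{1/2})$ segments, absorbing the polynomial factor into the stretched exponential, now gives (\ref{goodsite0}).

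All inputs — Lemma \ref{goodline}, estimate (\ref{35}), and Lemma \ref{connect} — are already available, so the points requiring care are the bookkeeping of the $(d-2)$-step backward chain through $S_y^{(2)}$ and, above all, the conditioning: one must ensure that $\bar{\mathcal{I}}_{x_j,i_j}$ depends only on $\mathcal{FI}_1$, that the sets $U^{(k)}_{x_j,i_j,y}$ depend only on $(\mathcal{FI}_1,\mathcal{FI}_2)$, and that the final crossing depends on $\mathcal{FI}_3$ alone with the sets $U^{(d-2)}_{x_j,i_j,y}$ frozen, so that the uniform bound trick legitimately decouples the three families. The constraint $(a+\epsilon)(1+\epsilon_1)<\tfrac1{2d}$ is precisely what confines all of these pieces to the single box $B_y(T^{1/2d})$.
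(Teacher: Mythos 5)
Your proof is correct and follows essentially the same route as the paper: split off (i) the event that some segment through $y$ is not good (Lemma~\ref{goodline}), (ii) the event that the containment/capacity estimate (\ref{35}) fails for some segment, and (iii) for each pair of segments the event that Lemma~\ref{connect} fails to connect $U^{(d-2)}_{x_1,i_1,y}$ and $U^{(d-2)}_{x_2,i_2,y}$ via $S_y^{(3)}$, all handled by a union bound over the polynomially-in-$T$ many segments and pairs, with the uniform bound trick used to condition on $(\mathcal{FI}_1,\mathcal{FI}_2)$ before applying Lemma~\ref{connect}. Your explicit ``backward chain'' explanation of why each $U^{(k)}_{x,i,y}$ is connected to $\bar{\mathcal{I}}_{x,i}$ through $S_y^{(2)}$-paths is a welcome unpacking of the paper's terse remark that $\bar{\mathcal{J}}_{x,i}(y)$ is connected to $\bar{\mathcal{I}}_{x,i}$, but it is the same mechanism.
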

\begin{proof}
	By Definition \ref{defofgoodsite}, we have: for any $y\in \mathbb{Z}^d$, \begin{equation}\label{422}
	\begin{split}
&P\left(y\ is\ not\ good \right)\\
\le &\sum\limits_{(x,i):y\in l_{x,i}}P\left(\left\lbrace \bigcup_{j=1}^{d-2} U_{x,i,y}^{(j)}\subset B_y(dn^{(a+\epsilon)(1+\epsilon_1)}),cap(U_{x,i,y}^{(d-2)})\ge cn^{(a+\epsilon)(d-2)(1-\epsilon_1)}\right\rbrace^c,l_{x,i}\ is\ good\right)\\
&+\sum\limits_{(x,i):y\in l_{x,i}}P\left(l_{x,i}\ is\ not\ good \right)+\sum\limits_{(x_1,x_2,i_1,i_2):y\in l_{x_1,i_1}\cap l_{x_2,i_2}}P\Bigg(\bigcap\limits_{k=1,2}\bigg\{ l_{x_k,i_k}\ is\ good,\\
&\bigcup_{j=1}^{d-2} U_{x_k,i_k,y}^{(j)}\subset B_y(dn^{(a+\epsilon)(1+\epsilon_1)}),cap(U_{x_k,i_k,y}^{(d-2)})\ge cn^{(a+\epsilon)(d-2)(1-\epsilon_1)} \bigg\}\cap\left\lbrace \bar{\mathcal{I}}_{x_1,i_1}\xleftrightarrow[B_y(T^{\frac{1}{2d}})]{ S_y^{(2)}\cup S_y^{(3)}}\bar{\mathcal{I}}_{x_2,i_2} \right\rbrace^c  \Bigg).
	\end{split}
	\end{equation}
	
For any $l_{x,i}$, by Lemma \ref{goodline}, we have  
	\begin{equation}\label{423}
		P\left(l_{x,i}\ is\ not\ good \right)\le \boldsymbol{s.e.}(T).
	\end{equation}
	
For any $y\in l_{x,i}$, by (\ref{35}), we have
	\begin{equation}\label{424}
		P\left(\left\lbrace \bigcup_{j=1}^{d-2} U_{x,i,y}^{(j)}\subset B_y(dn^{(a+\epsilon)(1+\epsilon_1)}),cap(U_{x,i,y}^{(d-2)})\ge cn^{(a+\epsilon)(d-2)(1-\epsilon_1)}\right\rbrace^c,l_{x,i}\ is\ good\right)\le \boldsymbol{s.e.}(T).
	\end{equation}

Note that $\bar{\mathcal{J}}_{x,i}(y):=\bigcup_{i=1}^{d-2} U_{x,i,y}^{(i)}$ is connected to $\bar{\mathcal{I}}_{x,i}$. For any good line segments $l_{x_1,i_1}$ and $l_{x_2,i_2}$ including $y$, by Lemma \ref{connect} and the uniform bound trick,
\begin{equation}\label{425}
	\begin{split}
	P\Bigg(&\bigcap\limits_{k=1,2}\bigg\{ l_{x_k,i_k}\ is\ good,
	\bigcup_{j=1}^{d-2} U_{x_k,i_k,y}^{(j)}\subset B_y(dn^{(a+\epsilon)(1+\epsilon_1)}),cap(U_{x_k,i_k,y}^{(d-2)})\ge cn^{(a+\epsilon)(d-2)(1-\epsilon_1)} \bigg\}\\
	&\cap\left\lbrace \bar{\mathcal{I}}_{x_1,i_1}\xleftrightarrow[B_y(T^{\frac{1}{2d}})]{ S_y^{(2)}\cup S_y^{(3)}}\bar{\mathcal{I}}_{x_2,i_2} \right\rbrace^c  \Bigg)\\
	\le P\Bigg(&\bigcap\limits_{k=1,2}\bigg\{ l_{x_k,i_k}\ is\ good,
	\bigcup_{j=1}^{d-2} U_{x_k,i_k,y}^{(j)}\subset B_y(dn^{(a+\epsilon)(1+\epsilon_1)}),cap(U_{x_k,i_k,y}^{(d-2)})\ge cn^{(a+\epsilon)(d-2)(1-\epsilon_1)} \bigg\}\\
	&\cap\left\lbrace \bar{\mathcal{J}}_{x_1,i_1}(y) \xleftrightarrow[B_y(T^{\frac{1}{2d}})]{S_y^{(3)}(\bar{\mathcal{J}}_{x_1,i_1}(y)) }\bar{\mathcal{J}}_{x_2,i_2}(y) \right\rbrace^c  \Bigg)\le \boldsymbol{s.e.}(T).
	\end{split}
\end{equation}

Combine (\ref{422})-(\ref{425}), then (\ref{goodsite0}) holds.
\end{proof}
\subsection{Good boxes and infinite sub-cluster $\bar{\Gamma}$}
\begin{definition}\label{defgoodsite}
	Let $\mathcal{V}=n*\mathbb{Z}^d$ and define a site percolation $\left\lbrace Y(y) \right\rbrace_{y\in \mathcal{V}}$  : $Y(y)=1$ if and only if all sites $z\in B_y(n)$ are good. We also say $B_y(n)$ is a good box if $Y(y)=1$. 
\end{definition}
Here we need a Lemma about existence, uniquess and chemical distance of the infinite open cluster for a k-independence site percolation with sufficiently large vertex open probability. Moreover, it's easy to see that this Lemma can be applied in $\left\lbrace Y(y) \right\rbrace_{y\in \mathcal{V}}$.
\begin{remark}
	When we denote a collection containing one element, we may omit the braces. For example, we abbreviate $\{0\}$ to $0$.
\end{remark}
\begin{lemma}\label{lemmaZ}
	For a site percolation $\left\lbrace Z(y) \right\rbrace_{y\in \mathbb{Z}^d}$, $\forall y\in \mathbb{Z}^d$, $ P\left(Z(y)=1\right) \ge 1-\boldsymbol{s.e.}(T) $. If $\left\lbrace Z(y) \right\rbrace_{y\in \mathbb{Z}^d}$ is k-independent for some $k(u,d)>1$ (i.e. for any $A,B\subset \mathbb{Z}^d$ satisfying $d(A,B)\ge k$, $\left\lbrace Z(y) \right\rbrace_{y\in A} $ and $\left\lbrace Z(y) \right\rbrace_{y\in B} $ are independent), then $\exists T'(u,d)>0, c(d)>0$ such that for any $T>T'$, $\left\lbrace Z(y) \right\rbrace_{y\in \mathbb{Z}^d}$ has a unique infinite open cluster $\Gamma_Z$ and for any $y\in \mathbb{Z}^d$,
	\begin{equation}\label{3.24}
	P\left(0, y\in \Gamma_Z, \rho_{\Gamma_Z}(0,y)\ge  c|y|\right)\le \boldsymbol{s.e.}(|y|).
	\end{equation}
\end{lemma}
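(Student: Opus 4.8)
The plan is to reduce the statement to a known result about finitely-dependent percolation via a standard renormalization/coarse-graining argument, the same type of argument used for supercritical Bernoulli percolation and for random interlacements. First I would partition $\BZ^d$ into disjoint blocks of a fixed side length $\ell = \ell(k,d)$ chosen so that two blocks at graph distance $\geq 1$ in the renormalized lattice $\ell\BZ^d$ correspond to vertex sets in $\BZ^d$ at $\ell^\infty$-distance $> k$; thus the block events are (at worst) $1$-dependent after rescaling. Declare a block ``occupied'' if, say, every vertex in the (slightly enlarged) block has $Z(\cdot)=1$. Since $P(Z(y)=0)\le \boldsymbol{s.e.}(T)$ and a block contains $O(\ell^d)$ vertices, a union bound gives $P(\text{block occupied})\ge 1-\boldsymbol{s.e.}(T)$, and by making $T$ large (equivalently the occupation probability close to $1$) we land in the regime where the Liggett--Schonmann--Stacey domination \cite{liggett1997domination} lets us stochastically dominate the occupied-block process from below by a supercritical Bernoulli site percolation on $\BZ^d$ with parameter as close to $1$ as we like. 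This is where $T'(u,d)$ comes from.

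Next I would invoke the classical Antal--Pisztora chemical-distance theorem \cite{antal1996chemical} (or its site-percolation analogue): for Bernoulli site percolation at a parameter sufficiently close to $1$, there is a unique infinite cluster, and the chemical distance between two points of that cluster is, except on an event of stretched-exponentially small probability in $|y|$, at most a constant multiple of their $\ell^\infty$-distance. Because the occupied-block process dominates such a percolation, its infinite cluster contains the Bernoulli one; uniqueness of the infinite occupied-block cluster follows from a Burton--Keane argument together with the finite-dependence and high density (or can be quoted directly from \cite{drewitz2014chemical}, whose hypotheses the block process satisfies). Uniqueness of $\Gamma_Z$ itself then follows because every infinite $Z$-cluster must traverse infinitely many occupied blocks and any two occupied blocks meeting the same renormalized path are joined inside $\{Z=1\}$.

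For the quantitative bound \eqref{3.24}: if $0,y\in\Gamma_Z$ then (off a $\boldsymbol{s.e.}(|y|)$ event) the blocks containing $0$ and $y$ are both occupied and lie in the infinite occupied-block cluster, their renormalized chemical distance is $\le c_1\,|y|/\ell$, and each step of a renormalized geodesic can be realized by a $Z$-open path of length $\le c_2\ell$ inside the union of two adjacent occupied blocks (every vertex there is $Z$-open, so this is immediate). Concatenating yields a $Z$-open path from $0$ to $y$ of length $\le c\,|y|$, i.e. $\rho_{\Gamma_Z}(0,y)\le c|y|$; the exceptional probability is the sum of the $\boldsymbol{s.e.}(T)$ bad-block probabilities over the $O(|y|^d)$ relevant blocks plus the Antal--Pisztora error, both $\boldsymbol{s.e.}(|y|)$. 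Finally I would note the small subtlety that the blocks containing $0$ and $y$ need to actually be in the infinite cluster conditionally on $0,y\in\Gamma_Z$: if the block of $0$ is not occupied one instead uses that $0$'s $Z$-cluster, being infinite, reaches a nearby occupied block in the infinite occupied-block cluster within $O(1)$ steps — here one uses that an infinite cluster cannot be trapped by a bounded region of non-occupied blocks when the occupied process is that dense.

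The main obstacle I expect is not any single step but the bookkeeping to make the renormalization honestly $k$-independent: one must choose the block size $\ell$ large relative to $k$ and be careful that the ``enlarged block'' used in the definition of occupied still keeps disjoint (or well-separated) dependency regions, so that the domination lemma of \cite{liggett1997domination} genuinely applies. Everything downstream (uniqueness, the linear chemical-distance bound, the stretched-exponential tail) is then a quotation of \cite{antal1996chemical} or \cite{drewitz2014chemical} applied to the dominated Bernoulli percolation, combined with the elementary observation that inside occupied blocks the chemical distance in $\{Z=1\}$ is comparable to $\ell^\infty$-distance by construction.
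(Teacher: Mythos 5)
Your overall strategy --- Liggett--Schonmann--Stacey domination by a high-parameter Bernoulli percolation, Antal--Pisztora for linear chemical distance on that backbone, and then bridging from the backbone back to $\Gamma_Z$ --- matches the paper's. The only cosmetic difference is that the paper works at scale $1$ with the auxiliary bond percolation $R(e)=\mathbbm{1}_{\{Z(x)=Z(y)=1\}}$ (which is automatically $k$-independent, so LSS applies directly to $\mathbb{Z}^d$), whereas you coarse-grain into $\ell$-blocks; either route works. The final quantitative step, however, is not right as written. You assert that ``off a $\boldsymbol{s.e.}(|y|)$ event the blocks containing $0$ and $y$ are both occupied and lie in the infinite occupied-block cluster'', but the probability that the block of $0$ fails to be occupied is $\boldsymbol{s.e.}(T)$, a constant independent of $|y|$; this cannot be absorbed into a $\boldsymbol{s.e.}(|y|)$ error. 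Likewise ``reaches a nearby occupied block within $O(1)$ steps'' is not a deterministic statement.

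What is actually needed --- and is the genuinely non-obvious part of the lemma beyond LSS and Antal--Pisztora --- is the tail estimate
\begin{equation*}
P\bigl(0\in\Gamma_Z,\ \rho_{\Gamma_Z}(0,\Gamma_W)\ge N\bigr)\le\boldsymbol{s.e.}(N),
\end{equation*}
where $\Gamma_W$ is the infinite cluster of the dominated Bernoulli percolation. The paper proves this by showing that the boundary of the component $\hat{\mathcal{C}}_0$ of $0$ in $(\Gamma_W)^c$ is connected in the $\ast$-graph $\widetilde{\mathbb{L}}^d$ and is contained in a cluster of a dominated \emph{subcritical} percolation of ``bad'' edges, hence has exponential tail for its diameter (its inequality (3.57)). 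One then takes $N=|y|/3$, union-bounds over the entry points $z_1\in B_0(|y|/3)$ and $z_2\in B_y(|y|/3)$ into $\Gamma_W$, and applies the {\v C}ern{\'y}--Popov/Antal--Pisztora bound to $\rho_{\Gamma_W}(z_1,z_2)$. Your intuition that an infinite $Z$-cluster cannot be trapped in the complement of such a dense backbone is exactly the right one, but it must be quantified by this tail bound and the $\ast$-connectedness argument behind it; that is where the real work of the lemma lies.
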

\begin{proof}
	 Consider an auxiliary bond percolation $\left\lbrace R(e)  \right\rbrace_{e\in \mathbb{L}^d}$, where $R(e)=1$ if and only if $Z(x)=Z(y)=1$ for $e=\left\lbrace x,y \right\rbrace $. Let $E_1=\left\lbrace \{e_1,e_2\}:e_1,e_2\in  \mathbb{L}^d, e_1\cap e_2\neq \emptyset \right\rbrace $ and define a graph $G_1=(\mathbb{L}^d, E_1)$. Since $\left\lbrace Z(y) \right\rbrace_{y\in \mathbb{Z}^d}$ is k-independent, $\left\lbrace R(e)\right\rbrace_{e\in \mathbb{L}^d}$ is also k-independent (i.e. $\forall D,E\subset \mathbb{L}^d$ and $\min\left\lbrace |x-z|:\exists e_1\in D,e_2\in E\ such\ that\ x\in e_1,z\in e_2 \right\rbrace\ge k$, $\left\lbrace R(e)  \right\rbrace_{e\in D}$ and $\left\lbrace R(e)  \right\rbrace_{e\in E}$ are independent). By Theorem 1.3 of \cite{liggett1997domination}, $\left\lbrace R(e) \right\rbrace_{e\in \mathbb{L}^d}$ stochastically dominates a Bernoulli bond percolation $\left\lbrace W(e) \right\rbrace_{e\in \mathbb{L}^d}$ with parameter $1-\boldsymbol{s.e.}(T)$. For $T$ large enough, $\left\lbrace W(e) \right\rbrace_{e\in \mathbb{L}^d}$ is supercritical, thus there a.s. exists a unique infinite open cluster $\Gamma_W$ in $\left\lbrace W(e) \right\rbrace_{e\in \mathbb{L}^d}$. By the stochastic domination, there also a.s. exists an infinite open cluster in $\left\lbrace R(e) \right\rbrace_{e\in \mathbb{L}^d}$.
	  
	 By Theorem 4.2 of \cite{grimmettpercolation}, if the parameter of $\left\lbrace W(e) \right\rbrace_{e\in \mathbb{L}^d}$ is greater than $p_{fin}$ ($0<p_{fin}<1$), then $\left( \Gamma_W\right)^c$ is composed of finite connected clusters. Assume $\Gamma_R^1$ and $\Gamma_R^2$ are both infinite open clusters in $\left\lbrace R(e) \right\rbrace_{e\in \mathbb{Z}^d}$. For any $y_1\in \Gamma_R^1$, if $y_1 \stackrel{R}\nleftrightarrow \Gamma_W$, then $\Gamma_R^1\subset \mathcal{C}_{y_1}^{\left( \Gamma_W\right) ^c}$, which is in conflict with the fact that $\Gamma_R^1$ is infinite but $\mathcal{C}_{y_1}^{\left( \Gamma_W\right) ^c}$ is finite. Thus $P\left(y_1 \xleftrightarrow{R} \Gamma_W \right) =1$. In the same way, for any $y_2\in \Gamma_R^2$, we have $P\left(y_2 \xleftrightarrow{R} \Gamma_W \right) =1$. Thus $\Gamma_R^1$ and $\Gamma_R^2$ are a.s. connected , which means $\left\lbrace R(e) \right\rbrace_{e\in \mathbb{L}^d}$ a.s. has a unique infinite open cluster $\Gamma_R$. By the definition of $\left\lbrace R(e)\right\rbrace_{e\in \mathbb{L}^d}$, $\left\lbrace Z(y) \right\rbrace_{y\in \mathbb{Z}^d}$ also a.s. has a unique infinite open cluster $\Gamma_{Z}$ and we have $\Gamma_{Z}=\Gamma_R$.

	Now we are going to prove a useful result: let $\hat{\mathcal{C}}_0=\mathcal{C}_0^{\left( \Gamma_W\right) ^c}$ and $D(0,\partial \hat{\mathcal{C}}_0)=\max\left\lbrace |z|:z\in \partial \hat{\mathcal{C}}_0 \right\rbrace $, then $\exists c,c'>0$ such that 
	\begin{equation}\label{useful1}
	P\left(D(0,\partial \hat{\mathcal{C}}_0)\ge N \right) \le c*e^{-c'N}.
	\end{equation}

	
	 Here we need to comfirm that $\partial \hat{\mathcal{C}}_0$ is connected under the graph $G_2=\left(\mathbb{Z}^d,\widetilde{\mathbb{L}}^d\right) $, where $\widetilde{\mathbb{L}}^d=\left\lbrace \{y_1,y_2\}:y_1,y_2\in \mathbb{Z}^d, |y_1-y_2|=1\right\rbrace $ (note that $y_1$ doesn't have to adjoin $y_2$ in $\mathbb{Z}^d$). Equivalently, we need to prove that for any $y_1,y_2\in \partial \hat{\mathcal{C}}_0$, there exists some points $\left(z_1,z_2,...,z_m \right) $ in $\partial \hat{\mathcal{C}}_0$ such that $|z_i-z_{i+1}|=1$ and $z_1=y_1,z_m=y_2$.
	
	By Lemma 3.1 of \cite{grimmettpercolation}, if $A\subset \mathbb{Z}^d$ is finite, connected and $\mathbb{Z}^d\setminus A$ is connected, then we have $\partial \left(\bigcup_{y\in A}\left\lbrace z\in \mathbb{R}^d: |z-y|\le 1 \right\rbrace \right)$ is a connected region in $\mathbb{R}^d$. Take $A=\hat{\mathcal{C}}_0$ and denote $U=\bigcup_{y\in \hat{\mathcal{C}}_0}\left\lbrace z\in \mathbb{R}^d: |z-y|\le 1 \right\rbrace $. Then $\partial U$ is connected. For any $y_1,y_2\in \partial \hat{\mathcal{C}}_0$, $B_{y_1}(1)\cap \partial U$ and $B_{y_2}(1)\cap \partial U$ are connected in $\partial U$ so that we can find a finite path $\gamma \subset \partial U$ from $B_{y_1}(1)\cap \partial U$ to $B_{y_2}(1)\cap \partial U$. Assume $\gamma$ crosses (d-1)-dimension squares $S_1,S_2,...,S_k\subset \partial U$ in turn and we denote the center of the box including $S_i$ by $z_i$. So we find the desired sequence of vertices $\left(z_1,z_2,...,z_m \right) $. Therefore, $\partial \hat{\mathcal{C}}_0$ is connected under the graph $G_2=\left(\mathbb{Z}^d,\widetilde{\mathbb{L}}^d\right) $.
	
	By definition, we know that for any $y\in \partial \hat{\mathcal{C}}_0$, there must exist an edge $e\in \mathbb{L}^d$ containing $y$ such that $W(e)=0$. In deed, since $y\in \partial \hat{\mathcal{C}}_0$, there exists $z\in \Gamma_W$ satisfying $|y-z|_2=1$ and thus $W(\{y,z\})=0$ (otherwise, $y\in \Gamma_W$).
	
	 We need another auxiliary percolation $\left\lbrace S(\widetilde{e})  \right\rbrace_{\widetilde{e}\in \widetilde{\mathbb{L}}^d}$: for $\widetilde{e}=\{x,y\}\in \widetilde{\mathbb{L}}^d$, $S(e)=1$ if and only if $\exists$ $e_1,e_2\in \mathbb{L}^d$ such that $x\in \widetilde{e}\cap e_1$, $y\in \widetilde{e}\cap e_2$ and $W(e_1)=W(e_2)=0$ (note that $e_1$ and $e_2$ can be the same edge). Since $\left\lbrace W(e)  \right\rbrace_{e\in \mathbb{L}^d}$ is a Bernoulli bond percolation with parameter $1-\boldsymbol{s.e.}(T)$, we know that $\left\lbrace S(e)  \right\rbrace_{e\in \widetilde{\mathbb{L}}^d}$ is 2-independent and $\forall e\in \mathbb{L}^d$, $P\left( S(e)=1 \right)\le \boldsymbol{s.e.}(T)$. In addition, since $\partial \hat{\mathcal{C}}_0$ is connected under $G_2$ and $y\in \partial \hat{\mathcal{C}}_0$, $\exists e\in \mathbb{L}^d$ such that $y\in e$ and $W(e)=0$, we have $\partial\hat{\mathcal{C}}_0\subset \widetilde{\mathcal{C}}_y^{S}$ for any $y\in \partial\hat{\mathcal{C}}_0$, where $\widetilde{\mathcal{C}}$ is defined in the same way as $\mathcal{C}$ by using $\widetilde{\mathbb{L}}^d$ instead of $\mathbb{L}^d$.
	
	 By Theorem 1.3 of \cite{liggett1997domination}, $\left\lbrace 1-S(e) \right\rbrace_{e\in \mathbb{Z}^d}$ stochastically dominates a supercritical Bernoulli bond percolation on $G_2$ with parameter $1-\boldsymbol{s.e.}(T)$. Thus $\left\lbrace S(e) \right\rbrace_{e\in \mathbb{Z}^d}$ is dominated by a subcritical bond percolation $\left\lbrace V(e) \right\rbrace_{e\in \mathbb{Z}^d}$, when $T$ is large enough. 
	
	 Using Theorem 6.1 of \cite{grimmett2013percolation}, since $\partial\hat{\mathcal{C}}_0\subset \widetilde{\mathcal{C}}_y^{S}$ for any $y\in \partial\hat{\mathcal{C}}_0$ and $\left\lbrace V(e) \right\rbrace_{e\in \mathbb{Z}^d}$ stochastically dominates $\left\lbrace S(e) \right\rbrace_{e\in \mathbb{Z}^d}$, we have 
	\begin{equation}\label{3.57}
	\begin{split}
	P\left( D(0,\partial\hat{\mathcal{C}}_0)\ge N \right) 
	\le &\sum_{y,|y|\ge N}P\left(y\in \partial\hat{\mathcal{C}}_0 \right) \\
	\le &\sum_{y,|y|\ge N}P\left(|\widetilde{\mathcal{C}}_y^{S}|\ge |y| \right) \\
	\le &\sum_{y,|y|\ge N}P\left(|\widetilde{\mathcal{C}}_y^{V}|\ge |y| \right) \\
	\le &\sum_{y,|y|\ge N}e^{-c'|y|}\le c*e^{-c'N}.		
	\end{split}
	\end{equation}
	Thus (\ref{useful1}) is proved.
	
	If $\left\lbrace0\in \Gamma_R, \rho_{\Gamma_R}(0,\Gamma_W)\ge N  \right\rbrace $ happens, then $D(0,\partial \hat{\mathcal{C}}_0)\ge 0.4N^{\frac{1}{d}}$ (otherwise, $ \rho_{\Gamma_R}(0,\Gamma_W)\le |\hat{\mathcal{C}}_0|\le |B(0.4N^{\frac{1}{d}})|<N$ ). By (\ref{useful1}), \begin{equation}\label{54}
	P\left( 0\in \Gamma_R, \rho_{\Gamma_R}(0,\Gamma_W)\ge N \right) \le P\left( D(0,\partial \hat{\mathcal{C}}_0)\ge 0.4N^{\frac{1}{d}}\right)\le \boldsymbol{s.e.}(N).
	\end{equation}
	
	By Theorem1.1 of \cite{vcerny2012internal}, there exists $c'(d)>0$ such that for any $z_1,z_2\in \mathbb{Z}^d$,
	\begin{equation}\label{55}
	P\left( z_1,z_2\in \Gamma_W, \rho_{\Gamma_W}(z_1,z_2)\ge c'*|z_1-z_2|\right) \le \boldsymbol{s.e.}(|z_1-z_2|).
	\end{equation}
	Note that $\forall z_1\in B_0(\frac{1}{3}|y|),z_2\in B_y(\frac{1}{3}|y|)$, $2|y|\ge |z_1-z_2|\ge \frac{1}{3}|y|$. By (\ref{54}) and (\ref{55}), take $c=2c'+1$ and then we have
	\begin{equation}\label{3.29}
	\begin{split}
	&P\left(0, y\in \Gamma_R, \rho_{\Gamma_R}(0,y)\ge c|y| \right)\\
	\le &P\left(0, y\in \Gamma_R, \rho_{\Gamma_R}(0,y)\ge c|y|,\rho_{\Gamma_R}(0,\Gamma_W)\le \frac{1}{3}|y|,\rho_{\Gamma_R}(y,\Gamma_W)\le \frac{1}{3}|y|\right)+\boldsymbol{s.e.}(|y|)\\
	\le &\sum_{z_1\in B_0(\frac{1}{3}|y|),z_2\in B_y(\frac{1}{3}|y|)}P\left( \rho_{\Gamma_W}(z_1,z_2)\ge c'|z_1-z_2| \right)+\boldsymbol{s.e.}(|y|)\\
	\le &c|y|^{2d}*\boldsymbol{s.e.}(|y|)+\boldsymbol{s.e.}(|y|)=\boldsymbol{s.e.}(|y|).
	\end{split}
	\end{equation} 
	Then by (\ref{3.29}) and $\Gamma_Z=\Gamma_R$, we get (\ref{3.24}).
\end{proof}
\begin{proposition}\label{Y}
	For any $d\ge 3$, $u>0$, $\exists \bar{T}(d,u)>0$ such that $\forall T>\bar{T}$, site percolation $\left\lbrace Y(y) \right\rbrace_{y\in \mathcal{V}}$ satisfies the following properties:
	\begin{enumerate}
		\item For any $y\in \mathcal{V}$, $P(Y(y)=1)\ge 1-\boldsymbol{s.e.}(T)$.
		\item $Y(y)=0\ or\ 1$ only depends on the paths starting from $B_y(9T^{0.5})$.
		\item For any subsets $A,B$ in $ \mathcal{V}$ and $d(A,B) \ge 20T^{0.5}$, then $\left\lbrace Y(y)\right\rbrace_{y\in A} $ and $\left\lbrace Y(y)\right\rbrace_{y\in B}$ are independent.
		\item If $Y(y)=1$, let $\mathbb{B}_y= \bigcup\limits_{l_{x,i}\cap B_y(n)\neq \emptyset} \bar{\mathcal{I}}_{x,i}$ and $\bar{\mathbb{B}}_y =\mathbb{B}_y \cup \left(\bigcup\limits_{z\in B_y(n)}\mathcal{C}_{\mathbb{B}_y}^{S_z^{(2)}\cup S_z^{(3)}}\left(B_z(T^{\frac{1}{2d}}) \right)    \right) $. Then $\bar{\mathbb{B}}_y$ is connected and $\exists c=c(d,u)>0$ such that for any two sites $z_1,z_2\in \bar{\mathbb{B}}_y$, $\rho_{\bar{\mathbb{B}}_y}(z_1,z_2)<cT^{0.5}$.
		\item For $y_1, y_2\in \mathcal{V}$, if  $|y_1-y_2|_2=n$ and $Y(y_1)=Y(y_2)=1$, then $\exists c=c(d,u)>0$ such that for any $z_1\in \bar{\mathbb{B}}_{y_1}$ and $z_2\in \bar{\mathbb{B}}_{y_2}$, $\rho_{\bar{\mathbb{B}}_{y_1}\cup \bar{\mathbb{B}}_{y_2}}(z_1,z_2)<cT^{0.5}$.
		\item  $\left\lbrace Y(y) \right\rbrace_{y\in \mathcal{V}}$ has a unique infinite open cluster $\Gamma_Y$. 
		\item There exists $c(d)>0$ such that for $y\in \mathcal{V}$, 
		\begin{equation}\label{42}
		P\left(0, y\in \Gamma_Y, \rho_{\Gamma_Y}(0,y)\ge c|y|_{\mathcal{V}}\right)\le \boldsymbol{s.e.}(|y|), 
		\end{equation}
		where $|y|_{\mathcal{V}}:=\frac{|y|}{n}$ for $y\in \mathcal{V}$.
	\end{enumerate}
\end{proposition}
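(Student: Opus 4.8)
The plan is to handle the seven assertions in three waves: (1)--(3) are ``soft'' (a union bound, the locality of the construction, and the independence built into FRI); (4)--(5) are the geometric heart, where the building--block clusters $\bar{\mathcal I}_{x,i}$ touching one good box are glued into a single cluster of chemical diameter $O(T^{0.5})$ and neighbouring boxes' clusters overlap; and (6)--(7) follow at once by passing to the rescaled lattice and quoting Lemma \ref{lemmaZ}. For (1), by Definition \ref{defgoodsite} and a union bound $P(Y(y)=0)\le\sum_{z\in B_y(n)}P(z\text{ not good})$, and since $|B_y(n)|=O(T^{d/2})$ while each summand is $\boldsymbol{s.e.}(T)$ by Lemma \ref{goodsite}, so is the sum. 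For (2), I would trace through the definitions: a line segment $l_{x,i}$ containing some $z\in B_y(n)$ has $|x-y|\le 2n$, and both $\bar{\mathcal I}_{x,i}$ and the $S^{(2)}\cup S^{(3)}$--connections in Definition \ref{defofgoodsite} are measurable with respect to the FRI trajectories started in $B_x(8T^{0.5})$ (together with the independent auxiliary walks attached to such starting points); since $a,b,\epsilon$ are small we may assume $2n\le 2bT^{0.5}\le T^{0.5}$, so all of this lives in $B_y(9T^{0.5})$. Property (3) is then immediate from (2) and ``Independence within FRI'': if $d(A,B)\ge 20T^{0.5}$ then $\bigcup_{y\in A}B_y(9T^{0.5})$ and $\bigcup_{y\in B}B_y(9T^{0.5})$ are disjoint, so the corresponding families of trajectories and auxiliary walks are independent.

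\textbf{Properties (4)--(5).} Working on $\{Y(y)=1\}$, every $z\in B_y(n)$ is good, hence every line segment meeting $B_y(n)$ is good and, for each such $z$, any two line segments through $z$ have their clusters joined by an $S^{(2)}_z\cup S^{(3)}_z$--path inside $B_z(T^{\frac{1}{2d}})$. The key geometric point is that for $z_1,z_2\in B_y(n)$ the coordinatewise lattice path $z_1=p^{(0)},p^{(1)},\dots,p^{(d)}=z_2$ with $p^{(k)}=p^{(k-1)}+(z_2-z_1)^{(k)}\boldsymbol e_k$ stays inside $B_y(n)$, because each coordinate of each $p^{(k)}$ coincides with a coordinate of $z_1$ or of $z_2$. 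Covering each leg of this path by one good line segment, or by two when $|(z_2-z_1)^{(k)}|>n$ (splitting at the midpoint, which again lies in $B_y(n)$), produces a chain of at most $2d$ good line segments whose consecutive members share a good site of $B_y(n)$; running the good--site connection along the chain and attaching the two end segments to $l_{x_1,i_1},l_{x_2,i_2}$ through the good sites $z_j\in l_{x_j,i_j}\cap B_y(n)$ shows that all $\bar{\mathcal I}_{x,i}$ with $l_{x,i}\cap B_y(n)\ne\emptyset$ are mutually connected. Since each good segment has chemical diameter $\le cn$ (Definition \ref{goodlineseg}) and each of the $O(1)$ connecting pieces lies in a box of radius $T^{\frac{1}{2d}}$, hence has length $O(T^{0.5})$, the set $\mathbb B_y$ is connected and any two of its points are within chemical distance $O(T^{0.5})$; each collar set $\mathcal C_{\mathbb B_y}^{S^{(2)}_z\cup S^{(3)}_z}(B_z(T^{\frac{1}{2d}}))$ is by construction connected to $\mathbb B_y$ through a path inside a box of radius $T^{\frac{1}{2d}}$, so $\bar{\mathbb B}_y$ is connected with the same $O(T^{0.5})$ bound, which is (4). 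For (5), $|y_1-y_2|_2=n$ forces $y_1\in B_{y_1}(n)\cap B_{y_2}(n)$, so $y_1$ is good and $\bar{\mathcal I}_{y_1,1}\subseteq\mathbb B_{y_1}\cap\mathbb B_{y_2}$; concatenating the estimate from (4) in $\bar{\mathbb B}_{y_1}$ and in $\bar{\mathbb B}_{y_2}$ through any point of $\bar{\mathcal I}_{y_1,1}$ yields $\rho_{\bar{\mathbb B}_{y_1}\cup\bar{\mathbb B}_{y_2}}(z_1,z_2)=O(T^{0.5})$.

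\textbf{Properties (6)--(7), and the main obstacle.} Identify $\mathcal V=n\mathbb Z^d$ with $\mathbb Z^d$ via $y\mapsto y/n$ and let $Z$ be the transported site percolation. By (1), $P(Z(y)=1)\ge 1-\boldsymbol{s.e.}(T)$; by (3), $Z$ is $k$--independent for some constant $k=k(u,d)>1$ (any $k$ with $nk\ge 20T^{0.5}$ works). Lemma \ref{lemmaZ} then gives, for all $T$ above the threshold there and above the thresholds needed for the choices of $a,b,\epsilon$, a unique infinite open cluster --- which is (6) --- while (\ref{3.24}) turns into (\ref{42}) after undoing the rescaling, since chemical distance is preserved by the isomorphism, $|y|_{\mathcal V}=|y|/n$, and $\boldsymbol{s.e.}(|y|/n)=\boldsymbol{s.e.}(|y|)$ (the $\boldsymbol{s.e.}$--constants now being allowed to depend on $T$). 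Setting $\bar T$ equal to the maximum of all thresholds used finishes the proof. The one genuinely substantial step is (4): showing that the union of \emph{all} building--block clusters meeting a good box is connected with chemical diameter of order $T^{0.5}$ rests on the ``stay inside $B_y(n)$'' property of coordinatewise paths plus the bookkeeping for the chain of good line segments and their $O(1)$ pairwise connections; everything else is a union bound, a locality check, or a direct appeal to Lemma \ref{lemmaZ}.
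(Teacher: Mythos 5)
Your proposal is correct and takes essentially the same approach as the paper: properties (1)--(3) by union bound, locality, and the built-in independence of FRI; (4)--(5) by exactly the same coordinatewise-path-in-$B_y(n)$ chaining through good line segments and good sites (the paper routes everything through the hub $\bar{\mathcal I}_{y,d}$ via the interpolating points $w_0,\dots,w_d$, while you chain the two endpoint segments directly and, in (5), use the common segment $l_{y_1,1}\subset\mathbb B_{y_1}\cap\mathbb B_{y_2}$ rather than gluing $l_{y_1,d}$ to $l_{y_2,d}$ at the good site $y_1$, but these are cosmetic reorganizations of the same idea); and (6)--(7) by transporting Lemma~\ref{lemmaZ} along the isomorphism $\mathcal V\cong\mathbb Z^d$.
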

\begin{proof}
	(1)-(3) are elementary so we just omit their proof.
	
	(4): If $z_1\in \bar{\mathcal{I}}_{x_1,i_1}$, without loss of generality, assume $i_1=1$. Let $w_i=\left(y^{(1)},...,y^{(i)},x_1^{(i+1)},...,x_1^{(d)} \right)$ for $i=0,1,2,...,d$ (recall that $z^{(i)}$ is the i-th coordinate of $z$). Especially, $w_0=x_1$ and $w_d=y$. Since $l_{x_1,1}\cap B_y(n)\neq \emptyset$, there exists $v_1=x_1+j*e_1=w_1+(j+x_1^{(1)}-y^{(1)})*e_1\in l_{x_1,1}\cap B_y(n)$, where $|j+x_1^{(1)}-y^{(1)}|\le n$ (otherwise, $|v_1-y|\ge v_1^{(1)}-y^{(1)}>n$ and thus, $v_1\notin B_y(n)$). Then we have $v_1\in l_{w_1,1}$. Since $v_1\in B_y(n)$ and $Y(y)=1$, $v_1$ is a good site. By the definition of good sites (recall Definition \ref{defgoodsite}) and $v_1\in l_{x_1,1}\cap l_{w_1,1}$, we have $\bar{\mathcal{I}}_{x_1,1}\xleftrightarrow[B_{v_1}(T^{\frac{1}{2d}})]{ S_{v_1}^{(2)}\cup S_{v_1}^{(3)}}\bar{\mathcal{I}}_{w_1,1}$. Thus $\exists s_o^{+}\in \bar{\mathcal{I}}_{x_1,1}$ and $s_1^{-}\in \bar{\mathcal{I}}_{w_1,1}$ such that $\rho_{\bar{\mathbb{B}}_y}(s_0^{+},s_1^{-})\le (2T^{\frac{1}{2d}}+1)^d\le c_1T^{0.5}$, where $c_1=c_1(d)$. 
	 Similarly, for any $1\le j\le d-1$, $\exists s_j^{+}\in \bar{\mathcal{I}}_{w_j,j}$ and $s_{j+1}^{-}\in \bar{\mathcal{I}}_{w_{j+1},j+1}$ such that $\rho_{\bar{\mathbb{B}}_y}(s_j^{+},s_{j+1}^{-})\le c_1T^{0.5}$. By subadditivity of chemical distance and (3) of Lemma \ref{goodline}, we have \begin{equation}\label{43}
	\begin{split}
	\rho_{\bar{\mathbb{B}}_y}\left(z_1, \bar{\mathcal{I}}_{y,d}\right)\le &
\rho_{\bar{\mathbb{B}}_y}\left(z_1, s_0^{+}\right)+\rho_{\bar{\mathbb{B}}_y}\left(s_0^{+}, s_1^{-}\right)
+\sum\limits_{j=1}^{d-1}\left( \rho_{\bar{\mathbb{B}}_y}\left(s_j^{-},s_j^{+}\right)+\rho_{\bar{\mathbb{B}}_y}\left(s_j^{+},s_{j+1}^{-}\right)\right)\\ \le &(c_1+c_2)d*T^{0.5}, 
	\end{split}
	\end{equation}
	where $c_2$ is the constant in (3) of Definition \ref{goodlineseg}.
	
	Now for any $z_1\in \mathcal{C}_{\mathbb{B}_y}^{S_z^{(2)}\cup S_z^{(3)}}\left(B_z(T^{\frac{1}{2d}}) \right)$, where $z\in B_y(n)$, we have 
	 \begin{equation}\label{44}
		\rho_{\bar{\mathbb{B}}_y}\left(z_1, \mathbb{B}_y \right)\le c_1T^{0.5}.
	\end{equation}
	Combine (\ref{43}) and (\ref{44}), \begin{equation}\label{45}
	\begin{split}
	\rho_{\bar{\mathbb{B}}_y}\left(z_1, \bar{\mathcal{I}}_{y,d} \right)\le \left(c_1(d+1)+c_2d\right) T^{0.5}.
	\end{split}
	\end{equation}
	In the same way, we have \begin{equation}\label{46}
		\rho_{\bar{\mathbb{B}}_y}\left(z_2, \bar{\mathcal{I}}_{y,d} \right)\le \left(c_1(d+1)+c_2d \right) T^{0.5}.
	\end{equation}
	Combine (\ref{45}), (\ref{46}) and (3) of Definition \ref{goodlineseg}, \begin{equation}
		\rho_{\bar{\mathbb{B}}_y}\left(z_1, z_2 \right)\le \left(2c_1(d+1)+c_2(2d+1) \right) T^{0.5}.
	\end{equation}
	
	(5): Without loss of generality, assume $y_1-y_2=n*e_d$. By (\ref{45}), for $j=1,2$, \begin{equation}\label{48}
		\rho_{\bar{\mathbb{B}}_{y_j}}\left(z_j, \bar{\mathcal{I}}_{y_j,d} \right)\le \left(c_1(d+1)+c_2d \right) T^{0.5}.
	\end{equation}
	Thus for $j=1,2$, there exists $r_j\in \bar{\mathcal{I}}_{y_j,d}$ such that \begin{equation}\label{3.30}
		\rho_{\bar{\mathbb{B}}_{y_j}}\left(z_j,r_j\right)\le  \left(c_1(d+1)+c_2d \right) T^{0.5}.
	\end{equation} 
	Since $y_1\in l_{y_1,d}\cap l_{y_2,d}$ and $y_1$ is a good site, we have  $\bar{\mathcal{I}}_{y_1,d}\xleftrightarrow[B_{y_1}(T^{\frac{1}{2d}})]{ S_{y_1}^{(2)}\cup S_{y_1}^{(3)}}\bar{\mathcal{I}}_{y_2,d}$. Thus, there exists $u_1\in \bar{\mathcal{I}}_{y_1,d}$ and $u_2\in \bar{\mathcal{I}}_{y_2,d}$ such that \begin{equation}\label{3.30.5}
		\rho_{\bar{\mathbb{B}}_{y_1}}\left(u_1,u_2 \right)\le c_1T^{0.5}.
	\end{equation}
	Combine (\ref{3.30}), (\ref{3.30.5}) and (3) of Definition \ref{goodlineseg}, \begin{equation}
	\begin{split}
	\rho_{\bar{\mathbb{B}}_{y_1}\cup \bar{\mathbb{B}}_{y_2}}\left(z_1,z_2 \right)\le& \rho_{\bar{\mathbb{B}}_{y_1}}\left(z_1,r_1 \right)+\rho_{\bar{\mathbb{B}}_{y_1}}\left(r_1,u_1 \right)+\rho_{\bar{\mathbb{B}}_{y_1}}\left(u_1,u_2 \right)+\rho_{\bar{\mathbb{B}}_{y_2}}\left(u_2,r_2 \right)+\rho_{\bar{\mathbb{B}}_{y_2}}\left(r_2,z_2 \right)\\
	\le &\left(c_1(2d+3)+2c_2(d+1) \right)T^{0.5} .
	\end{split}	
	\end{equation}
	
	(6)-(7): By (1)-(3), $\left\lbrace Y(y) \right\rbrace_{y\in \mathcal{V}} $ satisfies all conditions of Lemma \ref{lemmaZ}. Meanwhile, since $\mathbb{Z}^d$ and $\mathcal{V}$ are isomorphic, we can apply Lemma \ref{lemmaZ} on $\left\lbrace Y(y) \right\rbrace_{y\in \mathcal{V}} $ to get (6) and (7).
\end{proof}
\begin{proposition}\label{bargamma}
	Define $\bar{\Gamma}=\bigcup\limits_{y\in \Gamma_Y}\bar{\mathbb{B}}_y$. Then for large enough $T$, $\bar{\Gamma}$ is connected and $\exists c(u,d)>0$ such that 
	\begin{equation}
	P\left(0, z\in \bar{\Gamma}, \rho_{\bar{\Gamma}}(0,z)\ge c|z|\right)\le \boldsymbol{s.e.}(|z|).
	\end{equation}
\end{proposition}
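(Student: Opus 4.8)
The plan is to build $\bar\Gamma$ out of the blocks $\{\bar{\mathbb B}_y\}_{y\in\Gamma_Y}$ and to push the chemical-distance estimate for the renormalised percolation $\{Y(y)\}$ (Proposition \ref{Y}(7)) down to $\bar\Gamma$, using the local connectivity and local-distance estimates in Proposition \ref{Y}(4)--(5). Throughout, $T$ is taken large enough that Proposition \ref{Y} applies, so $\Gamma_Y$ exists and is unique.

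\emph{Connectedness.} $\Gamma_Y$ is a connected subset of $\mathcal V=n\mathbb Z^d$, so any two of its vertices $y,y'$ are joined by a nearest-neighbour path $y=w_0,w_1,\dots,w_m=y'$ in $\Gamma_Y$ with $|w_j-w_{j+1}|_2=n$. By Proposition \ref{Y}(5), for each $j$ and any $p\in\bar{\mathbb B}_{w_j}$, $q\in\bar{\mathbb B}_{w_{j+1}}$ we have $\rho_{\bar{\mathbb B}_{w_j}\cup\bar{\mathbb B}_{w_{j+1}}}(p,q)<cT^{0.5}<\infty$, so $\bar{\mathbb B}_{w_j}$ and $\bar{\mathbb B}_{w_{j+1}}$ lie in one connected piece of $\bar\Gamma$. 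Since each $\bar{\mathbb B}_{w_j}$ is itself connected (Proposition \ref{Y}(4)), chaining along $w_0,\dots,w_m$ shows that all of $\bar\Gamma=\bigcup_{y\in\Gamma_Y}\bar{\mathbb B}_y$ lies in a single component.

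\emph{Localising the box centres.} By Proposition \ref{Y}(4), any two points of $\bar{\mathbb B}_y$ are joined inside $\bar{\mathbb B}_y$ by a path of at most $cT^{0.5}$ steps, so $\mathrm{diam}_{\infty}(\bar{\mathbb B}_y)<cT^{0.5}$; moreover, when $Y(y)=1$ the segment $l_{y,1}$ is good and $\bar{\mathcal I}_{y,1}\subset\bar{\mathbb B}_y$ contains a vertex within $T^{\epsilon}$ of $y$ (Definition \ref{goodlineseg}(2)), hence $\bar{\mathbb B}_y\subset B_y(C_0T^{0.5})$ for a constant $C_0=C_0(u,d)$. Consequently, if $0\in\bar\Gamma$ there is $y_0\in\Gamma_Y$ with $0\in\bar{\mathbb B}_{y_0}$ and $|y_0|\le C_0T^{0.5}$, and if $z\in\bar\Gamma$ there is $y_z\in\Gamma_Y$ with $z\in\bar{\mathbb B}_{y_z}$ and $|y_z-z|\le C_0T^{0.5}$; in each case the admissible box centres form a set of cardinality $O((C_0/b)^d)=O(1)$. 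Given such $y_0,y_z$, take a $\Gamma_Y$-geodesic $y_0=w_0,\dots,w_m=y_z$ with $m=\rho_{\Gamma_Y}(y_0,y_z)$ and apply Proposition \ref{Y}(5) to the consecutive pairs, starting from $0$ and ending at $z$; concatenating gives a path from $0$ to $z$ inside $\bigcup_j\bar{\mathbb B}_{w_j}\subset\bar\Gamma$ with at most $cmT^{0.5}$ steps, so on the event $\{y_0,y_z\in\Gamma_Y,\ 0\in\bar{\mathbb B}_{y_0},\ z\in\bar{\mathbb B}_{y_z}\}$ one has, deterministically,
\begin{equation*}
\rho_{\bar\Gamma}(0,z)\ \le\ c\,T^{0.5}\,\rho_{\Gamma_Y}(y_0,y_z).
\end{equation*}

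\emph{Conclusion.} Since $T^{0.5}/n\to 1/b$ as $T\to\infty$ and $c,c',C_0,b$ depend only on $u,d$ (where $c'$ is the constant of Proposition \ref{Y}(7)), one may fix $C=C(u,d)$ so large that, whenever $|y_0|\le C_0T^{0.5}$, $|y_z-z|\le C_0T^{0.5}$ and $|z|\ge 2C_0T^{0.5}$, the inequality $\rho_{\bar\Gamma}(0,z)\ge C|z|$ combined with the displayed bound forces $\rho_{\Gamma_Y}(y_0,y_z)\ge c'\,|y_z-y_0|_{\mathcal V}$ (using $|y_z-y_0|\le 2|z|$). For such $|z|$ we sum over the $O(1)$ admissible pairs $(y_0,y_z)$ and apply the translate of Proposition \ref{Y}(7), together with $|y_z-y_0|\ge\tfrac12|z|$, to get $P(0,z\in\bar\Gamma,\ \rho_{\bar\Gamma}(0,z)\ge C|z|)\le\boldsymbol{s.e.}(|z|)$; the remaining finitely many values $|z|<2C_0T^{0.5}$ are absorbed into the $\boldsymbol{s.e.}$ constants (which may depend on $T$). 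The only genuinely delicate bookkeeping is that the box centres $y_0,y_z$ are themselves random: one must phrase the ``bad'' event so that, for each fixed admissible pair, it is contained in the bad event of Proposition \ref{Y}(7); everything else is the routine chaining described above.
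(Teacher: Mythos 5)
Your argument is essentially the paper's own proof, just slightly more explicit: both localize the block centres $y_0,y_z$ to $O(1)$ candidates near $0$ and $z$, deduce the deterministic bound $\rho_{\bar\Gamma}(0,z)\le cT^{0.5}\rho_{\Gamma_Y}(y_0,y_z)$ from Proposition \ref{Y}(4)--(5), and finish by a union bound over the admissible pairs together with the translate of Proposition \ref{Y}(7). The paper omits the short connectedness argument you supply and uses the radius $4n$ rather than $C_0T^{0.5}$, but these are the same constants up to $b$; the only slip is that you need $|z|\ge 4C_0T^{0.5}$ (not $2C_0T^{0.5}$) to get $|y_z-y_0|\ge \tfrac12|z|$, which is harmless since the remaining finitely many $|z|$ are absorbed into the $\boldsymbol{s.e.}$ constants.
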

\begin{proof}
	It's sufficient to prove the case when $|z|\ge T$. If $0,z\in \bar{\Gamma}$, then $\exists y_1\in B(4n)$ and $y_2\in B_{z}(4n)$ such that $0\in \bar{\mathbb{B}}_{y_1}$ and $z\in \bar{\mathbb{B}}_{y_2}$. By (4) and (5) of Proposition \ref{Y}, $\exists c_1(u,d)>0$ such that  \begin{equation}\label{58}
		\rho_{\bar{\Gamma}}\left(0,z\right)\le c_1n*\rho_{\Gamma_Y}\left(y_1,y_2\right).
	\end{equation}
	Note that for any $y_1\in B_0(4n)\cap \mathcal{V},y_2\in B_z(4n)\cap \mathcal{V}$, $10|z|\ge |y_1-y_2|\ge \frac{1}{10}|z|$. Combine (\ref{42}), (\ref{58}) and let $c_2$ be the constant $c$ in (\ref{42}), 
	\begin{equation}
		\begin{split}
		&P\left(0, z\in \bar{\Gamma}, \rho_{\bar{\Gamma}}(0,z)\ge 10c_1c_2|z| \right)\\
		\le &\sum_{y_1\in B_0(4n)\cap \mathcal{V},y_2\in B_z(4n)\cap \mathcal{V}}P\left(y_1, y_2\in \Gamma_Y, \rho_{\Gamma_Y}\left(y_1,y_2\right)\ge 10c_2n^{-1}|z| \right)\\
		\le &\sum_{y_1\in B_0(4n)\cap \mathcal{V},y_2\in B_z(4n)\cap \mathcal{V}}P\left(y_1, y_2\in \Gamma_Y, \rho_{\Gamma_Y}\left(y_1,y_2\right)\ge c_2 |y_1-y_2|_{\mathcal{V}}\right)\\
		\le &\sum_{y_1\in B_0(4n)\cap \mathcal{V},y_2\in B_z(4n)\cap \mathcal{V}}\boldsymbol{s.e.}(|y_1-y_2|)
		\le \boldsymbol{s.e.}(|z|).
		\end{split}
	\end{equation}
\end{proof}
\subsection{Proof of Theorem \ref{theorem3}}
Once we have Proposition \ref{Y}, we are able to prove Theorem \ref{theorem3}.

Let $\mathcal{H}=\left\lbrace \{v_1,v_2\}:v_1,v_2\in \mathcal{V}, |v_1-v_2|_2=n \right\rbrace $. Similar to Lemma \ref{lemmaZ}, define an auxiliary bond percolation $\left\lbrace S(e) \right\rbrace_{e\in \mathcal{H}}$: for $e=\left\lbrace y_1,y_2 \right\rbrace $, $S(e)=1$ if and only if $Y(y_1)=Y(y_2)=1$. Obviousely, $\left\lbrace S(e) \right\rbrace_{e\in \mathcal{H}}$ is k-independent for some $k(u,d)>0$. By Theorem 1.3 of \cite{liggett1997domination}, $\left\lbrace S(e) \right\rbrace_{e\in \mathcal{H}}$ stochasitcally dominates a supercritical Bernoulli bond percolation $\left\lbrace W(e) \right\rbrace_{e\in \mathcal{H}}$ for large enough $T$.

It's sufficient to prove the case when $N>T$. Let $\widetilde{N}=N-0.5n$. Consider the box $B^{\mathcal{V}}(\widetilde{N}):=\left\lbrace y\in \mathcal{V}:|y|\le \widetilde{N} \right\rbrace $ and define that $LR_{\mathcal{V}}(m)=\left\lbrace left\ of\ B^{\mathcal{V}}(m)\xleftrightarrow[B^{\mathcal{V}}(\widetilde{N})]{W} right\ of\ B^{\mathcal{V}}(m) \right\rbrace $. By (8.98) of \cite{grimmett2013percolation}, for  $\left\lbrace W(y) \right\rbrace_{e\in\mathcal{H}} $, $\exists c(d,u,T)>0$ such that \begin{equation}
	P\left(LR_{\mathcal{V}}(N) \right) \ge 1-e^{-cN^{d-1}}.
\end{equation}

If $LR_{\mathcal{V}}(\widetilde{N})$ happens, by stochastic domination, there exists an open path in $\left\lbrace Y(y) \right\rbrace_{y\in \mathcal{V}}$ consisting of centers of good boxes crossing $B^{\mathcal{V}}(\widetilde{N})$. Assume $(z_1,...,z_m)\subset B^{\mathcal{V}}(\widetilde{N})$ is an open nearest-neighbor path in $\left\lbrace Y(y) \right\rbrace_{y\in \mathcal{V}}$ satisfying $z_1\in left\ of\ B^{\mathcal{V}}(\widetilde{N})$ and $z_m\in right\ of\ B^{\mathcal{V}}(\widetilde{N})$. For $1\le j\le m-1$, let $z_{j+1}-z_j=\delta_{j}*ne_{i_j}$, where $\delta_{j}\in \{-1,1\}$ and $1\le i_j\le d$. Particularly, we set $i_m=1$. Since the line segment $l_{z_j,i_j}$ and the site $z_{j+1}$ are both good, we have  $\bar{\varphi}_1^{z_j,i_j}(z_j)\xleftrightarrow{\bar{\mathcal{I}}_{z_j,i_j}}\bar{\varphi}_1^{z_{j},i_j}(z_{j+1})$  and $\bar{\varphi}_1^{z_{j},i_j}(z_{j+1})\xleftrightarrow[B_{z_{j+1}}(T^{\frac{1}{2d}})]{}\bar{\varphi}_1^{z_{j+1},i_{j+1}}(z_{j+1})$. Recalling the definition of $\bar{\mathcal{I}}_{x,i}$, we have $\forall x\in \mathbb{Z}^d$, $1\le i\le d$, $\bar{\mathcal{I}}_{x,i}\subset \bigcup\limits_{y\in l_{x,i}}B_y(4n^{2(a+\epsilon)})$. Thus for $1\le j\le m-1$, since $\bar{\mathcal{I}}_{z_j,i_j}\subset \bigcup\limits_{y\in l_{z_j,i_j}}B_y(4n^{2(a+\epsilon)})\subset B(N)$ and $B_{z_{j+1}}(T^{\frac{1}{2d}})\subset B(N)$, we have  $\bar{\varphi}_1^{z_j,i_j}(z_j)\xleftrightarrow[B(N)]{}\bar{\varphi}_1^{z_{j+1},i_{j+1}}(z_{j+1})$. Meanwhile, since $l_{z_1-ne_1,1}$ is good and $\bar{\mathcal{I}}_{z_1-ne_1,1}\cap \partial B(N)\subset left\ of\ B(N)$, we have $\bar{\varphi}_1^{z_1-ne_1,1}(z_1-2ne_1)\in B_{z_1-2ne_1}(T^\epsilon)\subset (B(N))^c$ and thus $\bar{\varphi}_1^{z_1-ne_1,1}(z_1)\xleftrightarrow[B(N)]{}left\ of\ B(N)$. Note that $\bar{\varphi}_1^{z_1-ne_1,1}(z_1)\xleftrightarrow[B_{z_1}(T^{\frac{1}{2d}})]{}\bar{\varphi}_1^{z_1,i_1}(z_1)$, we have $\bar{\varphi}_1^{z_1,i_1}(z_1)\xleftrightarrow[B(N)]{}left\ of\ B(N)$. Similarly, we have $\bar{\varphi}_1^{z_m,i_m}(z_m)\xleftrightarrow[B(N)]{}right\ of\ B(N)$. In conclusion, \begin{equation}
	left\ of\ B(N)\xleftrightarrow[B(N)]{}\bar{\varphi}_1^{z_1,i_1}(z_1)\xleftrightarrow[B(N)]{}...\xleftrightarrow[B(N)]{}\bar{\varphi}_1^{z_m,i_m}(z_m)\xleftrightarrow[B(N)]{}right\ of\ B(N).
\end{equation}
Therefore, $LR_{\mathcal{V}}(\widetilde{N})$ implies $\left\lbrace left\ of\ B(N)\xleftrightarrow[B(N)]{}right\ of\ B(N) \right\rbrace $. Thus for FRI, 
\begin{equation}
	P\left(LR(N) \right)\ge P\left(LR_{\mathcal{V}}(\widetilde{N}) \right)\ge 1-e^{-cN^{d-1}}.
\end{equation} \qed
\section{Connecting to the good sub-cluster}
Assume $\mathcal{FI}^{u,T}=\mathcal{FI}_{1}^{0.5u,T}\cup \mathcal{FI}_{2}^{0.5u,T}$ and $\mathcal{FI}_{1}^{0.5u,T}=\mathcal{FI}_{1,1}^{0.25u,T}\cup \mathcal{FI}_{1,2}^{0.25u,T}$, $\mathcal{FI}_{2}^{0.5u,T}=\mathcal{FI}_{2,1}^{0.25u,T}\cup \mathcal{FI}_{2,2}^{0.25u,T}$, where $\mathcal{FI}_{1,1}^{0.25u,T}$, $\mathcal{FI}_{1,2}^{0.25u,T}$, $\mathcal{FI}_{2,1}^{0.25u,T}$ and $\mathcal{FI}_{2,2}^{0.25u,T}$ are independent. Define two independent site percolations $\left\lbrace Y_i(y) \right\rbrace_{y\in \mathcal{V}}$, $i=1,2$: $Y_i(y)=1$ if and only if the box $B_y(n)$ is good in $\mathcal{FI}_{i,1}^{0.25u,T}$. Let $\left\lbrace Y_{12}(y) \right\rbrace_{y\in \mathcal{V}}:=\left\lbrace Y_1(y) Y_2(y) \right\rbrace_{y\in \mathcal{V}}$. Let $\Gamma_Y^{(i)}$ be the unique infinite open cluster in $\left\lbrace  Y_i(y) \right\rbrace_{y\in \mathcal{V}}$ and $\Gamma_Y^{(12)}$ be the unique infinite open cluster in $\left\lbrace  Y_{12}(y) \right\rbrace_{y\in \mathcal{V}}$.

For any $y\in \mathcal{V}$, we define $\bar{\mathbb{B}}^{i}_{y}$ in the same way as $\bar{\mathbb{B}}_{y}$, using $\mathcal{FI}_{i,1}^{0.25u,T}$ instead of $\mathcal{FI}^{u,T}$, for $i=1,2$ (See (4) of Proposition \ref{Y}). We also define that $\bar{\Gamma}_{i}=\bigcup\limits_{y\in \Gamma_Y^{(i)}}\bar{\mathbb{B}}^{i}_{y}$ and $\bar{\Gamma}_{12}=\bigcup\limits_{y\in \Gamma_Y^{(12)}}\left( \bar{\mathbb{B}}^{1}_{y}\cup \bar{\mathbb{B}}^{2}_{y}\right) $.

In this section, we denote that $S_x^{i}=S_x\left(\mathcal{FI}_{i,2} \right)$, for $i=1,2$ (by the definition of $S_x(\cdot)$ in Section \ref{notations}, $S_x^{i}$ is the collection of all paths starting from $B_x(8T^{0.5})\setminus B_x(6T^{0.5})$ in $\mathcal{FI}_{i,2}$). Let $\bar{n}=[\frac{1}{2}bT^{0.5}]$, which is a half of the block size. Unless otherwise specified, all the arguements in this section are based on the condition ``$T$ is large enough''.
\begin{proposition}\label{prop12}For integer $N>0$,
	\begin{equation}
	P\left[0\xleftrightarrow{} \partial B(N^{\frac{1}{2d}}), \rho(0,\bar{\Gamma}_{12})>N    \right]\le \boldsymbol{s.e.}(N).
	\end{equation}
\end{proposition}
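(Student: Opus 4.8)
The plan is to show that, conditionally on $\{0\xleftrightarrow{}\partial B(N^{\frac1{2d}})\}$, the origin can be joined to $\bar\Gamma_{12}$ inside $\mathcal{FI}^{u,T}$ by a path of length at most $N$, off an $\boldsymbol{s.e.}(N)$-event.

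First I would reduce to a single open path. On $\{0\xleftrightarrow{}\partial B(N^{\frac1{2d}})\}$, truncating an open path at its first hitting time of $\partial B(N^{\frac1{2d}})$ and loop-erasing produces a self-avoiding open path $\gamma$ of $\mathcal{FI}^{u,T}$ from $0$ to $\partial B(N^{\frac1{2d}})$ lying inside $B(N^{\frac1{2d}})$. Since $\gamma$ is self-avoiding and confined to this box, its length $L$ obeys $N^{\frac1{2d}}\le L\le|B(N^{\frac1{2d}})|\le CN^{1/2}$, so every vertex of $\gamma$ is within chemical distance $CN^{1/2}\ll N$ of $0$; this is precisely why the exponent $\frac1{2d}$ leaves room for the $O(\sqrt T)$ cost of the final connection. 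Using that each trajectory of $\mathcal{FI}^{u,T}$ has $\ell^\infty$-diameter at most $C\sqrt T\log T$ except on an $\boldsymbol{s.e.}(N)$-event (geometric tail of the killing time together with a standard random-walk displacement bound, uniformly over the $O(N^{1/2})$ trajectories meeting $B(N^{\frac1{2d}})$), and invoking the decomposition scheme of \cite{rodriguez2013phase}, I would extract from $\gamma$ a family of $M\ge cN^{\frac1{2d}}/(\sqrt T\log T)^{d}$ vertices $x_1,\dots,x_M$ on $\gamma$ that are pairwise at $\ell^\infty$-distance at least $100\sqrt T\log T$; since $T$ is fixed, $M$ is of polynomial order $N^{\frac1{2d}}$, and the annuli $A_j:=B_{x_j}(8T^{0.5})\setminus B_{x_j}(6T^{0.5})$ are pairwise disjoint.

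The core is the connection step. Expose only the finitely many trajectories forming $\gamma$. I would then argue that, conditionally on what has been exposed, each of the following $M$ attempts succeeds with probability at least a fixed $c>0$: (i) there is a good box $B_{y_j}(n)\subset\bar\Gamma_{12}$ with $|x_j-y_j|\le C\sqrt T$, which follows from $P(Y_{12}(y)=1)\ge1-\boldsymbol{s.e.}(T)$ and a standard ``connection to the infinite cluster inside a box'' estimate for $\{Y_{12}(y)\}_{y\in\mathcal V}$; (ii) by Lemma \ref{lemma2} applied to $S_{x_j}(\mathcal{FI}_{1,2},\{x_j\})$, whose cardinality dominates $Pois(c\cdot cap(\{x_j\}))$ and $cap(\{x_j\})$ is a positive constant, there is an $\mathcal{FI}_{1,2}$-trajectory starting in $A_j$ that meets $\gamma$ near $x_j$ and then makes at least $T$ further steps; (iii) by Lemma \ref{connect} together with part (4) of Proposition \ref{Y}, that trajectory is routed inside $B_{x_j}(CT^{\frac1{2d}})$ into $\bar{\mathbb{B}}^1_{y_j}\subset\bar\Gamma_{12}$. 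On the success of any one attempt, $\gamma$ near $x_j$ is joined to $\bar\Gamma_{12}$ by $O(\sqrt T)$ extra edges, so $\rho(0,\bar\Gamma_{12})\le L+O(\sqrt T)\le N$. The reason the $M$ attempts may be treated as essentially independent is that $\bar\Gamma_{12}$ is measurable with respect to $\mathcal{FI}_{1,1}\cup\mathcal{FI}_{2,1}$, the connection uses the independent $\mathcal{FI}_{1,2}$ (with the second channel $\mathcal{FI}_{2,2}$ and the second highway $\bar\Gamma_2$, agreeing with $\bar\Gamma_1$ on $\Gamma_Y^{(12)}$, providing the redundancy the bookkeeping can fall back on), and the $A_j$ are disjoint; pushing the uniform bound trick through the exposure of $\gamma$'s trajectories then gives $P(\text{all }M\text{ attempts fail}\mid\text{exposed})\le(1-c)^M\le e^{-cN^{\frac1{2d}}}=\boldsymbol{s.e.}(N)$. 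Adding the $\boldsymbol{s.e.}(N)$ exceptional events yields the proposition.

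The step I expect to be the main obstacle is this last independence bookkeeping: after exposing the (possibly many) trajectories composing $\gamma$ — and with them a certain amount of negative information about the process — one must still certify, uniformly over every admissible exposure, that enough fresh Poisson mass of $\mathcal{FI}_{1,2}\cup\mathcal{FI}_{2,2}$ survives around each $x_j$ to drive the lucky-path connection to $\bar\Gamma_{12}$; arranging this is exactly what the four-fold splitting $\mathcal{FI}^{u,T}=\mathcal{FI}_{1,1}\cup\mathcal{FI}_{1,2}\cup\mathcal{FI}_{2,1}\cup\mathcal{FI}_{2,2}$, the two copies $\bar\Gamma_1,\bar\Gamma_2$, and the repeated uniform bound trick are designed to handle. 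By comparison, the displacement/capacity estimates and the density statement for $\Gamma_Y^{(12)}$ are routine given Sections \ref{notations} and \ref{section3}.
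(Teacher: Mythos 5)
Your outline captures the right geometric picture — well-separated checkpoints along any connection, each offering a chance to hook onto $\bar\Gamma_{12}$ via fresh Poisson mass — but the "expose $\gamma$ and work conditionally" step is a genuine gap, and it is exactly the difficulty the paper's Section 4 is built to solve. The problem is quantitative. If you try to make the conditioning rigorous by a union bound over self-avoiding paths $\gamma$ from $0$ to $\partial B(N^{1/2d})$, there are of order $(2d)^{CN^{1/2}}$ such paths (length up to $|B(N^{1/2d})|=O(N^{1/2})$), while your $M\sim N^{1/2d}/\mathrm{poly}(T)$ independent attempts with constant success probability $c$ give a failure bound of only $e^{-cN^{1/2d}}$ — nowhere near enough to beat the entropy, since $N^{1/2}\gg N^{1/2d}$. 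And if you try instead a direct conditional argument, the choice of $\gamma$ and of the $x_j$ is measurable only with respect to the full configuration, including the very channels $\mathcal{FI}_{1,2},\mathcal{FI}_{2,2}$ you want to keep fresh, so the uniform bound trick does not apply as stated: it requires fixing a point measure $\hat w$ on paths starting from (or meeting) a deterministic set $D$ and events $B$ depending only on the complementary part of the process. Your proposal does not specify such a split. You also misread the role of \cite{rodriguez2013phase}: that decomposition is not a device for extracting spaced points from a fixed $\gamma$; it is a renormalization of the crossing event $A_{n_0,0}$ itself into $\bigcup_{\mathcal T\in\Lambda_{n_0,0}}A_{\mathcal T}$, with only $|\Lambda_{n_0,0}|\le(c_0 l_0^{2(d-1)})^{2^{n_0}}$ trees. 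That cardinality — polynomial in $T$ raised to the $2^{n_0}$ power rather than exponential in $N^{1/2}$ — is what makes the union bound close, provided the per-leaf failure probability is $\boldsymbol{s.e.}(T)$ rather than merely a constant $1-c$.

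Concretely, the paper replaces your per-checkpoint event by the unconditioned event $D_x$: for \emph{every} trajectory of length $\ge T^\xi$ in $\mathcal{FI}_1^{0.5u,T}$ that meets $B_x(\bar n)$, there is an $\mathcal{FI}_{2,2}$-connection to $\bar{\mathbb B}^2_{x_{\mathcal V}}$ in $B_{x_{\mathcal V}}(n)$, and symmetrically with the roles of the two halves exchanged. Because $D_x$ quantifies over all long paths and uses the cross channel, there is nothing to expose and no circularity: whenever the crossing $A_{0,x}$ occurs, $E_x^\xi$ guarantees the crossing cluster contains a long path (this cutoff event is absent from your sketch), and $D_x$ then routes that path to the highway. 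Lemma \ref{lemma11} shows $P(D_x)\ge 1-\boldsymbol{s.e.}(T)$, with the strength $1-\boldsymbol{s.e.}(T)$ (not a constant) coming from the iterated capacity growth of Section \ref{goodsites}; this strength is indispensable for the union bound over trees. Finally, to make the $2^{n_0}$ leaf events independent one cannot just invoke disjoint annuli, because long trajectories can reach across scales; the paper introduces $F_x^m$ (no trajectory launched outside $B_x(l_0^m)$ enters $B_x(\bar n)$) and the truncated events $\bar D_x^m$ to peel off independence level by level in the tree. These three devices — the universal cross-channel event $D_x$ with $E_x^\xi$, the strength $1-\boldsymbol{s.e.}(T)$, and the truncation $F_x^m$ — are precisely what your sketch defers to the "independence bookkeeping" obstacle, and without them the argument does not close.
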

An important tool in the proof of Proposition \ref{prop12} is the decomposition constructed in \cite{rodriguez2013phase}. Based on this decomposition, we can decompose the event $\left\lbrace 0\xleftrightarrow{} \partial B(N^{\frac{1}{2d}}),\rho(0,\bar{\Gamma}_{12})>N\right\rbrace $ into $\left(polynomial(T) \right)^{N^c} $ sub-events, each of which will be proved to occur with probability at most $ \left(\boldsymbol{s.e.}(T)\right)^{N^c} $. As a result, $\left(polynomial(T)*\boldsymbol{s.e.}(T) \right)^{N^c} $ is an upper bound for the probability of $\left\lbrace 0\xleftrightarrow{} \partial B(N^{\frac{1}{2d}}),\rho(0,\bar{\Gamma}_{12})>N\right\rbrace $. Then Proposition \ref{prop12} holds for large enough $T$. 
\begin{notation}We need some notations before starting our proof:
	\begin{itemize}
		\item Fix $l_0=\lfloor T^{2}\rfloor >0$ and integer $L_0>100$; $L_m=L_0*\left( l_0\right) ^m$;
		\item For $m\ge 1$, let $\widetilde{L}_m=2L_m$; $\widetilde{L}_0=\bar{n}$;
		\item $\mathbb{L}_m=L_m*\mathbb{Z}^d$; $\mathbb{I}_m=\left\lbrace m\right\rbrace\times \mathbb{L}_m $;
		\item For any $ m\ge 0$ and $x\in \mathbb{L}_m$, let $B_{m,x}=B_x(L_m)$ and $\widetilde{B}_{m,x}=B_x(\widetilde{L}_m)$;
		\item For any $ m\ge 1$ and $x\in \mathbb{L}_m$,
		 $$\mathcal{H}_1(m,x)=\left\lbrace (m-1,y)\in \mathbb{I}_{m-1}: B_{m-1,y}\subset B_{m,x}\ and \ B_{m-1,y}\cap \partial B_{m,x}\neq \emptyset \right\rbrace; $$ $$\mathcal{H}_2(m,x)=\left\lbrace (m-1,y)\in \mathbb{I}_{m-1}: B_{m-1,y}\cap \left\lbrace z\in \mathbb{Z}^d: d(z,B_{m,x})=\lfloor \frac{L_m}{2}\rfloor \right\rbrace \neq \emptyset  \right\rbrace; $$
		\item $n_0=\max\left\lbrace m\in \mathbb{N}^+: 2L_{m}\le N^{\frac{1}{2d}}\right\rbrace $, note that $n_0=O(log(N))$ and $\exists c>0$ such that $2^{n_0}>N^c$.
		\item For $m\ge 1,x\in \mathbb{L}_m$ such that $\widetilde{B}_{m,x}\subset B(N^{\frac{1}{2d}})$, 
		 \begin{equation}\label{decom}
		\begin{split}
		\Lambda_{m,x}= &\biggl\{ \mathcal{T}\subset \bigcup_{k=0}^{m}\mathbb{I}_k: \mathcal{T}\cap\mathbb{I}_m=(m,x)\ and\ every\ (k,y)\in \mathcal{T}\cap \mathbb{I}_k, 0< k\le m, \\
		&has\ two\ descendants\ (k-1, y_i(k,y))\in \mathcal{H}_i(k,y), i=1,2,\\
		&such\ that\ \mathcal{T}\cap \mathbb{I}_{k-1}=\bigcup_{(k,y)\in \mathcal{T}\cap \mathbb{I}_k}\{ (k-1, y_1(k,y)),(k-1, y_2(k,y)) \} \biggr\} .
		\end{split}
		\end{equation}
		By (2.8) of \cite{rodriguez2013phase}, one has $|\Lambda_{m,x}|\le \left(c_0l_0^{2(d-1)} \right)^{2^m}$, where $c_0(d)$ is a constant.
		\item For $m\ge 0,x\in \mathbb{Z}^d$ such that $\widetilde{B}_{m,x}\subset B(N^{\frac{1}{2d}})$, let $A_{m,x}=\left\lbrace B_{m,x} \xleftrightarrow{\mathcal{C}_0(\widetilde{B}_{n_0,0})} \partial \widetilde{B}_{m,x} \right\rbrace $. Similar to (2.14) of \cite{rodriguez2013phase}, for $m\ge 1$,  \begin{equation}
		A_{m,x} \subset \bigcup_{\mathcal{T}\in \Lambda_{m,x}}A_{\mathcal{T}}, 
		\end{equation}
		where $A_{\mathcal{T}}=\bigcap\limits_{(0,y)\in \mathcal{T}\cap \mathbb{I}_0}A_{0,y}$.
		\item For small enough $\xi >0$, we define a cut-off mapping $\pi_\xi$: for any point measure $\omega=\sum_{i=1}^{\infty}\delta_{\eta_i}\ ( \forall \eta_i\in W^{\left[0,\infty \right) }) $, let \begin{equation}
		\pi_\xi(\omega)=\sum_{i=1}^{\infty}\delta_{\eta_i}\cdot \mathbbm{1}_{\left\lbrace length(\eta_i)< T^\xi\right\rbrace }.
		\end{equation}
		\item Define an important event: $E_x^\xi=\left\lbrace \omega: B_x(L_0)\stackrel{\pi_\xi(\omega)}{\nleftrightarrow}\partial B_x(\bar{n})\right\rbrace$. Note that $E_x^\xi$ only depends on the paths starting from $B_x(\bar{n}+T^\xi)$. Furthermore, if $E_x^\xi$ and $\left\lbrace B_x(L_0)\xleftrightarrow{}\partial B_x(\bar{n})\right\rbrace$ both occur, then for any cluster hitting $B_x(L_0)$ and $\partial B_x(\bar{n})$, there must exist a path with length $\ge T^\xi$ in it.
	\end{itemize}
\end{notation}
\begin{lemma}\label{lemma10}For any $x\in \mathbb{Z}^d$ and $\xi\le \frac{1}{d+1}$, we have
	$$P\left( E_x^\xi\right) \ge 1-\boldsymbol{s.e.}(T).$$
\end{lemma}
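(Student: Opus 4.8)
The plan is a Peierls-type argument on a coarse lattice of mesh $\sim T^\xi$. Writing $r:=\lceil T^\xi\rceil$, I would first note that $(E_x^\xi)^c=\{B_x(L_0)\xleftrightarrow{\pi_\xi(\omega)}\partial B_x(\bar n)\}$ forces a chain of trajectories $\eta_1,\dots,\eta_k$ of $\omega$, each of length $<T^\xi$, with $\eta_1\cap B_x(L_0)\neq\emptyset$, $\eta_k\cap\partial B_x(\bar n)\neq\emptyset$, and $\eta_i\cap\eta_{i+1}\neq\emptyset$ for all $i$ (take one trajectory traversing each edge of an open connecting path in the bond percolation induced by $\pi_\xi(\omega)$). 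A path of length $<T^\xi$ has $\ell^\infty$-diameter $<r$, so each $\eta_i$ can advance us only an $\ell^\infty$-distance $<r$; this is the only geometric input.

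Next I would partition $\mathbb{Z}^d$ into cubes of side $r$ (``cells'') and call a cell $Q$ \emph{bad} if some trajectory $\eta$ of $\omega$ with $\mathrm{length}(\eta)<T^\xi$ meets $Q$. If $\eta(s)\in Q$ and $\mathrm{length}(\eta)<T^\xi$, then $\eta(0)$ lies in the enlarged cell $Q^{+}:=\{z:d(z,Q)\le r\}$, a cube of side $\le 3r$; hence $\{Q\text{ bad}\}$ depends only on the trajectories of $\omega$ started inside $Q^{+}$, and by the independence-within-FRI property the field $(\mathbbm{1}_{\{Q\text{ bad}\}})_Q$ is finite-range dependent, cells being independent once $Q_1^{+}\cap Q_2^{+}=\emptyset$. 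By Definition~\ref{definition2} the number of trajectories of $\omega$ started in $Q^{+}$ is Poisson with parameter $\le\frac{2du}{T+1}|Q^{+}|\le CT^{d\xi-1}$, and each such trajectory has length $<T^\xi$ with probability $\le\frac{2T^\xi}{T+1}\le CT^{\xi-1}$; thinning then gives
\begin{equation}
P(Q\text{ bad})\le C\,T^{d\xi-1}\cdot C\,T^{\xi-1}=C'\,T^{(d+1)\xi-2}=:q(T),
\end{equation}
and the hypothesis $\xi\le\tfrac{1}{d+1}$ forces $q(T)\le C'/T\to 0$.

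Finally I would run the Peierls count. On $(E_x^\xi)^c$ the cells met by $\bigcup_j\eta_j$ form a $*$-connected set of bad cells joining a cell meeting $B_x(L_0)$ to a cell meeting $\partial B_x(\bar n)$; since consecutive cells of a $*$-path differ in $\ell^\infty$ by $\le r$, and $L_0$ is a fixed constant while $\bar n=[\tfrac12 bT^{0.5}]$, this $*$-path has at least $M:=M(T)\ge\tfrac{b}{16}T^{1/2-\xi}$ cells for $T$ large. Bounding the number of cells meeting $B_x(L_0)$ by a constant $C_0$, the number of $*$-self-avoiding paths of $M$ cells from a fixed cell by $(3^d)^M$, and — using finite-range dependence — the probability that such a path is entirely bad by $q(T)^{M/4}$ (extract $\ge M/4$ cells along it with pairwise disjoint enlargements, hence independent badness), a union bound yields
\begin{equation}
P\big((E_x^\xi)^c\big)\le C_0\,(3^d)^M\,q(T)^{M/4}=C_0\big(3^d\,q(T)^{1/4}\big)^M,
\end{equation}
which for $T$ large (so that $3^d q(T)^{1/4}\le\tfrac12$) is at most $C_0\,2^{-M}=\boldsymbol{s.e.}(T)$, since $\tfrac12-\xi>0$. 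Thus $P(E_x^\xi)\ge 1-\boldsymbol{s.e.}(T)$.

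The step I expect to require the most care — and the reason the threshold is exactly $\xi=\tfrac{1}{d+1}$ — is the calibration of the cell size: it must be at least of order $T^\xi$ so that short trajectories stay inside the enlarged cell (keeping the dependence finite-range and ``$Q$ bad'' a union of $\sim T^{d\xi}$ individually rare short-walk events), yet the crossing cluster must still be forced to use polynomially-in-$T$ many cells. The bound $P(Q\text{ bad})\lesssim T^{(d+1)\xi-2}$ beats the exponential $*$-path entropy precisely when $(d+1)\xi\le 1$.
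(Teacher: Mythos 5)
Your proof is correct, and the core strategy is the same as the paper's: both renormalize on a scale $\sim T^\xi$, bound the local ``badness'' probability by $C\,T^{(d+1)\xi-2}$ (exactly where the hypothesis $\xi\le\tfrac{1}{d+1}$ enters), and exploit finite-range dependence of the renormalized field. Where you diverge is in closing the argument: the paper frames the renormalized object as a $k$-independent bond percolation $\{Z(e)\}$ on the coarse lattice $\mathcal V_x^\xi$, invokes the Liggett--Schonmann--Stacey domination theorem to dominate $\{Z(e)\}$ by a subcritical Bernoulli bond percolation, and then quotes Grimmett's exponential decay for subcritical percolation, whereas you run a direct Peierls count over $*$-connected chains of bad cells. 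Your route is more self-contained (no black-box theorems), at the cost of having to manage the combinatorics and the independence extraction by hand; the paper's route is shorter given the cited machinery. Two small caveats on your write-up: (i) the fraction ``$\ge M/4$'' of cells along the $*$-path with pairwise disjoint enlargements should be $M/c(d)$ for a dimension-dependent constant --- e.g.\ colour the cell lattice by residues mod $4$ in each coordinate, giving $\ge M/4^d$ mutually independent cells by pigeonhole; this does not affect the conclusion since any constant fraction suffices. (ii) Your closing remark that the threshold is ``exactly $\xi=\tfrac{1}{d+1}$'' is a little too strong: the Peierls argument already wins once $q(T)\to 0$, i.e.\ once $(d+1)\xi<2$; the bound $\xi\le\tfrac{1}{d+1}$ is simply the hypothesis the lemma is stated under, not a sharp cutoff for this argument.
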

\begin{proof}
	Let $n_{\xi}=\lfloor T^\xi\rfloor +1$, $\mathcal{V}_x^\xi:=x+n_\xi*\mathbb{Z}^d$ and $\mathcal{H}_x^\xi=\left\lbrace \{y_1,y_2\}:y_1,y_2\in \mathcal{V}_x^\xi,|y_1- y_2|_2=n_{\xi} \right\rbrace $. Consider a bond percolation $\left\lbrace Z(e)\right\rbrace_{e\in \mathcal{H}_x^\xi}$: for $e=\{y_1,y_2\}$, $Z(e)=1$ if and only if there exists a path hitting $B_{y_j}(n_\xi)$ in $\pi_\xi(\omega)$, $j=1,2$.
	For any $e\in \mathcal{H}_x^\xi$, we have \begin{equation}
	\begin{split}
		P(Z(e)=1)
	\le &P\left( \bigcap\limits_{j=1,2}\left\lbrace \exists a\ path\ starting\ from\ B_{y_j}(2n_\xi)\ in\ \pi_\xi(\omega)\right\rbrace \right) \\
	\le &1-e^{-\frac{2du}{T+1}*(1-(\frac{T}{T+1})^{T^\xi})*(5n_\xi+1)^d}\le cT^{(d+1)\xi-2}\le cT^{-1}.
	\end{split}
	\end{equation}
	For any $e=\{y_1,y_2\}\in \mathcal{H}_x^\xi$, $Z(e)$ depends only on paths starting from $B_{y_1}(2T^\xi)\cup B_{y_2}(2T^\xi)$. Thus $\left\lbrace Z(e)\right\rbrace_{e\in \mathcal{H}_x^\xi}$ is 6-independent. Using Theorem 1.3 of \cite{liggett1997domination}, we know that $\left\lbrace 1-Z(e) \right\rbrace_{e\in \mathcal{H}_x^\xi} $ stochastically dominates a Bernoulli bond percolation with parameter $1-\boldsymbol{s.e.}(T)$. Thus $\left\lbrace Z(e) \right\rbrace_{e\in \mathcal{H}_x^\xi} $ is stochastically dominated by a subcritical Bernoulli bond percolation $\left\lbrace W(e) \right\rbrace_{e\in \mathcal{H}_x^\xi} $ when $T$ is large enough. By Theorem 6.1 of \cite{grimmett2013percolation}, 
	\begin{equation}\label{4.7}
	\begin{split}
	P\left( (E_x^\xi)^c   \right)
	= &P\left(B_x(L_0)\xleftrightarrow{\pi_\xi(\omega)} \partial B_x(\bar{n}) \right)\\
	\le &P\left(0\xleftrightarrow{Z } \left\lbrace z\in \mathcal{V}_x^\xi: |z|\ge \bar{n} \right\rbrace  \right)\\
	\le &P\left(0\xleftrightarrow{W}\left\lbrace z\in \mathcal{V}_x^\xi: |z|\ge \bar{n} \right\rbrace \right) \le\boldsymbol{s.e.}(T).
	\end{split}
	\end{equation}
	Then we get Lemma \ref{lemma10} by (\ref{4.7}).
\end{proof}

For $x\in \mathbb{Z}^d$, let $x_{\mathcal{V}}$ be the closest site to $x$ in $\mathcal{V}$ (i.e. $x_{\mathcal{V}}^{(i)}$=$\min\left\lbrace m\in n*\mathbb{Z}: |m-x^{(i)}|\le 0.5n\right\rbrace $). Define an event:
\begin{equation}\label{dx}
\begin{split}
D_x
=&\{ Y_{12}(x_{\mathcal{V}})=1\}\cap E_x^\xi\cap 
\left[ \bigcap\limits_{\eta \in \mathcal{FI}_1^{0.5u,T}, \eta \cap B_x(\bar{n})\neq\emptyset, length(\eta)\ge T^\xi } \left\lbrace \eta \xleftrightarrow[B_{x_{\mathcal{V}}}(n)]{S_x^{2}}  \bar{\mathbb{B}}^{2}_{x_{\mathcal{V}}}\right\rbrace\right]\\  &\cap \left[ \bigcap\limits_{\eta \in \mathcal{FI}_2^{0.5u,T}, \eta \cap B_x(\bar{n})\neq \emptyset, length(\eta)\ge T^\xi} \left\lbrace \eta \xleftrightarrow[B_{x_{\mathcal{V}}}(n)]{S_x^{1}}  \bar{\mathbb{B}}^{1}_{x_{\mathcal{V}}}\right\rbrace\right]. 
\end{split}
\end{equation}
\begin{lemma}\label{lemma11}For any $x\in \mathbb{Z}^d$, 
	\begin{equation}\label{4.9}
	P(D_x)\ge 1-\boldsymbol{s.e.}(T).
	\end{equation}
\end{lemma}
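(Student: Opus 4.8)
The plan is to bound $P(D_x^c)$ by splitting it into the four constituent events in the definition (\ref{dx}) and showing each fails with $\boldsymbol{s.e.}(T)$ probability. First, $P(Y_{12}(x_{\mathcal V})=0) \le \boldsymbol{s.e.}(T)$: by construction $\{Y_{12}(y)\} = \{Y_1(y)Y_2(y)\}$, and each $Y_i(y)=1$ with probability $1-\boldsymbol{s.e.}(T)$ by Proposition \ref{Y}(1) applied to the independent copies $\mathcal{FI}_{i,1}^{0.25u,T}$, so a union bound over $i\in\{1,2\}$ suffices. Second, $P((E_x^\xi)^c)\le \boldsymbol{s.e.}(T)$ is exactly Lemma \ref{lemma10} (with $\xi\le\frac{1}{d+1}$ fixed once and for all). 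The substantive work is in the last two intersection events; by symmetry it is enough to handle the one involving paths $\eta\in\mathcal{FI}_1^{0.5u,T}$ with $\eta\cap B_x(\bar n)\neq\emptyset$ and $length(\eta)\ge T^\xi$, showing that with probability $1-\boldsymbol{s.e.}(T)$ every such $\eta$ connects to $\bar{\mathbb{B}}^2_{x_{\mathcal V}}$ inside $B_{x_{\mathcal V}}(n)$ using paths of $S_x^2\subset \mathcal{FI}_{2,2}^{0.25u,T}$.

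For that event, I would argue conditionally on $\mathcal{FI}_1^{0.5u,T}$ (which determines the collection of ``bad'' trajectories $\eta$) and on the copy $\mathcal{FI}_{2,1}^{0.25u,T}$ that builds $\bar{\mathbb{B}}^2_{x_{\mathcal V}}$, so that the randomness of $\mathcal{FI}_{2,2}^{0.25u,T}=S_x^2$-type trajectories is still fresh and independent — this is precisely the setup for the uniform bound trick from Section \ref{notations} (with $D = B_x(\bar n+T^\xi)$, conditioning on $\pi_{\cap D}$). The number of trajectories of $\mathcal{FI}_1^{0.5u,T}$ meeting $B_x(\bar n)$ of length $\ge T^\xi$ is stochastically dominated by a Poisson variable with parameter $O(T^{d/2})$, so with probability $1-\boldsymbol{s.e.}(T)$ there are at most $cT^{d/2}$ of them; it then suffices to show that a \emph{fixed} such $\eta$ fails to connect to $\bar{\mathbb{B}}^2_{x_{\mathcal V}}$ within $B_{x_{\mathcal V}}(n)$ with probability $\boldsymbol{s.e.}(T)$, and take a union bound over the $\le cT^{d/2}$ many of them. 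For a fixed $\eta$: on $\{Y_2(x_{\mathcal V})=1\}$ the set $\bar{\mathbb{B}}^2_{x_{\mathcal V}}$ has a subset of capacity $\ge c n^{(a+\epsilon)(d-2)(1-\epsilon_1)}$ — indeed $\bar{\mathbb{B}}^2_{x_{\mathcal V}}\supset \bar{\mathcal I}_{x_{\mathcal V},i}$ for each $i$, which contains one of the sets $U^{(d-2)}_{x_{\mathcal V},i,\cdot}$ whose capacity is controlled in (\ref{35}); meanwhile since $length(\eta)\ge T^\xi$, some ball $B_{z_0}(T^{0.5})$ with $z_0$ along $\eta$ near $x$ contains a sub-segment of $\eta$ of length $\ge T^{0.5}$ (up to choosing $\xi$, $b$ appropriately so that $\bar n, n, T^\xi, T^{0.5}$ are comparable on the right scales) hence an ``anchor'' set of capacity $\ge c T^{(d-2)/2}$, much larger than needed. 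Now apply Lemma \ref{connect} (or the argument in its proof via Lemma \ref{lemma1}, Lemma \ref{lemma2} and the large-deviation bound (\ref{19})) with $U$ taken to be this anchor set on $\eta$ and $V$ a high-capacity subset of $\bar{\mathbb{B}}^2_{x_{\mathcal V}}$, both sitting inside a box of radius $O(n^{(a+\epsilon)(1+\epsilon_1)}) \le n \le T^{1/(2d)}$-scale, to conclude that the $\mathcal{FI}_{2,2}^{0.25u,T}$ trajectories from $S_x^2$ connect $U$ to $V$ inside $B_{x_{\mathcal V}}(n)$ with probability $1-\boldsymbol{s.e.}(T)$.

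Assembling: condition on $\mathcal{FI}_1^{0.5u,T}$ and $\mathcal{FI}_{2,1}^{0.25u,T}$; on the (very likely) event that there are $\le cT^{d/2}$ relevant $\eta$'s and that $Y_2(x_{\mathcal V})=1$, the uniform bound trick gives that the conditional probability of the failure event is at most $cT^{d/2}\cdot\boldsymbol{s.e.}(T)=\boldsymbol{s.e.}(T)$; integrating back and adding the $\boldsymbol{s.e.}(T)$ from the conditioning events yields the bound for this term. The same argument with the roles of $1$ and $2$ swapped handles the other intersection. A union bound over all four pieces gives $P(D_x^c)\le\boldsymbol{s.e.}(T)$, which is (\ref{4.9}). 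The main obstacle I anticipate is bookkeeping the independence correctly — one must verify that $S_x^2$ (part of $\mathcal{FI}_{2,2}^{0.25u,T}$) is independent of everything that determines both the bad trajectories $\eta$ \emph{and} the highway piece $\bar{\mathbb{B}}^2_{x_{\mathcal V}}$, which is why the four-fold splitting $\mathcal{FI}_{1,1},\mathcal{FI}_{1,2},\mathcal{FI}_{2,1},\mathcal{FI}_{2,2}$ was set up at the start of the section — and checking that the length scale $T^\xi$ of the surviving trajectory $\eta$ really does guarantee an anchor set of capacity comparable to $T^{(d-2)/2}$ inside the relevant box, which may require choosing $\xi$ close to $\frac12$ (consistent with $\xi\le\frac1{d+1}$ only for $d=3$, so more care, or a different anchoring argument using a sub-segment of length merely $\gg n^{(a+\epsilon)(d-2)}$, is needed in higher dimensions).
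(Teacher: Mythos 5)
Your decomposition of $D_x^c$ into the four pieces of (\ref{dx}), the use of Proposition \ref{Y}(1) for $Y_{12}(x_\mathcal{V})$, Lemma \ref{lemma10} for $E_x^\xi$, the uniform bound trick after conditioning on $\mathcal{FI}_1^{0.5u,T}$ and $\mathcal{FI}_{2,1}^{0.25u,T}$, the union bound over the (at most $cT^{(d-2)/2}$, by Lemma 2.1 of \cite{procaccia2019percolation} together with Lemma \ref{lemma3.5}, not $T^{d/2}$ --- though this does not affect the union bound) relevant trajectories, and the symmetry between the last two intersections --- all of this matches the paper's skeleton. The gap is in the per-trajectory connection step. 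You propose a \emph{single} application of Lemma \ref{connect} with $U$ an anchor sub-segment of $\eta$ and $V$ a pre-existing high-capacity subset of $\bar{\mathbb{B}}^2_{x_\mathcal{V}}$, claiming the anchor has capacity $\gtrsim T^{(d-2)/2}$. This cannot be right: Lemma \ref{connect} requires both $U$ and $V$ to sit in a box of radius $O(n^{(a+\epsilon)(1+\epsilon_1)})$ (which itself must be $\le T^{1/(2d)}$), so the usable sub-segment of $\eta$ has length only $O(n^{2(a+\epsilon)(1+\epsilon_1)}) \ll T^{0.5}$; moreover the capacity of a random-walk segment (or of any set $A$ by the bound in (\ref{cap2})--(\ref{cap3})) is of order $|A|^{1-2/d}$, which for a segment of this length is of order $n^{2(a+\epsilon)(1+\epsilon_1)(d-2)/d}$, strictly below the threshold $cn^{(a+\epsilon)(d-2)(1-\epsilon_1)}$ that Lemma \ref{connect} demands, for every $d\ge 3$. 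Taking a longer sub-segment does not help, since it then leaves the allowed box. You correctly sense at the end that ``a different anchoring argument'' is needed, but the fix is not a longer segment.

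The missing ingredient is precisely the iterative capacity amplification that the paper carries out in (\ref{4.11}): one further decomposes $\mathcal{FI}_{2,2}^{0.25u,T} = \bigcup_{k=1}^{d-1}\mathcal{FI}_{2,2,k}$ and builds a chain $U_x^{(1)}(\bar\eta),\dots,U_x^{(d-2)}(\bar\eta)$ by successively attaching, via Lemma 6 of \cite{rath2011transience}, trajectories of $\mathcal{FI}_{2,2,k}$ that hit the previous stage, so that after $d-2$ rounds the capacity has been boosted to $cn^{(a+\epsilon)(d-2)(1-\epsilon_1)}$ with probability $1-\boldsymbol{s.e.}(T)$; this is exactly the mechanism behind (\ref{35}), now seeded at $\bar\eta$ instead of at $Q_{x,i,y}$. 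The paper then builds, from the \emph{same} sub-processes, a parallel high-capacity set $\bar U^{(d-2)}_{z_\eta,1,z_\eta}$ rooted at the good line segment $l_{z_\eta,1}$ (this is where $Y_2(x_\mathcal{V})=1$ enters, via (\ref{4.12})) and finally uses $\mathcal{FI}_{2,2,d-1}$ as the connecting layer in Lemma \ref{connect}. Incidentally, your parenthetical justification for the highway-side capacity --- that $\bar{\mathcal{I}}_{x_\mathcal{V},i}$ contains a set $U^{(d-2)}_{x_\mathcal{V},i,\cdot}$ --- is incorrect: the $U^{(k)}$ sets are built from the $\mathcal{FI}_2^{u/3,T}$ sub-processes, whereas $\bar{\mathcal{I}}$ is built from $\mathcal{FI}_1^{u/3,T}$, so the former are not subsets of the latter. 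Re-deriving the high-capacity set from the good line segment with fresh randomness, as the paper does, is both cleaner and avoids this pitfall.
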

\begin{proof}
	We denote the number of paths hitting $B_x(\bar{n})$ by $N_x$. By Lemma 2.1 of \cite{procaccia2019percolation}, Lemma \ref{lemma3.5} (in the appendix) and large deviation bound of Poisson distribution,  \begin{equation}\label{4.10}
		P\left( N_x\le cT^{\frac{d-2}{2}}\right)\ge 1-\boldsymbol{s.e.}(T).
	\end{equation}
	
	Consider a fixed path $\eta\in \mathcal{FI}^{0.5u,T}_1$ hitting $B_x(\bar{n})$ with $length(\eta)\ge T^\xi$. Choose a point $z_\eta\in B_x(\bar{n})\cap \eta$ and let $\bar{\eta}=\mathcal{C}_{z_\eta}^{\eta\cap B_{z_\eta}(T^\xi)}$. Obviously, $|\bar{\eta}|\ge T^\xi$. Assume $\mathcal{FI}_{2,2}^{0.25u,T}=\mathcal{FI}_{2,2,1}^{\frac{u}{8(d-2)},T}\cup \mathcal{FI}_{2,2,2}^{\frac{u}{8(d-2)},T}\cup...\cup\mathcal{FI}_{2,2,d-2}^{\frac{u}{8(d-2)},T}\cup \mathcal{FI}_{2,2,d-1}^{\frac{u}{8},T}$, where $\mathcal{FI}_{2,2,1}^{\frac{u}{8(d-2)},T},...,\mathcal{FI}_{2,2,d-2}^{\frac{u}{8(d-2)},T}, \mathcal{FI}_{2,2,d-1}^{\frac{u}{8},T}$ are independent. Recalling the notations $\Psi_y$ and $U_{x,i,y}^{(k)}$ in Section \ref{goodsites}, for $1\le k\le d-2$, we define that $\Psi_x(k,A)=\bigcup\limits_{\zeta \in \bar{S}_x(\mathcal{FI}_{2,2,k}^{\frac{u}{8(d-2)},T},A)}R(\zeta,2n^{2(a+\epsilon)})$; meanwhile, let $U_{x}^{(1)}(\bar{\eta})=\Psi_x(1,\bar{\eta})$ and $U_{x}^{(k)}(\bar{\eta})=\Psi_x(k,U_{x}^{(k-1)}(\bar{\eta}))$ for $2\le k\le d-2$.
	Similar to (\ref{35}), take $\xi>0$ small enough such that $\bar{\eta}\subset B_{z_\eta}(2n^{(a+\epsilon)(1+\epsilon_1)})$ and then we have \begin{equation}\label{4.11}
	P\left(\bigcup\limits_{k=1}^{d-2}U_{x}^{(k)}(\bar{\eta})\subset B_{z_{\eta}}\left(dn^{(a+\epsilon)(1+\epsilon_1)}\right), cap(U_{x}^{(d-2)}(\bar{\eta}))\ge cn^{(a+\epsilon)(d-2)(1-\epsilon_1)}\bigg|\eta\in \mathcal{FI}^{0.5u,T}_1   \right) \ge 1-\boldsymbol{s.e.}(T).
	\end{equation}
	
On the other hand, since $Y_2(x_{\mathcal{V}})=1$, the line segments $l_{z_\eta,1}$ is good for $\mathcal{FI}_{2,1}^{0.25u,T}$. We define $\bar{U}^{(k)}_{x,i,y}$ in the same say as $U^{(k)}_{x,i,y}$, using $\mathcal{FI}_{2,2,1}^{\frac{u}{8(d-2)},T},...,\mathcal{FI}_{2,2,d-2}^{\frac{u}{8(d-2)},T}$ instead of $\mathcal{FI}_{2,1}^{\frac{u}{3(d-2)},T},...,\mathcal{FI}_{2,d-2}^{\frac{u}{3(d-2)},T}$ in Section \ref{goodsites}. Similar to (\ref{35}), we have
\begin{equation}\label{4.12}
	P\left(\bigcup_{j=1}^{d-2} \bar{U}_{z_\eta,1,z_\eta}^{(j)}\subset B_{z_\eta}(dn^{(a+\epsilon)(1+\epsilon_1)}),cap(\bar{U}_{z_\eta,1,z_\eta}^{(d-2)})\ge cn^{(a+\epsilon)(d-2)(1-\epsilon_1)}\bigg|Y_2(x_{\mathcal{V}})=1 \right) \ge 1-\boldsymbol{s.e.}(T).
\end{equation}
Note that $\left\lbrace \eta\in \mathcal{FI}^{0.5u,T}_1  \right\rbrace  $ and $\left\lbrace Y_2(x_{\mathcal{V}})=1  \right\rbrace  $ are independent. By (\ref{4.11}), (\ref{4.12}) and Lemma \ref{connect}, we have
\begin{equation}\label{4.13}
\begin{split}
&P\left(\eta\xleftrightarrow[B_{z_\eta}(T^{\frac{1}{2d}})]{S_x^2} \bar{\mathbb{B}}^{2}_{x_{\mathcal{V}}}\bigg|  \eta\in \mathcal{FI}^{0.5u,T}_1 ,Y_2(x_{\mathcal{V}})=1  \right)\\
\ge &P\left(\bigcup\limits_{k=1}^{d-2}U_{x}^{(k)}(\bar{\eta})\xleftrightarrow[B_{z_\eta}(T^{\frac{1}{2d}})]{S_x(\mathcal{FI}_{2,2,d-1}^{\frac{u}{8},T})} \bigcup_{j=1}^{d-2} \bar{U}_{z_\eta,1,z_\eta}^{(j)}\bigg|  \eta\in \mathcal{FI}^{0.5u,T}_1 ,Y_2(x_{\mathcal{V}})=1  \right) \ge 1-\boldsymbol{s.e.}(T).
\end{split}
\end{equation}

Note that $B_{z_\eta}(T^{\frac{1}{2d}})\subset B_{x_{\mathcal{V}}}(n)$. By (\ref{4.10}), (\ref{4.13}) and uniform bound trick, we have
\begin{equation}\label{4.14}
	P\left( \bigcap\limits_{\eta \in \mathcal{FI}_1^{0.5u,T}, \eta \cap B_x(\bar{n})\neq \emptyset, length(\eta)\ge T^\xi} \left\lbrace \eta \xleftrightarrow[B_{x_{\mathcal{V}}}(n)]{S_x^{2}}  \bar{\mathbb{B}}^{2}_{x_{\mathcal{V}}}\right\rbrace\bigg|Y_2(x_{\mathcal{V}})=1  \right)\ge 1-\boldsymbol{s.e.}(T). 
\end{equation}
Equivalently, we have \begin{equation}\label{4.15}
	P\left( \bigcap\limits_{\eta \in \mathcal{FI}_2^{0.5u,T}, \eta \cap B_x(\bar{n})\neq \emptyset, length(\eta)\ge T^\xi} \left\lbrace \eta \xleftrightarrow[B_{x_{\mathcal{V}}}(n)]{S_x^{1}}  \bar{\mathbb{B}}^{1}_{x_{\mathcal{V}}}\right\rbrace\bigg|Y_1(x_{\mathcal{V}})=1   \right)\ge 1-\boldsymbol{s.e.}(T). 
\end{equation}
Combine (1) of Proposition \ref{Y}, Lemma \ref{lemma10}, (\ref{4.14}) and (\ref{4.15}), then we get (\ref{4.9}).
\end{proof}
\begin{lemma}\label{lemma12}
	For site percolation $\left\lbrace Y_{12}(y) \right\rbrace_{y\in \mathcal{V}}$, $\exists c(u,d,T)>0$ such that for any integer $M>0$,
	\begin{equation}
	P(0\xleftrightarrow{Y_{12} } \partial B^{\mathcal{V}}(M), 0\notin \Gamma^{(12)}_Y)\le e^{-cM},
	\end{equation}
	where $B^{\mathcal{V}}(M)=\{y\in\mathcal{V}:|y|\le M \}$.
\end{lemma}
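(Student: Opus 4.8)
The plan is to recognize that $\{Y_{12}(y)\}_{y\in\mathcal{V}}=\{Y_1(y)Y_2(y)\}_{y\in\mathcal{V}}$ falls under the hypotheses of Lemma \ref{lemmaZ} and then to reuse the intermediate tail bound (\ref{useful1}) established inside its proof. First I would note that, since $\{Y_1(y)\}$ and $\{Y_2(y)\}$ are independent and each is built from paths of a disjoint sub-process supported near the corresponding boxes (cf. Proposition \ref{Y}(2)--(3) and the decomposition of $\mathcal{FI}^{u,T}$), each $\{Y_i(y)\}$ is $k$-independent for some $k=k(u,d)$ with $P(Y_i(y)=1)\ge 1-\boldsymbol{s.e.}(T)$; hence $\{Y_{12}(y)\}$ is $k$-independent with $P(Y_{12}(y)=1)\ge(1-\boldsymbol{s.e.}(T))^2=1-\boldsymbol{s.e.}(T)$, so Lemma \ref{lemmaZ} applies to $Z:=Y_{12}$ for $T$ large, $\Gamma_Y^{(12)}$ being its unique infinite cluster. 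Working on the coupling furnished by Theorem 1.3 of \cite{liggett1997domination} inside that proof, there is a supercritical Bernoulli bond percolation $\{W(e)\}$ with parameter $1-\boldsymbol{s.e.}(T)$, dominated by the bond percolation $R(e):=Y_{12}(x)Y_{12}(y)$ ($e=\{x,y\}$), whose infinite cluster $\Gamma_W$ satisfies $\Gamma_W\subseteq\Gamma_Y^{(12)}$.

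Next I would translate the target event. On $\{0\xleftrightarrow{Y_{12}}\partial B^{\mathcal{V}}(M),\,0\notin\Gamma_Y^{(12)}\}$ one has $Y_{12}(0)=1$, so the site cluster $\mathcal{C}_0^{Y_{12}}$ coincides with the $R$-bond cluster of $0$; it is finite (as $0\notin\Gamma_Y^{(12)}$) and, being finite, is disjoint from the infinite set $\Gamma_W$, hence $\mathcal{C}_0^{Y_{12}}\subseteq(\Gamma_W)^c$. Since $\mathcal{C}_0^{Y_{12}}$ is nearest-neighbor connected and contains $0\in(\Gamma_W)^c$, it follows that $\mathcal{C}_0^{Y_{12}}\subseteq\hat{\mathcal{C}}_0:=\mathcal{C}_0^{(\Gamma_W)^c}$, in the notation of the proof of Lemma \ref{lemmaZ}. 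Because $0\xleftrightarrow{Y_{12}}\partial B^{\mathcal{V}}(M)$, the set $\hat{\mathcal{C}}_0$ contains a vertex $z$ with $|z|\ge M-O(n)$; moving from $z$ in a coordinate direction realizing $|z|_\infty$ one exits the finite set $\hat{\mathcal{C}}_0$, and the last vertex of $\hat{\mathcal{C}}_0$ visited lies in $\partial\hat{\mathcal{C}}_0$ with $\ell^\infty$-norm $\ge M-O(n)$. Thus the target event is contained in $\{D(0,\partial\hat{\mathcal{C}}_0)\ge M-O(n)\}$, where $n=n(u,d,T)$ is the fixed block size.

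Finally I would invoke the bound (\ref{useful1}) from the proof of Lemma \ref{lemmaZ} applied with $Z=Y_{12}$, namely $P(D(0,\partial\hat{\mathcal{C}}_0)\ge N)\le c\,e^{-c'N}$ with $c,c'$ depending on $u,d,T$; combined with the inclusion above and with the trivial bound $P(\cdot)\le 1$ for small $M$, and absorbing the prefactor and the $O(n)$-shift into the exponent, this yields $P(0\xleftrightarrow{Y_{12}}\partial B^{\mathcal{V}}(M),\,0\notin\Gamma_Y^{(12)})\le e^{-cM}$ for a suitable $c=c(u,d,T)>0$ and every integer $M>0$. The main obstacle, if one does not wish to cite the proof of Lemma \ref{lemmaZ} as a black box, is re-establishing (\ref{useful1}): one must argue that the outer boundary $\partial\hat{\mathcal{C}}_0$ of the finite cluster is $\ast$-connected and that the associated ``contour'' percolation of closed sites is stochastically dominated by a subcritical one, after which Theorem 6.1 of \cite{grimmett2013percolation} gives exponential decay of its radius --- but all of this is exactly the content of that proof, so no new idea is required.
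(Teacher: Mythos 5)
Your proof is correct and follows essentially the same route as the paper: apply Lemma \ref{lemmaZ}'s framework to $Y_{12}$, observe that on the event the finite site cluster of $0$ is contained in $\hat{\mathcal{C}}_0 := \mathcal{C}_0^{(\Gamma_W)^c}$ and hence $D(0,\partial\hat{\mathcal{C}}_0)$ is of order $M$, then invoke the tail bound (\ref{useful1}). The only cosmetic difference is that you pass through the site cluster $\mathcal{C}_0^{Y_{12}}$ explicitly, whereas the paper notes directly that $0\xleftrightarrow{(\Gamma_R^{(12)})^c}\partial B^{\mathcal{V}}(M)$; the logic is identical.
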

\begin{proof}
	By (1) and (3) of Proposition \ref{Y}, we know that $\left\lbrace Y_{12}(y) \right\rbrace_{y\in \mathcal{V}}$ is $k$-independent for some $k(u,d)>0$ and for any $y\in \mathcal{V}$, $P\left(Y_{12}(y)=1 \right)\ge 1-\boldsymbol{s.e.}(T) $. Thus we can replace $\left\lbrace Z(y) \right\rbrace_{y\in \mathbb{Z}^d} $ in Lemma \ref{lemmaZ} by $\left\lbrace Y_{12}(y) \right\rbrace_{y\in \mathcal{V}}$ and all the arguments still hold.
	
	Like in Lemma \ref{lemmaZ}, we consider an auxiliary bond percolation $\left\lbrace R_{12}(e) \right\rbrace_{e\in \mathcal{H}}$: for any $e=\{y_1,y_2\}\in \mathcal{H}$, $R_{12}(e)=1$ if and only if $Y_{12}(y_1)=Y_{12}(y_2)=1$. Similarly, when $T$ is large enough, $\left\lbrace R_{12}(e) \right\rbrace_{e\in \mathcal{H}}$ stochasitcally dominates a supercritical bond percolation $\left\lbrace W_{12}(e) \right\rbrace_{e\in \mathcal{H}}$. We denote by $\Gamma_R^{(12)}$ and $\Gamma_W^{(12)}$ the unique connected clusters in $\left\lbrace R_{12}(e) \right\rbrace_{e\in \mathcal{H}}$ and $\left\lbrace W_{12}(e) \right\rbrace_{e\in \mathcal{H}}$.

	Replacing $W$ in Lemma \ref{lemmaZ} by $W_{12}$, we define that $\hat{\mathcal{C}}_0^{(12)}:=\mathcal{C}_{0}^{\left(\Gamma^{(12)}_W \right)^c}$.

	 If $\left\lbrace 0\xleftrightarrow{Y_{12} } \partial B^{\mathcal{V}}(M), 0\notin \Gamma^{(12)}_Y \right\rbrace $ , then $0\xleftrightarrow{\left(\Gamma^{(12)}_R \right)^c} \partial B^{\mathcal{V}}(M)$. Since $\Gamma^{(12)}_W\subset \Gamma^{(12)}_R$, we have $\left(\Gamma^{(12)}_R \right)^c\subset \left(\Gamma^{(12)}_W \right)^c$. Thus $0\xleftrightarrow{\left(\Gamma^{(12)}_W \right)^c} \partial B^{\mathcal{V}}(M) $, which means there exists $x\in B^{\mathcal{V}}(M)\cap \hat{\mathcal{C}}_0^{(12)}$. Therefore, $\left\lbrace D(0,\partial \hat{\mathcal{C}}_0^{(12)})\ge \lfloor \frac{M}{n} \rfloor \right\rbrace $ happends, where $D(0,\partial \hat{\mathcal{C}}_0^{(12)}):=\left\lbrace |y|_{\mathcal{V}}:y\in \hat{\mathcal{C}}_0^{(12)} \right\rbrace $.
	 
	  Recalling (\ref{useful1}), we have
	 \begin{equation}
	 	P(0\xleftrightarrow{Y_{12} } \partial B^{\mathcal{V}}(M), 0\notin \Gamma^{(12)}_Y)\le P\left(D(0,\partial \hat{\mathcal{C}}_0^{(12)})\ge \lfloor \frac{M}{n} \rfloor \right)\le e^{-cM}.
	 \end{equation}
\end{proof}
Define $F^m_x= \bigcap\limits_{\eta\in \mathcal{FI}^{u,T}, \eta(0)\in \left( B_x(l_0^m)\right) ^c}\left\lbrace \eta \cap B_{x}(\bar{n})= \emptyset \right\rbrace.$\\
\begin{lemma}\label{lemma13}For any $x\in \mathbb{Z}^d$ and integer $m\ge 1$, $\exists c(u,d)>0$ such that 
	\begin{equation}\label{4.18}
	P\left(F^m_x \right)\ge 1-e^{-cT^{-1}*l_0^m}.
	\end{equation}
\end{lemma}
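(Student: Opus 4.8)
The plan is to bound the probability that some trajectory of $\mathcal{FI}^{u,T}$ which starts far from $x$—outside $B_x(l_0^m)$—nevertheless reaches $B_x(\bar n)$. First I would note that the complement of $F^m_x$ is the union over all starting points $z\notin B_x(l_0^m)$ of the events $\{\text{some trajectory based at }z\text{ hits }B_x(\bar n)\}$. Writing $n_z$ for the Poisson$\left(\tfrac{2du}{T+1}\right)$ number of trajectories started at $z$ (Definition \ref{definition2}), and using the independence across starting points, the expected number of trajectories that start at $z$ and hit $B_x(\bar n)$ is $\tfrac{2du}{T+1}P_z\!\left(H_{B_x(\bar n)}<\infty\right)$. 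Since a geometrically killed walk is a sub-probability weakening of the simple random walk, $P_z^{(T)}\!\left(H_{B_x(\bar n)}<\infty\right)\le P_z\!\left(H_{B_x(\bar n)}<\infty\right)$, and the latter is controlled by the Green-function estimate: $P_z\!\left(H_{B_x(\bar n)}<\infty\right)\le C\,\bar n^{\,d-2}\,|z-x|^{2-d}$ for $|z-x|\ge 2\bar n$, using $cap(B_x(\bar n))\le c_2\bar n^{d-2}$ from (\ref{20}) together with the standard bound $G(z,w)\le C|z-w|^{2-d}$ (Section 1.5 of \cite{lawler2013intersections}).

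Next I would sum this hitting probability over all $z$ with $|z-x|>l_0^m$. The number of lattice points at $\ell^\infty$-distance $r$ from $x$ is $O(r^{d-1})$, so
\begin{equation}\label{4.18-sum}
\sum_{z:\,|z-x|>l_0^m} P_z\!\left(H_{B_x(\bar n)}<\infty\right)\le C\bar n^{\,d-2}\sum_{r>l_0^m} r^{d-1}\cdot r^{2-d}= C\bar n^{\,d-2}\sum_{r>l_0^m} r\, ,
\end{equation}
which diverges — so a crude first-moment bound over \emph{all} far-away starting points is not enough. The fix is that the stated bound is exponentially small only in $T^{-1}l_0^m$, not in $l_0^m$ itself, which signals that one should exploit the geometric killing: a trajectory started at distance $r$ from $x$ must survive roughly $r$ steps to reach $B_x(\bar n)$, and this costs a factor $\left(\tfrac{T}{T+1}\right)^{r}\le e^{-cr/T}$. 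More precisely, $P_z^{(T)}\!\left(H_{B_x(\bar n)}<\infty\right)\le P_z\!\left(H_{B_x(\bar n)}<\infty\right)\cdot\text{(survival cost)}$ can be handled by splitting on the number of steps needed, or more cleanly by the observation that a killed walk reaching distance $r$ must have made at least $r$ steps, so its contribution is at most $\left(\tfrac{T}{T+1}\right)^{r} G(z,\cdot)$-type weight. Carrying the extra factor $e^{-cr/T}$ into \eqref{4.18-sum} gives $C\bar n^{d-2}\sum_{r>l_0^m} r\, e^{-cr/T}$, and since $\bar n=O(T^{1/2})$ and $l_0^m\ge l_0=\lfloor T^2\rfloor\gg T$, the sum is dominated by its first term and is bounded by $C T^{d-1} l_0^m e^{-c l_0^m/T}\le e^{-c' T^{-1} l_0^m}$ for $T$ large, as required.

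Finally, assembling: $P\!\left((F^m_x)^c\right)$ is at most the expected number of ``bad'' trajectories (by Markov/union bound over the Poisson point process), which is $u\,v^{(T)}$-mass of the set of paths starting outside $B_x(l_0^m)$ and hitting $B_x(\bar n)$, i.e. $\tfrac{2du}{T+1}\sum_{z\notin B_x(l_0^m)} P_z^{(T)}\!\left(H_{B_x(\bar n)}<\infty\right)$, which by the above is $\le e^{-cT^{-1}l_0^m}$. I expect the main obstacle to be making the ``survival cost'' step rigorous with the right dependence: one must correctly pair the polynomial Green-function decay in $|z-x|$ against the geometric survival factor, keeping careful track that $\bar n\asymp T^{1/2}$ and $l_0^m\ge T^2$ so that the exponential term genuinely swallows all polynomial prefactors and the boundary-surface count, yielding an exponent proportional to $T^{-1}l_0^m$ rather than something weaker.
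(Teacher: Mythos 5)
Your proposal is correct and hits the decisive mechanism the paper uses: a trajectory started at $\ell^\infty$-distance $r$ from $x$ must survive at least $r-\bar n$ steps of the geometric killing, contributing a factor $\left(1-\tfrac{1}{T+1}\right)^{r-\bar n}\le\delta^{(r-\bar n)/T}$ with $\delta<1$, and since $l_0^m\ge T^2$ this exponential swallows the $O(r^{d-1})$ surface count and all polynomial prefactors, leaving $e^{-cT^{-1}l_0^m}$. The paper's proof is the same argument but leaner: it never invokes the Green-function/capacity estimate $P_z(H_{B_x(\bar n)}<\infty)\le C\bar n^{\,d-2}|z-x|^{2-d}$ at all, instead bounding the per-site hitting probability directly by the single survival factor $P^{(T)}_z(H_{B_x(\bar n)}<\infty)\le\left(1-\tfrac{1}{T+1}\right)^{|z-x|-\bar n}$, multiplying by the Poisson rate $\tfrac{2du}{T+1}$, and summing over $z$. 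Your insertion of the transience factor is valid — it follows from $P^{(T)}_z(H<\infty)=E_z\big[(1-\tfrac{1}{T+1})^{H}\mathbbm{1}_{\{H<\infty\}}\big]\le\left(1-\tfrac{1}{T+1}\right)^{|z-x|-\bar n}P_z(H<\infty)$ — but superfluous here: it only sharpens the polynomial part of the bound, which is immaterial once the exponential tail from the geometric killing is in place. You correctly diagnose that the Green-function bound alone diverges, but the paper avoids that detour entirely by going straight to the killing.
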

\begin{proof}
	Since $\exists 0<\delta<1$ such that $\left(1-\frac{1}{T+1} \right)^{T}<\delta$ for any $T>0$, we have \begin{equation}\label{4.19}
		\begin{split}
		P\left((F_x^m)^c \right)
		\le &\sum_{|z-x|>l_0^m}P\left(\exists a\ path\ \eta\ starting\ from\ z\ and\ hitting\ B_{x}(\bar{n}) \right)\\
		\le &\sum_{|z-x|>l_0^m}\left(1-e^{-\frac{2du}{T+1}*\left(1-\frac{1}{T+1} \right)^{|z-x|-\bar{n}} }\right)\\
		\le &\frac{2du}{T+1}*\sum_{|z-x|>l_0^m}\delta^{T^{-1}*(|z-x|-\bar{n})}\\
		\le &\frac{2du}{T+1}\delta^{T^{-1}(l_0^m-\bar{n})}\sum_{k\ge 1}c(k+l_0^m)^{d-1}\delta^{T^{-1}*k}.
		\end{split}
	\end{equation}
	For integer $p\ge 0$, if $1+pT\le k\le (p+1)T$, then $(k+l_0^m)^{d-1}\delta^{T^{-1}*k}\le l_0^{m(d-1)}(p+1)^{d-1}\delta^{p}$. Thus,
	\begin{equation}\label{4.195}
		\begin{split}
		\sum_{k\ge 1}(k+l_0^m)^{d-1}\delta^{T^{-1}*k}\le T*l_0^{m(d-1)}\sum_{p\ge 0}(p+1)^{d-1}\delta^{p}\le cT*l_0^{m(d-1)}.
		\end{split}
	\end{equation}
	Recall that $l_0=\lfloor T^2\rfloor$. Combine (\ref{4.19}) and (\ref{4.195}), then (\ref{4.18}) follows.
\end{proof}
Now we are ready to conclude the proof of Proposition \ref{prop12}:
\begin{proof}
	It's sufficient to prove the case when $N>T^3$.
	
	For any $1\le m\le n_0$ and $\mathcal{T}\in \Lambda_{m,x}$, assume $\mathcal{T}\cap \mathbb{I}_0=\left\lbrace (0,x^i) \right\rbrace_{i=1}^{2^m} $. Frist, we are going to prove the following estimate (recall the definition of $D_x$ in (\ref{dx})): \begin{equation}\label{important}
		P\left[ \bigcap_{i=1}^{2^m}  \left( (D_{x^i})^c\cup\left\lbrace x^i_{\mathcal{V}} \notin \Gamma_Y^{(12)} \right\rbrace\right)\right]   \le \left(\boldsymbol{s.e.}(T) \right)^{2^m}.
	\end{equation}
	
	Define the truncated events of $D_x$:
	\begin{equation}\label{newdx}
	\begin{split}
	\bar{D}_x^m
	=&\{ Y_{12}(x_{\mathcal{V}})=1\}\cap E_x^{\xi}\cap 
	\left[\bigcap\limits_{\eta \in \mathcal{FI}_1^{0.5u,T}, \eta \cap B_x(\bar{n})\neq\emptyset, length(\eta)\ge T^\xi,\eta(0)\in B_x(l_0^m) } \left\lbrace \eta \xleftrightarrow[B_{x_{\mathcal{V}}}(n)]{S_x^{2}}  \bar{\mathbb{B}}^{2}_{x_{\mathcal{V}}}\right\rbrace\right]\\  &\cap \left[ \bigcap\limits_{\eta \in \mathcal{FI}_2^{0.5u,T}, \eta \cap B_x(\bar{n})\neq \emptyset, length(\eta)\ge T^\xi,\eta(0)\in B_x(l_0^m)} \left\lbrace \eta \xleftrightarrow[B_{x_{\mathcal{V}}}(n)]{S_x^{1}}  \bar{\mathbb{B}}^{1}_{x_{\mathcal{V}}}\right\rbrace\right]. 
	\end{split}
	\end{equation}
	It's easy to see that $D_x\cap F_x^m=\bar{D}_x^m\cap F_x^m$ and $D_x\subset \bar{D}_x^m$.
	
	Assume that $\mathcal{T}\cap \mathbb{I}_{m-1}=\left\lbrace (m-1, z_1), (m-1, z_2)\right\rbrace $. By definition, we know that $\left\lbrace x^i \right\rbrace_{i=1}^{2^m}$ can be devided into two sets $Z_1$ and $Z_2$, where both of them include $2^{m-1}$ sites and $Z_j\subset B_{z_j}(\widetilde{L}_{m-1})$, $j=1,2$. Without loss of generality, assume $Z_1=\left\lbrace x^i \right\rbrace_{i=1}^{2^{m-1}}$ and $Z_2=\left\lbrace x^i \right\rbrace_{i=2^{m-1}+1}^{2^{m}}$. Since $L_m=L_0*l_0^m$ and $L_0>100$, we know that $|z_1-z_2|>10l_0^m$ and $B_{z_1}(3l_0^m)\cap B_{z_2}(3l_0^m)=\emptyset$. We denote that $B_x^{\mathcal{V}}(M)=\{y\in \mathcal{V}:|y-x|\le M\}$. By definition of $\bar{D}_x^m$, $\bigcap\limits_{x\in Z_j} \left( (\bar{D}_{x}^m)^c\cup\left\lbrace x_{\mathcal{V}} \stackrel{Y_{12}}\nleftrightarrow \partial B^{\mathcal{V}}_{x_{\mathcal{V}}}(l_0^m) \right\rbrace\right)$ only depends on the paths starting from $B_{z_j}(3l_0^m)$, $j=1,2$. Since $B_{z_1}(3l_0^m)\cap B_{z_2}(3l_0^m)=\emptyset$, we have $\bigcap\limits_{x\in Z_1}\left( (\bar{D}_{x}^m)^c\cup\left\lbrace x_{\mathcal{V}} \stackrel{Y_{12}}\nleftrightarrow \partial B^{\mathcal{V}}_{x_{\mathcal{V}}}(l_0^m) \right\rbrace\right)$ and $\bigcap\limits_{x\in Z_2}\left( (\bar{D}_{x}^m)^c\cup\left\lbrace x_{\mathcal{V}} \stackrel{Y_{12}}\nleftrightarrow \partial B^{\mathcal{V}}_{x_{\mathcal{V}}}(l_0^m) \right\rbrace\right)$ are independent. By Lemma \ref{lemma12}, Lemma \ref{lemma13} and $(D_x)^c\cap F_x^m=(\bar{D}_x^m)^c\cap F_x^m$, 
	\begin{equation}\label{4.20}
		\begin{split}
		&	P\left[ \bigcap_{i=1}^{2^m} \left( (D_{x^i})^c\cup\left\lbrace x^i_{\mathcal{V}} \notin \Gamma_Y^{(12)} \right\rbrace\right) \right] \\
		\le &P\left[\bigcap_{i=1}^{2^m} \left( \left[ (D_{x^i})^c\cap F_{x^i}^m\right] \cup \left[ \left\lbrace x^i_{\mathcal{V}}\stackrel{Y_{12}}\nleftrightarrow \partial B^{\mathcal{V}}_{x^i_{\mathcal{V}}}(l_0^m) \right\rbrace\cap F_{x^i}^m \right] \right) \right]+P\left(\bigcup\limits_{i=1}^{2^m}(F_{x^i}^m)^c \right)  \\
		&+P\left[ \bigcup\limits_{i=1}^{2^m}\left\lbrace x^i_{\mathcal{V}} \xleftrightarrow{Y_{12}} \partial B^{\mathcal{V}}_{x^i_{\mathcal{V}}}(l_0^m),  x^i_{\mathcal{V}} \notin \Gamma_Y^{(12)}  \right\rbrace  \right] \\
		= &P\left[\bigcap_{i=1}^{2^m} \left( \left[ (\bar{D}_{x^i})^c\cap F_{x^i}^m\right] \cup \left[ \left\lbrace x^i_{\mathcal{V}}\stackrel{Y_{12}}\nleftrightarrow \partial B^{\mathcal{V}}_{x^i_{\mathcal{V}}}(l_0^m) \right\rbrace\cap F_{x^i}^m \right] \right) \right]+P\left(\bigcup\limits_{i=1}^{2^m}(F_{x^i}^m)^c \right)  \\
		&+P\left[ \bigcup\limits_{i=1}^{2^m}\left\lbrace x^i_{\mathcal{V}} \xleftrightarrow{Y_{12}} \partial B^{\mathcal{V}}_{x^i_{\mathcal{V}}}(l_0^m),  x^i_{\mathcal{V}} \notin \Gamma_Y^{(12)}  \right\rbrace  \right] \\
		\le & P\left[ \bigcap_{i=1}^{2^m}\left( (\bar{D}_{x^i}^m)^c\cup\left\lbrace x^i_{\mathcal{V}} \stackrel{Y_{12}}\nleftrightarrow \partial B^{\mathcal{V}}_{x^i_{\mathcal{V}}}(l_0^m) \right\rbrace\right) \right]+2^{m}*\left(e^{-cT^{-1}*l_0^m}+ e^{-c*l_0^m}\right) .
		\end{split}
	\end{equation}
 By independence between $\bigcap\limits_{x\in Z_1}\left( (\bar{D}_{x}^m)^c\cup\left\lbrace x_{\mathcal{V}} \stackrel{Y_{12}}\nleftrightarrow \partial B^{\mathcal{V}}_{x_{\mathcal{V}}}(l_0^m) \right\rbrace\right)$ and $\bigcap\limits_{x\in Z_2}\left( (\bar{D}_{x}^m)^c\cup\left\lbrace x_{\mathcal{V}} \stackrel{Y_{12}}\nleftrightarrow \partial B^{\mathcal{V}}_{x_{\mathcal{V}}}(l_0^m) \right\rbrace\right)$,  
 \begin{equation}\label{4.21}
	\begin{split}
&P\left[ \bigcap_{i=1}^{2^m}\left( (\bar{D}_{x^i}^m)^c\cup\left\lbrace x^i_{\mathcal{V}} \stackrel{Y_{12}}\nleftrightarrow \partial B^{\mathcal{V}}_{x^i_{\mathcal{V}}}(l_0^m) \right\rbrace\right) \right]\\
=&P\left[ \bigcap_{i=1}^{2^{m-1}}\left( (\bar{D}_{x^i}^m)^c\cup\left\lbrace x^i_{\mathcal{V}} \stackrel{Y_{12}}\nleftrightarrow \partial B^{\mathcal{V}}_{x^i_{\mathcal{V}}}(l_0^m) \right\rbrace\right) \right]
*P\left[ \bigcap_{i=2^{m-1}+1}^{2^m}\left( (\bar{D}_{x^i}^m)^c\cup\left\lbrace x^i_{\mathcal{V}} \stackrel{Y_{12}}\nleftrightarrow \partial B^{\mathcal{V}}_{x^i_{\mathcal{V}}}(l_0^m) \right\rbrace\right) \right] \\
\le &P\left[ \bigcap_{i=1}^{2^{m-1}} \left( (D_{x^i})^c\cup\left\lbrace x^i_{\mathcal{V}} \notin \Gamma_Y^{(12)} \right\rbrace\right) \right] *P\left[ \bigcap_{i=2^{m-1}+1}^{2^m} \left( (D_{x^i})^c\cup\left\lbrace x^i_{\mathcal{V}} \notin \Gamma_Y^{(12)} \right\rbrace\right) \right].	
\end{split}
	\end{equation}
	Note that $2^{m}*\left(e^{-cT^{-1}*l_0^m}+ e^{-c*l_0^m}\right)\le 2e^{-cT*2^m}$ when $T$ is large enough. By (\ref{4.20}) and (\ref{4.21}), \begin{equation}\label{4.23}
		\begin{split}
		&P\left[ \bigcap_{i=1}^{2^m} \left( (D_{x^i})^c\cup\left\lbrace x^i_{\mathcal{V}} \notin \Gamma_Y^{(12)} \right\rbrace\right) \right] \\
		\le& P\left[ \bigcap_{i=1}^{2^{m-1}} \left( (D_{x^i})^c\cup\left\lbrace x^i_{\mathcal{V}} \notin \Gamma_Y^{(12)} \right\rbrace\right) \right] *P\left[ \bigcap_{i=2^{m-1}+1}^{2^m} \left( (D_{x^i})^c\cup\left\lbrace x^i_{\mathcal{V}} \notin \Gamma_Y^{(12)} \right\rbrace\right) \right]  +2e^{-cT*2^m}.
		\end{split}
	\end{equation}
	For any $x^i$, by Lemma \ref{lemma11} and Lemma \ref{lemma12}, 
	\begin{equation}\label{4.24}
	\begin{split}
	P\left[  (D_{x^i})^c\cup\left\lbrace x^i_{\mathcal{V}} \notin \Gamma_Y^{(12)} \right\rbrace \right] \le& P\left[ (D_{x^i})^c \right]+ P\left(x^i_{\mathcal{V}}\notin \Gamma_Y^{(12)},x^i_{\mathcal{V}}\xleftrightarrow{Y_{12}} \partial B^{\mathcal{V}}_{x^i_{\mathcal{V}}}(T^2)  \right)+P\left(x^i_{\mathcal{V}}\stackrel{Y_{12}}\nleftrightarrow \partial B^{\mathcal{V}}_{x^i_{\mathcal{V}}}(T^2) \right)  \\
	\le &P\left(x^i_{\mathcal{V}}\stackrel{Y_{12}}\nleftrightarrow \partial B^{\mathcal{V}}_{x^i_{\mathcal{V}}}(T^2) \right) +\boldsymbol{s.e.}(T)\\
	\le& \sum\limits_{y\in B_{x^i_{\mathcal{V}}}(T^2)\cap \mathcal{V}}P\left(Y_{12}(y)=0 \right) +\boldsymbol{s.e.}(T)\le \boldsymbol{s.e.}(T).
	\end{split}
	\end{equation}
	By (\ref{4.23}) and (\ref{4.24}), we have \begin{equation}
		\begin{split}
		&P\left[ \bigcap_{i=1}^{2^m} \left( (D_{x^i})^c\cup\left\lbrace x^i_{\mathcal{V}} \notin \Gamma_Y^{(12)} \right\rbrace\right)\right] +2e^{-cT*2^m}\\
		\le& P\left[ \bigcap_{i=1}^{2^{m-1}} \left( (D_{x^i})^c\cup\left\lbrace x^i_{\mathcal{V}} \notin \Gamma_Y^{(12)} \right\rbrace\right)\right] *P\left[ \bigcap_{i=2^{m-1}+1}^{2^m} \left( (D_{x^i})^c\cup\left\lbrace x^i_{\mathcal{V}} \notin \Gamma_Y^{(12)} \right\rbrace\right)\right] +4e^{-cT*2^m}\\
		\le &\left(P\left[ \bigcap_{i=1}^{2^{m-1}} \left( (D_{x^i})^c\cup\left\lbrace x^i_{\mathcal{V}} \notin \Gamma_Y^{(12)} \right\rbrace\right)\right] +2e^{-cT*2^{m-1}} \right)\\
		&*\left(P\left[ \bigcap_{i=2^{m-1}+1}^{2^m} \left( (D_{x^i})^c\cup\left\lbrace x^i_{\mathcal{V}} \notin \Gamma_Y^{(12)} \right\rbrace\right)\right] +2e^{-cT*2^{m-1}} \right) \\
		\le &...\le \prod_{i=1}^{2^m} \left(  P\left[  (D_{x^i})^c\cup\left\lbrace x^i_{\mathcal{V}} \notin \Gamma_Y^{(12)} \right\rbrace \right] +2e^{-cT}\right)  \le \left(\boldsymbol{s.e.}(T) \right)^{2^m}.
		\end{split}
	\end{equation}
	Thus we get (\ref{important}).
	
	If $\left\lbrace 0\xleftrightarrow{} \partial B(N^{\frac{1}{2d}})  \right\rbrace $ happens, then $A_{n_0, 0}$ also happens. By (\ref{decom}), we have\begin{equation}
		A_{n_0, 0}\subset \bigcup\limits_{\mathcal{T}\in \Lambda_{n_0,0}}A_{\mathcal{T}}.
	\end{equation}
	Therefore,
	\begin{equation}\label{4.28}
		P\left[0\xleftrightarrow{} \partial B(N^{\frac{1}{2d}}), \rho(0,\bar{\Gamma}_{12})>N  \right]\le P\left(A_{n_0, 0},\rho(0,\bar{\Gamma}_{12})>N \right) \le \sum\limits_{\mathcal{T}\subset \Lambda_{n_0,0}}P\left(A_{\mathcal{T}}, \rho(0,\bar{\Gamma}_{12})>N \right). 
	\end{equation}
	
	For any $\mathcal{T}\subset \Lambda_{n_0,0}$, we assume that $\mathcal{T}\cap \mathbb{I}_0=\left\lbrace (0,x^i) \right\rbrace_{i=1}^{2^{n_0}} $. If there exists $x^i\in\left\lbrace x^i \right\rbrace_{i=1}^{2^{n_0}} $ such that $D_{x^i}$, $\left\lbrace x^i_{\mathcal{V}} \in \Gamma_Y^{(12)}\right\rbrace $ and $A_{0, x^i}$ happen, then $\mathcal{C}_0(\widetilde{B}_{0,n_0})\cap \left(\bar{\mathbb{B}}^{1}_{x^i_{\mathcal{V}}}\cup \bar{\mathbb{B}}^{2}_{x^i_{\mathcal{V}}} \right)\neq \emptyset $ (note that the event $D_{x^i}$ ensures that any cluster hitting $B_{0,x_i}$ and $\partial \widetilde{B}_{0,x_i}$ must contain a path with length greater than $T^\xi$ and this path must be connected to $\bar{B}^1_{x^i_{\mathcal{V}}}$ or $\bar{B}^2_{x^i_{\mathcal{V}}}$). Thus $\mathcal{C}_0(\widetilde{B}_{0,n_0})\cap \bar{\Gamma}_{12}\neq \emptyset$, which yields that $\left\lbrace \rho(0, \bar{\Gamma}_{12})\le c_1N^{0.5} \right\rbrace $ for some $c_1(d)>0$. In conclusion, for any $ \mathcal{T}\in \Lambda_{n_0,0}$, 
	\begin{equation}\label{4.32}
		A_{\mathcal{T}}\cap \left( \bigcup_{i=1}^{2^{n_0}}D_{x^i}\cap \left\lbrace x^i_{\mathcal{V}} \in \Gamma_Y^{(12)} \right\rbrace\right) \subset A_{\mathcal{T}}\cap\left\lbrace \rho(0, \bar{\Gamma}_{12})\le c_1N^{0.5} \right\rbrace.
	\end{equation}
Then by (\ref{important}) and (\ref{4.32}),
	\begin{equation}\label{4.29}
		P\left(A_{\mathcal{T}}, \rho(0,\bar{\Gamma}_{12})>c_1N^{0.5}  \right)\le P\left[ A_{\mathcal{T}},\bigcap_{i=1}^{2^{n_0}}\left( (D_{x^i})^c\cup\left\lbrace x^i_{\mathcal{V}} \notin \Gamma_Y^{(12)} \right\rbrace\right)\right] \le \left(\boldsymbol{s.e.}(T) \right)^{2^{n_0}}.
	\end{equation}
	
	 Note that $\exists c>0$ such that $2^{n_0}\ge N^c$ and when $T$ is large enough, any $N\in (T^3,\infty)$ satisfies $N>c_1N^{0.5}$. By $|\Lambda_{n_0,0}|\le \left(c_0l_0^{2(d-1)} \right)^{2^{n_0}} $,(\ref{4.28}) and (\ref{4.29}), we have: for large enough $T$,
	 \begin{equation}
		P\left[0\xleftrightarrow{} \partial B(N^{\frac{1}{2d}}), \rho(0,\bar{\Gamma}_{12})>N  \right]\le \left(c_0l_0^{2(d-1)}*\boldsymbol{s.e.}(T) \right)^{2^{n_0}}\le \boldsymbol{s.e.}(N).
	\end{equation}
	Now we complete the proof of Proposition \ref{prop12}.
\end{proof}
\section{The chemical distance on $\Gamma$ is good}
In this section, we are going to prove Theorem \ref{theorem1}. Assume $|y|=N$ and it's sufficient to prove the case when $N\ge 3T$. Note that $\bar{\Gamma}_{12}\subset \bar{\Gamma}_{1}\cup \bar{\Gamma}_{2}$ and $\forall x_1\in B_0(N^{0.5}), \forall x_2\in B_y(N^{0.5})$, $3N\ge |x_1-x_2|\ge \frac{1}{3}N$. By Proposition \ref{prop12}, , we have
 \begin{equation}\label{5.1}
\begin{split}
	&P\left(0, y\in \Gamma, \rho(0,y)>(3c+2)N \right) \\
\le &P\left(0, y\in \Gamma, \rho(0,y)>(3c+2)N , \rho(0,\bar{\Gamma}_{12})\le N^{0.5}, \rho(y,\bar{\Gamma}_{12})\le N^{0.5} \right) +\boldsymbol{s.e.}(N)\\
\le &\sum\limits_{x_1\in B_0(N^{0.5}),x_2\in B_y(N^{0.5})}P\left(x_1, x_2\in \bar{\Gamma}_{12}, \rho(x_1,x_2)> 3cN \right) +\boldsymbol{s.e.}(N)\\
\le &\sum\limits_{x_1\in B_0(N^{0.5}),x_2\in B_y(N^{0.5})}P\left(x_1, x_2\in \bar{\Gamma}_{12}, \rho(x_1,x_2)> c*|x_1-x_2|\right) +\boldsymbol{s.e.}(N).
\end{split}
\end{equation}
\begin{lemma}\label{lemma14}
	For large enough $T$, there exists $c(u,d)>0$ such that, for any $ |x_1-x_2|\ge T$, 
\begin{equation}
	P\left(x_1, x_2\in \bar{\Gamma}_{12}, \rho(x_1,x_2)> c*|x_1-x_2|\right) \le \boldsymbol{s.e.}(|x_1-x_2|).
\end{equation}
\end{lemma}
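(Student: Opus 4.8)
The plan is to bridge the two ``highway systems'' $\bar\Gamma_1$ and $\bar\Gamma_2$ using the reserve sub-processes $\mathcal{FI}_{1,2}$ and $\mathcal{FI}_{2,2}$, together with the connecting estimate of Proposition~\ref{prop12}. The starting observation is that $\bar\Gamma_{12}\subset\bar\Gamma_1\cup\bar\Gamma_2$, so if $x_1,x_2\in\bar\Gamma_{12}$ then each $x_j$ lies in one of $\bar{\mathbb B}^1_{(x_j)_{\mathcal V}}$ or $\bar{\mathbb B}^2_{(x_j)_{\mathcal V}}$, and more importantly $(x_j)_{\mathcal V}\in\Gamma_Y^{(12)}$. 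The idea is to reduce the claim, via coarse-graining to the block percolation $\{Y_{12}(y)\}_{y\in\mathcal V}$, to a chemical-distance statement about $\Gamma_Y^{(12)}$: by the same argument as in Lemma~\ref{lemmaZ} and (7) of Proposition~\ref{Y} (applied now to the $k$-independent, high-density field $Y_{12}$), one has
\begin{equation}\label{eq:Y12chem}
P\bigl((x_1)_{\mathcal V},(x_2)_{\mathcal V}\in\Gamma_Y^{(12)},\ \rho_{\Gamma_Y^{(12)}}\bigl((x_1)_{\mathcal V},(x_2)_{\mathcal V}\bigr)\ge c|x_1-x_2|_{\mathcal V}\bigr)\le\boldsymbol{s.e.}(|x_1-x_2|).
\end{equation}
On the complementary event there is a short open path $y_0=(x_1)_{\mathcal V},y_1,\dots,y_L=(x_2)_{\mathcal V}$ in $\{Y_{12}(y)\}$ with $L\le c|x_1-x_2|_{\mathcal V}=cn^{-1}|x_1-x_2|$ of consecutive good boxes (good in \emph{both} $\mathcal{FI}_{1,1}$ and $\mathcal{FI}_{2,1}$).

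Next I would upgrade this block path to an actual connected path inside $\mathcal{FI}^{u,T}$ of comparable length. For each pair of neighbouring good boxes $y_{k},y_{k+1}$ with $|y_k-y_{k+1}|_2=n$, property~(5) of Proposition~\ref{Y} (applied to each of $\mathcal{FI}_{1,1}$ and $\mathcal{FI}_{2,1}$ separately) gives $\rho_{\bar{\mathbb B}^1_{y_k}\cup\bar{\mathbb B}^1_{y_{k+1}}}<cT^{0.5}$ and likewise for the superscript $2$; so $\bar\Gamma_1$ restricted along the block path is connected with chemical distance between consecutive $\bar{\mathbb B}^1$'s of order $T^{0.5}$, and similarly $\bar\Gamma_2$. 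Summing over the $L$ blocks, $\rho(x_1,x_2)$ within $\bar\Gamma_1$ (or $\bar\Gamma_2$, whichever contains the relevant endpoints) is at most $cT^{0.5}L\le cT^{0.5}n^{-1}|x_1-x_2|\le c|x_1-x_2|$ since $n=\lfloor bT^{0.5}\rfloor$. The only subtlety is that $x_1$ might sit in $\bar{\mathbb B}^1$ while $x_2$ sits in $\bar{\mathbb B}^2$; but since $(x_j)_{\mathcal V}\in\Gamma_Y^{(12)}$ each box $y_k$ along the path is good for both $\mathcal{FI}_{1,1}$ and $\mathcal{FI}_{2,1}$, so by Definition~\ref{defofgoodsite} applied inside either process, $\bar{\mathbb B}^1_{y_k}$ and $\bar{\mathbb B}^2_{y_k}$ are themselves joined within $B_{y_k}(T^{1/(2d)})$ by $S^{(2)}\cup S^{(3)}$ paths of a third independent copy, at cost $O(T^{0.5})$ per box; this is exactly the kind of ``switching'' already performed in the proof of Lemma~\ref{lemma11}, and it lets us move between the two highways wherever we like.

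I would organise the bound as a union over $x_1\in B_0$-ish, $x_2\in B_y$-ish neighbourhoods exactly as in \eqref{5.1} is about to be used, but at the level of this lemma it suffices to: (i) condition on the block path from \eqref{eq:Y12chem}; (ii) invoke properties (4) and (5) of Proposition~\ref{Y} for $\mathcal{FI}_{1,1}$ and $\mathcal{FI}_{2,1}$ to get chemical-distance control inside each $\bar\Gamma_i$; (iii) use the good-site switching property (with the uniform bound trick, since the reserve process used for switching is independent of everything that determined the block path) to splice the two; and (iv) collect the error terms, each of which is $\boldsymbol{s.e.}(|x_1-x_2|)$ because $|x_1-x_2|\ge T$ forces $L\ge c$ and the number of blocks along the path is polynomial in $|x_1-x_2|/n$ while each per-block failure probability is $\boldsymbol{s.e.}(T)$. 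I expect the main obstacle to be bookkeeping rather than anything deep: making sure that the event ``there is a short open $Y_{12}$-path'' is measurable with respect to data independent of the reserve sub-processes $\mathcal{FI}_{1,2},\mathcal{FI}_{2,2}$ that do the switching, so that the conditional estimates from Lemma~\ref{connect} and Definition~\ref{defofgoodsite} apply uniformly; this is precisely what the uniform bound trick of Section~\ref{notations} is designed to handle, and the decomposition $\mathcal{FI}^{u,T}=\mathcal{FI}_1^{0.5u,T}\cup\mathcal{FI}_2^{0.5u,T}$ with its further splitting into $(i,1)$ and $(i,2)$ pieces was set up with exactly this in mind.
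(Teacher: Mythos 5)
Your overall structure — reduce to a chemical‑distance estimate for the block field $Y_{12}$, upgrade the short block path to an actual path by properties (4)–(5) of Proposition~\ref{Y}, and handle the mixed‑endpoint case by switching between $\bar{\mathbb B}^1$ and $\bar{\mathbb B}^2$ using a reserve sub‑process — is the same as the paper's. Two steps are, however, stated imprecisely in a way that does not actually close the argument.

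First, the assertion that ``by Definition~\ref{defofgoodsite} applied inside either process, $\bar{\mathbb B}^1_{y_k}$ and $\bar{\mathbb B}^2_{y_k}$ are themselves joined'' is not correct as written: Definition~\ref{defofgoodsite} only connects line‑segment clusters \emph{of a single FRI process} via that process's own $S^{(2)}\cup S^{(3)}$ paths, and says nothing about linking the $\mathcal{FI}_{1,1}$‑highway to the $\mathcal{FI}_{2,1}$‑highway. That cross‑connection is a new probabilistic estimate — the paper re‑runs the capacity‑boosting argument of Lemma~\ref{goodsite} with the reserve process $\mathcal{FI}_{2,2}^{0.25u,T}$, producing a per‑site success probability $1-\boldsymbol{s.e.}(T)$ for the event $\{\bar{\mathbb B}^1_{z_i}\leftrightarrow\bar{\mathbb B}^2_{z_i}\}$ conditionally on $Y_{12}(z_i)=1$. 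You later appeal to ``the kind of switching performed in Lemma~\ref{lemma11},'' which is the right mechanism, but your initial attribution to the definition of good sites would not survive a careful read.

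Second, and more seriously, your step (iv) does not produce $\boldsymbol{s.e.}(|x_1-x_2|)$. You assert the error is $\boldsymbol{s.e.}(|x_1-x_2|)$ ``because \ldots the number of blocks \ldots is polynomial in $|x_1-x_2|/n$ while each per‑block failure probability is $\boldsymbol{s.e.}(T)$,'' but a union or sum of polynomially many $\boldsymbol{s.e.}(T)$ terms is $\boldsymbol{s.e.}(T)$, not $\boldsymbol{s.e.}(|x_1-x_2|)$. The independence you invoke (``the reserve process is independent of everything that determined the block path,'' i.e.\ the uniform bound trick) is independence \emph{between the reserve and the conditioning data}, which lets you apply the conditional per‑site estimate; it is not the independence that produces decay in $|x_1-x_2|$. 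What is needed — and what the paper does — is to thin the block path to $k\asymp|x_1-x_2|/T^{0.5}$ sites that are mutually $20T^{0.5}$‑separated so that, by property~(3) of Proposition~\ref{Y}, the $k$ switching events are mutually independent, and then to argue that \emph{all $k$ fail} only with probability $(\boldsymbol{s.e.}(T))^{k}=\boldsymbol{s.e.}(|x_1-x_2|)$, after which a single successful switch plus property~(5) yields $\rho(x_1,x_2)\le cT^{0.5}L\le c|x_1-x_2|$. Without this product‑over‑independent‑attempts step your bound remains stuck at $\boldsymbol{s.e.}(T)$ and the lemma is not proved.

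A minor omission: you never explicitly handle the two ``same‑highway'' cases $x_1,x_2\in\bar\Gamma_j$; the paper dispatches these directly by Proposition~\ref{bargamma} applied to $\bar\Gamma_1$ and $\bar\Gamma_2$ separately, and you should say so rather than implicitly folding them into the block‑path argument.
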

\begin{proof}
	Let $\bar{N}=|x_1-x_2|\ge T$. Since $\bar{\Gamma}_{12}\subset \bar{\Gamma}_{1}\cup \bar{\Gamma}_{2}$, we have \begin{equation}\label{5.3}
	\begin{split}
	&P\left(x_1, x_2\in \bar{\Gamma}_{12}, \rho(x_1,x_2)> c\bar{N}\right)\\
	\le & P\left(x_1, x_2\in \bar{\Gamma}_{1}, \rho_{\bar{\Gamma}_{1}}(x_1,x_2)> c\bar{N}\right)+P\left(x_1, x_2\in \bar{\Gamma}_{2}, \rho_{\bar{\Gamma}_{2}}(x_1,x_2)> c\bar{N}\right)\\
	&+P\left(x_1\in \bar{\Gamma}_{12}\cap\bar{\Gamma}_{1}, x_2 \in \bar{\Gamma}_{12}\cap\bar{\Gamma}_{2},\rho(x_1,x_2)> c\bar{N}\right) \\
	&+P\left(x_1\in \bar{\Gamma}_{12}\cap\bar{\Gamma}_{2}, x_2 \in \bar{\Gamma}_{12}\cap\bar{\Gamma}_{1},\rho(x_1,x_2)> c\bar{N}\right). 
	\end{split}		
	\end{equation}
	
Let $c'$ be the constant in Propostion \ref{bargamma} and $c> c'$. By Propostion \ref{bargamma}, for $j=1,2$, we have
\begin{equation}\label{5.4}
\begin{split}
P\left(x_1, x_2\in \bar{\Gamma}_{j}, \rho_{\bar{\Gamma}_{j}}(x_1,x_2)> c\bar{N}\right)\le \boldsymbol{s.e.}(\bar{N}).
\end{split}
\end{equation}

For the remaining parts in (\ref{5.3}), if $x_1\in \bar{\Gamma}_{12}\cap\bar{\Gamma}_{1}$, there must exist $y_1\in \Gamma_Y^{(12)}$ such that $x_1\in \bar{\mathbb{B}}^1_{y_1}$. Since $\bar{\mathbb{B}}^1_{y_1}\subset B_{y_1}(4n)$, we have $y_1\in B_{x_1}(4n)$. Similarly, there exists $y_2\in \Gamma_Y^{(12)}\cap B_{x_2}(4n)$ such that $x_2\in \bar{\mathbb{B}}^2_{y_2}$. Thus we have  \begin{equation}\label{5.5}
\begin{split}
	&P\left(x_1\in \bar{\Gamma}_{12}\cap\bar{\Gamma}_{1}, x_2 \in \bar{\Gamma}_{12}\cap\bar{\Gamma}_{2},\rho(x_1,x_2)> c\bar{N}\right)\\
	\le &\sum\limits_{y_1\in B_{x_1}(4n)\cap \mathcal{V},y_2\in B_{x_2}(4n)\cap \mathcal{V} } P\left(y_1,y_2\in \Gamma_Y^{(12)},x_1\in \bar{\mathbb{B}}^1_{y_1},x_2\in \bar{\mathbb{B}}^2_{y_2},\rho(x_1,x_2)> c\bar{N}  \right). 
\end{split}
\end{equation}
By Lemma \ref{lemmaZ}, there exists $c''(d)>0$ such that $\forall y_1, y_2\in \mathcal{V}$, \begin{equation}\label{5.6}
	P\left(y_1,y_2\in \Gamma_Y^{(12)}, \rho_{\Gamma_Y^{(12)}}(y_1,y_2)\ge c''|y_1-y_2|_{\mathcal{V}} \right)\le \boldsymbol{s.e.}(|y_1-y_2|).
\end{equation}
If $\left\lbrace \rho_{\Gamma_Y^{(12)}}(y_1,y_2)< c''|y_1-y_2|_{\mathcal{V}} \right\rbrace $ happens, we fix all paths starting from $B_{y_1}\left(2c''|y_1-y_2| \right) $ and then choose a sequence of open vertices $\left(z_0,z_2,...,z_m \right) $ in $\left\lbrace Y_{12}(y) \right\rbrace_{y\in \mathcal{V}} $ such that $z_0=y_1$, $z_m=y_2$ and $|y_1-y_2|_{\mathcal{V}}\le m< c''|y_1-y_2|_{\mathcal{V}}$. Meanwhile, we fix $\bar{\mathbb{B}}_{z_i}^1$ and $\bar{\mathbb{B}}_{z_i}^2$, for $i=0,1,...,m$. For any fixed $\bar{\mathbb{B}}_{z_i}^1$ and $\bar{\mathbb{B}}_{z_i}^2$, using the same approach as in the proof of Lemma \ref{goodsite}, we have 
\begin{equation}
	P\left(\bar{\mathbb{B}}_{z_i}^1\xleftrightarrow[B_{z_i}(T^{\frac{1}{2d}})]{S_{z_i}\left(\mathcal{FI}_{2,2}^{0.25u,T} \right) } \bar{\mathbb{B}}_{z_i}^2\bigg|Y_{12}(z_i)=1 \right) \ge 1-\boldsymbol{s.e.}(T).
\end{equation}
In $\left(z_0,z_2,...,z_m \right)$, we can choose $k=\lfloor \frac{|y_1-y_2|}{20T^{0.5}}\rfloor$ sites $\left\lbrace z_{i_1}, z_{i_2},...,z_{i_k}\right\rbrace $ such that $|z_{i_s}-z_{i_t}|\ge 20T^{0.5}$ for any $z_{i_s}\neq z_{i_t} $ in it. By (3) of Proposition \ref{Y}, we know that $\left\lbrace \bar{\mathbb{B}}_{z_{i_s}}^1\xleftrightarrow[B_{z_{i_s}}(T^{\frac{1}{2d}})]{S_{z_{i_s}}\left(\mathcal{FI}_{2,2}^{0.25u,T} \right) } \bar{\mathbb{B}}_{z_{i_s}}^2 \right\rbrace $, $s=1,2,...,k$ are independent. Thus \begin{equation}\label{5.8}
	P\left(\bigcap_{i=0}^{m} \left\lbrace \bar{\mathbb{B}}_{z_i}^1\xleftrightarrow[B_{z_i}(T^{\frac{1}{2d}})]{S_{z_i}\left(\mathcal{FI}_{2,2}^{0.25u,T} \right) } \bar{\mathbb{B}}_{z_i}^2 \right\rbrace^c\bigg|\forall 0\le i\le m,Y_{12}(z_i)=1\right) \le \left(\boldsymbol{s.e.}(T)\right)^{k}\le \boldsymbol{s.e.}(|y_1-y_2|).
\end{equation}
If there exists $z_i$ such that $\left\lbrace \bar{\mathbb{B}}_{z_i}^1\xleftrightarrow[B_{z_i}(T^{\frac{1}{2d}})]{S_{z_i}\left(\mathcal{FI}_{2,2}^{0.25u,T} \right) } \bar{\mathbb{B}}_{z_i}^2  \right\rbrace $ happens, by (5) of Proposition \ref{Y},  \begin{equation}\label{5.9}
	\rho(x_1,x_2)\le m*c'''T^{0.5}+(2T^{\frac{1}{2d}}+1)^d< 2c''c'''b^{-1}|y_1-y_2|+c_1T^{0.5},
\end{equation}
where $c'''=c'''(u,d)$ and $c_1=c_1(d)$.

Take $c=\max\left\lbrace c',20c''c'''b^{-1}+c_1\right\rbrace$. Since $|y_1-y_2|\le 10\bar{N} $, we have \begin{equation}\label{5.10}
	c\bar{N}\ge 2c''c'''b^{-1}|y_1-y_2|+c_1T^{0.5}.
\end{equation}
By (\ref{5.9}) and (\ref{5.10}), we have
\begin{equation}\label{511}
\begin{split}
\bigcup_{i=0}^{m} \left\lbrace \bar{\mathbb{B}}_{z_i}^1\xleftrightarrow[B_{z_i}(T^{\frac{1}{2d}})]{S_{z_i}\left(\mathcal{FI}_{2,2}^{0.25u,T} \right) } \bar{\mathbb{B}}_{z_i}^2 \right\rbrace \cap \left\lbrace \rho_{\Gamma_Y^{(12)}}(y_1,y_2)< c'|y_1-y_2|_{\mathcal{V}} \right\rbrace
\subset\left\lbrace \rho(x_1,x_2)\le c\bar{N} \right\rbrace.
\end{split}
\end{equation}
By (\ref{5.6}), (\ref{5.8}), (\ref{511}) and uniform bound trick, \begin{equation}\label{5.11}
\begin{split}
&P\left(y_1,y_2\in \Gamma_Y^{(12)},x_1\in \bar{\mathbb{B}}^1_{y_1},x_2\in \bar{\mathbb{B}}^2_{y_2},\rho(x_1,x_2)> c\bar{N}\right)
\le \boldsymbol{s.e.}(|y_1-y_2|).
\end{split}
\end{equation}
Combine (\ref{5.5}) and (\ref{5.11}), 
\begin{equation}\label{5.12}
\begin{split}
	&P\left(x_1\in \bar{\Gamma}_{12}\cap\bar{\Gamma}_{1}, x_2 \in \bar{\Gamma}_{12}\cap\bar{\Gamma}_{2},\rho(x_1,x_2)> c\bar{N}\right)\\
	\le &\sum\limits_{y_1\in B_{x_1}(4n)\cap \mathcal{V},y_2\in B_{x_2}(4n)\cap \mathcal{V}}\boldsymbol{s.e.}(|y_1-y_2|)\le \boldsymbol{s.e.}(\bar{N}).
\end{split}
\end{equation}
Equivalently, we also have 
\begin{equation}\label{5.14}
P\left(x_1\in \bar{\Gamma}_{12}\cap\bar{\Gamma}_{2}, x_2 \in \bar{\Gamma}_{12}\cap\bar{\Gamma}_{1},\rho(x_1,x_2)> c\bar{N}\right)\le \boldsymbol{s.e.}(\bar{N}).
\end{equation}

By (\ref{5.3}), (\ref{5.4}), (\ref{5.12}) and (\ref{5.14}), we finally get Lemma \ref{lemma14}.
\end{proof}
By (\ref{5.1}) and Lemma \ref{lemma14},
\begin{equation}
\begin{split}
&P\left(0, y\in \Gamma, \rho(0,y)>(3c+2)N \right)\\
\le &\sum\limits_{x_1\in B_0(N^{0.5}),x_2\in B_y(N^{0.5})} \boldsymbol{s.e.}(|x_1-x_2|)\le \boldsymbol{s.e.}(N).
\end{split}
\end{equation}
Now we complete the proof of Theorem \ref{theorem1}.
\section{Local uniqueness of FRI}
In this section, we are going to prove Theorem \ref{theorem2}. It's sufficient to prove the case when $T$ is large enough and $R\ge T^2$. By Proposition \ref{prop12} and $\bar{\Gamma}_{12}\subset \Gamma$, 
\begin{equation}\label{6.1}
\begin{split}
P[&\exists two\ clusters\ in\ \mathcal{FI}^{u,T}\cap B(R)\ having\ diameter\ at\ least\\  
&\frac{R}{10}\ not\ connected\ to\ each\ other\ in\ B(2R)]\\
\le &\sum\limits_{x_1,x_2\in B(R)}P\left(x_1\xleftrightarrow{} \partial B_{x_1}(\frac{R}{20}), x_2\xleftrightarrow{} \partial B_{x_2}(\frac{R}{20}), \left\lbrace x_1\xleftrightarrow[B(2R)]{} x_2\right\rbrace^c \right)\\
\le &\sum\limits_{x_1,x_2\in B(R)}P\left(x_1\xleftrightarrow{} \partial B_{x_1}(\frac{R}{20}), x_2\xleftrightarrow{} \partial B_{x_2}(\frac{R}{20}), \left\lbrace x_1\xleftrightarrow[B(2R)]{} x_2\right\rbrace^c,x_1\in \Gamma,x_2\in \Gamma\right)\\
&+P\left(x_1\xleftrightarrow{} \partial B_{x_1}(\frac{R}{20}),\rho(x_1,\bar{\Gamma}_{12})>\left(\frac{R}{20} \right)^{2d}\right)+ P\left(x_2\xleftrightarrow{} \partial B_{x_2}(\frac{R}{20}),\rho(x_2,\bar{\Gamma}_{12})>\left(\frac{R}{20} \right)^{2d}\right)\\
\le &\sum\limits_{x_1,x_2\in B(R)}P\left(x_1,x_2\in \Gamma, \left\lbrace x_1\xleftrightarrow[B(2R)]{\Gamma} x_2\right\rbrace^c\right)+\boldsymbol{s.e.}(R).
\end{split}
\end{equation}

Before showing the result of interest, we first prove a weaker estimate: $\exists c(u,d)>1$ such that for any $ x_1,x_2\in B(R)$, 
\begin{equation}\label{7.2}
	P\left(x_1,x_2\in \Gamma, \left\lbrace x_1\xleftrightarrow[B(cR)]{\Gamma} x_2\right\rbrace^c\right)\le \boldsymbol{s.e.}(R). 
\end{equation}

First, we are going to prove (\ref{7.2}) in the case when $|x_1-x_2|>\frac{R}{30}$. Let $c'(u,d)$ be the constant in Lemma \ref{lemma14} and $c=1.1c'+1$. By Proposition \ref{prop12}, 
\begin{equation}\label{7.3}
	\begin{split}
	&P\left(x_1,x_2\in \Gamma, \left\lbrace x_1\xleftrightarrow[B(cR)]{\Gamma} x_2\right\rbrace^c \right)\\
	\le &P\left(x_1,x_2\in \Gamma, \rho(x_1,x_2)>cR, \rho(x_1, \bar{\Gamma}_{12})\le \frac{R}{90}, \rho(x_2, \bar{\Gamma}_{12})\le \frac{R}{90} \right)\\
	&+P\left(x_1\xleftrightarrow{} \partial B_{x_1}\left( (\frac{R}{90})^{\frac{1}{2d}}\right), \rho(x_1, \bar{\Gamma}_{12})> \frac{R}{90} \right)+P\left(x_2\xleftrightarrow{} \partial B_{x_2}\left( (\frac{R}{90})^{\frac{1}{2d}}\right), \rho(x_2, \bar{\Gamma}_{12})> \frac{R}{90}  \right) \\
	\le &\sum\limits_{z_1\in B_{x_1}(\frac{R}{90}),z_2\in B_{x_2}(\frac{R}{90}) }P\left(z_1,z_2\in\bar{\Gamma}_{12}, \rho(z_1,z_2)>1.1c'R \right)+\boldsymbol{s.e.}(R)
	\end{split}
\end{equation}
For any $ x_1,x_2\in B(R)$, note that $\forall z_1\in B_{x_1}(\frac{R}{90}),\forall z_2\in B_{x_2}(\frac{R}{90})$, $\frac{R}{90}\le |z_1-z_2|\le 1.1R$. By Lemma \ref{lemma14}, we have
\begin{equation}\label{704}
\begin{split}
&\sum\limits_{z_1\in B_{x_1}(\frac{R}{90}),z_2\in B_{x_2}(\frac{R}{90}) }P\left(z_1,z_2\in\bar{\Gamma}_{12}, \rho(z_1,z_2)>1.1c'R \right)\\
\le &\sum\limits_{z_1\in B_{x_1}(\frac{R}{90}),z_2\in B_{x_2}(\frac{R}{90}) }P\left(z_1,z_2\in\bar{\Gamma}_{12}, \rho(z_1,z_2)>c'|z_1-z_2|\right) \\
\le &\sum\limits_{z_1\in B_{x_1}(\frac{R}{90}),z_2\in B_{x_2}(\frac{R}{90})}\boldsymbol{s.e.}(|z_1-z_2|)\le \boldsymbol{s.e.}(R).
\end{split}
\end{equation}
Combine (\ref{7.3}) and (\ref{704}), we have: for any $x_1,x_2\in B(R)$ such that $|x_1-x_2|>\frac{R}{30}$, 
\begin{equation}\label{6.2}
	P\left(x_1,x_2\in \Gamma, \left\lbrace x_1\xleftrightarrow[B(cR)]{\Gamma} x_2\right\rbrace^c \right)\le \boldsymbol{s.e.}(R).
\end{equation}

Now let's consider the remaining case ``$|x_1-x_2|\le \frac{R}{30}$''. We define an event $\widetilde{LR}(M):=\left\lbrace left\ of\ B(M)\xleftrightarrow[B(M)]{\Gamma} right\ of\ B(M) \right\rbrace$. By Theorem \ref{theorem3} and Proposition \ref{prop12}, we have \begin{equation}\label{74}
\begin{split}
P\left((\widetilde{LR}(M))^c\right)\le & P\left((\widetilde{LR}(M))^c, LR(M)\right)+P\left((LR(M))^c \right) \\
\le &\sum_{x\in\ left\ of\ B(M)}P\left((\widetilde{LR}(M))^c, x\xleftrightarrow[B(M)]{} right\ of\ \partial B(M)\right)+\boldsymbol{s.e.}(M)\\
\le &\sum_{x\in\ left\ of\ B(M)}P\left(x\xleftrightarrow[]{} B_x(M),\rho(x,\Gamma)=\infty\right)+\boldsymbol{s.e.}(M)\\
\le &\boldsymbol{s.e.}(M).
\end{split}
\end{equation}
If the event $\widetilde{LR}(R)$ occurs, there must exist two sites $z_1,z_2\in B(R)\cap \Gamma$ such that $|z_1-z_2|\ge 2R$ and thus $\exists z\in \{z_1,z_2\}$ such that $|z-x_1|\ge R$ (since $2R\le |z_1-z_2|\le |z_1-x_1|+|z_2-x_1|$). Since $|z-x_1|\ge R$ and $|x_1-x_2|\le \frac{R}{30}$, we have $|z-x_2|>\frac{R}{30}$. Therefore, by (\ref{6.2}) and (\ref{74}), we have
\begin{equation}\label{7.4}
	\begin{split}
	&P\left(x_1,x_2\in \Gamma, \left\lbrace x_1\xleftrightarrow[B(cR)]{\Gamma} x_2\right\rbrace^c \right)\\
	\le &P\left(x_1,x_2\in \Gamma, \left\lbrace x_1\xleftrightarrow[B(cR)]{\Gamma} x_2\right\rbrace^c,\widetilde{LR}(R)  \right)+\boldsymbol{s.e.}(R)\\
	\le &\sum_{z\in B(R), |z-x_1|>\frac{R}{30},|z-x_2|>\frac{R}{30}}P\left(x_1,x_2,z\in \Gamma,\left\lbrace x_1\xleftrightarrow[B(cR)]{\Gamma} x_2\right\rbrace^c\right)+\boldsymbol{s.e.}(R)\\
	\le &\sum_{z\in B(R), |z-x_1|>\frac{R}{30},|z-x_2|>\frac{R}{30}}\left[ P\left(x_1,z\in \Gamma,\left\lbrace x_1\xleftrightarrow[B(cR)]{\Gamma} z\right\rbrace^c \right)+P\left(x_2,z\in \Gamma,\left\lbrace x_2\xleftrightarrow[B(cR)]{\Gamma} z\right\rbrace^c \right)\right]+\boldsymbol{s.e.}(R)\\
	\le &\sum_{z\in B(R), |z-x_1|>\frac{R}{30},|z-x_2|>\frac{R}{30}}\boldsymbol{s.e.}(R)+\boldsymbol{s.e.}(R)\le \boldsymbol{s.e.}(R).
	\end{split}
\end{equation}
Combine (\ref{6.2}) and (\ref{7.4}), we get (\ref{7.2}).

Now we improve $c$ in (\ref{7.2}) to $2$ using the approach in Section 4.3 of \cite{rath2011transience}. Assume $R'=\lfloor \frac{R}{2c}\rfloor $. Define $A_x^1=\left\lbrace B_x(R')\cap \Gamma\neq \emptyset \right\rbrace $, $A^2_x=\left\lbrace \forall y_1,y_2\in B_x(2R')\cap \Gamma,y_1\xleftrightarrow[B_x(2cR')]{\Gamma} y_2\right\rbrace$ and $A=\bigcap\limits_{x\in B(R)}\left(A_x^1\cap A_x^2 \right) $. By Theorem \ref{theorem3} and Proposition \ref{prop12}, 
\begin{equation}\label{75}
\begin{split}
P\left((A^1_x)^c\right)=P\left((A^1_0)^c\right)\le &P\left(LR(R'),(A^1_0)^c\right)+\boldsymbol{s.e.}(R)\\
\le& \sum_{z\in B(R')}P\left(z\xleftrightarrow{}\partial B_z(R'),\rho(z,\bar{\Gamma}_{12})=\infty \right)+\boldsymbol{s.e.}(R)\\
\le &\boldsymbol{s.e.}(R). 
\end{split}
\end{equation}
By (\ref{7.2}), we have 
\begin{equation}\label{76}
P\left((A^2_x)^c \right)\le \sum_{y_1,y_2\in B_x(2R')}P\left(y_1,y_2\in B_x(2R')\cap \Gamma,  \left( y_1\xleftrightarrow[B_x(2cR')]{\Gamma} y_2\right)^c\right)\le \boldsymbol{s.e.}(R).
\end{equation}
Combine (\ref{75}) and (\ref{76}), we have \begin{equation}\label{7.7}
	P\left(A\right)\ge 1-\boldsymbol{s.e.}(R).
\end{equation}

We will prove that event $A$ implies $\left\lbrace \forall x_1,x_2\in B(R)\cap \Gamma\ and\ |x_1-x_2|=k, x_1\xleftrightarrow[B(2R)]{\Gamma}x_2 \right\rbrace $ for $1\le k\le 2R$ by induction. The cases when $1\le k\le 4R'$ are elementary. Assume that $A$ and $\bigcap\limits_{l=1}^{k}\left\lbrace \forall x_1,x_2\in B(R)\cap \Gamma\ and\ |x_1-x_2|=l, x_1\xleftrightarrow[B(2R)]{\Gamma}x_2 \right\rbrace $ occur, where $k\ge 4R'$. For any  $z_1,z_2\in B(R)\cap \Gamma$ such that $|z_1-z_2|=k+1$, we know that $z:=\left(\lfloor \frac{z_1^{(1)}+z_2^{(1)}}{2}\rfloor ,...,\lfloor \frac{z_1^{(d)}+z_2^{(d)}}{2} \rfloor\right) $ satisfying $B_z(R')\subset B_{z_1}(k)\cap B_{z_2}(k)$. Since $A_z^1$ occurs, there exists $w\in B_z(R')\cap \Gamma$. Therefore, $w\xleftrightarrow[B(2R)]{\Gamma}z_1$ and $w\xleftrightarrow[B(2R)]{\Gamma}z_2$, which implies $z_1\xleftrightarrow[B(2R)]{\Gamma}z_2$. In conclusion, the induction is completed. 

By (\ref{7.7}), for any $ x_1,x_2\in B(R) $, 
\begin{equation}\label{7.8}
	P\left(x_1,x_2\in \Gamma, \left\lbrace x_1\xleftrightarrow[B(2R)]{\Gamma}x_2\right\rbrace^c \right)\le P\left(A^c \right)\le \boldsymbol{s.e.}(R).
\end{equation}

Combine (\ref{6.1}) and (\ref{7.8}), we finish the proof of Theorem \ref{theorem2}.

\section*{Acknowledgments} 
We would like to thank Mahmood Ettehad, Burak Hatinoglu, and Eviatar Procaccia for fruitful discussions. 

\section{Appendix A: Proof of Claim \ref{claim1}}
In this seciton, we give the proof of Claim \ref{claim1} for completeness. The technique we use in this section is almost parallel to that introduced in Section 6 of \cite{vcerny2012internal}. 

Recall that $a,b,\epsilon>0$ are small enough constants (only depend on $u,d$) and $n=\lfloor bT^{0.5}\rfloor$.
\begin{lemma}\label{lemma3}
	Let $F_s=\left\lbrace x\in \mathbb{Z}^d: x\cdot e_1=s\right\rbrace $. For $0<a<1$, any integer $s\in [-n-n^a,n+n^{a}]$ and $y\in B(8T^{0.5})\setminus B(6T^{0.5})$, \begin{equation}\label{l3}
	\sum\limits_{z\in G_a^{(0,1)}\cap F_s}P_y\left(X_{H_{G_a^{(0,1)}}}=z,H_{G_a^{(0,1)}}\le T\right)\ge c*n^{2-d-a}*f_d(n), 
	\end{equation}
	where $f_3(n)=\frac{n^{a(d-2)}}{\ln(n)}$ and $f_d(n)=n^{a(d-2)}$ for $d\ge 4$.
\end{lemma}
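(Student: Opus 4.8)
The plan is to reduce the hitting-measure lower bound on the slab $F_s \cap G_a^{(0,1)}$ to a last-exit (or first-entrance) decomposition in the spirit of Lemma \ref{lemma1}, and then to control the resulting sum by the heat-kernel estimates already invoked in the proof of Lemma \ref{lemma1}. First I would observe that $F_s \cap G_a^{(0,1)}$ is essentially a single ball of radius $\asymp n^a$ sitting in the slab $\{x \cdot e_1 = s\}$, so that $\mathrm{cap}(F_s \cap G_a^{(0,1)}) \asymp (n^a)^{d-2} = n^{a(d-2)}$ for $d \ge 4$, with the extra logarithmic loss $n^{a(d-2)}/\ln n$ when $d = 3$ (this is exactly the $d=3$ correction in \eqref{px}, and it is where $f_d$ comes from). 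The factor $n^{2-d}$ should then appear as the "cost" of travelling from $y \in B(8T^{0.5}) \setminus B(6T^{0.5})$ — which is at distance $\asymp T^{0.5} \asymp n$ from the target set — to a set at scale $n$; more precisely, $n^{2-d}$ is the order of the Green's function $G(y,\cdot)$ across distance $\asymp n$, i.e. $(n)^{2-d}$, so the product $n^{2-d-a} f_d(n)$ is just $G(\text{distance } n) \times \mathrm{cap}(F_s \cap G_a^{(0,1)})$ up to the $n^{-a}$ bookkeeping and the $d=3$ log.

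The key steps, in order, are: (i) write, as in \eqref{5}, a last-exit decomposition of $\{X_{H_{G_a^{(0,1)}}} = z,\ H_{G_a^{(0,1)}} \le T\}$ — but one must be slightly careful, since the event fixes the \emph{entrance} point $z$ rather than a generic visit; so instead I would bound it below by $P_y(H_{F_s \cap G_a^{(0,1)}} \le T,\ \text{entrance point } z)$ and sum over $z \in F_s \cap G_a^{(0,1)}$, which by a first-entrance/last-exit switch equals $\sum_z \big(\sum_{m \le T} P_y(X_m = z)\big) P_z(\bar H_{F_s\cap G_a^{(0,1)}} = \infty)$ restricted appropriately, exactly the structure of \eqref{5}; (ii) apply the local central limit theorem bound \eqref{2.3} together with the Gaussian lower bound \eqref{pb}--\eqref{pbo}, but now at the correct scale: since $|y - z|_2 \asymp T^{0.5} \asymp n$, one gets $\sum_{m = T/2}^{T-1} P_y(X_m = z) \ge c\, n^{2-d}$ for each relevant $z$ (up to parity adjustments handled as in \eqref{pbo}), uniformly over $z \in F_s \cap G_a^{(0,1)}$ since the diameter of that set, $\asymp n^a$, is negligible compared to $n$; (iii) sum over $z$: $\sum_{z \in F_s \cap G_a^{(0,1)}} P_z(\bar H_{F_s\cap G_a^{(0,1)}} = \infty) = \mathrm{cap}(F_s \cap G_a^{(0,1)}) \asymp f_d(n) \cdot n^{-a(d-2)} \cdot n^{a(d-2)}$ — i.e. use \eqref{20} adapted to the ball of radius $n^a$, with the $d=3$ logarithmic correction coming from the standard capacity-of-a-disc-in-a-slab estimate (Lemma 3.2 of \cite{vcerny2012internal}, already cited here in \eqref{px}); (iv) combine (ii) and (iii) to get the bound $c\, n^{2-d} \cdot n^{a(d-2)} / (\text{log, if } d=3)$, and then divide/multiply to match the stated form $c\, n^{2-d-a} f_d(n)$ — note $n^{2-d} \cdot n^{a(d-2)}$ versus $n^{2-d-a} f_d(n) = n^{2-d-a} n^{a(d-2)}$ for $d \ge 4$; the exponent bookkeeping $2-d$ vs $2-d-a$ suggests the target set in the intended reading is a thinner object (radius $\asymp n^{a}$ but the relevant "reach" from $y$ is measured more carefully, or the capacity is of a lower-dimensional slice), so I would track the exact power by identifying $F_s \cap G_a^{(0,1)}$ precisely and using the right capacity asymptotics for it.

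The main obstacle I expect is step (iii): getting the correct power of $n$ (and the $\ln n$ only in $d = 3$) for the capacity of $F_s \cap G_a^{(0,1)}$. This set is the intersection of a "tube" $G_a^{(0,1)} = \bigcup_{j=-n}^n B_{j e_1}(n^a)$ with a hyperplane $F_s$, which for $|s| \le n + n^a$ is a $(d-1)$-dimensional ball of radius $\asymp n^a$; its Newtonian capacity in $\mathbb{Z}^d$ behaves like that of a $(d-1)$-disc, giving $\asymp n^{a(d-2)}$ for $d \ge 4$ and $\asymp n^{a}/\ln n = n^{a(d-2)}/\ln n$ for $d = 3$ — precisely $f_d(n)$. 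Reconciling this with the claimed prefactor $n^{2-d-a}$ (rather than $n^{2-d}$) is the delicate point: I would handle it by noting that the sum over $z \in F_s \cap G_a^{(0,1)}$ of the entrance probabilities is not the full capacity but carries an extra normalization because we are summing the quantity $\sum_{m\le T} P_y(X_m = z)$, whose per-site value $\asymp n^{2-d}$ must be weighted by the harmonic measure of $F_s \cap G_a^{(0,1)}$ from $y$, and harmonic measure distributes an extra factor of roughly $n^{-a}$ across the set. So the honest version of step (ii)--(iii) is: $\sum_z P_y(H_{F_s\cap G_a^{(0,1)}} \le T,\ \text{enter at } z) \ge c\, n^{2-d} \cdot \mathrm{cap}(F_s\cap G_a^{(0,1)}) \cdot n^{-a(d-2)} \cdot n^{a(d-3)} = c\, n^{2-d-a} f_d(n)$, where the factor $n^{-a(d-2)} n^{a(d-3)} = n^{-a}$ reflects that the Green's function $G(y,z)$ is nearly constant $\asymp n^{2-d}$ over the set while the equilibrium measure has total mass $\asymp f_d(n)$; I would make this rigorous by the same last-exit identity as in \eqref{5}, which automatically produces $\sum_z (\cdots) P_z(\bar H = \infty)$ with $P_z(\bar H_{F_s\cap G_a^{(0,1)}}=\infty)$ summing to the capacity, and then the only thing to verify carefully is the uniform lower bound $\sum_{m \in [T/2, T]} P_y(X_m = z) \ge c\, n^{2-d-a}$ — which, since $a$ is small and $z$ ranges over a set of diameter $n^a \ll n$, follows from \eqref{pb} exactly as written, with the extra $n^{-a}$ absorbed because we only need the bound after summing, i.e. the clean statement is the displayed inequality \eqref{l3} itself. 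In short, the structure mirrors Lemma \ref{lemma1} verbatim; the one genuinely new ingredient is the slab-capacity estimate with its $d = 3$ logarithm, and I would cite Lemma 3.2 of \cite{vcerny2012internal} (already used here) for it.
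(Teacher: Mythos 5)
Your proposal has a genuine gap, and it is precisely at the point you flag as "the main obstacle."

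The inequality you write in step (i) — bounding $P_y\bigl(X_{H_{G_a^{(0,1)}}}=z,\ H_{G_a^{(0,1)}}\le T\bigr)$ \emph{below} by $P_y\bigl(H_{F_s\cap G_a^{(0,1)}}\le T,\ \text{entrance point }z\bigr)$ — goes the wrong way. The left side requires the walk to avoid the entire tube $G_a^{(0,1)}$ before reaching $z\in F_s$, while the right side only requires it to avoid the slab $F_s\cap G_a^{(0,1)}$; the second event is strictly larger, so the proposed "lower bound" is in fact an upper bound. The same misidentification then propagates into step (iii): a first-entrance decomposition of $\{X_{H_{G_a^{(0,1)}}}=z\}$ produces, after time-reversal, the escape probability $P_z(\bar H_{G_a^{(0,1)}}>m)$ relative to the \emph{whole tube}, not $P_z(\bar H_{F_s\cap G_a^{(0,1)}}=\infty)$ relative to the slab. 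The two differ substantially. Indeed $\sum_{z\in F_s\cap G_a^{(0,1)}} P_z\bigl(\bar H_{G_a^{(0,1)}}\text{-type escape}\bigr)\asymp n^{-a}f_d(n)$, not $\asymp f_d(n) = \mathrm{cap}(F_s\cap G_a^{(0,1)})$, and that missing factor of $n^{-a}$ is exactly the $n^{2-d-a}$ vs.\ $n^{2-d}$ discrepancy you notice but cannot account for rigorously; your harmonic-measure/weighting discussion identifies the right order of magnitude by bookkeeping but does not give an argument.

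The paper's proof handles the tube-versus-slab distinction by a different route. Rather than decomposing onto the target set directly, it last-exit-decomposes through the sphere $\partial B(4n)$: let $L'$ be the last visit to $\partial B(4n)$ before hitting $G_a^{(0,1)}$, so the target probability factors into (a) the restricted heat-kernel sum $\sum_{m\le 0.5T}P_y(X_m=x, H_{G_a^{(0,1)}}>m)$, shown to be $\ge cn^{2-d}$ uniformly for $x\in\partial B(4n)$ by combining a Donsker-type escape estimate with a restricted Green's function computation (Proposition 1.5.8 and Theorem 4.3.1 of \cite{lawler2010random}, with $b$ small), and (b) the "escape-to-$\partial B(4n)$" quantity $\sum_{z\in F_s\cap G_a^{(0,1)}}P_z(H_{\partial B(4n)}<\bar H_{G_a^{(0,1)}},\ H_{\partial B(4n)}\le 0.5T)\ge cn^{-a}f_d(n)$. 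Quantity (b) is precisely the piece your proposal lacks, and it is obtained not by a slab-capacity estimate but by a translation-invariance + reversibility argument: one lower-bounds it by the corresponding quantity for the longer tube $\widetilde G_a^{(0,1)}$ at any slice $F_l$, then relates the slice-sum to $P_0(H_{\partial B(6n)}\le 0.5T,\ H_{\widetilde G_a^{(0,1)}\setminus\bar G_a^{(0,1)}}=\infty)\ge c$ (by Donsker) via the Green's function sum $\sum_{l=-2n}^{2n}\sup_{z\in \widetilde G_a^{(0,1)}\cap F_l}G(z)\le c\,(n^{-a}f_d(n))^{-1}$. None of this machinery appears in your sketch, and the slab-capacity route you propose does not supply a substitute for it.
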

\begin{proof}
	Without loss of generality, we assume $0\le s\le n+n^{a}$.
	
	Define $\widetilde{G}_a^{(0,1)}=\left([-4n,4n]\times [-n^a,n^a]^{d-1} \right)\cap \mathbb{Z}^d $ and $\bar{G}_a^{(0,1)}=\left([-2n,2n]\times [-n^a,n^a]^{d-1} \right)\cap \mathbb{Z}^d $. For any $0\le l \le 2n$, we have $G_a^{(0,1)}+(l-s)*e_1 \subset\widetilde{G}_a^{(0,1)}$ and $B(4n)+(l-s)e_1\subset B(6n)$. Thus \begin{equation}\label{16}
	\sum\limits_{z\in G_a^{(0,1)}\cap F_s}P_z\left(H_{\partial B(4n)}<\bar{H}_{G_a^{(0,1)}},H_{\partial B(4n)}<0.5T  \right) 
	\ge \sum\limits_{z\in \widetilde{G}_a^{(0,1)}\cap F_l}P_z\left(H_{\partial B(6n)}<\bar{H}_{\widetilde{G}_a^{(0,1)}},H_{\partial B(6n)}<0.5T  \right).
	\end{equation}
	Define $\widetilde{G}^{(0,1)}(m)=\left([-4m,4m]\times [-m,m]^{d-1} \right)\cap \mathbb{Z}^d$ and $\bar{G}^{(0,1)}(m)=\left([-2m,2m]\times [-m,m]^{d-1} \right)\cap \mathbb{Z}^d$. By Donsker's Theorem (see Theorem 8.1.11 of \cite{durrett2019probability}) and $\widetilde{G}_a^{(0,1)}\setminus \bar{G}_a^{(0,1)}\subset \widetilde{G}^{(0,1)}(n)\setminus \bar{G}^{(0,1)}(n)$,
	\begin{equation}
	\begin{split}
		&\varliminf\limits_{T \to \infty}P_0\left(H_{\partial B(6n)}\le 0.5T, H_{\widetilde{G}_a^{(0,1)}\setminus \bar{G}_a^{(0,1)}}=\infty\right)\\
		\ge &\varliminf\limits_{T \to \infty}P_0\left(H_{\partial B(6n)}\le 0.5T, H_{\widetilde{G}^{(0,1)}(n)\setminus \bar{G}^{(0,1)}(n)}=\infty\right)\\
		=&P_0^W\left(H_{\partial ([-6,6]^d)}\le 0.5b^{-2}, H_{\widetilde{G}^{(0,1)}(1)\setminus \bar{G}^{(0,1)}(1)}=\infty \right)>0,
	\end{split}
	\end{equation}
	where $P_0^W(\cdot)$ is the law of a Brownian motion starting from $0$. Thus $\exists c>0$ such that \begin{equation}\label{8.4}
	P_0\left(H_{\partial B(6n)}\le 0.5T, H_{\widetilde{G}_a^{(0,1)}\setminus \bar{G}_a^{(0,1)}}=\infty\right) \ge c.
	\end{equation}
	Let $L$ be the last time that a simple random walk starting from $0$ is in $\widetilde{G}_a^{(0,1)}$ before hitting $\partial B(6n)$. By symmetry and (\ref{16}), \begin{equation}\label{p0}
	\begin{split}
	&P_0\left(H_{\partial B(6n)}\le 0.5T, H_{\widetilde{G}_a^{(0,1)}\setminus \bar{G}_a^{(0,1)}}=\infty\right)\\
	\le&\sum\limits_{m=1}^{0.5T}\sum\limits_{l=-2n}^{2n}\sum\limits_{z\in \widetilde{G}_a^{(0,1)}\cap F_l} P_0\left(L=m,X_m=z,H_{\partial B(6n)}\le 0.5T \right)\\
	\le &\sum\limits_{l=-2n}^{2n}\left( \sum\limits_{z\in \widetilde{G}_a^{(0,1)}\cap F_l}P_z\left(H_{\partial B(6n)}<\bar{H}_{\widetilde{G}_a^{(0,1)}}, H_{\partial B(6n)}\le 0.5T \right)\right)*\left(\sup\limits_{z\in \widetilde{G}_a^{(0,1)}\cap F_l}\sum\limits_{m=1}^{0.5T}P_0(X_m=z) \right)\\
	\le &\sum\limits_{z\in G_a^{(0,1)}\cap F_s}P_z\left(H_{\partial B(4n)}<\bar{H}_{G_a^{(0,1)}},H_{\partial B(4n)}\le 0.5T  \right)*\left(\sum\limits_{l=-2n}^{2n}\sup\limits_{z\in \widetilde{G}_a^{(0,1)}\cap F_l}G(z) \right).
	\end{split}
	\end{equation}
	By Theorem 1.5.4 of \cite{lawler2013intersections},  
	\begin{equation}\label{7.5}
		 	\sum\limits_{l=-2n}^{2n}\sup\limits_{z\in \widetilde{G}_a^{(0,1)}\cap F_l}G(z)
		 \le\left\{
		\begin{aligned}
		&c*\sum\limits_{l=1}^{2n}l^{-1}  \       & d=3;\\
		&c*\left( \sum\limits_{l=n^a}^{2n} |l|^{2-d}+n^a*n^{a(2-d)}\right)\          & d\ge 4.
		\end{aligned}
		\right.
	\end{equation} 
	Recall the definition of $f_d(n)$. By (\ref{7.5}), we have: for $d\ge 3$,
	\begin{equation}\label{g0}
	\sum\limits_{l=-2n}^{2n}\sup\limits_{z\in \widetilde{G}_a^{(0,1)}\cap F_l}G(z)
	\le   c*\left( n^{-a}*f_d(n)\right)^{-1}.
	\end{equation}
	Combine (\ref{8.4}), (\ref{p0}) and (\ref{g0}), \begin{equation}\label{pz}
	\sum\limits_{z\in G_a^{(0,1)}\cap F_s}P_z\left(H_{\partial B(4n)}<\bar{H}_{G_a^{(0,1)}},H_{\partial B(4n)}\le 0.5T  \right)\ge cn^{-a}*f_d(n).
	\end{equation}
	
	We claim that \begin{equation}\label{8.9}
		\varliminf\limits_{T\to \infty}\min\limits_{x\in \partial B(4n),y\in B(8T^{0.5})\setminus B(6T^{0.5})}\left\lbrace P_y\left(H_{\partial B(4n)}\le 0.25T, |X_{H_{\partial B(4n)}}-x|_2\le 0.5n \right)\right\rbrace>0.
	\end{equation}
	If (\ref{8.9}) is not true, then $\exists$ a sequence $\left\lbrace (x_{n_j},y_{n_j}):x_{n_j}\in \partial B(4n_j),y_{n_j}\in B(8b^{-1}(n_j+1))\setminus B(6b^{-1}n_j)\right\rbrace_{j=1}^{\infty} $ such that 
	\begin{equation}\label{8.10}
		\lim\limits_{j\to \infty}P_{y_{n_j}}\left(H_{\partial B(4n_j)}\le 0.25b^{-2}n_j^2, |X_{H_{\partial B(4n_j)}}-x_{n_j}|_2\le 0.5n_j \right)=0
	\end{equation}
	Since any $(\frac{x_{n_j}}{n_j},\frac{y_{n_j}}{n_j})$ drops in a compact set $(\partial [-4,4]^d)\times ([-9b^{-1},9b^{-1}]^d\setminus(-6b^{-1},6b^{-1})^d)$, there must exists a sub-sequence $\left\lbrace (x_{m_l},y_{m_l})\right\rbrace_{l=1}^{\infty} $ such that $(x_\infty,y_\infty):=\lim\limits_{l\to \infty}(\frac{x_{m_l}}{m_l},\frac{y_{m_l}}{m_l})\in (\partial [-4,4]^d)\times ([-9b^{-1},9b^{-1}]^d\setminus(-6b^{-1},6b^{-1})^d)$. By Donsker's theorem, 
	\begin{equation}
		\begin{split}
		&\lim\limits_{l\to \infty}P_{y_{m_l}}\left(H_{\partial B(4m_l)}\le 0.25b^{-2}m_l^2, |X_{H_{\partial B(4m_l)}}-x_{m_l}|_2\le 0.5m_l \right)\\
		=&P^W_{y_\infty}\left(H_{\partial([-4,4]^d)}\le 0.25b^{-2}, |X_{H_{\partial ([-4,4]^d)}}-x_{\infty}|_2\le 0.5 \right)>0,
		\end{split}
	\end{equation}
	which is contradictory to (\ref{8.10}).
	
	 By (\ref{8.9}), there exists $c>0$ such that $P_y\left(H_{\partial B(4n)}\le 0.25T, |X_{H_{\partial B(4n)}}-x|_2\le 0.5n \right)\ge c$ for any $x\in \partial B(4n)$ and $y\in B(8T^{0.5})\setminus B(6T^{0.5})$. Then we have 
	\begin{equation}\label{21}
	\begin{split}
	&\sum\limits_{m=1}^{0.5T}P_y(X_m=x,H_{G_a^{(0,1)}}>m)\\
	\ge& P_y\left(H_{\partial B(4n)}
	\le 0.25T, |X_{H_{\partial B(4n)}}-x|_2\le 0.5n \right)\min\limits_{|x-z|_2\le 0.5n,z\in \partial B(4n)}\sum\limits_{k=0}^{0.25T}P_z\left(X_k=x, H_{G_a^{(0,1)}}>k \right)\\
	\ge &c*\min\limits_{|x-z|_2\le 0.5n,z\in \partial B(4n)}\sum\limits_{k=0}^{0.25T}P_z\left(X_k=x, H_{\partial B_x(2n)}>k \right).
	\end{split}
	\end{equation}
	By Proposition1.5.8 of \cite{lawler2013intersections} and Theorem 4.3.1 of \cite{lawler2010random}, for any $z\in \partial B(4n)$ and $|x-z|_2\le 0.5n$,\begin{equation}\label{21.5}
	\begin{split}
	&\sum\limits_{k=0}^{0.25T}P_z\left(X_k=x, H_{\partial B_x(2n)}>k \right)\\
	\ge &G_{\partial B_x(2n)}(z,x)-\sum\limits_{k=0.25T}^{\infty}P_z\left(X_k=x \right)  \\
	= &G(z-x)-\sum\limits_{w\in \partial B_x(2n)}H_{\partial B_x(2n)}(z,w)G(w,x)-\sum\limits_{m=0.25T}^{\infty}P_z\left(X_k=x \right)\\
	\ge & C_d\left(2^{d-2}-2^{2-d} \right)n^{2-d}+O(n^{-d})-\sum\limits_{k=0.25T}^{\infty}P_z\left(X_k=x \right).
	\end{split}
	\end{equation} 
	Recalling (\ref{2.3}) and (\ref{pb}), if $\sum_{i=1}^{d}z^{(i)}-x^{(i)}$ is even, 
	\begin{equation}\label{3.9}
	\begin{split}
	&	\sum\limits_{k=0.25T}^{\infty}P_z\left(X_k=x \right)\\
	\le& \sum\limits_{k=0.25T}^{\infty}\bar{p}_k\left(z-x \right)+c\sum\limits_{k=0.25T}^{\infty}k^{-\frac{d+2}{2}}\\
	\le &\int_{[\frac{T}{8}]-1}^{\infty}2\left( \frac{d}{4\pi t}\right) ^{\frac{d}{2}}dt+O(T^{-\frac{d}{2}})\\
	\le &\frac{4}{d-2}\left(\frac{d}{4\pi} \right)^{\frac{d}{2}}\left( 10b^2\right)^{\frac{d-2}{2}}*n^{2-d}+O(T^{-\frac{d}{2}}).
	\end{split}
	\end{equation}
	If $\sum_{i=1}^{d}z^{(i)}-x^{(i)}$ is odd, similar to (\ref{pbo}), 
	 \begin{equation}\label{3.10}
	\sum\limits_{k=0.25T}^{\infty}P_z\left(X_k=x \right)\le\frac{4}{d-2}\left(\frac{d}{4\pi} \right)^{\frac{d}{2}}\left( 10b^2\right)^{\frac{d-2}{2}}*n^{2-d}+O(T^{-\frac{d}{2}}).
	\end{equation}
	For any fixed $b$ satisfying $\frac{4}{d-2}\left(\frac{d}{4\pi} \right)^{\frac{d}{2}}\left( 10b^2\right)^{\frac{d-2}{2}}<C_d$, combine (\ref{21})-(\ref{3.10}),
	 \begin{equation}\label{3.11}
	\sum\limits_{m=1}^{0.5T}P_y(X_m=x,H_{G_a^{(0,1)}}>m)\ge C_d\left(2^{d-2}-2^{2-d}-1 \right)n^{2-d}-O(n^{-d})\ge cn^{2-d}.
	\end{equation}
	
	Let $L'$ be the last time in $\partial B(4n)$ before hitting $G_a^{(0,1)}$, then 
	\begin{equation}\label{l'}
	\begin{split}
	&\sum\limits_{z\in G_a^{(0,1)}\cap F_s}P_y\left(X_{H_{G_a^{(0,1)}}}=z,H_{G_a^{(0,1)}}\le T\right)\\
	\ge &\sum\limits_{z\in G_a^{(0,1)}\cap F_s}P_y\left(X_{H_{G_a^{(0,1)}}}=z,H_{G_a^{(0,1)}}\le T,L'\le 0.5T\right)\\
	\ge& \sum\limits_{x\in \partial B(4n)}\sum\limits_{m=1}^{0.5T}P_y(X_m=x,H_{G_a^{(0,1)}}>m)\sum\limits_{z\in G_a^{(0,1)}\cap F_s}P_x\left(\bar{H}_{\partial B(4n)}>H_{G_a^{(0,1)}}, H_{G_a^{(0,1)}}\le 0.5T, X_{H_{G_a^{(0,1)}}}=z \right)\\ 
	= & \sum\limits_{x\in \partial B(4n)}\sum\limits_{m=1}^{0.5T}P_y(X_m=x,H_{G_a^{(0,1)}}>m)\sum\limits_{z\in G_a^{(0,1)}\cap F_s}P_z\left(H_{\partial B(4n)}<\bar{H}_{G_a^{(0,1)}}, H_{\partial B(4n)}\le 0.5T, X_{H_{\partial B(4n)}}=x \right)\\
	\ge &\sum\limits_{z\in G_a^{(0,1)}\cap F_s}P_z\left(H_{\partial B(4n)}<\bar{H}_{G_a^{(0,1)}}, H_{\partial B(4n)}\le 0.5T\right)*\left(\min\limits_{x\in \partial B(4n)}\sum\limits_{m=1}^{0.5T}P_y(X_m=x,H_{G_a^{(0,1)}}>m) \right) 
	\end{split}
	\end{equation}
	
	Combine (\ref{pz}), (\ref{3.11}) and (\ref{l'}), then we can get (\ref{l3}).
\end{proof}
\begin{lemma}\label{lemma3.5}
	For any $0<a<0.3$, $\exists c(u,d)>0$ such that \begin{equation}
	cap^{(T)}(B(n^a))\le c*n^{a(d-2)}, 
	\end{equation} 
	where $cap^{(T)}(A)=\sum\limits_{x\in A}P_x^{(T)}(\bar{H}_{A}=\infty)$.
\end{lemma}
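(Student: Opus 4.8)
The plan is to bound $cap^{(T)}(B(n^a))$ by comparing the geometrically killed walk to an ordinary simple random walk and using the classical capacity estimate~\eqref{20}. First I would observe that, by definition, $cap^{(T)}(B(n^a))=\sum_{x\in B(n^a)}P_x^{(T)}(\bar H_{B(n^a)}=\infty)$, and since a geometrically killed walk is just a simple random walk killed at an independent geometric time, we have for every $x$ the pointwise inequality
\begin{equation}
P_x^{(T)}(\bar H_{B(n^a)}=\infty)\le P_x(\bar H_{B(n^a)}=\infty)+P_x(\text{walk killed before }\bar H_{B(n^a)}).
\end{equation}
The first term summed over $B(n^a)$ is exactly $cap(B(n^a))\le c\,n^{a(d-2)}$ by~\eqref{20}. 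So the whole task reduces to controlling the second term; a cruder but cleaner route is to note directly that $P_x^{(T)}(\bar H_{B(n^a)}=\infty)\le P_x(\bar H_{B(n^a)}=\infty) + P_x^{(T)}(\bar H_{B(n^a)}=\infty,\ \bar H_{B(n^a)}<\infty\text{ for the unkilled walk})$, i.e. the killed walk escapes $B(n^a)$ forever only if either the unkilled walk does, or the unkilled walk would have returned but the killing happened first.

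The key step is thus to estimate $\sum_{x\in B(n^a)}P_x\big(\text{killed before returning to }B(n^a)\big)$. For a fixed $x$, if the unkilled walk returns to $B(n^a)$ at time $\bar H_{B(n^a)}$, killing beats this return with probability $1-(1-\tfrac{1}{T+1})^{\bar H_{B(n^a)}}$. I would split according to whether the walk has left $B(2n^a)$: on the event that it stays in $B(2n^a)$ up to some time, the return time is small and contributes negligibly after summing; on the event that it exits $B(2n^a)$, by the heat-kernel/hitting estimates already invoked in the proof of Lemma~\ref{lemma1} (Theorem 1.2.1 and Theorem 1.5.4 of \cite{lawler2013intersections}), the probability that a walk starting at distance $\gtrsim n^a$ from $B(n^a)$ ever comes back is $O((n^a/|y-x|)^{d-2})$, and conditionally on coming back the return time is typically of order $|y-x|^2 \le$ (diameter of the range), so the killing probability there is $O(\min(1, |y-x|^2/T))$. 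Since $a<0.3$ forces $n^{2a}=o(T)$, one multiplies a polynomially small hitting probability by a polynomially small killing probability and sums over $y$; the total is $O(n^{a(d-2)})$, of the same order as $cap(B(n^a))$ itself. Hence $cap^{(T)}(B(n^a))\le c\,n^{a(d-2)}+c'\,n^{a(d-2)}=O(n^{a(d-2)})$.

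The main obstacle I anticipate is making the second estimate genuinely clean: one must avoid circular use of Green's function bounds on $B(n^a)$ and instead work with the ordinary walk's Green's function $G(\cdot,\cdot)$ on all of $\mathbb{Z}^d$ together with a last-exit decomposition at $\partial B(2n^a)$, exactly paralleling the structure of the proof of Lemma~\ref{lemma3}. The bookkeeping of which scale ($n^a$ versus $n$ versus $T^{0.5}$) controls which probability, and the verification that $a<0.3$ suffices for the cross terms to be summable, is where care is needed; everything else is a direct appeal to~\eqref{20} and the local central limit theorem. In fact, since only an upper bound of the right order is claimed (not a matching lower bound), I expect one can be somewhat generous: it is enough to show $P_x^{(T)}(\bar H_{B(n^a)}=\infty)\le P_x(\bar H_{B(n^a)}=\infty)+ c\,n^{-a}$ uniformly for $x\in B(n^a)$, since $|B(n^a)|\le c\,n^{ad}$ would then give $c\,n^{a(d-1)}$, which is worse than $n^{a(d-2)}$ and so requires the sharper last-exit argument above; thus the sharp hitting estimate really is needed and constitutes the crux of the proof.
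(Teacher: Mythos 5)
Your plan follows essentially the same route as the paper's proof. Both write $cap^{(T)}(B(n^a))$ as $cap(B(n^a))$ plus a correction coming from trajectories that would have returned to $B(n^a)$ but were killed first; both control that correction by splitting the geometric killing time at $T^{0.5}$ (giving the $cT^{-0.5}$ piece) and reducing the remaining term to $P_x\big(T^{0.5}<\bar H_{B(n^a)}<\infty\big)$, which is handled by a last-exit decomposition at an intermediate sphere together with a Green's-function bound and the capacity estimate~\eqref{20}. The paper's version of the "bookkeeping" you flag as the crux is to decompose at $\partial B(2n^{a_1})$ with $a_1$ a free parameter in $\big(\frac{a}{d-2},0.3\big)$, invoke Lemma 3.4 of~\cite{vcerny2012internal} to show the walk exits $B(2n^{a_1})$ within $T^{0.5}$ steps outside an $\boldsymbol{s.e.}(T)$-small event, then use $G(x,z)\le cn^{a_1(2-d)}$ and the time-reversed harmonic-measure identity to bound the correction by $cn^{a_1(2-d)+a(d-1)}+\boldsymbol{s.e.}(T)$, which is negligible once $a_1(d-2)>a$. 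Your "integrate the return probability $(n^a/r)^{d-2}$ against the killing probability $r^2/T$ over the return distance $r$" heuristic is a reasonable shorthand for exactly this computation, but you leave the arithmetic (the choice of intermediate scale and the final exponent count) to be filled in.

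One detail to adjust in your closing paragraph: you multiply a hypothetical uniform correction $n^{-a}$ by $|B(n^a)|\asymp n^{ad}$ and conclude this costs a factor $n^a$, so a sharper pointwise bound is forced. But only boundary points $x\in\partial B(n^a)$ can have $P_x(\bar H_{B(n^a)}=\infty)>0$ for the unkilled walk; interior points contribute exactly $\frac{1}{T+1}$ to $P_x^{(T)}(\bar H_{B(n^a)}=\infty)$ (killed at time zero), and these sum to $O(n^{ad}/T)=o(n^{a(d-2)})$ since $n\asymp T^{1/2}$ and $a<1$. With the correct count $|\partial B(n^a)|\asymp n^{a(d-1)}$, a uniform $O(n^{-a})$ correction on the boundary already yields $O(n^{a(d-2)})$. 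Extracting that $n^{-a}$ decay uniformly still requires the last-exit/Green's-function estimate, so your conclusion that the hitting estimate is the heart of the proof stands; only the counting argument behind it needs the $\partial B(n^a)$ versus $B(n^a)$ distinction.
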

\begin{proof}
	For $N_T\sim Geo(\frac{1}{T+1})$ and any $x\in \partial B(n^a)$, we have \begin{equation}\label{24}
	P_x^{(T)}(\bar{H}_{B(n^a)}=\infty)\le P_x(\bar{H}_{B(n^a)}>T^{0.5})+P\left(N_T\le T^{0.5}\right)\le P_x(\bar{H}_{B(n^a)}>T^{0.5})+cT^{-0.5}.
	\end{equation}
	
	Recall the Green's function has approximation $G(x)=O(|x|^{2-d})$. By last-time decomposition and Lemma 3.4 of \cite{vcerny2012internal}, for $\frac{1}{d-2}a<a_1<0.3$, we have
	\begin{equation}\label{25}
	\begin{split}
	&P_x\left(\bar{H}_{B(n^a)}>T^{0.5} \right)-P_x\left(\bar{H}_{B(n^a)}=\infty \right)\\=&P_x\left(T^{0.5}<\bar{H}_{B(n^a)}<\infty \right) \\
	\le &P_x\left(T^{0.5}<\bar{H}_{B(n^a)}<\infty, H_{\partial B(2n^{a_1})}<T^{0.5} \right)+P_x\left(H_{\partial B(2n^{a_1})}\ge T^{0.5} \right)  \\
	\le &\sum\limits_{z\in \partial B(2n^{a_1})}G(x-z)*\sum\limits_{w\in \partial B(n^a)}P_z\left(H_{B(n^a)}<\bar{H}_{B(2n^{a_1})}, X_{H_{B(n^a)}}=w \right)+\boldsymbol{s.e.}(T)\\
	= &\sum\limits_{z\in \partial B(2n^{a_1})}G(x-z)*\sum\limits_{w\in \partial B(n^a)}P_w\left(\bar{H}_{B(n^a)}>H_{\partial B(2n^{a_1})}, X_{H_{\partial B(2n^{a_1})}}=z \right)+\boldsymbol{s.e.}(T)\\
	\le &c*n^{a_1(2-d)}*\sum\limits_{w\in \partial B(n^a)}P_w\left(\bar{H}_{B(n^a)}>H_{\partial B(2n^{a_1})} \right)+\boldsymbol{s.e.}(T)\\
	\le &c*n^{a_1(2-d)+a(d-1)}+\boldsymbol{s.e.}(T)=o(n^{a(d-2)}).
	\end{split}
	\end{equation}

Combine (\ref{24}), (\ref{25}) and (\ref{20}), 
	\begin{equation}
	cap^{(T)}(B(n^a))=\sum\limits_{x\in B(n^a)} P_x^{(T)}(\bar{H}_{B(n^a)}=\infty)\le cap(B(n^a))+o(n^{a(d-2)})\le c*n^{a(d-2)}.
	\end{equation}
\end{proof}
\begin{lemma}\label{lemma4}
	For $0<a<0.3$ and any integer $k\in [-n^{1-a}-1,n^{1-a}+1]$, there exists $c_1, c_2>0$ such that 
	\begin{equation}\label{23}
	P\left( c_1*f_d(n^{a})\le|\bar{S}_x^{(1)}(G_a^{(x,i)}, k)|\le c_2*n^{a(d-2)} \right)\ge 1-\boldsymbol{s.e.}(T).
	\end{equation}
\end{lemma}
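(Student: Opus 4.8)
\textbf{Proof proposal for Lemma \ref{lemma4}.}

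The plan is to obtain the two bounds in (\ref{23}) separately, both by dominating $|\bar{S}_x^{(1)}(G_a^{(x,i)},k)|$ by Poisson random variables and then invoking the large deviation bound (\ref{19}). Recall that $\bar{S}_x^{(1)}(G_a^{(x,i)},k)$ consists of the images under $\pi_{G_a^{(x,i)}}$ of those paths in $\mathcal{FI}_1^{\frac13 u,T}$ that start from $B_x(8T^{0.5})\setminus B_x(6T^{0.5})$, hit $G_a^{(x,i)}$, and whose starting point lies in $U_k^{(x,i)}$. By the independence-within-FRI property and the thinning of Poisson point processes, $|S_x^{(1)}(G_a^{(x,i)},k)|$ is itself a Poisson random variable whose parameter is $\frac{u}{3(T+1)}$ times the expected number of trajectory-starts in $U_k^{(x,i)}\cap(B_x(8T^{0.5})\setminus B_x(6T^{0.5}))$ weighted by the hitting probability $P_y(H_{G_a^{(x,i)}}<\infty)$; since there are $O(T^{d/2})$ such starting points, the whole analysis reduces to estimating these hitting probabilities uniformly.

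\emph{Upper bound.} First I would bound $|\bar S_x^{(1)}(G_a^{(x,i)},k)|$ from above by $|S_x^{(1)}(G_a^{(x,i)},k)|$ (the map $\pi_{G_a^{(x,i)}}$ is only a truncation, so it does not increase cardinality) and then by the number of trajectories in $\mathcal{FI}_1^{\frac13 u,T}$ starting in the annulus that \emph{ever} hit $G_a^{(x,i)}$. The expected number of such trajectories is $\le \frac{u}{3(T+1)}\sum_{y}P_y(H_{G_a^{(x,i)}}<\infty)$. Bounding $P_y(H_{G_a^{(x,i)}}<\infty)$ by $c\,\mathrm{cap}^{(T)}(G_a^{(x,i)})\,|y-\text{centre}|^{2-d}$ via the standard last-exit decomposition (as in the proof of Lemma \ref{lemma1}), and using $\mathrm{cap}^{(T)}(B(n^a))\le c\,n^{a(d-2)}$ from Lemma \ref{lemma3.5} (the set $G_a^{(x,i)}$ is a union of $O(n^{1-a})$ such balls, but only the single ball $U_k^{(x,i)}$ matters once we restrict the starting point to $U_k^{(x,i)}$, because a walk started there and conditioned to be counted in $\bar S_x^{(1)}(\cdot,k)$ has already entered near $U_k$), together with $\sum_{y\in B_x(8T^{0.5})}|y-\text{centre}|^{2-d}=O(T)$, gives an effective Poisson parameter $O(n^{a(d-2)})$. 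Then (\ref{19}) yields $|\bar S_x^{(1)}(G_a^{(x,i)},k)|\le c_2 n^{a(d-2)}$ with probability $1-\boldsymbol{s.e.}(T)$.

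\emph{Lower bound.} This is the substantive part. Here I would use Lemma \ref{lemma3}: for every integer $s\in[-n-n^a,n+n^a]$ and every $y$ in the annulus,
\begin{equation}\label{plan-l3}
\sum_{z\in G_a^{(0,1)}\cap F_s}P_y\big(X_{H_{G_a^{(0,1)}}}=z,\,H_{G_a^{(0,1)}}\le T\big)\ge c\,n^{2-d-a}f_d(n),
\end{equation}
and sum this over the $O(n^a)$ hyperplanes $F_s$ that meet $U_k^{(x,i)}$ to conclude that, for each fixed $y$ in the annulus, a geometrically killed walk started at $y$ has probability $\ge c\,n^{2-d}f_d(n)$ of hitting $U_k^{(x,i)}$ within time $T$ (hence, with an extra $(1-\tfrac1{T+1})^{T}\ge c$ factor, of contributing to $\bar S_x^{(1)}(G_a^{(x,i)},k)$ after being suitably truncated). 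Summing the resulting per-trajectory success probability over the $\asymp T^{d/2}$ potential starting points in the annulus and multiplying by $\frac{u}{3(T+1)}$ gives an effective Poisson parameter $\ge c\,\frac{1}{T+1}\,T^{d/2}\,n^{2-d}f_d(n)=c\,f_d(n^a)\cdot(\text{const})$ after substituting $n=\lfloor bT^{0.5}\rfloor$ and checking the exponents of $T$ cancel (using $f_d(n)\asymp n^{a(d-2)}$ up to the logarithm in $d=3$, so $T^{d/2-1}n^{2-d}f_d(n)\asymp n^{d-2+2-d}f_d(n)=f_d(n)\asymp f_d(n^a)\cdot n^{?}$ — the bookkeeping must be done carefully so that one lands exactly on $f_d(n^a)$). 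Since the paths starting at distinct points are independent given their starting points (independence within FRI), $|\bar S_x^{(1)}(G_a^{(x,i)},k)|$ stochastically dominates this Poisson variable, and (\ref{19}) gives the lower bound with probability $1-\boldsymbol{s.e.}(T)$.

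\emph{Main obstacle.} The delicate point is the lower bound's bookkeeping: one must make sure the truncation map $\pi_{G_a^{(x,i)}}$ and the restriction ``starting point in $U_k^{(x,i)}$'' do not destroy a constant fraction of the hitting mass supplied by (\ref{plan-l3}) — in particular, a walk counted in (\ref{plan-l3}) hits $G_a^{(x,i)}$ somewhere, not necessarily in the slab near $U_k^{(x,i)}$, so one needs the refinement of Lemma \ref{lemma3} that localizes the entrance point to a prescribed hyperplane $F_s$, and then to sum only over the $s$ with $F_s\cap U_k^{(x,i)}\neq\emptyset$. Combined with the $d=3$ logarithmic correction hidden in $f_d$, keeping the constants $c_1,c_2$ clean while matching the exponents in (\ref{23}) is where the care is required; the Poisson large-deviation step and the upper bound are routine by comparison.
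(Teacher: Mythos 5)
Your proposal is correct and essentially reproduces the paper's argument: upper-bound by the number of trajectories that hit $U_k^{(x,i)}$ (a Poisson variable with parameter $\tfrac{u}{3}\,\mathrm{cap}^{(T)}(B(n^a))\le c\,n^{a(d-2)}$ by Lemma~\ref{lemma3.5}); lower-bound by summing Lemma~\ref{lemma3} over the $O(n^a)$ hyperplanes meeting $U_k^{(x,i)}$ to get a per-start entrance probability $\ge c\,n^{2-d}f_d(n)$, multiplying by the $\asymp T^{d/2}\asymp n^d$ annulus starting sites and the Poisson rate $\asymp (T+1)^{-1}\asymp n^{-2}$, and closing with the Poisson large-deviation bound~(\ref{19}). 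Two small remarks. First, the paper's upper bound is simpler than your route: it bounds $|\bar S^{(1)}_x(G_a^{(x,i)},k)|$ directly by the total number of trajectories in $\mathcal{FI}_1^{u/3,T}$ intersecting $U_k^{(x,i)}$, for which Lemma~2.1 of~\cite{procaccia2019percolation} gives the exact Poisson law with parameter $\tfrac{u}{3}\,\mathrm{cap}^{(T)}(U_k^{(x,i)})$ outright, so no last-exit decomposition or detour through $\mathrm{cap}^{(T)}(G_a^{(x,i)})$ is needed; also, the constraint defining $\bar S^{(1)}_x(G_a^{(x,i)},k)$ is that the \emph{first entrance} to $G_a^{(x,i)}$ lies in $U_k^{(x,i)}$ (the trajectories start in the annulus), so your parenthetical ``once we restrict the starting point to $U_k^{(x,i)}$'' should be rephrased in terms of the entrance point. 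Second, your bookkeeping worry resolves in your favour: the calculation lands on Poisson parameter $\asymp f_d(n)$, which dominates $f_d(n^a)$; the paper's own proof likewise establishes $f_d(n)$ and this sharper form is the one actually consumed downstream in Lemma~\ref{goodline}, so the appearance of $f_d(n^a)$ in the lemma statement is simply a weaker (and apparently inadvertent) restatement of what is proved.
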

\begin{proof}
	Without loss of generality, we fix $i=1$ and $x=0$. Denote the number of the paths hitting $U_k^{(x,i)}$ in $\mathcal{FI}_1^{\frac{1}{3}u,T}$ by $N$. By Lemma 2.1 of \cite{procaccia2019percolation}, $N\sim Pois(\frac{1}{3}u*cap^{(T)}(B(n^a)))$. Then Using Lemma \ref{lemma3.5} and $|\bar{S}_x^{(1)}(G_a^{(x,i)}, k)|\le N$, we know that $Pois(c*n^{a(d-2)})\succeq|\bar{S}_x^{(1)}(G_a^{(x,i)}, k)|$ for some $c>0$.
	  
	Meanwhile, by Lemma \ref{lemma3}, for any $y\in B_x(8T^{0.5})\setminus B_x(6T^{0.5})$, 
	\begin{equation}
	\begin{split}
	P_y\left(H_{G_a^{(x,i)}}\le T, X_{H_{G_a^{(x,i)}}}\in U_k^{(x,i)}\right)= &\sum_{z\in U_k^{(x,i)}\cap \partial G_a^{(x,i)}}P_y\left(H_{G_a^{(x,i)}}\le T, X_{H_{G_a^{(x,i)}}}=z \right)\\
	\ge &cn^a*n^{2-d-a}*f_d(n)=cn^{2-d}*f_d(n).
	\end{split}
	\end{equation}
	For any trajectory $\eta\in S_x^{(1)}$, if $\eta(0)\in B_x(8T^{0.5})\setminus B_x(6T^{0.5})$ is given, then 
	\begin{equation}
	\begin{split}
	P_{\eta(0)}\left(\pi_{G_a^{(x,i)}}(\eta)\in \bar{S}_x^{(1)}(G_a^{(x,i)}, k) \right)
	=&P_{\eta(0)}\left(length(\pi_{G_a^{(x,i)}}(\eta))\ge T, X_{H_{G_a^{(x,i)}}}\in U_k^{(x,i)}\right) \\
	\ge &P_{\eta(0)}\left(length(\eta)\ge 2T,H_{G_a^{(x,i)}}\le T,X_{H_{G_a^{(x,i)}}}\in U_k^{(x,i)} \right)\\
	\ge &\left(1-\frac{1}{T+1} \right)^{2T+1}*cn^{2-d}*f_d(n)\ge cn^{2-d}*f_d(n)
	\end{split}
	\end{equation}
	Since $|S_x^{(1)}|\sim Pois\left(\frac{2du}{T+1}*O(n^d)\right) $ and all paths in $S_x^{(1)}$ are independent given their starting points, $|\bar{S}_x^{(1)}(G_a^{(x,i)}, k)|\succeq Pois(cn^{2-d}*f_d(n)*\frac{2du}{T+1}*O(n^d))\succeq Pois\left(cf_d(n)\right) $.
	
	Finally, due to the large deviation bound for Poisson distribution, we get (\ref{23}).
\end{proof}
We cite Proposition 4.2 of \cite{vcerny2012internal} here, since it is repeatedly referred to in this section.  
\begin{lemma}\label{prop4.2}(Proposition 4.2, \cite{vcerny2012internal})
	Let $\hat{\eta}_1^{(m)}\le \hat{\eta}_2^{(m)}$ be two random variables satisfying that  $P\left(\hat{\eta}_1^{(m)}\ge cm^{d-2-h} \right)\ge 1-\boldsymbol{s.e.}(m) $ and $P\left(\hat{\eta}_2^{(m)}\le cm^{M} \right)\ge 1-\boldsymbol{s.e.}(m) $, where $0<h<\frac{2}{d}$ and $M>0$. Assume $X^{(1)},X^{(2)},...,X^{(\hat{\eta}_2^{(m)})}$ are $\hat{\eta}_2^{(m)}$ independent simple random walks starting from $x^{(1)},x^{(2)},...,x^{(\hat{\eta}_2^{(m)})}\in B(m)$. Then there are inequalities \begin{equation}
		P\left( \forall i,j\le \hat{\eta}_1^{(m)}, X^{(i)}\ and\ X^{(j)}\ are\ \left(2\beta+1,2m^2 \right)-connected  \right) \ge 1-\boldsymbol{s.e.}(m)
	\end{equation}
	and \begin{equation}
		P\left(\forall i\le \hat{\eta}_2^{(m)},p\in L^{(i)},\exists j\le \hat{\eta}_1^{(m)}\ such\ that\ \left\lbrace X^{(i)}_{3pm^2},...,X^{(i)}_{3(p+1)m^2-1}\right\rbrace\cap R(X^{(j)},2m^2)\neq \emptyset  \right) \ge 1-\boldsymbol{s.e.}(m), 
	\end{equation}
	where $\beta(d,h)$ is a constant, $X^{(i)}$ is (s,r) connected to $X^{(j)}$ if there exists a sequence $i=k_0,k_1,...,k_s=j$ in $[1,\hat{\eta}_1^{(m)}]$ such that $R_{k_t}(r)\cap R_{k_{t-1}}(r)\neq 0$ for all $1\le t\le s$ and $L^{(i)}:=\left\lbrace p\ge 1:\left\lbrace X^{(i)}_{3pm^2},...,X^{(i)}_{3(p+1)m^2-1}\right\rbrace\cap B(m)\neq \emptyset\right\rbrace $.
\end{lemma}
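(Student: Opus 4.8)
Since the statement is Proposition~4.2 of \cite{vcerny2012internal}, one may simply invoke that reference; I outline here the strategy of its proof. First condition on the realised values of $\hat\eta_1^{(m)},\hat\eta_2^{(m)}$ and on the starting points $x^{(1)},\dots,x^{(\hat\eta_2^{(m)})}\in B(m)$: on the hypothesis event $\{\hat\eta_1^{(m)}\ge cm^{d-2-h}\}\cap\{\hat\eta_2^{(m)}\le cm^{M}\}$, which has probability $1-\boldsymbol{s.e.}(m)$, it suffices to prove both displays for a deterministic number $N_1\ge cm^{d-2-h}$ of distinguished walks among a deterministic total of $N_2\le cm^{M}$ independent simple random walks started in $B(m)$, and then average back. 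A reflection/Azuma estimate shows each walk run for $3m^2$ steps stays inside $B(Cm)$ with probability $1-\boldsymbol{s.e.}(m)$; a union bound over the polynomially many walks places us on an event $\mathcal{E}_0$ of probability $1-\boldsymbol{s.e.}(m)$ on which all $N_2$ ranges $R(X^{(i)},3m^2)$ lie in $B(Cm)$. Next fix a small $\gamma=\gamma(d,h)>0$ with $h/(d-2)<\gamma\le 2/d$ and tile $B(Cm)$ by boxes of side $m^{\gamma}$. By the classical hitting estimate, a walk started in $B(m)$ meets a given such box within $2m^2$ steps with probability at least $c\,\mathrm{cap}(\text{box})\,m^{2-d}\ge cm^{-(d-2)(1-\gamma)}$, so the number of $i\le N_1$ whose range $R(X^{(i)},2m^2)$ meets that box stochastically dominates a binomial variable of mean $\ge cN_1m^{-(d-2)(1-\gamma)}$, a positive power of $m$; a Chernoff bound per box and a union bound over the $O(m^{d})$ boxes give an event $\mathcal{E}_1$, still of probability $1-\boldsymbol{s.e.}(m)$, on which \emph{every} box of side $m^{\gamma}$ in $B(Cm)$ is met within $2m^2$ steps by at least one distinguished walk. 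Writing $\mathcal{U}:=\bigcup_{i\le N_1}R(X^{(i)},2m^2)$, the choice $\gamma\le 2/d$ guarantees that on $\mathcal{E}_0\cap\mathcal{E}_1$ the set $\mathcal{U}$ has capacity comparable to that of the ball: $\mathrm{cap}(\mathcal{U}\cap B_z(r))\ge cr^{d-2}$ for all $z\in B(Cm)$ and all $r\in[m^{\gamma'},m]$, with $\gamma'=\gamma'(d)$.

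For the coverage display (the second inequality), fix $i\le N_2$ and $p\in L^{(i)}$ and let $s\in[3pm^2,3(p+1)m^2)$ satisfy $X^{(i)}_s\in B(m)$. The factor $3$ ensures the window contains a sub-segment of length $m^2$ adjacent to $s$ along which, on $\mathcal{E}_0$, the walk stays in $B(Cm)$; split this sub-segment into $m^{\gamma''}$ consecutive pieces of length $m^{2-\gamma''}$, so that during each piece $X^{(i)}$ sits in a ball of radius $\lesssim m^{1-\gamma''/2}$. By the capacity lower bound on $\mathcal{U}$ inside that ball, $X^{(i)}$ hits $\mathcal{U}$ during a given piece with conditional probability $\ge c>0$; since the pieces are disjoint, $X^{(i)}$ misses $\mathcal{U}$ throughout the window with probability at most $(1-c)^{m^{\gamma''}}=\boldsymbol{s.e.}(m)$. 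A hit of $\mathcal{U}$ lies in $R(X^{(j)},2m^2)$ for some $j\le N_1$, which is exactly the assertion; a union bound over $i\le N_2$ and over the (with probability $1-\boldsymbol{s.e.}(m)$, polynomially many) windows comprising each $L^{(i)}$ finishes it.

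For the connectivity display (the first inequality), put the graph structure $\mathcal{G}$ on $\{1,\dots,N_1\}$ with $i\sim j$ iff $R(X^{(i)},2m^2)\cap R(X^{(j)},2m^2)\ne\emptyset$. Conditioning on $X^{(i)}$ and running the block argument above with the union of the other $N_1-1$ ranges in place of $\mathcal{U}$ shows $\deg_{\mathcal{G}}(i)\ge m^{c_0}$ for every $i$, with $c_0=c_0(d,h)>0$, on an event of probability $1-\boldsymbol{s.e.}(m)$. Iterating, let $\mathcal{U}^{(t)}_i$ be the union of all ranges within $\mathcal{G}$-distance $t$ of $i$; the hitting estimate shows the number of $j\le N_1$ adjacent to $\mathcal{U}^{(t)}_i$ is $\gtrsim N_1\,m^{2-d}\,\mathrm{cap}(\mathcal{U}^{(t)}_i)$, so (using a crude bound on the overlaps of the newly added ranges to lower-bound the capacity of the enlarged union) $\mathrm{cap}(\mathcal{U}^{(t)}_i)$ increases by a fixed positive power of $m$ per step until it saturates at $\asymp m^{d-2}$ — this occurs after a number of steps $\beta=\beta(d,h)$ \emph{independent of $m$}. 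Once $\mathrm{cap}(\mathcal{U}^{(\beta)}_i)\asymp m^{d-2}$, the filling estimate forces $\mathcal{U}^{(\beta)}_i$ to meet every $m^{\gamma}$-box of $B(Cm)$, whence $\mathcal{U}^{(\beta)}_i\cap\mathcal{U}^{(\beta)}_j\ne\emptyset$ for every pair $i,j\le N_1$; concatenating the corresponding two chains produces $i=k_0,\dots,k_s=j$ in $[1,N_1]$ with $s\le 2\beta+1$ and $R_{k_{t-1}}(2m^2)\cap R_{k_t}(2m^2)\ne\emptyset$ for all $t$. A union bound over the $\binom{N_1}{2}$ pairs yields the first display.

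The main obstacle is the capacity bootstrap of the last paragraph: one must control, uniformly in $i$ and with stretched-exponential error, the growth of $\mathrm{cap}(\mathcal{U}^{(t)}_i)$ — which requires bounding the mutual overlaps of the $\gtrsim m^{c_0}$ ranges appended at each stage — while organising the exploration of $\mathcal{G}$ so that the conditional hitting estimates remain applicable (i.e.\ so the walks whose independence is used at stage $t+1$ have not already been examined at stage $t$). A secondary nuisance is the case $d=4$, where the capacity of the range of an $m^2$-step walk carries a logarithmic correction; this only changes constants and the admissible interval for $\gamma$.
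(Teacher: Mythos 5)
The paper gives no proof of this lemma at all: it is quoted verbatim as Proposition 4.2 of \cite{vcerny2012internal}, so your primary move of simply invoking that reference is exactly what the paper does. Your accompanying sketch (bounded-iteration capacity bootstrap for the connectivity display, plus hitting estimates and union bounds for the coverage display) is broadly consistent with the strategy of the cited proof, so the proposal is fine as it stands.
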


Now we are ready to finish the proof of Claim \ref{claim1}. \begin{proof}[proof of Claim \ref{claim1}:]
	For (1) of Claim \ref{claim1}, take $m=n^{a}$, $\hat{\eta}_1^{(m)}=\hat{\eta}_2^{(m)}=|\bar{S}_x^{(1)}(G_a^{(x,i)}, k)|$ in Lemma \ref{prop4.2}. By Lemma \ref{lemma4}, we know that for any $0<h<\frac{2}{d}$, \begin{equation}
	P\left(\hat{\eta}_1^{(m)}\ge Cm^{d-2-h} \right) \ge 1-\boldsymbol{s.e.}(m)
	\end{equation}
	and for $M=d-2$, \begin{equation}
	P\left(\hat{\eta}_2^{(m)}\le Cm^M \right) \ge 1-\boldsymbol{s.e.}(m).
	\end{equation}
	Without loss of generality, denote the paths in $\bar{S}_x^{(1)}(G_a^{(x,i)}, k)$ by $X^{(1)},X^{(2)},...,X^{(\hat{\eta}_2^{(m)})} $. Then $\hat{X}^{(1)},\hat{X}^{(2)},...,\hat{X}^{(\hat{\eta}_2^{(m)})}$ are simple random walks starting from $X^{(1)}(0), X^{(2)}(0),...,X^{(\hat{\eta}_2^{(m)})}(0) \in B_{x+kn^ae_i}(m)$. Thus \begin{equation}\label{7.21}
	P\left( \forall X^{(p)}, X^{(q)}\in \bar{S}_x^{(1)}(G_a^{(x,i)}, k), \hat{X}^{(p)}\ and\ \hat{X}^{(q)}\ are\ \left(2\beta+1,2m^2 \right)-connected  \right) \ge 1-\boldsymbol{s.e.}(T).
	\end{equation}
	If $\hat{X}^{(p)}\ and\ \hat{X}^{(q)}\ are\ \left(2\beta+1,2m^2 \right)-connected$, then \begin{equation}\label{7.22}
	\rho_{\hat{\mathcal{I}}_{x,i}}\left(R(\hat{X}^{(p)},2m^2),R(\hat{X}^{(q)},2m^2) \right)\le 2\left(2\beta+1\right)*m^{2}.
	\end{equation}
	Combine (\ref{7.21}) and (\ref{7.22}), we know (1) of Claim \ref{claim1} happends with probability $1-\boldsymbol{s.e.}(T)$.
	
	For (2) of Claim \ref{claim1}, if $j_k^{x,i}\ge n^\epsilon$, then $\forall 0\le j\le n^\epsilon-1$, $\left\lbrace \hat{X}^{(k)}_{n^{2(a+\epsilon)}+(j-1)n^{2a}+i}:0\le i\le n^{2a}\right\rbrace\cap G_a^{(x,i)}\neq \emptyset$. Define that $t_j=\min\left\lbrace 0\le i\le n^{2a}:\hat{X}^{(k)}_{n^{2(a+\epsilon)}+(j-1)n^{2a}+i}\in G_a^{(x,i)} \right\rbrace $. 
	For any $ 0\le j\le n^\epsilon-3$, the event $\left\lbrace d\left( \hat{X}^{(k)}_{t_j+n^{2a}}, G_a^{(x,i)}\right)\ge n^a, X^{(k)}_{t_j+n^{2a}+l}\notin G_a^{(x,i)}\ for\ all\ 0\le l\le 2n^{2a} \right\rbrace $ doesn't happen (otherwise, $\left\lbrace \hat{X}^{(k)}_{n^{2(a+\epsilon)}+(j+1)n^{2a}+i}:0\le i\le n^{2a}\right\rbrace\cap G_a^{(x,i)}\neq \emptyset$). By Donsker's Theorem, one can see that $\exists c>0$ such that for any $w\in G_a^{(x,i)}$,
	\begin{equation}\label{7.23}
	P_w\left(d\left( X_{n^{2a}}, G_a^{(x,i)}\right)\ge n^a \right)\ge c
	\end{equation}
	and for any $ y\in \left\lbrace z\in \mathbb{Z}^d: d\left(G_a^{(x,i)}, y\right)\ge n^a \right\rbrace$,  \begin{equation}\label{7.24}
	P_y\left(X_l\notin G_a^{(x,i)}\ for\ any\ 0\le l\le 2n^{2a} \right)\ge c. 
	\end{equation}
	Combining (\ref{7.23}) and (\ref{7.24}), by Markov property, we have \begin{equation}
	P\left( d\left( \hat{X}^{(k)}_{t_j+n^{2a}}, G_a^{(x,i)}\right)\ge n^a, \hat{X}^{(k)}_{t_j+n^{2a}+l}\notin G_a^{(x,i)}\ for\ all\  0\le l\le 2n^{2a}  \right)\ge c^2.
	\end{equation}
	By the strong Markov property, \begin{equation}
	\begin{split}
	&P\left(j_k^{x,i}\ge n^\epsilon \right)\\
	= &P\left(j_k^{x,i}\ge n^\epsilon, \bigcap\limits_{0\le j\le n^\epsilon-3,\ 3|j}\left\lbrace d\left( \hat{X}^{(k)}_{t_j+n^{2a}}, G_a^{(x,i)}\right)\ge n^a, \hat{X}^{(k)}_{t_j+n^{2a}+l}\notin G_a^{(x,i)}\ for\ all\ 0\le l\le 2n^{2a} \right\rbrace^c  \right)\\
	\le &(1-c^2)^{\frac{n^\epsilon-3}{3}}=\boldsymbol{s.e.}(T).
	\end{split}
	\end{equation}   
	Thus, (2) of Claim \ref{claim1} happens with probability $1-\boldsymbol{s.e.}(T)$. 
	
	Now we start to consider (3) of Claim \ref{claim1}. We will prove a stronger result: $\forall y\in l_{x,i}$ and $m\ge 0$,
	\begin{equation}\label{8.33}
	P\left(|y-\hat{\varphi}_1^{x,i}(y)|>m\right)\le \boldsymbol{s.e.}(T)+\boldsymbol{s.e.}(m).
	\end{equation}
	Assume $y\in l_{x,i}\cap U_k^{(x,i)}$ and $\mathcal{L}_y^m=\left\lbrace y+l*e_i:1\le l\le m \right\rbrace$. By Lemma 3.2 of \cite{vcerny2012internal}, for integer $j\in [-n^{1-a}-1,n^{1-a}+1]$ and $z\in U_j^{x,i}\cap\partial G_a^{(x,i)}$, 
	\begin{equation}\label{ly}
	P_z\left(H_{\mathcal{L}_y^m}\le n^{2(a+\epsilon)}\right) 
	\le\left\{
	\begin{aligned}
	&\frac{cm}{\left(|j-k|+1\right)^{d-2}n^{a(d-2)}*\ln(m) }  \       & d=3;\\
	&\frac{cm}{ \left(|j-k|+1\right)^{d-2}n^{a(d-2)} }\          & d\ge 4.
	\end{aligned}
	\right.
	\end{equation} 
	
	For the case $d=3$, combine (\ref{ly}) and Lemma \ref{lemma4}, 
	\begin{equation}\label{733}
	\begin{split}
	&P\left(there\ exists\ no\ path\ in\ \bar{S}_x^{(1)}(G_a^{(x,i)})\ hitting\ \mathcal{L}_y\ within\ n^{2(a+\epsilon)}\ steps \right) \\
	\le &\boldsymbol{s.e.}(T)+\prod_{j\in [-n^{1-a}-1,n^{1-a}+1],2|j}\left(1-\frac{cm}{\left(|j-k|+1\right)^{d-2}n^{a(d-2)}*\ln(m)}\right)^{c'f_d(n)} \\
	\le &\boldsymbol{s.e.}(T)+\exp(-c'f_d(n^a)*\sum_{j\in [-n^{1-a}-1,n^{1-a}+1],2|j}\frac{cm}{\left(|j-k|+1\right)^{d-2}n^{a(d-2)}*\ln(m)})\\
	\le &\boldsymbol{s.e.}(T)+\exp(\frac{cm}{\ln(m)})=\boldsymbol{s.e.}(T)+\boldsymbol{s.e.}(m).
	\end{split}
	\end{equation}
	For the case $d\ge 4$, the calculation is very similar to (\ref{733}) so we omit it.
	
	If there exists a path in $\bar{S}_x^{(1)}(G_a^{(x,i)})$ hitting $\mathcal{L}_y^m$ within $n^{2(a+\epsilon)}$ steps, since $\forall \hat{t}_k^{x,i}\ge n^{2(a+\epsilon)}$, then $|\hat{\varphi}_1^{x,i}(y)-y|\le T^\epsilon$. Take $m=T^\epsilon$ in (\ref{8.33}) and then we know (3) of Claim \ref{claim1} happens with probability $1-\boldsymbol{s.e.}(T)$.
	
	For (4) of Claim \ref{claim1}, by (\ref{7.21}), we have: 
	\begin{equation}\label{7.28}
	\begin{split}
	&P\left(\bigcap\limits_{k\in [-n^{1-a}-1,n^{1-a}+1]\cap \mathbb{Z}} \left\lbrace \forall X^{(p)}, X^{(q)}\in \bar{S}_x^{(1)}(G_a^{(x,i)}, k), \hat{X}^{(p)}\ and\ \hat{X}^{(q)}\ are\ \left(2\beta+1,2n^{2a} \right)-connected\right\rbrace   \right)\\
	\ge &1-\boldsymbol{s.e.}(T).
	\end{split}
	\end{equation}
	
	On the other hand, we also need to confirm that there exists common paths between $\bar{S}_x^{(1)}(G_a^{(x,i)}, k)$ and $\bar{S}_x^{(1)}(G_a^{(x,i)}, k+1)$. By Lemma \ref{lemma3}, we have $\left| \left\lbrace X^{(m)}\in \bar{S}_x^{(1)}(G_a^{(x,i)}): X^{(m)}(0)\in U_k^{(x,i)}\cap U_{k+1}^{(x,i)} \right\rbrace\right| \succeq Pois\left(c*f_d(n)\right) $. Thus for any integer $k\in [-n^{1-a}-1,n^{1-a}]$, \begin{equation}
	P\left(\left| \left\lbrace X^{(m)}\in \bar{S}_x^{(1)}(G_a^{(x,i)}): X^{(m)}(0)\in U_k^{(x,i)}\cap U_{k+1}^{(x,i)} \right\rbrace\right| \ge c*f_d(n) \right)\ge 1-\boldsymbol{s.e.}(T). 
	\end{equation}
	Since $\left\lbrace X^{(m)}\in \bar{S}_x^{(1)}(G_a^{(x,i)}): X^{(m)}(0)\in U_k^{(x,i)}\cap U_{k+1}^{(x,i)} \right\rbrace \subset \bar{S}_x^{(1)}(G_a^{(x,i)}, k)\cap \bar{S}_x^{(1)}(G_a^{(x,i)}, k+1)$, \begin{equation}\label{7.30}
	P\left(\bigcap\limits_{k\in [-n^{1-a}-1,n^{1-a}]\cap \mathbb{Z}}\left\lbrace \bar{S}_x^{(1)}(G_a^{(x,i)}, k)\ and\ \bar{S}_x^{(1)}(G_a^{(x,i)}, k+1)\ have\ common\ paths\right\rbrace \right)\ge 1-\boldsymbol{s.e.}(T). 
	\end{equation}
	For any integer $k\in [-n^{1-a}-1,n^{1-a}]$, if $\bar{S}_x^{(1)}(G_a^{(x,i)}, k)$ and $\bar{S}_x^{(1)}(G_a^{(x,i)}, k+1)$ have common paths and $\bigcap\limits_{j=k,k+1}\left\lbrace \forall X^{(p)}, X^{(q)}\in \bar{S}_x^{(1)}(G_a^{(x,i)}, k), \hat{X}^{(p)}\ and\ \hat{X}^{(q)}\ are\ \left(2\beta+1,2n^{2a} \right)-connected  \right\rbrace  $ happens, then $\bigcup\limits_{X^{(m)}\in \bar{S}_x^{(1)}(G_a^{(x,i)}, k)\cup \bar{S}_x^{(1)}(G_a^{(x,i)}, k+1)}R_m(n^{2(a+\epsilon)}) $ is connected. Thus $\hat{\mathcal{I}}_{x,i}$ is connected if the events in (\ref{7.28}) and (\ref{7.30}) both occur. By (\ref{7.28}) and (\ref{7.30}), (4) of Claim \ref{claim1} happens with probability $1-\boldsymbol{s.e.}(T)$.
	
	Finally, let's focus on (5) of Claim \ref{claim1}. Here are some auxiliary notations we need:
	\begin{itemize}
		\item $\widetilde{t}_m^{x,i}=\hat{t}_m^{x,i}\land \inf\left\lbrace j\ge 0: diam(R_m(j))\ge 2n^{2(a+\epsilon)} \right\rbrace $. (Recall the definition of $R_m(j)$ in Setion 3.1.)
		\item $ \widetilde{\mathcal{I}}_{x,i}=\bigcup_{k=1}^{|S_x^{(1)}(G_a^{(x,i)})|}R_k(\widetilde{t}_m^{x,i})$ and $\widetilde{\rho}_{x,i}=\rho_{\widetilde{\mathcal{I}}_{x,i}}$.
		\item For any integer $k\in [-n,n]$, let $L_k=\left\lbrace p\in [-n^{1-a}-1,n^{1-a}+1]\cap \mathbb{Z}: |pn^a-k|\le 2.1n^{2(a+\epsilon)} \right\rbrace  $ and $ \widetilde{\mathcal{I}}_{x,i,k}=\bigcup\limits_{p\in L_k }\bigcup\limits_{X^{(m)}\in \bar{S}_x^{(1)}(G_a^{(x,i)},p)}R_m(\widetilde{t}_m^{x,i})$.
		\item Define $\widetilde{\rho}_{x,i,k}$, $\widetilde{\varphi}_j^{(x,i,k)}$ in the same way as $\hat{\rho}_{x,i}$, $\hat{\varphi}_j^{(x,i)}$ by replacing $\hat{\mathcal{I}}_{x,i}$ with $\widetilde{\mathcal{I}}_{x,i,k}$.
		\item For any integer $k\in [-n,n]$,
		\begin{equation}
		\widetilde{T}_k^{x,i} = \left\{
		\begin{aligned}
		&\widetilde{\rho}_{x,i,k}(x+ke_i,\widetilde{\varphi}_1^{(x,i,k)}(x+ke_i)) ,&if\ x+ke_i\in \widetilde{\mathcal{I}}_{x,i,k},|x+ke_i-\widetilde{\varphi}^{(x,i,k)}_1(x+ke_i)|\le n^a;\\
		&0 ,    &otherwise.
		\end{aligned}
		\right.
		\end{equation}
	\end{itemize}

	Here we need an estimate like Lemma 6.3 of \cite{vcerny2012internal}. Before proving it, we need the following lemma:
	\begin{lemma}\label{lemma8.5} For any integer $m\le 0.5n^a$,
		\begin{equation}
		P\left(\widetilde{\rho}_{x,i,k}(x+ke_i,\widetilde{\varphi}_1^{(x,i,k)}(x+ke_i))> 4(\beta+3)m^2 \right)\le \boldsymbol{s.e.}(m)+\boldsymbol{s.e.}(T),
		\end{equation} 
		where $\beta$ is the constant in Lemma \ref{prop4.2}.
	\end{lemma}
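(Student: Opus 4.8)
The plan is to prove this estimate as the FRI counterpart of Lemma~6.3 of \cite{vcerny2012internal}, running the whole argument at the short scale $m$ rather than at scale $n^a$; the point of working with the truncated set $\widetilde{\mathcal I}_{x,i,k}$ and the capped times $\widetilde t^{x,i}_{\bullet}$ is precisely that everything localizes to a neighbourhood of $y:=x+ke_i$. First I would record two reductions. If $y\notin\widetilde{\mathcal I}_{x,i,k}$ there is nothing to prove, so we may work on $\{y\in\widetilde{\mathcal I}_{x,i,k}\}$; and if $|y-\widetilde\varphi_1^{(x,i,k)}(y)|>m$ then the short segment $\mathcal L_y^m=\{y+\ell e_i:1\le\ell\le m\}$ is avoided by every piece contributing to $\widetilde{\mathcal I}_{x,i,k}$, an event which already has probability $\boldsymbol{s.e.}(m)+\boldsymbol{s.e.}(T)$ by the argument used for part~(3) of Claim~\ref{claim1} (the hitting estimate (\ref{ly}) with $\mathcal L_y^m$, fed by the lower bound on $|\bar S_x^{(1)}(G_a^{(x,i)},p)|$ from Lemma~\ref{lemma4}, exactly as in (\ref{8.33})). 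Hence it suffices to bound $\widetilde\rho_{x,i,k}(y,\widetilde\varphi_1^{(x,i,k)}(y))$ on the event $\{y\in\widetilde{\mathcal I}_{x,i,k},\ |y-\widetilde\varphi_1^{(x,i,k)}(y)|\le m\}$.

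On that event the target is the chemical statement that the piece of $\widetilde{\mathcal I}_{x,i,k}$ carrying $y$ and the piece carrying $\widetilde\varphi_1^{(x,i,k)}(y)$ are joined, inside $\widetilde{\mathcal I}_{x,i,k}$, by a path of $O(m^2)$ edges, and I would establish it along the lines of \cite{vcerny2012internal}. Step~(i): by Lemma~\ref{lemma4} there are, with probability $1-\boldsymbol{s.e.}(T)$, at least $c\,(n^a)^{d-2}/\log(n^a)$ lucky trajectory-pieces whose extensions $\hat X^{(\bullet)}$ start in $B_y(n^a)$ (those from the column $p_0\in L_k$ through $y$) and run for time $\widetilde t^{x,i}_{\bullet}\asymp n^{2(a+\epsilon)}$; a hitting estimate in the spirit of Lemmas~3.2 and 3.3 of \cite{vcerny2012internal} together with the local CLT (Theorem~1.2.1 of \cite{lawler2013intersections}) shows that each such piece enters $B_y(2m)$, within the first half of its lifetime, with probability $\asymp(m/n^a)^{d-2}$, so that the number of pieces entering $B_y(2m)$ early stochastically dominates a Poisson variable of mean $\ge c\,m^{d-2}\ge c\,m^{d-2-h}$ for any small fixed $h\in(0,2/d)$, again up to $\boldsymbol{s.e.}(m)+\boldsymbol{s.e.}(T)$. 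Step~(ii): restart each of these pieces at its first entrance to $B_y(2m)$ (the paths in $S_x^{(1)}(\cdot)$ are independent given their starting points, so conditioning on the entrance points and invoking the uniform bound trick preserves the independence needed); since $2m^2\ll n^{2(a+\epsilon)}$ each restarted stretch of length $2m^2$ sits inside the un-truncated part of its trajectory, so applying Proposition~4.2 of \cite{vcerny2012internal} (our Lemma~\ref{prop4.2}) at scale $m$ gives, with probability $1-\boldsymbol{s.e.}(m)$, that all these stretches are pairwise $(2\beta+1,2m^2)$-connected inside $\widetilde{\mathcal I}_{x,i,k}$ and that every point of $B_y(2m)$ visited by any piece within $2m^2$ steps of entering lies within chemical distance $2(\beta+1)m^2$ of one of them. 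Step~(iii): $y$ (being in $\widetilde{\mathcal I}_{x,i,k}$) and $\widetilde\varphi_1^{(x,i,k)}(y)$ (being within $\ell^\infty$-distance $m$ of $y$, hence in $B_y(2m)$) are both such points, and, with probability $1-\boldsymbol{s.e.}(m)$ by a union over the $\ge c\,m^{d-2-h}$ pieces from Step~(i), at least one piece reaches $\mathcal L_y^m$ within $cm^2$ steps of entering $B_y(2m)$, so that $\widetilde\varphi_1^{(x,i,k)}(y)$ sits on a stretch of chemical diameter $\le cm^2$. Adding the three $O(m^2)$ contributions and keeping track of the constants ($2\beta+1$ links of cost $2m^2$ plus two short excursions) yields the stated bound $4(\beta+3)m^2$.

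The main obstacle is, as the paper itself flags for this circle of ideas, dimension $d=3$: there the count $f_3$ in Lemma~\ref{lemma4} and the segment-hitting probability (\ref{ly}) carry logarithmic corrections, so in Step~(i) one must check that the mean of the dominating Poisson variable is still a genuine power $m^{d-2-h}$ of $m$ --- so that both $(1-\text{hitting probability})^{\#\text{pieces}}$ and the failure probability of Proposition~\ref{prop4.2} are genuinely $\boldsymbol{s.e.}(m)$ --- rather than being eaten by a stray $\log$; since the pieces through $B_y(2m)$ number a full power $m^{d-2}$ (up to a single log) this goes through, but the bookkeeping is where the dimension-dependent care is needed. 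A secondary, purely mechanical point is to confirm that the truncations $\widetilde t^{x,i}_{\bullet}$ and the diameter cap at $2n^{2(a+\epsilon)}$ never interfere with the short $O(m^2)$-excursions near $y$, which is automatic on the high-probability event from part~(2) of Claim~\ref{claim1} that $j^{x,i}_{\bullet}<n^\epsilon$.
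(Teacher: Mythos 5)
Your proposal follows the same overall architecture as the paper's proof: reduce to the event $|y-\widetilde\varphi_1^{(x,i,k)}(y)|\le m$ via the segment-hitting estimate (\ref{8.33}); count the pieces of $\widetilde{\mathcal I}_{x,i,k}$ that hit a ball of radius comparable to $m$ around $y$; restart those pieces at their first hitting time, observe that $2m^2\le\widetilde t^{x,i}_\bullet$ is small enough that the truncation never bites, and invoke Lemma~\ref{prop4.2} at scale $m$; then splice the $(2\beta+1,2m^2)$-chain together with the two short excursions of $y$ and $\widetilde\varphi_1^{(x,i,k)}(y)$ to get $4(\beta+3)m^2$. This is exactly what the paper does, with $\widetilde R(k,m)$ playing the role of your Step~(i) count and estimate (\ref{8.40}) providing the two-sided bound $c_1m^{d-2}\le|\widetilde R(k,m)|\le c_2m^{d-2}$ with probability $1-\boldsymbol{s.e.}(m)$.

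The one place where you diverge, and where you have a genuine gap, is Step~(i) in $d=3$. You restrict your count to the pieces emanating from the single column $p_0\in L_k$ containing $y$, of which there are only $\asymp n^a/\log(n^a)$; multiplying by the per-piece hitting probability $\asymp(m/n^a)^{d-2}$ gives a Poisson mean of order $m/\log n$, not $m^{d-2}$. Since $\log n\asymp\log T$ grows with $T$ and $m$ may be bounded, this mean is \emph{not} bounded below by any fixed power $c\,m^{d-2-h}$, so the hypothesis of Lemma~\ref{prop4.2} fails and the failure probability cannot be converted to $\boldsymbol{s.e.}(m)$. You noticed the logarithm but asserted it goes through; with a single column it does not. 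The fix is precisely what the paper does in its definition of $\widetilde R(k,m)$ and computation (\ref{7.34}): sum the hitting probabilities over \emph{all} columns $p\in L_k$. For $d=3$ that sum contributes a factor $\asymp n^{-a}\log|L_k|\asymp n^{-a}\log n$, and the $\log n$ cancels the $\log n$ in $f_3(n^a)$, leaving a clean $c\,m^{d-2}$ in every dimension. Your Steps~(ii)--(iii), including the restart construction, the use of independence given starting points, and the $4(\beta+3)m^2$ bookkeeping, are correct and match the paper.
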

	\begin{proof}
		Denote that $\widetilde{R}(k,m)=\left\lbrace X^{(j)}\in \bigcup\limits_{p\in L_k }\bar{ S}_x^{(1)}(G_a^{(x,i)},p): R_j(\widetilde{t}_j^{x,i})\cap B_{x+ke_i}(m)\neq\emptyset\right\rbrace $.
		
		We claim that $\exists c_1,c_2>0$ such that 
		\begin{equation}\label{8.40}
		P\left(c_1m^{d-2}\le |\widetilde{R}(k,m)|\le c_2m^{d-2} \right)\ge 1-\boldsymbol{s.e.}(m).
		\end{equation}
		In deed, for any $p\in L_k$ and $y\in U_p^{(x,i)}\cap \partial G_a^{(x,i)}$, by Lemma 3.3 of \cite{vcerny2012internal}, we have \begin{equation}
		P_y\left(H_{B_{x+ke_i}(m)}\le n^{2(a+\epsilon)} \right) \ge \frac{cm^{d-2}}{\left(|[pn^a]-k|+n^a \right)^{d-2}}.
		\end{equation} 
		By Lemma \ref{lemma4}, $|\widetilde{R}(k,m)|$ stochasically dominates a Poisson distribution with parameter at least:
		\begin{equation}\label{7.34}
		\sum_{p\in L_k,3|p}c*f_d(n^a)*\frac{m^{d-2}}{\left(|[pn^a]-k|+n^a \right)^{d-2}}\ge c*m^{d-2}.
		\end{equation}
		Meanwhile, $|\widetilde{R}(k,m)|\le \bigg|\left\lbrace \eta\in \mathcal{FI}_1^{\frac{1}{3}u,T}:\eta\cap B_{x+ke_i}(m)\neq\emptyset\right\rbrace\bigg|\sim Pois(\frac{1}{3}u*cap^{(T)}(B(m)))$. By Lemma \ref{lemma3.5}, we know that $|\widetilde{R}(k,m)|$ is stochasically dominated by $Pois(c*m^{d-2})$ for some $c>0$. Then using the large deviation bound for the Poisson distribution, we can get (\ref{8.40}).
		
		Denote $\widetilde{R}(k,m)=\left\lbrace  X^{(j)}:1\le j\le |\widetilde{R}(k,m)| \right\rbrace $ and $\widetilde{X}^{(j)}=\left\lbrace \hat{X}^{(j)}_l:H_{B_{x+ke_i}(m)}\le l\le \widetilde{t}_j^{x,i} \right\rbrace $. By the definition of $\widetilde{t}_j^{x,i}$, we know that $length(\widetilde{X}^{(j)})\ge 2n^{2a}>2m^2$. Apply Lemma \ref{prop4.2} for $\left\lbrace \widetilde{X}^{(l)}:1\le l\le |\widetilde{R}(k,m)| \right\rbrace $ (note that $\hat{\eta}_1^{(m)}=\hat{\eta}_2^{(m)}=|\widetilde{R}(k,m)|$) and then we have \begin{equation}\label{7.36}
		P\left(\forall 1\le p,q\le|\widetilde{R}(k,m)|, \widetilde{X}^{(p)}\ and\ \widetilde{X}^{(q)}\ are\ \left(2\beta+1,2m^2 \right)-connected  \right)\ge 1-\boldsymbol{s.e.}(m)
		\end{equation}
		and 
		\begin{equation}\label{8.44}
		\begin{split}
		P\bigg(&\forall q\le |\widetilde{R}(k,m)|,p\in L^{(q)},\exists j\le |\widetilde{R}(k,m)|\ such\ that\ \left\lbrace \widetilde{X}^{(q)}_{3pm^2},...,\widetilde{X}^{(q)}_{3(p+1)m^2-1}\right\rbrace\\
		&\cap R(\widetilde{X}^{(j)},2m^2)\neq \emptyset  \bigg)\ge 1-\boldsymbol{s.e.}(m),
		\end{split}
		\end{equation}
		where $L^{(q)}=\left\lbrace p\ge 1:\left\lbrace X^{(q)}_{3pm^2},...,X^{(q)}_{3(p+1)m^2-1}\right\rbrace\cap B_{x+ke_i}(m)\neq \emptyset\right\rbrace $.
		
		When $\widetilde{T}_k^{x,i}>0$ (note that $x+ke_i\in \widetilde{\mathcal{I}}_{x,i,k}$), $|x+ke_i-\widetilde{\varphi}^{(x,i,k)}_1(x+ke_i)|\le m $ and the events in (\ref{7.36}), (\ref{8.44}) happen, there exists $\widetilde{X}^{(q_1)},\widetilde{X}^{(q_2)}\in \widetilde{R}(k,m)$ such that $x+ke_i\in \widetilde{X}^{(q_1)}$ and $\widetilde{\varphi}^{(x,i,k)}_1(x+ke_i)\in \widetilde{X}^{(q_2)}$. If $x+ke_i\notin R(\widetilde{X}^{(q_1)},3m^2)$, then $\exists p_1\in L^{(q_1)}$ such that $x+ke_i\in \left\lbrace \widetilde{X}^{(q_1)}_{3p_1m^2},...,\widetilde{X}^{(q_1)}_{3(p_1+1)m^2-1} \right\rbrace$, by the event in (\ref{8.44}), $\exists \widetilde{X}^{(j)}$ such that $\left\lbrace \widetilde{X}^{(q_1)}_{3p_1m^2},...,\widetilde{X}^{(q_1)}_{3(p_1+1)m^2-1}\right\rbrace
		\cap R(\widetilde{X}^{(j)},2m^2)\neq \emptyset$ and thus $d(x+ke_i,R(\widetilde{X}^{(j)},2m^2))\le 3m^2$. If $x+ke_i\in R(\widetilde{X}^{(q_1)},3m^2)$, we also have $d(x+ke_i,R(\widetilde{X}^{(q_1)},2m^2))\le m^2<3m^2$. In conclution, there exists $\widetilde{X}^{(j_1)}\in \widetilde{R}(k,m)$ such that $d(x+ke_i,R(\widetilde{X}^{(j_1)},2m^2))\le 3m^2$. Similarly, there also exists $\widetilde{X}^{(j_2)}\in \widetilde{R}(k,m)$ such that $d(\widetilde{\varphi}^{(x,i,k)}_1(x+ke_i),R(\widetilde{X}^{(j_2)},2m^2))\le 3m^2$. By the event of (\ref{7.36}), $\widetilde{X}^{(j_1)}$ and $\widetilde{X}^{(j_2)}$ are $(2\beta+1,2m^2)$-connected. Thus 
		\begin{equation}\label{8.45}
		\widetilde{\rho}_{(x,i,k)}(x+ke_i,\widetilde{\varphi}_1^{(x,i,k)}(x+ke_i))\le (2\beta+1)*2m^2+4m^2+6m^2=4(\beta+3)m^2,
		\end{equation}
		which means if $\widetilde{\rho}_{(x,i,k)}(x+ke_i,\widetilde{\varphi}_1^{(x,i,k)}(x+ke_i))>4(\beta+3)m^2$ and $|x+ke_i-\widetilde{\varphi}_1^{(x,i,k)}(x+ke_i)|\le m$ both occur, then the events in (\ref{7.36}) and (\ref{8.44}) won't happend at the same time.
		
		By (\ref{8.33}), (\ref{7.36}) and (\ref{8.44}), we have \begin{equation}\label{8.47}
		\begin{split}
		&P\left(\widetilde{\rho}_{(x,i,k)}(x+ke_i,\widetilde{\varphi}_1^{x,i,k}(x+ke_i))> 4(\beta+3)m^2 \right)\\
		\le& P\left(\widetilde{\rho}_{(x,i,k)}(x+ke_i,\widetilde{\varphi}_1^{(x,i,k)}(x+ke_i))> 4(\beta+3)m^2, |x+ke_i-\widetilde{\varphi}_1^{(x,i,k)}(x+ke_i)|\le m\right)\\
		&+P\left(|x+ke_i-\widetilde{\varphi}_1^{(x,i,k)}(x+ke_i)|> m \right) \\
		\le&\boldsymbol{s.e.}(m)+\boldsymbol{s.e.}(T).
		\end{split}
		\end{equation}
	\end{proof}
	For any $l\le (\beta+3)n^{2a}$, there exists $m\le 0.5n^a$ such that $4(\beta+3)m^2\le l< 4(\beta+3)(m+1)^2$. By Lemma \ref{lemma8.5}, we have
	\begin{equation}\label{743}
	P\left(\widetilde{T}_k^{x,i}> l\right)\le P\left(\widetilde{T}_k^{x,i}> 4(\beta+3)m^2\right)\le  \boldsymbol{s.e.}(m)+\boldsymbol{s.e.}(T).
	\end{equation}
	For the situation $l> (\beta+3)n^{2a}$, 
	\begin{equation}\label{744}
	P\left(\widetilde{T}_k^{x,i}> l\right)\le P\left(\widetilde{T}_k^{x,i}>(\beta+3)n^{2a}\right)\le \boldsymbol{s.e.}(T).
	\end{equation}
	Combine (\ref{743}) and (\ref{744}), we have: for any integer $k\in [-n,n]$ and $l\ge 0$,
	\begin{equation}\label{hattk}
	P\left(\widetilde{T}_k^{x,i}\ge l \right)\le \boldsymbol{s.e.}(l)+\boldsymbol{s.e.}(T). 
	\end{equation} 
	
	If $j_k^{x,i}< n^\epsilon$, then $\hat{t}_k^{x,i}=n^{2(a+\epsilon)}+j_k^{x,i}*n^{2a}<2n^{2(a+\epsilon)}$ and $diam(R_k(\hat{t}_k^{x,i}))<2n^{2(a+\epsilon)}$. Consequently, if for any $k$, the event $j_k^{x,i}< n^\epsilon$ happens, then $\widetilde{\mathcal{I}}_{x,i}=\hat{\mathcal{I}}_{x,i}$. Since $P\left(j_k^{x,i}< n^\epsilon\right)\ge 1-\boldsymbol{s.e.}(T) $ and $P\left(|S_x^{(1)}(G_a^{(x,i)})|\le cn^d \right)\ge 1-\boldsymbol{s.e.}(T) $, we have \begin{equation}\label{widehat}
	P\left(\widetilde{\mathcal{I}}_{x,i}=\hat{\mathcal{I}}_{x,i} \right)\ge 1-\boldsymbol{s.e.}(T). 
	\end{equation}
	
	By the definition of $\widetilde{t}_m^{x,i}$ and $\widetilde{\mathcal{I}}_{x,i,k}$, we know that $B_{x+ke_i}\left(n^a \right)\cap \widetilde{\mathcal{I}}_{x,i,k}=B_{x+ke_i}\left(n^a \right)\cap \widetilde{\mathcal{I}}_{x,i}$. Therefore, When $\widetilde{\mathcal{I}}_{x,i}=\hat{\mathcal{I}}_{x,i}$, $x+ke_i\in \hat{\mathcal{I}}_{x,i}$ and $|x+ke_i-\hat{\varphi}_1^{(x,i)}(x+ke_i)|\le n^a$ happen, we have $x+ke_i\in \widetilde{\mathcal{I}}_{x,i}$ and $|x+ke_i-\widetilde{\varphi}_1^{(x,i,k)}(x+ke_i)|\le n^a$. Thus if $\widetilde{\mathcal{I}}_{x,i}=\hat{\mathcal{I}}_{x,i}$ and $\bigcap\limits_{-n\le k\le n}\left\lbrace |x+ke_i-\hat{\varphi}_1^{(x,i)}(x+ke_i)|\le n^a \right\rbrace $ both happen, for any $y_1,y_2\in l_{x,i}\cap \hat{\mathcal{I}}_{x,i}$, we have \begin{equation}\label{7.40}
	\hat{\rho}_{x,i}(y_1,y_2)\le \sum\limits_{k=-n}^{n}\widetilde{T}_k^{x,i}.
	\end{equation}
	
	Let $b_n=\lfloor 5n^{2(a+\epsilon)}\rfloor $. By the definition of $\widetilde{\mathcal{I}}_{x,i,k}$, the random variables $\left\lbrace \widetilde{T}_{l*b_n+j}^{x,i}:|l*b_n+j|\le n \right\rbrace $ are independent for $1\le j\le b_n$. By (\ref{widehat}), (\ref{7.40}) and $P\left(|x+ke_i-\hat{\varphi}_1^{(x,i)}(x+ke_i)|\le n^a \right)\ge 1-\boldsymbol{s.e.}(T)$, we have  \begin{equation}\label{7.41}
	\begin{split}
	P\left(\hat{\rho}_{x,i}(y_1,y_2)>cn \right)\le& P\left(\sum\limits_{k=-n}^{n}\widetilde{T}_k^{x,i}>cn \right) +\boldsymbol{s.e.}(T)\\
	\le &\sum\limits_{j=1}^{b_n}P\left(\sum\limits_{l:|l*b_n+j|\le n}\widetilde{T}_{l*b_n+j}^{x,i}>\frac{cn}{b_n} \right) +\boldsymbol{s.e.}(T)\\
	\le &\sum\limits_{j=1}^{b_n}P\left(\sum\limits_{l:|l*b_n+j|\le n}\left( \widetilde{T}_{l*b_n+j}^{x,i}\land n\right) >\frac{cn}{b_n} \right) +\sum_{k=-n}^{n}P\left(\widetilde{T}_k^{x,i}>n \right) +\boldsymbol{s.e.}(T).
	\end{split}
	\end{equation}
	
	We need a large diviation bound in \cite{nagaev1979large}. 
	\begin{lemma}\label{colo1.5}(Colollary 1.5, \cite{nagaev1979large})
		For $0<t\le 1$, assume that $X_1,X_2,...,X_m$ are independent random variables such that $A_t^+=\sum_{j=1}^{m}E\left(|X_j|^t*1_{\{X_j>0\}} \right)<\infty $. For $x,y_1,y_2,...,y_m>0$ and $y\ge \max\left\lbrace y_1,y_2,...,y_m \right\rbrace $, \begin{equation}
		P\left(X_1+X_2+...+X_m\ge x \right)\le \sum_{j=1}^{m}P\left( X_j\ge y_j\right)+\left(\frac{eA_t^+}{xy^{t-1}} \right)^{x/y}.
		\end{equation}
	\end{lemma}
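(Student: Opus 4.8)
The plan is to prove this by the classical truncation-plus-exponential-Chernoff argument (this is, after all, the standard route to Nagaev-type bounds, recovered here for completeness). First I would truncate each summand from above at its own level: set $\bar X_j = X_j\, 1_{\{X_j < y_j\}}$. On the event that $X_j < y_j$ for every $j$ one has $\sum_{j=1}^m X_j = \sum_{j=1}^m \bar X_j$, so a union bound gives
\[
P\Big(\sum_{j=1}^m X_j \ge x\Big) \le \sum_{j=1}^m P(X_j \ge y_j) + P\Big(\sum_{j=1}^m \bar X_j \ge x\Big),
\]
and it suffices to show $P(\sum_j \bar X_j \ge x) \le \big(e A_t^+ /(x y^{t-1})\big)^{x/y}$.

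For the truncated sum I would apply the exponential Markov inequality: for any $h>0$, by independence,
\[
P\Big(\sum_{j=1}^m \bar X_j \ge x\Big) \le e^{-hx} \prod_{j=1}^m E\big[e^{h\bar X_j}\big].
\]
The key step is an elementary pointwise estimate for each factor. Splitting $\bar X_j = \bar X_j^+ - \bar X_j^-$ and using $e^{-h\bar X_j^-}\le 1$, it is enough to bound $E[e^{h\bar X_j^+}]$ with $0 \le \bar X_j^+ < y_j \le y$. One checks that $v \mapsto (e^{hv}-1)/v^t$ is non-decreasing on $(0,y]$ — this is exactly where the hypothesis $t\le 1$ is used, since its derivative has the sign of $h v e^{hv} - t(e^{hv}-1) \ge hve^{hv} - (e^{hv}-1) \ge 0$. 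Hence $e^{hv} - 1 \le \tfrac{e^{hy}-1}{y^t}\, v^t$ for $0 \le v \le y$, which gives
\[
E\big[e^{h\bar X_j}\big] \le 1 + \frac{e^{hy}-1}{y^t}\, E\big[(X_j^+)^t\big] = 1 + \frac{e^{hy}-1}{y^t}\, E\big[|X_j|^t\, 1_{\{X_j>0\}}\big].
\]
Applying $1+z\le e^z$ to each factor and summing the exponents, using $\sum_j E[|X_j|^t 1_{\{X_j>0\}}] = A_t^+$, yields $\prod_j E[e^{h\bar X_j}] \le \exp\big((e^{hy}-1)A_t^+/y^t\big)$, and therefore
\[
P\Big(\sum_{j=1}^m \bar X_j \ge x\Big) \le \exp\Big(-hx + \tfrac{e^{hy}-1}{y^t} A_t^+\Big) \le \exp\Big(-hx + \tfrac{e^{hy}}{y^t} A_t^+\Big).
\]

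The last step is optimization in $h$. Writing $s = hy$ and minimizing $s \mapsto -sx/y + e^s A_t^+/y^t$, the critical point is $e^s = x y^{t-1}/A_t^+$, at which the exponent equals $\tfrac{x}{y}\big(1 - \log(x y^{t-1}/A_t^+)\big) = \tfrac{x}{y}\log\!\big(e A_t^+/(x y^{t-1})\big)$; this gives precisely the asserted bound. (If $x y^{t-1}\le A_t^+$ the claimed right-hand side is already $\ge 1$, so nothing has to be done and one may assume the critical $s$ is positive.) The main obstacle here is not conceptual but careful bookkeeping: the truncation is performed at the individual levels $y_j$ while the moment-generating-function estimate is carried out with the common upper bound $y \ge \max_j y_j$ — this is legitimate because $\bar X_j^+ < y_j \le y$ and the monotonicity just established lets us raise the cut-off to $y$ — and one must keep track of the fact that only the positive parts of the $X_j$ enter, so that the finiteness of $A_t^+$, a constraint on the positive parts alone, is all that is used.
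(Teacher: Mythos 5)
The paper does not prove this lemma; it is imported verbatim as Corollary~1.5 of Nagaev~\cite{nagaev1979large}, so there is no in-paper argument to compare against. Your reconstruction via truncation at the individual levels $y_j$ followed by an exponential Chernoff bound on the truncated sum is the standard route to this inequality, and I have checked each step: the union bound handles the event $\bigcup_j\{X_j\ge y_j\}$ correctly; the pointwise bound $e^{h\bar X_j}\le e^{h\bar X_j^{+}}$ is valid since for each $\omega$ at most one of $\bar X_j^{+},\bar X_j^{-}$ is nonzero; the monotonicity of $v\mapsto(e^{hv}-1)/v^{t}$ on $(0,y]$ for $t\le 1$ is proved correctly from $uve^{u}-(e^{u}-1)\ge 0$; and the optimization $s=hy=\log(xy^{t-1}/A_t^{+})$, together with the trivial case $xy^{t-1}\le A_t^{+}$, gives exactly the claimed exponent $\tfrac{x}{y}\log\bigl(eA_t^{+}/(xy^{t-1})\bigr)$. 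The proof is correct and self-contained; the only thing one could add for polish is the observation that $\bar X_j^{+}\le X_j^{+}$ pointwise, which is what lets you replace $E[(\bar X_j^{+})^{t}]$ by $E[|X_j|^{t}\mathbbm{1}_{\{X_j>0\}}]$ and hence sum to $A_t^{+}$, but you clearly use this implicitly.
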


	Take $m=|\left\lbrace l:|l*b_n+j|\le n\right\rbrace |$, $X_l=\widetilde{T}_{l*b_n+j}^{x,i}\land n$, $x=\frac{cn}{b_n}$, $y=y_1=...=y_m=n^\epsilon$ and $t=1$ in Lemma \ref{colo1.5}, then we have \begin{equation}\label{7.43}
	P\left(\sum\limits_{l:|l*b_n+j|\le n}\widetilde{T}_{l*b_n+j}^{x,i}\land n>\frac{cn}{b_n} \right)\le \sum_{j=1}^{m}P\left( \widetilde{T}_{l*b_n+j}^{x,i}\land n\ge n^\epsilon \right)+\left(\frac{5e\widetilde{A}_1^+b_n}{cn}\right) ^{0.2c*n^{1-2a-3\epsilon}}, 
	\end{equation}
	where $\widetilde{A}_1^+=\sum\limits_{l:|l*b_n+j|\le n}E|\widetilde{T}_{l*b_n+j}^{x,i}\land n|$ and $1-2a-3\epsilon>0$.
	
	By (\ref{hattk}), there exists $c'(u,d)>0$ such that \begin{equation}
	E|\widetilde{T}_{k}^{x,i}\land n|\le \sum\limits_{m=1}^{n}P\left(\widetilde{T}_{k}^{x,i}\ge m \right)\le \sum\limits_{m=1}^{n}\left( \boldsymbol{s.e.}(m)+\boldsymbol{s.e.}(n)\right) \le c'. 
	\end{equation}
	Thus if we take $c$ large enough in (\ref{7.43}), then
	\begin{equation}\label{7.45}
	\frac{5e\widetilde{A}_1^+b_n}{cn}\le \frac{10ec'}{c}<1.
	\end{equation}
	
	Meanwhile, using (\ref{hattk}) again, for any integer $k\in [-n,n]$,  \begin{equation}\label{7.46}
	P\left( \widetilde{T}_{k}^{x,i}\land n\ge n^\epsilon \right)= P\left( \widetilde{T}_{k}^{x,i}\ge n^\epsilon \right)\le \boldsymbol{s.e.}(T).
	\end{equation}
	
	Combine (\ref{7.41}), (\ref{7.43}), (\ref{7.45}) and (\ref{7.46}),  \begin{equation}
	P\left(\hat{\rho}_{x,i}(y_1,y_2)>cn \right)\le \boldsymbol{s.e.}(T).
	\end{equation}
	Thus we know that (5) of Claim \ref{claim1} happens with probability $1-\boldsymbol{s.e.}(T)$ and finally, the proof of Claim \ref{claim1} is complete.
\end{proof}

\section{Appendix B: Proof of Corollary \ref{quenched invariance principle}}

Let $0 < \alpha< \beta < \infty$. We first show that there exists $0 < T_5(d, \alpha, \beta) <\infty$ such that for all $T> T_5$, (\ref{local uniqueness}) holds and $P^{u,T} (0 \in \Gamma) >0$ for all $u \in (\alpha, \beta)$. By the proof of Theorem \ref{theorem2}, there exists $T' (d, \alpha, \beta)>0$ such that (\ref{local uniqueness}) holds for all $u \in (\alpha, \beta)$ and all $T>T'$. By the proof of Proposition \ref{prop12},
\begin{equation}
\begin{aligned}
& P^{u,T} (0 \in \Gamma) \\
& \geq P^{u,T} \Big(0 \leftrightarrow \partial B(0, T^{1/3}) \Big) - P^{u,T} \Big( 0 \leftrightarrow \partial B(0, T^{1/3}), \rho (0, \bar{\Gamma}_{12}) > T^{2d/3} \Big) \\
& \geq 1 - s.e. (T) - s.e.(T)^{T^{c_3}} \\
& \geq 1 - s.e.(T).
\end{aligned}
\end{equation} 
Let $T'' <\infty$ such that for all $T> T''$, $\eta^{T} (u) >0$ for all $u \in (\alpha, \beta)$. We could choose $T_5 = \max \{T', T''\}$.

Define $\eta^T (u) := P^{u,T} (0 \in \Gamma)$. The next lemma shows that $\eta^T (u)$ is continuous for $T >T_5$.

\begin{lemma}
\label{continuity}
Let $d \geq 3$, $0 < \alpha < \beta < \infty$, and $T_5 (d, \alpha, \beta)$ be the same critical value in Corollary \ref{quenched invariance principle}. For all $T > T_5$, $\eta^T (u)$ is continuous on $(\alpha, \beta)$.
\end{lemma}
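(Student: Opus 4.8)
The plan is to prove continuity of $\eta^T(u) = P^{u,T}(0\in\Gamma)$ on $(\alpha,\beta)$ by exploiting the standard coupling of FRI's at different intensities together with the strong quantitative control provided by Proposition \ref{prop12} (equivalently, the s.e.-closeness of $\eta^T(u)$ to $1$). The key point is that when $T>T_5$ the probability that $0$ lies in the infinite cluster is $1-\boldsymbol{s.e.}(T)$ uniformly in $u\in(\alpha,\beta)$, so the function $\eta^T$ is squeezed into a narrow band near $1$, and small perturbations of $u$ can only move it by a small amount.

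First I would set up the coupling: for $u_1<u_2$ in $(\alpha,\beta)$, by the thinning/decomposition property stated in Section \ref{notations}, write $\mathcal{FI}^{u_2,T} \overset{d}{=} \mathcal{FI}^{u_1,T}\cup \mathcal{FI}^{u_2-u_1,T}$ with the two pieces independent, so that $\mathcal{FI}^{u_1,T}\subset \mathcal{FI}^{u_2,T}$ on this coupled space. This immediately gives monotonicity of the event $\{0\in\Gamma\}$ only in a weak sense (adding trajectories can merge finite clusters into the infinite one but never remove $0$ from an existing infinite cluster), hence $\eta^T(u_1)\le \eta^T(u_2)$; so $\eta^T$ is nondecreasing and it suffices to bound $\eta^T(u_2)-\eta^T(u_1)$ from above. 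For the upper bound, note $\eta^T(u_2)-\eta^T(u_1) = P^{u_1,T}\times P^{u_2-u_1,T}\big(0\in\Gamma(\mathcal{FI}^{u_2,T}),\, 0\notin\Gamma(\mathcal{FI}^{u_1,T})\big)$. I would dominate this by $P^{u_1,T}(0\notin\Gamma(\mathcal{FI}^{u_1,T}))$, which by the computation already displayed in Appendix B (using Proposition \ref{prop12} and $\bar\Gamma_{12}\subset\Gamma$) is at most $\boldsymbol{s.e.}(T)$ — but that only gives that $\eta^T$ takes values in a band of width $\boldsymbol{s.e.}(T)$, not continuity at fixed $T$. So this crude bound is not enough by itself.

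The real argument requires showing $\eta^T(u_2)-\eta^T(u_1)\to 0$ as $u_2-u_1\to 0$. Here I would localize: fix a large radius $L$ and observe that, by Proposition \ref{prop12} (or rather its corollary used in Appendix B), the event $\{0\in\Gamma\}$ differs from the local event $\{0\leftrightarrow \partial B(L^{1/2d})\text{ and }\rho(0,\bar\Gamma_{12})\le L\}$ by probability at most $\boldsymbol{s.e.}(L)$, uniformly in $u\in(\alpha,\beta)$. The latter event depends only on $\mathcal{FI}^{u,T}$ restricted to a bounded region $B_L$ (paths hitting $B_L$). On a bounded region, the number of trajectories hitting $B_L$ is Poisson with parameter $\lambda(u)$ depending continuously (indeed linearly) on $u$, and the conditional law of those trajectories given their count does not depend on $u$; hence the law of $\mathcal{FI}^{u,T}$ restricted to $B_L$ is continuous in total variation in $u$, with TV modulus $O(|u_2-u_1|\cdot \mathrm{poly}(L,T))$ (a standard estimate for Poisson point processes: the TV distance between $\mathrm{Pois}(\lambda_1)$ and $\mathrm{Pois}(\lambda_2)$ mixtures is $\le |\lambda_1-\lambda_2|$, times the local intensity). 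Therefore $|\eta^T(u_2)-\eta^T(u_1)| \le 2\boldsymbol{s.e.}(L) + C(L,T)\,|u_2-u_1|$. Letting $u_2-u_1\to 0$ first and then $L\to\infty$ gives continuity at each $u\in(\alpha,\beta)$, and since the bound is locally uniform, continuity on $(\alpha,\beta)$.

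The main obstacle I anticipate is making the ``local event approximates $\{0\in\Gamma\}$ uniformly in $u$'' step rigorous: Proposition \ref{prop12} as stated fixes $u$, so I must check that all the constants in the $\boldsymbol{s.e.}(\cdot)$ bounds can be taken uniform over the compact interval $[\alpha,\beta]$ — this follows by re-examining the proofs, since every estimate there is either monotone in $u$ or has constants depending on $u$ only through $\alpha,\beta$, but it needs to be stated carefully (this is exactly why $T_5$ was defined to depend on $\alpha,\beta$). The second, more routine, obstacle is the total-variation bound on a bounded box: one must write the restricted process via Definition \ref{definition2} (or via the intensity measure $u v^{(T)}$ restricted to paths meeting $B_L$, whose total mass is finite and linear in $u$) and invoke the elementary coupling of Poisson processes with comparable intensities; I would not belabor this computation beyond citing the Poisson thinning structure already recorded in Section \ref{notations}.
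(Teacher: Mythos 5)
Your approach is genuinely different from the paper's, and it has a gap that goes beyond the obstacle you flag at the end.

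The paper's proof (following Teixeira) splits into right- and left-continuity. Right-continuity comes ``for free'': $1-\eta^T(u)=\lim_r P(D_r^{u,T})$ is an increasing limit of real-analytic (hence continuous) functions of $u$, so it is lower-semicontinuous; combined with monotonicity this gives right-continuity of $\eta^T$. Left-continuity is the interesting direction, and the paper handles it with the standard marked-Poisson coupling on $W^{[0,\infty)}\times\mathbb{R}_+$: on the event $0\in\Gamma_u$, uniqueness gives a finite connecting path $\tau$ from $0$ to the infinite cluster $\Gamma_{v_0}$ already present at level $v_0<u$, and only finitely many trajectories with labels in $(v_0,u)$ touch $\tau$, so $0\in\Gamma_{v_1}$ for some $v_1<u$. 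This is a soft coupling argument that does not need any quantitative control and in particular does not need Proposition \ref{prop12} at all.

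Your argument instead tries to localize $\{0\in\Gamma\}$ and then use total-variation continuity of the restricted Poisson process. The conceptual idea is reasonable, but the specific event you write down, $\{0\leftrightarrow \partial B(L^{1/2d})\text{ and }\rho(0,\bar\Gamma_{12})\le L\}$, is \emph{not} measurable with respect to the restriction of $\mathcal{FI}^{u,T}$ to a bounded box. The set $\bar\Gamma_{12}$ is defined as a union of good blocks over $y\in\Gamma_Y^{(12)}$, and $\Gamma_Y^{(12)}$ is the unique \emph{infinite} open cluster of the auxiliary site percolation $Y_{12}$; deciding whether a given $y\in\mathcal V$ lies in $\Gamma_Y^{(12)}$ requires looking arbitrarily far away. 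So the TV argument on a bounded box does not apply to this event as stated. To repair this you would have to replace $\bar\Gamma_{12}$ by a genuinely local surrogate, e.g.\ ``$0$ is connected within $B(L)$ to a $Y_{12}$-open cluster crossing an annulus at scale $L$,'' and then invoke Lemma \ref{lemma12} to argue that such a cluster is, except on a stretched-exponentially small event, part of $\Gamma_Y^{(12)}$. That is doable, but it is an extra step that is missing. The obstacle you do flag — that all the $\boldsymbol{s.e.}$ constants in Proposition \ref{prop12} need to be uniform over $u\in[\alpha,\beta]$ — is also real and also not checked in the paper, but the paper avoids it entirely because its continuity proof does not rely on Proposition \ref{prop12}. (Two minor quibbles: your opening observation that monotonicity plus $\eta^T\ge 1-\boldsymbol{s.e.}(T)$ pins $\eta^T$ to a thin band is indeed irrelevant to continuity at fixed $T$, as you yourself note; and the Poisson TV bound is correct, with the total intensity of paths meeting $B(L)$ of order $u\cdot\mathrm{cap}^{(T)}(B(L))$, which is polynomial in $L$.)
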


\begin{proof}
We follow the proof of Corollary $1.2$ of Teixeira \cite{teixeira2009uniqueness} closely. First we prove the right-continuity of $\eta^T$. Define the event
$$
C^{u,T}_r = \Big\{ 0 \xleftrightarrow{\mathcal{FI}^{u,T}}\partial B(0,r) \Big\}.
$$ 
Denote the complement of $C^{u,T}_r$ by $D^{u,T}_r$. Similar to its counterpart in vacant set of random interlacements, $P(C^{u,T}_r)$ is real analytic from inclusion-exclusion principle and Corollary $2.1$ in \cite{procaccia2019percolation}. Note that 
$$
1- \eta^T (u) = P^{u,T} (0 \notin \Gamma) = P \Bigg( \bigcup_{r \geq 1} D^{u,T}_r \Bigg) = \lim_{r \rightarrow \infty} P(D^{u,T}_r)
$$
is an increasing limit of continuous functions and hence is lower-semicontinuous on $\mathbb{R}_{+}$. Since $1 - \eta^T(u)$ is monotone non-increasing in $u$, it is right-continuous on $\mathbb{R}_{+}$. Therefore, $\eta^T(u)$ is also right-continuous on $\mathbb{R}_{+}$. To show that $\eta^T(u)$ is left-continuous, we consider the event $$C^{u,T}_{\infty} := \bigcap_{r \geq 1} C^{u,T}_r$$ for $u \in (\alpha, \beta)$. We could couple FRI $\mathcal{FI}^{v,T}$ for all $v \in \mathbb{R}_{+}$ and for a fixed $T$. Similar to the definition of random interlacements in Section $5$ of \cite{drewitz2014introduction}, consider a Poisson Point Process on the space $W^{[0, \infty)} \times \mathbb{R}_{+}$ with intensity measure $v^{(T)} \times m$, where $m$ is the Lebesgue measure on $\mathbb{R}_{+}$. Note that $C^{v,T}_{\infty}$ is monotone non-decreasing with respect to $v$, so
\begin{equation}
\label{left limit of eta}
\lim_{v \uparrow u} \eta^T(v) = \lim_{v \uparrow u} P(C^{v,T}_{\infty}) = P \Bigg( \bigcup_{v<u} C^{v,T}_{\infty} \Bigg).
\end{equation}
It suffices to prove that the limit in (\ref{left limit of eta}) is $\eta^T(u)$, or it is equivalent to prove that
\begin{equation}
\label{prob diff equal 0}
P \Bigg( C^{u,T}_{\infty} \setminus \bigcup_{v<u} C^{v,T}_{\infty} \Bigg) = 0.
\end{equation}
Fix $v_0 \in (\alpha,u)$. Define an event
\begin{equation}
F := \left\{ w = \sum_{i =1}^{\infty} \delta_{(\eta_i, u_i)} : \begin{aligned}
& w \text{ is a point measure on } W^{[0, \infty)} \times \mathbb{R}_{+}, \text{ and there exists unique} \\ &\text{ infinite clusters } \Gamma_{v_0} \text{ and } \Gamma_{u} \text{ of } \mathcal{FI}^{v_0, T} \text{ and } \mathcal{FI}^{u, T}, \text{ respectively } \end{aligned} \right\}.
\end{equation}
Since $T > T_5$, $\mathcal{FI}^{v,T}$ has a unique infinite cluster almost surely for all $v \in (\alpha, \beta)$ (see Theorem $1$ of \cite{procaccia2019percolation}). By uniqueness, $\Gamma_{v_0} \subset \Gamma_{u}$ for all $w \in F$. If $w \in F \cap C^{u,T}_{\infty}$, then there exists a finite path $\tau$ in $\Gamma_{u}$ connecting $0$ to $\Gamma_{v_0}$. By Definition \ref{definition1} of FRI, there are finite number of pairs $(\eta_i, u_i)$ with $v_0 < u_i < u$ such that $\eta_i$ intersects $\tau$. Thus there exists $v_1 \in (v_0,u)$ such that $w \in C^{v_1,T}_{\infty}$. Therefore, 
\begin{equation}
\label{intersect with F}
F \cap C^{u,T}_{\infty} \subset \bigcup_{v<u} C^{v,T}_{\infty}.
\end{equation}
(\ref{prob diff equal 0}) follows from (\ref{intersect with F}) and the fact that $P(F)=1$.

\end{proof}

\begin{proof}[Proof of Corollary \ref{quenched invariance principle}]
Let $0 < \alpha < \beta <\infty$. We show that FRI $\mathcal{FI}^{u,T}$ satisfies conditions $\bold{P1} - \bold{P3}$ and $\bold{S1} - \bold{S2}$ listed in \cite{procaccia2016quenched} for $T > T_5(d, \alpha, \beta)$ and $u \in (\alpha, \beta)$. The reader is referred to \cite{drewitz2014chemical, procaccia2016quenched} for a detailed descriptions of these $5$ conditions. FRI satisfies $\bold{P1}$ (ergodicity) by Proposition $6.1$ in \cite{procaccia2019percolation} and $\bold{P2}$ (monotonicity) by definitions of FRI. Fix $T > T_5$. For $i \in \{1,2\}$, let $x_i \in \mathbb{Z}^d$ and $A_i \in \sigma (\Psi_y: y \in B(x_i, 10 L))$, where $\Psi_y : \{0,1\}^{\mathbb{Z}^d} \rightarrow \{0,1\}$ is the coordinate map at $y \in \mathbb{Z}^d$ and $L \in \mathbb{Z}_{+}$. Recall Definition \ref{definition2} of FRI. Define an event
\begin{equation}
F_L^{v,T} := \left\{ 
\begin{aligned}
&\text{There exists a geometrically killed random walk starting from } \\ & B(x_1, 20L) \text{ intersecting } B(x_1, 10L) \text{ in } \mathcal{FI}^{v,T}
\end{aligned}
\right\}.
\end{equation}
One can easily adapt the proof of Lemma \ref{lemma13} (or the proof of Lemma $4.10$ of \cite{procaccia2019percolation}) and show that
\begin{equation}
\label{small prob from faraway}
P(F_L^{v,T}) \leq 1 - e^{-c_1 L},
\end{equation}
for all $v \in (\alpha, \beta)$, where $c_1 (T, d, \alpha, \beta)>0$ is a constant.  By (\ref{small prob from faraway}) and Definition \ref{definition2}, for all $x_1, x_2$ such that $|x_1 - x_2| \geq 50L$, and for all $v \in (\alpha, \beta)$, we have
$$
| P^{v,T} (A_1 \cap A_2) - P^{v,T} (A_1) P^{v,T} (A_2) | \leq e^{-c_2 L}.
$$
Thus condition $\bold{P3}$ (decoupling) is satisfied. By Proposition \ref{Y} and Theorem \ref{theorem2}, condition $\bold{S1}$ (local uniqueness) is satisfied for $T> T_5$. For condition $\bold{S2}$ (continuity), $\eta^{T}$ is positive and continuous on $(\alpha, \beta)$ by the choice of $T_5$ and Lemma \ref{continuity}. The result of Corollary \ref{quenched invariance principle} follows from Theorem $1.1$ in \cite{procaccia2016quenched}.
\end{proof}

\bibliographystyle{plain}
\bibliography{ref}
\end{document}